\theoremstyle{plain}
\newtheorem{theorem}{Theorem}[section]
\newtheorem{remark}[theorem]{Remark}
\newtheorem{lemma}[theorem]{Lemma}
\newtheorem{proposition}[theorem]{Proposition}
\newtheorem{definition}[theorem]{Definition}
\newtheorem{corollary}[theorem]{Corollary}
\numberwithin{equation}{section}
\begin{document}

\title[Non-cutoff Kac equation
in critical Besov space]
{The Cauchy problem for the inhomogeneous\\ non-cutoff Kac equation
in critical Besov space}

\author[H.-M. Cao, H.-G. Li, C.-J. Xu and J. Xu]
{Hongmei Cao,\, Hao-Guang Li,\,  Chao-Jiang Xu\, and Jiang Xu}

\address{Hong-Mei Cao,
\newline\indent
Department of Mathematics, Nanjing University of Aeronautics and Astronautics, Nanjing 211106,
P. R. China
\newline\indent
Universit\'e de Rouen, CNRS UMR 6085, Laboratoire de Math\'ematiques
\newline\indent
76801 Saint-Etienne du Rouvray, France
}
\email{hmcao\underline{ }91@nuaa.edu.cn}

\address{Hao-Guang Li,
\newline\indent
School of Mathematics and Statistics, South-Central University for Nationalities
\newline\indent
430074, Wuhan, P. R. China}
\email{lihaoguang@mail.scuec.edu.cn}

\address{Chao-Jiang Xu,
\newline\indent
School of Mathematics and statistics, Wuhan University 430072,
Wuhan, P. R. China
\newline\indent
Universit\'e de Rouen, CNRS UMR 6085, Laboratoire de Math\'ematiques
\newline\indent
76801 Saint-Etienne du Rouvray, France
}
\email{chao-jiang.xu@univ-rouen.fr}

\address{Jiang Xu,
\newline\indent
Department of Mathematics, Nanjing University of Aeronautics and Astronautics, Nanjing 211106,
P. R. China
}
\email{jiangxu\underline{ }79@nuaa.edu.cn}

\date{\today}

\subjclass[2010]{35B65,35E15,35H10,35Q20,35S05,82C40}

\keywords{Inhomogeneous Kac equation, Gevrey regularity, Gelfand-Shilov regularity, Critical Besov space}

\begin{abstract}
In this work, we investigate the Cauchy problem for the spatially inhomogeneous non-cutoff Kac equation. If the initial datum belongs to the spatially critical Besov space, we can prove the well-posedness of weak solution under a perturbation framework. Furthermore, it is shown that the solution enjoys Gelfand-Shilov regularizing properties with respect to the velocity variable and Gevrey regularizing properties with respect to the position variable. In comparison with the recent result in \cite{LMPX2},  the Gelfand-Shilov regularity index is improved to be optimal. To the best of our knowledge, our work is
the first one that exhibits smoothing effect for the kinetic equation in Besov spaces.
\end{abstract}

\maketitle

\tableofcontents

%%%%%%%%%%%%%%%%%%%%%%%%%%%%%%%%%%%%%%%%
\section{Introduction}%\label{S1}
In this work, we consider the spatially inhomogeneous non-cutoff Kac equation
\begin{equation}\label{eq1.10}
\left\{
\begin{array}{ll}
   \partial_t f+v\partial_x f=K(f,f),\\
   f|_{t=0}=f_0,
\end{array}
\right.
\end{equation}
where $f=f(t, x, v)\ge0$ is the density distribution function depending on the position $x\in\mathbb{R}$, velocity $v\in\mathbb{R}$ and time $t\geq0$. The Kac collision operator is given by
\begin{equation*}
K(f,g)(v)
=\int_{|\theta|\leq\frac{\pi}{4}}\beta(\theta)\left(\int_{\mathbb{R}}\left(f_{*}'g'-f_{*}g\right)dv_{*}\right)d\theta,
\end{equation*}
with the standard shorthand $f_{*}'=f(t, x, v_{*}'), f'=f(t, x, v'), f_{*}=f(t, x, v_{*}), f=f(t, x, v)$, where the pre and post collision velocities can be defined by
$$
v'+iv_{*}'=e^{i\theta}(v+iv_{*}), \ i.e., \ \ v'=v\cos\theta-v_{*}\sin\theta,\ \ v_{*}'=v\sin\theta+v_{*}\cos\theta, \ v,v_{*}\in\mathbb{R}.
$$
Indeed, the relation follows from the conversation of the kinetic energy in the binary collision
$$
v^{2}+v_{*}^{2}=v'^{2}+v_{*}'^{2}.
$$
We consider a cross section with a non-integrable singularity of the type
\begin{equation}\label{A-0}
\beta(\theta)\underset{\theta\rightarrow0}\approx|\theta|^{-1-2s}
\end{equation}
for some given parameter $0<s<1$. For more details on the physics background, the reader is referred to \cite{Cerci,V4} and references therein.

We study the Kac equation \eqref{eq1.10} around the normalized Maxwellian distribution
$$
\mu(v)=(2\pi)^{-\frac 12}e^{-\frac{|v|^{2}}{2}}, \ \ v\in\mathbb{R}.
$$
In a close to equilibrium framework, considering the fluctuation of density distribution function
$$
f(t, x, v)=\mu(v)+\sqrt{\mu}(v)g(t, x, v).
$$
Note that $K(\mu,\mu)=0$ by conservation of the kinetic energy, we turn to the following Cauchy problem
\begin{equation} \label{eq-1}
\left\{
\begin{aligned}
&\partial_t g+v\partial_xg+\mathcal{K}g=\Gamma(g, g),\,\,\, t>0,\, v\in\mathbb{R},\\
&g|_{t=0}=g_{0},
\end{aligned} \right.
\end{equation}
where
$$
\mathcal{K}(g)\triangleq\mathcal{K}_1(g)+\mathcal{K}_2(g),
$$
with
$$
\mathcal{K}_1(g)=-\mu^{-1/2}K(\mu,\mu^{1/2}g),\quad \mathcal{K}_2(g)=-\mu^{-1/2}K(\mu^{1/2}g,\mu)
$$
and
$$
\Gamma(g,g)=\mu^{-1/2}K(\mu^{1/2}g,\mu^{1/2}g).
$$
The linearized Kac operator $\mathcal{K}$ has been investigated by Lerner, Morimoto, Pravda-Starov and Xu in \cite{LMPX1}. It is shown that $\mathcal{K}$ is a non-negative unbounded operator on $L^2(\mathbb{R}_{v})$ with a kernel given by
$$
\mathrm{Ker}\ \mathcal{K}=\mathrm{Span}\{e_0,e_2\}.
$$
Here the Hermite basis $(e_n)_{n\geq0}$ is an orthonormal basis of $L^2(\mathbb{R})$, which is presented in Section \ref{S5-1}.
The harmonic oscillator
$$
\mathcal{H}\triangleq-\Delta_{v}+\frac{v^2}{4}=\sum_{n\geq0}(n+\frac{1}{2})\mathbb{P}_n,
$$
where $\mathbb{P}_n$ stands for the orthogonal projection
$$
\mathbb{P}_nf=(f,e_n)_{L^2(\mathbb{R})}e_n.
$$
Furthermore, the fractional harmonic oscillator
$$
\mathcal{H}^s=\sum_{k\geq0}(k+\frac{1}{2})^{s}\mathbb{P}_{k}
$$
can be defined by the functional calculus. The linearized Kac operator is diagonal in the Hermite basis
\begin{equation}\label{A-2}
\mathcal{K}=\sum_{k\geq1}\lambda_k\mathbb{P}_{k}
\end{equation}
with a spectrum only composed by the non-negative eigenvalues
\begin{align*}
&\lambda_{2k+1}=\int_{-\frac{\pi}{4}}^{\frac{\pi}{4}}\beta(\theta)(1-(\cos\theta)^{2k+1})d\theta\geq0, \ \ k\geq0,
\\& \lambda_{2k}=\int_{-\frac{\pi}{4}}^{\frac{\pi}{4}}\beta(\theta)(1-(\cos\theta)^{2k}-(\sin\theta)^{2k})d\theta\geq0, \ \ k\geq1,
\end{align*}
satisfying the asymptotic estimates
\begin{equation}\label{A-3}
\lambda_{k}\approx k^{s}.
\end{equation}

Now we take the following choice for the cross section
\begin{equation}\label{A-5}
\beta(\theta)=\frac{|\cos\frac{\theta}{2}|}{|\sin\frac{\theta}{2}|^{1+2s}}, \ \ |\theta|\leq\frac{\pi}{4},
\end{equation}
in part because of the usage of those results in \cite{LMPX1} directly. In that case, the eigenvalues satisfy the asymptotic equivalence
\begin{equation}\label{A-55}
\lambda_k\underset{k\rightarrow+\infty}\sim\frac{2^{1+s}}{s}\mathbf{\Gamma}(1-s)k^s,
\end{equation}
where $\mathbf{\Gamma}$ denotes the Gamma function.

It is known that the solution of Boltzmann equation without angular cutoff can enjoy smoothing effects. The non-integrability of the cross section is essential for the smoothing effect, see for example \cite{D}. Alexandre-Morimoto-Ukai-Xu-Yang \cite{AMUXY3} highlighted the importance of regularization effects for Boltzmann equation (see also \cite{AMUXY,AM,DFT,DS}). They studied $C^\infty$ smoothing properties of the spatially inhomogeneous non-cutoff Boltzmann equation in  \cite{AMUXY,AMUXY1,AMUXY3}. In \cite{LMPX3}, Lerner-Morimoto-Starov-Xu studied the linearized Landau and Boltzmann equation and proved that the linearized non-cutoff Boltzmann operator with Maxwellian is exactly equal to a fractional power of the linearized Landau operator. In addition, Lekrine-Xu \cite{LX}
investigated the Gevrey regularizing effect of the Cauchy problem for non-cutoff homogeneous Kac equation. Later, Lerner-Morimoto-Starov-Xu
\cite{LMPX1} considered the linearized non-cutoff Kac collision operator around the Maxwellian distribution and found that it behaved like a fractional power of the harmonic oscillator and was diagonal in the hermite basis. Moreover, it was shown in \cite{LMPX4} by Lerner-Morimoto-Starov-Xu that the Cauchy problem to the homogeneous non-cutoff Kac equation
\begin{equation*}
\left\{
\begin{array}{ll}
   \partial_t g+\mathcal{K}g=\Gamma(g,g), \\
   g|_{t=0}=g_0\in L^2(\mathbb{R}_v),
\end{array}
\right.
\end{equation*}
enjoys the following Gefand-Shilov regularizing properties
$$
\forall t>0,\quad g(t)\in S^{\frac 1{2s}}_{\frac{1}{2s}}(\mathbb{R}),
$$
where the Gefand-Shilov space $S^{\mu}_{\nu}(\mathbb{R}^d)$ with $\mu, \nu>0, \mu+\nu\ge 1$, are defined as the set of smooth functions $f\in C^\infty(\mathbb{R}^d)$ satisfying
$$
\exists A, C>0, \ \forall \alpha ,\beta\in\mathbb{N}^d, \ \ \sup_{v\in\mathbb{R}^d}|v^\beta\partial_v^\alpha f(v)|\leq C A^{|\alpha|+|\beta|}(\alpha!)^{\mu}(\beta !)^{\nu}.
$$
The Gevrey class $G^{\mu}(\mathbb{R}^d)$ is the set of smooth function fulfilling
$$
\exists A, C>0, \ \forall \alpha\in\mathbb{N}^d, \ \ \sup_{v\in\mathbb{R}^d}|\partial_v^\alpha f(v)|\leq C A^{|\alpha|}(\alpha!)^{\mu}.
$$
The analysis of the Gevrey regularizing properties of spatially inhomogeneous kinetic equations with respect to both position and velocity variables is more complicated. Up to now, there are few results expect for a very simplified model of the linearized inhomogeneous non-cutoff Boltzmann equation, say the generalized Kolomogorov equation
\begin{equation}\label{K}
\left\{
\begin{array}{ll}
   \partial_t g+v\cdot\nabla_{x}g+(-\Delta_v)^{s}g=0,\\
   g|_{t=0}=g_0\in L^{2}(\mathbb{R}^{2d}_{x,v}),
\end{array}
\right.
\end{equation}
with $0<s<1$. Morimoto and Xu \cite{MX} found that the solution to \eqref{K} satisfied
$$
\exists c>0, \forall t>0, \ \ e^{c(t^{2s+1}(-\Delta_x)^s+t(-\Delta_v)^s)}g(t)\in L^{2}(\mathbb{R}^{2d}_{x,v}),
$$
which implies that the generalized Kolomogorov equation enjoys a $G^{\frac{1}{2s}}(\mathbb{R}^{2d}_{x,v})$ Gevrey smoothing effect with respect to both position and velocity variables. The phenomenon of hypoellipticity arises from non-commutation and non-trivial interactions between the transport part $v\cdot\nabla_{x}$ and the diffusion part $(-\Delta_v)^{s}$ in this evolution equation. On the other hand, for the Cauchy problem of the linear model of spatially inhomogeneous Landau equation,
\begin{equation}\label{Landau}
\left\{
\begin{array}{ll}
   \partial_t g+v\cdot\nabla_{x}g=\nabla_{v}\left(\bar{a}(\mu)\cdot\nabla_vg-\bar{b}(\mu)g\right),\\
   g|_{t=0}=g_0\,
\end{array}
\right.
\end{equation}
with
\begin{align*}
&\bar{a}_{ij}(\mu)=\delta_{ij}(|v|^2+1)-v_iv_j, \\
&\bar{b}_j(\mu)=-v_j,\ \  \ i,j=1,\cdots,d,
\end{align*}
they showed in \cite{MX} that the solution to \eqref{Landau} enjoyed a $G^1(\mathbb{R}^{2d}_{x,v})$ Gevrey smoothing effect with respect to both position and velocity variables with the estimate
$$
\exists c>0, \forall t>0, \ \ e^{c(t^2(-\Delta_x)^{\frac{1}{2}}+t(-\Delta_v)^{\frac{1}{2}})}g(t)\in L^{2}(\mathbb{R}^{2d}_{x,v}),
$$
which coincides with the fact that the Landau equation can be regarded as the limit $s=1$ of the Boltzmann equation.

Recently, Lerner-Morimoto-Starov-Xu \cite{LMPX2} considered the spatially inhomogeneous non-cutoff Kac equation in the Sobolev space and showed that the Cauchy problem for the fluctuation around the Maxwellian distribution admitted $S^{1+\frac{1}{2s}}_{1+\frac{1}{2s}}$ Gelfand-Shilov regularity properties with respect to the velocity variable and $G^{1+\frac{1}{2s}}$ Gevrey regularizing properties with respect to the position variable. In \cite{LMPX2}, the authors conjectured that it remained still open to determine whether the regularity indices $1+\frac{1}{2s}$ is sharp or not. On the other hand, Duan-Liu-Xu \cite{DLX-2016} and Morimoto-Sakamoto \cite{MS} studied the Cauchy problem for the Boltzmann equation with the initial datum belonging to critical Besov space. Motivated by those works, we intend to study the inhomogeneous non-cutoff Kac equation in critical Besov space and then improve the Gelfand-Shilov regularizing properties and Gevrey regularizing properties.

Now, our main results are stated as follows (see Section \ref{S2} for the definition of Besov spaces ).

\begin{theorem}[{\bf Main Theorem}]\label{Main}
Let $0<T<+\infty$. We suppose that the collision cross section satisfies \eqref{A-5} with $0<s<1$. There exists a constant $\varepsilon_{0}>0$ such that for all $g_0\in\widetilde{L}^{2}_{v}(B^{1/2}_{2,1})$ satisfying
$$
\|g_0\|_{\widetilde{L}_v^2(B^{1/2}_{2,1})}\leq\,\varepsilon_{0},
$$
then the Cauchy problem \eqref{eq-1} admits a weak solution $g\in \widetilde{L}^{\infty}_T\widetilde{L}^2_{v}(B^{1/2}_{2,1})$ satisfying
\begin{equation*}%\label{local-existence}
\begin{split}	&\|g\|_{\widetilde{L}^{\infty}_T\widetilde{L}^2_{v}(B^{1/2}_{2,1})}+\|\mathcal{H}^{\frac{s}{2}}g\|_{\widetilde{L}^{2}_T\widetilde{L}^2_{v}(B^{1/2}_{2,1})}
\leq c_{0}e^{T}\|g_{0}\|_{\widetilde{L}^2_{v}(B^{1/2}_{2,1})},
\end{split}
\end{equation*}	
for some constant $c_{0}>1$. Furthermore, this solution is smooth for all positive time $0<t\leq T$, which satisfies the following Gelfand-Shilov and Gevrey type estimates: For $\delta>0$, there exists $C>1$ such that for all $ 0<t\leq T$ and for all $k,l,q\geq0$,
\begin{align*}
\|v^k\partial_v^l\partial_x^qg(t)\|_{\widetilde{L}^2_{v}(B^{1/2}_{2,1})}
\leq\frac{C^{k+l+q+1}}{t^{\frac{3s+1}{2s(s+1)}(k+l+2)+\frac{2s+1}{2s}q+\delta}}
(k!)^{\frac{3s+1}{2s(s+1)}}(l!)^{\frac{3s+1}{2s(s+1)}}(q!)^{\frac{2s+1}{2s}}\|g_{0}\|_{\widetilde{L}^2_{v}(B^{1/2}_{2,1})}.
\end{align*}
\end{theorem}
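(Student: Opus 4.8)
\section*{Proof proposal}

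The plan is to split the argument into an existence part and a regularization part. For existence I would work blockwise with the Littlewood--Paley decomposition $\Delta_j$ in the position variable, exploiting that all the velocity operators $v$, $\partial_v$, $\mathcal{H}$ and $\mathcal{K}$ commute with $\Delta_j$. Applying $\Delta_j$ to \eqref{eq-1} and taking the $L^2_v$ inner product with $\Delta_j g$, the transport term drops out since $(v\partial_x \Delta_j g,\Delta_j g)_{L^2_{x,v}}=0$ by skew-symmetry. The linearized operator supplies coercivity: from the diagonalization \eqref{A-2} together with the spectral asymptotics \eqref{A-3}, one has $(\mathcal{K}u,u)_{L^2_v}\gtrsim \|\mathcal{H}^{s/2}u\|_{L^2_v}^2-C\|u\|_{L^2_v}^2$, the $-C\|u\|^2$ loss being due only to the two-dimensional kernel $\mathrm{Span}\{e_0,e_2\}$. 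The nonlinearity is absorbed via a trilinear upper bound controlling $(\Gamma(f,h),w)_{L^2_v}$ by products of $\mathcal{H}^{s/2}$-norms of the three entries. Multiplying by $2^{j/2}$ and summing over $j$ yields a closed inequality in $\widetilde{L}^\infty_T\widetilde{L}^2_v(B^{1/2}_{2,1})$; the smallness of $\|g_0\|_{\widetilde{L}^2_v(B^{1/2}_{2,1})}\le\varepsilon_0$ lets me absorb the trilinear term, and a Gr\"onwall argument produces the stated bound with constant $c_0 e^T$. A genuine weak solution is then obtained by a Friedrichs-type frequency-truncation scheme, deriving uniform bounds and passing to the limit.

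For the regularization the first reduction is that Gelfand--Shilov regularity in $v$ is equivalent to control of powers of the harmonic oscillator, in the sense $\|v^k\partial_v^l u\|_{L^2_v}\lesssim C^{k+l}\|\mathcal{H}^{(k+l)/2}u\|_{L^2_v}$ up to factorial factors, so it suffices to bound $\|\mathcal{H}^{m/2}\partial_x^q g\|$ with the prescribed growth in $m=k+l$ and $q$. The heart of the matter is the hypoelliptic coupling carried by the transport field: writing $M_v$ for multiplication by $v$, one checks $[\partial_x,v\partial_x]=0$ and $[M_v,v\partial_x]=0$, but $[\partial_v^l,v\partial_x]=l\,\partial_x\partial_v^{l-1}$. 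Thus each velocity derivative manufactures a position derivative, which is precisely the mechanism transferring $v$-regularity to $x$-regularity; it is what forces the velocity index to degrade from the homogeneous value $1/(2s)$ to $\frac{3s+1}{2s(s+1)}=\frac{1}{2s}+\frac{1}{s+1}$, while the position index becomes $\frac{2s+1}{2s}=1+\frac{1}{2s}$. The further commutator $[\mathcal{H}^{s/2},v\partial_x]$ will be estimated using the functional-calculus and pseudodifferential tools available for the linearized Kac operator.

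I would then introduce time-weighted derivatives, schematically $t^{\frac{3s+1}{2s(s+1)}}\mathcal{H}^{1/2}$ for the velocity directions and $t^{\frac{2s+1}{2s}}\partial_x$ for the position direction, and follow the evolution of the weighted energies $\|t^{\alpha}\mathcal{H}^{m/2}\partial_x^q \Delta_j g\|^2$. Differentiating in time and using \eqref{eq-1}, the transport term again contributes only through the commutator $[\,\cdot\,,v\partial_x]$, whose $\partial_x$-output is dominated by the position-weighted energy of lower order, while the $\mathcal{K}$-term produces the coercive gain $-\|\mathcal{H}^{(m+s)/2}\cdots\|^2$. The exponents $\frac{3s+1}{2s(s+1)}$ and $\frac{2s+1}{2s}$ are fixed exactly so that the time derivative of the weight, the coupling commutator, the commutator $[\mathcal{H}^{s/2},v\partial_x]$, and the coercive term are in equilibrium. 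The nonlinear term is distributed by a weighted Leibniz rule and absorbed through the Part~1 smallness together with the trilinear estimate, now carried with the factorial bookkeeping. An induction on $k+l+q$ closes the hierarchy, and summing the dyadic blocks with the $B^{1/2}_{2,1}$ weights (Bernstein inequalities turning $\partial_x^q$ on a block at frequency $2^j$ into a factor $2^{jq}$) gives the claimed factorial-and-time bound.

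The main obstacle will be the exact balancing in this weighted-energy induction: the position derivative produced by $[\partial_v,v\partial_x]=\partial_x$ must be absorbed by the position-weighted energy without losing powers of $t$ or accumulating constants that degrade with the order, while the harmonic-oscillator coercivity simultaneously compensates the velocity derivatives and the remainder $[\mathcal{H}^{s/2},v\partial_x]$. Arranging that the factorials $(k!)^{\frac{3s+1}{2s(s+1)}}(l!)^{\frac{3s+1}{2s(s+1)}}(q!)^{\frac{2s+1}{2s}}$ and the matching negative powers of $t$ emerge sharply, and uniformly in both the number of derivatives and the Littlewood--Paley index $j$, is the delicate computation on which the optimality of the index rests. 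Controlling the nonlinear term under all these weights at once, with compatible weighted trilinear estimates, is the second genuinely technical point.
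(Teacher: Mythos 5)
Your existence half tracks the paper's Section \ref{S3} closely enough (blockwise energy estimates on $\Delta_p g$, skew-symmetry of $v\partial_x$, coercivity of $\mathcal{K}$, the trilinear bound, smallness, and an approximation scheme playing the role of the paper's mollification $f_N=S_Nf$ and iteration \eqref{equationA}), so the problem lies in your regularization half, where there is a gap I believe is fatal as written. Your weighted-energy induction on $\|t^{\alpha}\mathcal{H}^{m/2}\partial_x^q\Delta_j g\|^2$ cannot close, because the equation carries no dissipation in $x$: in a symmetric energy estimate the transport term vanishes identically, so at the base level of your hierarchy there is no coercive term in which to absorb the position derivatives manufactured by $[\partial_v,v\partial_x]=\partial_x$; the ``position-weighted energy of lower order'' you propose to dominate them by is itself uncontrolled, since no equation step ever produces a gain in $x$. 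What is needed is a quantified hypoelliptic gain in position, and the paper obtains it not from commutators inside a derivative hierarchy but from a nonsymmetric multiplier: the Weyl quantization $Q=1-\varepsilon m^w$ of the symbol \eqref{G}, for which Lemma \ref{Linear-B} gives $\mathrm{Re}((v\partial_x+\mathcal{K})\Delta_pu,Q\Delta_pu)\gtrsim\|\mathcal{H}^{\frac{s}{2}}\Delta_pu\|^2+\|\langle D_x\rangle^{\frac{s}{2s+1}}\Delta_pu\|^2-\|\Delta_pu\|^2$. That gain $\langle D_x\rangle^{\frac{s}{2s+1}}$ is exactly what absorbs the error coming from conjugating the transport term by the time-dependent weight (Lemma \ref{HD-2}, where the commutator $[G_{\kappa}(ct)v(G_{\kappa}(ct))^{-1}-v]\partial_x$ is bounded by a small multiple of $\|\langle D_x\rangle^{\frac{s}{2s+1}}\cdot\|$); nothing in your sketch produces this quantitative input, and your phrase about estimating $[\mathcal{H}^{s/2},v\partial_x]$ ``using functional-calculus and pseudodifferential tools'' is precisely where the missing lemma would have to live.

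The second gap is your ``weighted Leibniz rule'': fractional powers of $\mathcal{H}$ admit no Leibniz rule in general, and the uniformity of constants over infinitely many derivative orders — which you correctly flag as the main obstacle but offer no device for — is exactly what such inductions founder on. The paper sidesteps both difficulties with a single exponential weight $G_{\kappa}(ct)$ built on the anisotropic symbol $(\mathcal{H}^{\frac{s+1}{2}}+\langle D_x\rangle^{\frac{3s+1}{2s+1}})^{\frac{2s}{3s+1}}$, and the reason the weight can be moved across the nonlinearity is structural: $\Gamma(e_k,e_l)=\alpha_{k,l}e_{k+l}$ is triangular in the Hermite basis (Lemma \ref{J10-1}), the weight symbol is subadditive under $k+l=n$ and $\xi=\eta+(\xi-\eta)$, and $Z(x)=e^x/(1+\kappa e^x)$ satisfies $Z(x+y)\leq 3Z(x)Z(y)$, yielding the weighted trilinear estimates of Lemmas \ref{trilinear-estimate-M-G} and \ref{trilinear-estimate-G}. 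One then closes a single weighted estimate on the iterates (Proposition \ref{Gelfand}), lets $\kappa\rightarrow0_+$, and all the factorials fall out at once from \eqref{g-limit3} via elementary bounds of the type $x^k\exp(-cx^{\frac{2s}{3s+1}})\lesssim (k!)^{\frac{3s+1}{2s}}c^{-\frac{3s+1}{2s}k}$ together with the Hermite-function estimates of Lemma \ref{J10-5} — no induction on the number of derivatives at all, which is how the sharp indices $\frac{3s+1}{2s(s+1)}$ and $\frac{2s+1}{2s}$ emerge uniformly. To salvage your scheme you would at minimum need to import the multiplier $Q$ and the Hermite triangularity of $\Gamma$, at which point you have essentially reconstructed the paper's proof.
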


Our result deserves some comments in contrast to the result of \cite{LMPX2}.
\begin{remark}%\label{theorem-remark}
\begin{itemize}
  \item[(1)] We show the well-posedness of Cauchy problem with the initial datum belonging to the spatially critical Besov space $\widetilde{L}_v^2(B^{1/2}_{2,1})$, rather than in the Sobolev space $L^2_v(H^1_x)$.
  \item[(2)] For the regularizing effect, our result indicates that
  $$
  \forall t>0, \ \forall x\in\mathbb{R}, \ \ g(t,x,\cdot)\in S^{\frac{3s+1}{2s(s+1)}}_{\frac{3s+1}{2s(s+1)}}(\mathbb{R}); \quad
  \forall t>0, \ \forall v\in\mathbb{R}, \ \ g(t,\cdot, v)\in G^{1+\frac{1}{2s}}(\mathbb{R}).
  $$
  Actually, the Gelfand-Shilov index for the velocity variable is sharp for $0<s<1$, if noticing that
  $$
  \quad\frac{3s+1}{2s(s+1)}=\frac{2s+1}{2s}\frac{3s+1}{(2s+1)(s+1)}<1+\frac{1}{2s}.
  $$
  \item[(3)] If $s$ is close to $1$, the solution is almost analytic in the velocity variable, since
  $$
  \quad\frac{3s+1}{2s(s+1)}\rightarrow1.
  $$
 Therefore, our Gelfand-Shilov index for the velocity variable should be optimal. 
\end{itemize}	
\end{remark}

The paper is arranged as follows. In Section \ref{S2}, we recall the definitions of Besov spaces and Chemin-Lerner spaces as well as some
key estimates for the Kac collision operator. Section \ref{S3} is devoted to establish the local existence for \eqref{eq-1} in critical Besov space. In Section \ref{S4}, we establish Gelfand-Shilov regularizing properties with respect to the velocity variable and Gevrey smoothing effects with respect to position variable. Section \ref{S5} is Appendix, where instrumental estimates in terms of Hermite functions, the definition of the Kac collision operator as a finite part integral, some estimates of Kac collision operator, the equivalent definitions of Gelfand-Shilov regularity and analysis properties in Besov spaces are presented.

%%%%%%%%%%%%%%%%%%%%%%%%%%%%%%%%%%%%%%%%%%%%%%%%%%%%%%%%%%%%%%%%%%%%%%%%%%%%%%%%%%%%%%%%%%%%%%
\section{Analysis of Kac collision operator}\label{S2}

In this section, we present the trilinear estimates of the Kac collision operator which will be used in the subsequent analysis. Firstly, we recall the Littlewood-Paley decomposition. The reader is also referred to \cite{BCD} for more details. Let $(\varphi,\chi)$ be a couple of smooth functions valued in the closed interval $[0,1]$ such that $\varphi$ is supported in the shell $\mathbb{C}(0,\frac{3}{4},\frac{8}{3})=\left\{\xi\in\mathbb{R}:\frac{3}{4}\leq\left|\xi\right|\leq\frac{8}{3}\right\}$ and $\chi$ is supported in the ball $\mathbb{B}(0,\frac{4}{3})=\left\{\xi\in\mathbb{R}:\left|\xi\right|\leq\frac{4}{3}\right\}$. In terms of the two functions, one has the unit decomposition
$$
\chi(\xi)+\sum_{q\geq0}\varphi\left(2^{-q}\xi\right)=1, \ \ \forall\xi\in\mathbb{R}.
$$
The inhomogeneous dyadic blocks are defined by
$$
\Delta_{-1}u\triangleq\chi(D)u, \quad\quad \Delta_{q}u\triangleq\varphi(2^{-q}D)u, \ \ \ q\geq0
$$
for $u=u(x)\in\mathcal{S}'(\mathbb{R}_{x})$. Hence, the Littlewood-Paley decomposition for any tempered distribution $u$ reads
$$
u=\sum_{q\geq-1}\Delta_qu.
$$
It is also convenient to introduce the low-frequency cut-off:
$$
S_{q}u=\sum_{p\leq q-1}\Delta_{p}u.
$$

Now, we give the definition of main functional spaces in the present paper.

\begin{definition}%\label{definition-1}
Let $\sigma\in\mathbb{R}$ and $1\leq p,r\leq\infty$. The nonhomogeneous Besov space $B^{\sigma}_{p,r}$ is defined by
\begin{align*}
B^{\sigma}_{p,r}:=\Big\{u\in\mathcal{S}'(\mathbb{R}_{x}):u=\sum_{q\geq-1}\Delta_qu \ \text{in} \ \mathcal{S}' \Big| \
\|u\|_{B^{\sigma}_{p,r}}<\infty\Big\}
\end{align*}
where
\begin{align*}
\|u\|_{B^{\sigma}_{p,r}}=\left(\sum_{q\ge-1}\left(2^{q\sigma}\|\Delta_qf\|_{L^{p}_{x}}\right)^{r}\right)^{1/r}
\end{align*}
with the usual convention for $p, r=\infty$.
\end{definition}

For the distribution $u=u(t,x,v)$, we define the mixed Banach space

\begin{align*}
L^{p_{1}}_{T}L^{p_{2}}_{v}L^{p_{3}}_{x}\triangleq L^{p_{1}}([0,T]; L^{p_{2}}(\mathbb{R}_{v};L^{p_{3}}(\mathbb{R}_{x})))
\end{align*}
for $0<T\leq\infty$ and $ 1\leq p_{1},p_{2},p_{3}\leq\infty$, whose norm is given by
\begin{align*}
\|u\|_{L^{p_{1}}_{T}L^{p_{2}}_{v}L^{p_{3}}_{x}}=
\left(\int_{0}^{T}\left(\int_{\mathbb{R}}\left(\int_{\mathbb{R}}\left|u(t,x,v)\right|^{p_{3}}dx\right)^{p_{2}/p_{3}}dv\right)^{p_{1}/p_{2}}dt\right)^{1/p_{1}}
\end{align*}
with the usual convention if $p_{1},p_{2},p_{3}=\infty$.

In addition, we give another mixed Banach space, which was initialed by Chemin and Lerner in \cite{CL}.

\begin{definition}%\label{definition-2}
Let $\sigma\in\mathbb{R}$ and $1\leq\varrho_{1}, \varrho_{2},p, r\leq\infty$. For $0<T\leq\infty$, the space $\widetilde{L}^{\varrho_{1}}_{T}\widetilde{L}^{\varrho_{2}}_{v}(B^{\sigma}_{p,r})$ is defined by
\begin{align*}
\widetilde{L}^{\varrho_{1}}_{T}\widetilde{L}^{\varrho_{2}}_{v}(B^{\sigma}_{p,r})=\left\{u(t,\cdot,v)\in\mathcal{S}': \|u\|_{\widetilde{L}^{\varrho_{1}}_{T}\widetilde{L}^{\varrho_{2}}_{v}(B^{\sigma}_{p,r})}<\infty\right\},
\end{align*}
where
\begin{align*}
\|u\|_{\widetilde{L}^{\varrho_{1}}_{T}\widetilde{L}^{\varrho_{2}}_{v}(B^{\sigma}_{p,r})}=\left(\sum_{q\geq-1}\left(2^{q\sigma}\|\Delta_{q}u\|
_{L^{\varrho_{1}}_{T}L^{\varrho_{2}}_{v}L^{p}_{x}}\right)^{r}\right)^{1/r}
\end{align*}
with the usual convention for $\varrho_{1},\varrho_{2},p,r=\infty$.
\end{definition}

Due to the coercivity of the linearized Kac collision operator $\mathcal{K}$, we have the following result.

\begin{lemma}\label{JJ}
For the linear term $\mathcal{K}$, there exists a constant $C>0$ such that for the suitable function $f,g$
\begin{align*}
\frac{1}{C}\|\Delta_p\mathcal{H}^{\frac{s}{2}}f\|^2_{L^2(\mathbb{R}_{v})}\leq(\Delta_p\mathcal{K}f,\Delta_pf)_{L^{2}(\mathbb{R}_{v})}+\|\Delta_pf\|^2_{L^2(\mathbb{R}_{v})}
\leq C\|\Delta_p\mathcal{H}^{\frac{s}{2}}f\|^2_{L^2(\mathbb{R}_{v})}
\end{align*}
for each $p\geq-1$. Moreover, for $\sigma>0$ and $ T>0$, it holds that
\begin{align*}
\sum_{p\geq-1}2^{p\sigma}\left(\int^T_0\left(\Delta_p\mathcal{K}g,\Delta_pg\right)_{L^{2}(\mathbb{R}^2_{x,v})}dt\right)^{1/2}
\geq\frac{1}{C}\|\mathcal{H}^{\frac{s}{2}}g\|_{\widetilde{L}_T^{2}\widetilde{L}^2_{v}(B^\sigma_{2,1})}
-\|g\|_{\widetilde{L}_T^{2}\widetilde{L}^2_{v}(B^\sigma_{2,1})}.
\end{align*}
\end{lemma}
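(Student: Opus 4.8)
The plan is to first establish the pointwise (in $p$) double inequality by passing to the Hermite expansion in the velocity variable, and then to derive the summed Besov estimate by taking square roots and summing the dyadic pieces. The two parts are logically distinct: the first is a spectral statement about $\mathcal{K}$, the second a bookkeeping passage to the Chemin--Lerner norm.

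First I would observe that the Littlewood--Paley projection $\Delta_p$ acts only on the position variable $x$, whereas $\mathcal{K}$ and $\mathcal{H}^{s/2}$ act only on the velocity variable $v$; hence they commute, and it suffices to prove the first assertion at the level of the velocity variable, say for $h=h(v):=(\Delta_p f)(x,\cdot)$ with $x$ frozen. Expanding $h=\sum_{k\geq0}(h,e_k)_{L^2}e_k$ in the Hermite basis and using the diagonalization \eqref{A-2} together with the definition of $\mathcal{H}^{s}$, I get
\begin{equation*}
(\mathcal{K}h,h)_{L^2}+\|h\|_{L^2}^2=\sum_{k\geq0}(\lambda_k+1)|(h,e_k)_{L^2}|^2,\qquad \|\mathcal{H}^{s/2}h\|_{L^2}^2=\sum_{k\geq0}\Big(k+\tfrac12\Big)^{s}|(h,e_k)_{L^2}|^2,
\end{equation*}
with the convention $\lambda_0=0$. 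The crux is then the uniform comparison $\lambda_k+1\approx(k+\tfrac12)^s$ for all $k\geq0$: for large $k$ this follows from the asymptotics \eqref{A-3}, while for the finitely many small indices --- in particular $k=0$ and $k=2$, where $e_0,e_2$ span $\mathrm{Ker}\,\mathcal{K}$ so that $\lambda_k=0$ --- both sides are bounded above and below by positive constants. It is precisely the additive term $\|h\|_{L^2}^2$ that compensates for the vanishing of $\mathcal{K}$ on its kernel and rescues the lower bound. Comparing the two series term by term yields the first claim, and integrating in $x$ replaces $L^2(\mathbb{R}_v)$ by $L^2(\mathbb{R}^2_{x,v})$.

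For the second assertion I apply the lower bound just obtained to $g$, integrate in $t\in[0,T]$, and set $A_p:=\|\Delta_p\mathcal{H}^{s/2}g\|_{L^2_TL^2_vL^2_x}$ and $B_p:=\|\Delta_p g\|_{L^2_TL^2_vL^2_x}$ (using Fubini to identify $L^2_vL^2_x$ with $L^2_{x,v}$). This gives $X_p:=\int_0^T(\Delta_p\mathcal{K}g,\Delta_pg)_{L^2_{x,v}}\,dt\geq \tfrac1C A_p^2-B_p^2$, and since $\mathcal{K}\geq0$ also $X_p\geq0$, so that $X_p\geq\max\{0,\tfrac1C A_p^2-B_p^2\}$. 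The passage from this squared estimate to a linear-in-norm estimate is the only genuinely delicate point; I would handle it with the elementary inequality $\sqrt{\max\{0,a^2-b^2\}}\geq a-b$, valid for all $a,b\geq0$, applied with $a=\tfrac1{\sqrt C}A_p$ and $b=B_p$, which yields $X_p^{1/2}\geq \tfrac1{\sqrt C}A_p-B_p$.

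Finally, multiplying by $2^{p\sigma}$ and summing over $p\geq-1$,
\begin{equation*}
\sum_{p\geq-1}2^{p\sigma}X_p^{1/2}\geq\frac1{\sqrt C}\sum_{p\geq-1}2^{p\sigma}A_p-\sum_{p\geq-1}2^{p\sigma}B_p=\frac1{\sqrt C}\|\mathcal{H}^{s/2}g\|_{\widetilde{L}^2_T\widetilde{L}^2_v(B^\sigma_{2,1})}-\|g\|_{\widetilde{L}^2_T\widetilde{L}^2_v(B^\sigma_{2,1})},
\end{equation*}
where I recognized the Chemin--Lerner norms directly from their definition; renaming $\sqrt C$ as $C$ gives the stated inequality. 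The main obstacle, as noted, is the square-root step, where one must be careful not to lose the sign: the non-negativity $X_p\geq0$ coming from the coercivity of $\mathcal{K}$ is exactly what makes the elementary inequality applicable term by term before summation.
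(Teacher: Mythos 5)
Your proof is correct and follows essentially the same route as the paper, which simply invokes the spectral diagonalization of $\mathcal{K}$ in the Hermite basis (the comparison $\lambda_k+1\approx(k+\tfrac12)^s$ you spell out) for the first inequality and the definition of the Chemin--Lerner norms for the second. Your explicit justification of the square-root step via $\sqrt{\max\{0,a^2-b^2\}}\geq a-b$, using $X_p\geq0$ from the non-negativity of $\mathcal{K}$, is exactly the bookkeeping the paper leaves implicit.
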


\begin{proof}
Observe that the inner product $(\cdot,\cdot)_{L^{2}(\mathbb{R}_{v})}$ is with respect to $v$, $\Delta_p$ acts on $x$ and the linearized non-cutoff Kac operator $\mathcal{K}$ is independent of $x$. Thus, the first inequality can be obtained by using the spectral estimate for $\mathcal{K}$ in Section \ref{S5-2}.  The second inequality just follows from the first inequality and the definition of Chemin-Lerner spaces.
\end{proof}

For the nonlinear term $\Gamma(f,g)$,  the authors \cite{LMPX2} showed some trilinear estimates in the Sobolev space. Here, we establish the trilinear estimates with minor changes, which will be used in the proof of local existence of  \eqref{eq-1}.

\begin{lemma}\label{trilinear-estimate-S}
Let $f,g,h\in\mathcal{S}(\mathbb{R}^2_{x,v})$. Then there exists a constant $C_0>0$ such that for all $0\leq\delta\leq1, j_1,j_2\geq0$ with $j_1+j_2\leq1$, it holds that
\begin{equation}\label{trilinear-A}
\begin{split}
&\left|\left((1+\delta\sqrt{\mathcal{H}}+\delta\langle D_{x}\rangle)^{-1}\Gamma((1+\delta\sqrt{\mathcal{H}})^{j_{1}}f,(1+\delta\sqrt{\mathcal{H}})^{j_{2}}g),h\right)_{L^2_{x,v}}\right|
\\&\qquad
\leq
C_{0}\left\|f\right\|_{L^2_{v}L^{\infty}_x}
\|\mathcal{H}^{\frac{s}{2}}g\|_{L^2_{x,v}}\|\mathcal{H}^{\frac{s}{2}}h\|_{L^2_{x,v}},
\\&
\Big|\left((1+\delta\sqrt{\mathcal{H}}+\delta\langle D_{x}\rangle)^{-1}\Gamma(f,\delta\langle D_{x}\rangle(1+\delta\sqrt{\mathcal{H}}+\delta\langle D_{x}\rangle)^{-1}g),h\right)_{L^2_{x,v}}\Big|
\\&\qquad
\leq C_{0}\|f\|_{L^{2}_{v}L^{\infty}_{x}}\|\mathcal{H}^{\frac{s}{2}}g\|_{L^{2}_{x,v}}
\|\mathcal{H}^{\frac{s}{2}}h\|_{L^{2}_{x,v}}.
\end{split}
\end{equation}
\end{lemma}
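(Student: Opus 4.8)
The plan is to regard these two inequalities as the mixed-norm counterpart of the Sobolev-space trilinear estimates of \cite{LMPX2}, and to reduce them to a single velocity-variable estimate for the Kac operator together with H\"older's inequality in the position variable. The starting point is that $\mathcal{H}$ (hence $\sqrt{\mathcal{H}}$ and $\mathcal{H}^{s/2}$) acts only on $v$, while $\langle D_x\rangle$ acts only on $x$; the two operators are self-adjoint and commute. Consequently the multiplier $(1+\delta\sqrt{\mathcal{H}}+\delta\langle D_x\rangle)^{-1}$ is self-adjoint on $L^2_{x,v}$, and I would first transfer it onto the test function $h$, rewriting the left-hand side of \eqref{trilinear-A} as $\left(\Gamma((1+\delta\sqrt{\mathcal{H}})^{j_1}f,(1+\delta\sqrt{\mathcal{H}})^{j_2}g),(1+\delta\sqrt{\mathcal{H}}+\delta\langle D_x\rangle)^{-1}h\right)_{L^2_{x,v}}$.

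Next I would invoke the core velocity-variable trilinear estimate for the Kac collision operator recalled in the Appendix (Section~\ref{S5}), applied for each fixed $x$. This is exactly the estimate established in \cite{LMPX2}: it distributes one factor $\mathcal{H}^{s/2}$ onto each of the last two arguments and, crucially, shows how the gains $(1+\delta\sqrt{\mathcal{H}})^{j_1}$ and $(1+\delta\sqrt{\mathcal{H}})^{j_2}$ on the first two arguments are absorbed by the smoothing factor $(1+\delta\sqrt{\mathcal{H}}+\delta\langle D_x\rangle)^{-1}$ carried by the output, provided $j_1+j_2\leq1$. The mechanism behind this absorption is the energy/degree structure of the collision: the Hermite degree of $\Gamma(F,G)$ is controlled by the sum of the degrees of $F$ and $G$, so that $\sqrt{\mathcal{H}}$ is subadditive along a collision and a total gain of order $j_1+j_2\le1$ is dominated by the single inverse power of $1+\delta\sqrt{\mathcal{H}}$ on the output. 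Having the velocity estimate at hand, I would integrate in $x$ and close with H\"older's inequality: the first argument is estimated in $L^\infty_x$, where it plays the role of the slowly varying weight in the collision, while the remaining two arguments are paired in $L^2_x$; this yields precisely the norms $\|f\|_{L^2_vL^\infty_x}$, $\|\mathcal{H}^{s/2}g\|_{L^2_{x,v}}$ and $\|\mathcal{H}^{s/2}h\|_{L^2_{x,v}}$ on the right.

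For the second inequality I would not repeat the argument but deduce it from the first with $j_1=j_2=0$. Indeed, the operator $\delta\langle D_x\rangle(1+\delta\sqrt{\mathcal{H}}+\delta\langle D_x\rangle)^{-1}$ is a Fourier multiplier whose symbol $\delta\langle\xi\rangle/(1+\delta\sqrt{n+1/2}+\delta\langle\xi\rangle)$ is bounded by $1$ uniformly in $\delta$, and it commutes with $\mathcal{H}^{s/2}$ since it acts only on $x$; hence, writing $\tilde g=\delta\langle D_x\rangle(1+\delta\sqrt{\mathcal{H}}+\delta\langle D_x\rangle)^{-1}g$, one has $\|\mathcal{H}^{s/2}\tilde g\|_{L^2_{x,v}}\le\|\mathcal{H}^{s/2}g\|_{L^2_{x,v}}$, and the first inequality applied to $(f,\tilde g,h)$ gives the claim.

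The step I expect to be the main obstacle is the absorption of the gain $(1+\delta\sqrt{\mathcal{H}})^{j_1}$ carried by the first argument $f$, because the right-hand side controls $f$ only through the un-weighted norm $\|f\|_{L^2_vL^\infty_x}$, with no $\mathcal{H}^{s/2}$ weight available to compensate when high Hermite modes of $f$ collide into a low mode of the output. This cannot be extracted from the mere coercivity of $\mathcal{K}$ in Lemma~\ref{JJ}; it requires the full force of the weighted velocity estimate of \cite{LMPX2}, namely the precise commutation of $\sqrt{\mathcal{H}}$ through the Kac operator together with the sharp use of $j_1+j_2\le1$. The remaining modifications relative to \cite{LMPX2}, being only the passage from the $L^2_x$ Sobolev norm to the mixed $L^2_vL^\infty_x$ and $L^2_{x,v}$ norms, are routine consequences of H\"older's inequality.
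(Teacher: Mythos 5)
Your proposal is correct in substance and, for the first inequality, follows essentially the paper's own route: decompose $f,g,h$ in the Hermite basis, use the exact degree additivity $\Gamma(e_k,e_l)=\alpha_{k,l}e_{k+l}$ of Lemma \ref{J10-1}, absorb the weights, and close with H\"older taking $f$ in $L^\infty_x$. Two remarks are in order. First, your phrase ``applied for each fixed $x$'' cannot be taken literally: the resolvent $(1+\delta\sqrt{\mathcal{H}}+\delta\langle D_x\rangle)^{-1}$ contains $\langle D_x\rangle$ and is nonlocal in $x$, so there is no pointwise-in-$x$ velocity estimate to invoke. The correct implementation --- which the paper carries out, and which your description of the absorption mechanism in fact amounts to --- works per Hermite mode: for $k+l=n$ the operator $\bigl(1+\delta\sqrt{n+\tfrac12}+\delta\langle D_x\rangle\bigr)^{-1}\bigl(1+\delta\sqrt{k+\tfrac12}\bigr)^{j_1}\bigl(1+\delta\sqrt{l+\tfrac12}\bigr)^{j_2}$ is an $x$-Fourier multiplier of norm at most one on $L^2(\mathbb{R}_x)$, and only after this reduction does one apply H\"older in $x$. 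Relatedly, the ``main obstacle'' you flag (absorbing the gain on $f$ with no weight available on $\|f\|_{L^2_vL^\infty_x}$) dissolves into an elementary inequality once degree additivity is in hand: since $k,l\leq n=k+l$ and $j_1+j_2\leq1$, one has $(1+\delta\sqrt{k+\tfrac12})^{j_1}(1+\delta\sqrt{l+\tfrac12})^{j_2}\leq(1+\delta\sqrt{n+\tfrac12})^{j_1+j_2}\leq1+\delta\sqrt{n+\tfrac12}$; no delicate commutation of $\sqrt{\mathcal{H}}$ through the collision operator is needed beyond this. What your write-up defers wholesale to \cite{LMPX2} --- the coefficient bounds $|\alpha_{0,n}|\lesssim(n+\tfrac12)^s$ and $|\alpha_{2k,l}|\lesssim\tilde{\mu}_{k,l}k^{-3/4}$ of Lemmas \ref{J10-1}--\ref{J10-2}, and the summation estimate \eqref{HHH} --- is legitimately imported (the paper does the same), but the mixed-norm Cauchy--Schwarz bookkeeping over $(k,l)$ with $f_{2k}$ in $L^\infty_x$ against $g_l,h_{2k+l}$ in $L^2_x$ is exactly where the ``minor changes'' live, so a complete proof still has to reproduce that computation rather than cite it.

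Where you genuinely depart from the paper is the second inequality, and your route is valid and slightly cleaner. The paper reruns the whole Hermite computation using $\bigl\|\delta\langle D_x\rangle\bigl(1+\delta\sqrt{n+\tfrac12}+\delta\langle D_x\rangle\bigr)^{-1}\bigr\|_{\mathcal{L}(L^2(\mathbb{R}_x))}\leq1$ mode by mode, whereas you deduce it from the first inequality with $j_1=j_2=0$ applied to $\tilde{g}=\delta\langle D_x\rangle(1+\delta\sqrt{\mathcal{H}}+\delta\langle D_x\rangle)^{-1}g$: the joint Hermite--Fourier symbol $\delta\langle\xi\rangle/(1+\delta\sqrt{n+\tfrac12}+\delta\langle\xi\rangle)$ is bounded by one and commutes with the symbol $(n+\tfrac12)^{s/2}$ of $\mathcal{H}^{s/2}$, so indeed $\|\mathcal{H}^{s/2}\tilde{g}\|_{L^2_{x,v}}\leq\|\mathcal{H}^{s/2}g\|_{L^2_{x,v}}$ and the claim follows. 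This reduction buys economy (one computation instead of two) at no cost in generality.
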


\begin{proof}
For $f, g, h\in\mathcal{S}(\mathbb{R}^2_{x,v})$, we decompose these functions into the Hermite basis in the velocity variable
$$
f=\sum_{n=0}^{+\infty}f_{n}(x)e_{n}(v),\quad\,f_{n}=\langle f(x,\cdot),e_{n}\rangle_{L^2(\mathbb{R}_{v})}
$$
and similar decomposition for $g,h$. Notice that
\begin{equation}\label{non-1}
\begin{split}
\|f\|_{L^2_vL^p_x}=\Big(\sum_{n=0}^{+\infty}\|f_{n}\|^{2}_{L^p(\mathbb{R}_{x})}\Big)^{\frac{1}{2}}, \ \
\|\mathcal{H}^{m}f\|_{L^2_vL^p_x}=\Big(\sum_{n=0}^{+\infty}\Big(n+\frac{1}{2}\Big)^{2m}\|f_{n}\|^{2}_{L^p(\mathbb{R}_{x})}\Big)^{\frac{1}{2}}
\end{split}
\end{equation}
with $1\leq p\leq\infty$ and $m\in\mathbb{R}$. Following from Lemma \ref{J10-1} and Cauchy-Schwarz inequality, we obtain
\begin{align*}
&\left|\left((1+\delta\sqrt{\mathcal{H}}+\delta\langle D_{x}\rangle)^{-1}\Gamma((1+\delta\sqrt{\mathcal{H}})^{j_{1}}f,\Big(1+\delta\sqrt{\mathcal{H}}\Big)^{j_{2}}g),h\right)_{L^2_{x,v}}\right|
\\&
=\Big|\sum_{n=0}^{+\infty}\sum_{\substack{k+l=n\\k,l\geq0}}\alpha_{k,l}\Big(\Big(1+\delta\sqrt{n+\frac{1}{2}}+\delta\langle D_{x}\rangle\Big)^{-1}\Big(1+\delta\sqrt{k+\frac{1}{2}}\Big)^{j_{1}}
\\&\qquad
\times\Big(1+\delta\sqrt{l+\frac{1}{2}}\Big)^{j_{2}}f_{k}g_{l},h_{n}\Big)_{L^2_x}\Big|
\\&
\leq\sum_{n=0}^{+\infty}|\alpha_{0,n}|\|f_{0}\|_{L^\infty_x}\|g_{n}\|_{L^2_x}\|h_{n}\|_{L^2_x}
+\sum_{n=0}^{+\infty}\sum_{\substack{2k+l=n\\k\geq1,l\geq0}}|\alpha_{2k,l}|\|f_{2k}\|_{L^\infty_x}\|g_{l}\|_{L^2_x}\|h_{n}\|_{L^2_x},
\end{align*}
where we used that
$$
\Big\|\Big(1+\delta\sqrt{n+\frac{1}{2}}+\delta\langle D_{x}\rangle\Big)^{-1}\Big(1+\delta\sqrt{k+\frac{1}{2}}\Big)^{j_{1}}\Big(1+\delta\sqrt{l+\frac{1}{2}}\Big)^{j_{2}}\Big\|_{\mathcal{L}(L^2(\mathbb{R}_x))}\leq1,
$$
since $k+l=n$ and $j_1,j_2\geq0$ with $j_1+j_2\leq1$. Under the assumption \eqref{A-5}, it follows from the formula \eqref{A-55} and Lemma \ref{J10-2} that for $f,g,h\in\mathcal{S}(\mathbb{R}^2_{x,v})$,
\begin{align*}
&\left|\left((1+\delta\sqrt{\mathcal{H}}+\delta\langle D_{x}\rangle)^{-1}\Gamma((1+\delta\sqrt{\mathcal{H}})^{j_{1}}f,\Big(1+\delta\sqrt{\mathcal{H}}\Big)^{j_{2}}g),h\right)_{L^2_{x,v}}\right|
\\&
\lesssim\|f_{0}\|_{L^\infty_x}\sum_{n=0}^{+\infty}(n+\frac{1}{2})^{s}\|g_{n}\|_{L^2_x}\|h_{n}\|_{L^2_x}
+\sum_{n=0}^{+\infty}\|h_{n}\|_{L^2_x}\Big(\sum_{\substack{2k+l=n\\k\geq1,l\geq0}}\frac{\tilde{\mu}_{k,l}}{k^{\frac{3}{4}}}
\|f_{2k}\|_{L^\infty_x}\|g_{l}\|_{L^2_x}\Big)
\\&\quad
:=I_{1}+I_2.
\end{align*}
By using \eqref{non-1}, we have
\begin{align*}
I_{1}
&\leq\|f_{0}\|_{L^\infty_x}\Big(\sum_{n=0}^{+\infty}(n+\frac{1}{2})^{s}\|g_{n}\|^{2}_{L^2_x}\Big)^{\frac{1}{2}}
\Big(\sum_{n=0}^{+\infty}(n+\frac{1}{2})^{s}\|h_{n}\|^{2}_{L^2_x}\Big)^{\frac{1}{2}}
\\&
\leq\|f_{0}\|_{L^\infty_x}\|\mathcal{H}^{\frac{s}{2}}g\|_{L^2_{x,v}}\|\mathcal{H}^{\frac{s}{2}}h\|_{L^2_{x,v}}.
\end{align*}
On the other hand, we obtain
\begin{align*}
I_{2}&=\sum_{k\geq1,l\geq0}\frac{\tilde{\mu}_{k,l}}{k^{\frac{3}{4}}}\|f_{2k}\|_{L^\infty_x}\|g_{l}\|_{L^2_x}\|h_{2k+l}\|_{L^2_x}
\\&
=\sum_{l=0}^{+\infty}(l+\frac{1}{2})^{\frac{s}{2}}\|g_{l}\|_{L^2_x}\Big(\sum_{k=1}^{+\infty}
\frac{\tilde{\mu}_{k,l}}{k^{\frac{3}{4}}(l+\frac{1}{2})^{\frac{s}{2}}}\|f_{2k}\|_{L^\infty_x}\|h_{2k+l}\|_{L^2_x}\Big)
\\&
\leq\left\|f\right\|_{L^2_{v}(L^{\infty}_x)}\|\mathcal{H}^{\frac{s}{2}}g\|_{L^2_{x,v}}
\Big(\sum_{l=0}^{+\infty}\sum_{k=1}^{+\infty}\frac{\tilde{\mu}^{2}_{k,l}}{k^{\frac{3}{2}}(l+\frac{1}{2})^{s}}\|h_{2k+l}\|^{2}_{L^2_x}\Big)^{\frac{1}{2}}.
\end{align*}
Here, we may calculate that
\begin{align*}
\Big(\sum_{l=0}^{+\infty}\sum_{k=1}^{+\infty}\frac{\tilde{\mu}^{2}_{k,l}}{k^{\frac{3}{2}}(l+\frac{1}{2})^{s}}\|h_{2k+l}\|^{2}_{L^2_x}\Big)^{\frac{1}{2}}
=\Big[\sum_{n=0}^{+\infty}\|h_{n}\|^{2}_{L^2_x}\Big(\sum_{\substack{2k+l=n\\k\geq1,l\geq0}}
\frac{\tilde{\mu}^{2}_{k,l}}{k^{\frac{3}{2}}(l+\frac{1}{2})^{s}}\Big)\Big]^{\frac{1}{2}}.
\end{align*}
Since
$$
\tilde{\mu}_{k,l}\lesssim k^{\frac{1}{4}} \ \text{when} \ k\geq l, k\geq1, l\geq0; \ \ \
\tilde{\mu}_{k,l}\lesssim(l+\frac{1}{2})^{s} \ \text{when} \ 1\leq k\leq l,
$$
it follows from Lemma \ref{J10-2} that
\begin{equation*}%\label{KK}
\sum_{\substack{2k+l=n\\k\geq1,l\geq0}}
\frac{\tilde{\mu}^{2}_{k,l}}{k^{\frac{3}{2}}(l+\frac{1}{2})^{s}}
\lesssim\sum_{\substack{2k+l=n\\k\geq1,l\geq0\\k\geq l}}\frac{k^{\frac{1}{2}}}{k^{\frac{3}{2}}(l+\frac{1}{2})^{s}}
+\sum_{\substack{2k+l=n\\k\leq1,l\geq0\\k\leq l}}\frac{(l+\frac{1}{2})^{s}}{k^{\frac{3}{2}}}
\lesssim(n+\frac{1}{2})^{s}.
\end{equation*}
Thus, we are led to
\begin{equation}\label{HHH}
\Big(\sum_{l=0}^{+\infty}\sum_{k=1}^{+\infty}\frac{\tilde{\mu}^{2}_{k,l}}{k^{\frac{3}{2}}(l+\frac{1}{2})^{s}}\|h_{2k+l}\|^{2}_{L^2_x}\Big)^{\frac{1}{2}}
\lesssim\|\mathcal{H}^{\frac{s}{2}}h\|_{L^2_{x,v}}.
\end{equation}
We can conclude that there a positive constant $C_0>0$ such that
\begin{align*}
&\left|\left((1+\delta\sqrt{\mathcal{H}}+\delta\langle D_{x}\rangle)^{-1}\Gamma((1+\delta\sqrt{\mathcal{H}})^{j_{1}}f,(1+\delta\sqrt{\mathcal{H}})^{j_{2}}g),h\right)_{L^2_{x,v}}\right|
\\&\qquad
\leq
C_{0}\left\|f\right\|_{L^2_{v}L^{\infty}_x}
\|\mathcal{H}^{\frac{s}{2}}g\|_{L^2_{x,v}}\|\mathcal{H}^{\frac{s}{2}}h\|_{L^2_{x,v}},
\end{align*}
which leads to the first inequality in \eqref{trilinear-A}. Similarly, we have
\begin{align*}
&\Big|\left((1+\delta\sqrt{\mathcal{H}}+\delta\langle D_{x}\rangle)^{-1}\Gamma(f,\delta\langle D_{x}\rangle(1+\delta\sqrt{\mathcal{H}}+\delta\langle D_{x}\rangle)^{-1}g),h\right)_{x,v}\Big|
\\&
=\Big|\sum_{n=0}^{+\infty}\sum_{\substack{k+l=n\\k,l\geq0}}\alpha_{k,l}\int_{\mathbb{R}}\frac{1}{(1+\delta\sqrt{n+\frac{1}{2}}+\delta\langle D_{x}\rangle)}(f_{k}\frac{\delta\langle D_{x}\rangle}{(1+\delta\sqrt{l+\frac{1}{2}}+\delta\langle D_{x}\rangle)}g_{l})\overline{h_{n}(x)}dx\Big|
\\&
\leq\sum_{n=0}^{+\infty}|\alpha_{0,n}|\|f_{0}\|_{L^\infty_x}\|g_{n}\|_{L^2_x}\|h_{\delta,n}\|_{L^2_x}
+\sum_{n=0}^{+\infty}\sum_{\substack{2k+l=n\\k\geq1,l\geq0}}|\alpha_{2k,l}|\|f_{2k}\|_{L^\infty_x}\|g_{l}\|_{L^2_x}\|h_{n}\|_{L^2_x},
\end{align*}
where we used
\begin{align*}
&\Big\|\delta\langle D_{x}\rangle\Big(1+\delta\sqrt{n+\frac{1}{2}}+\delta\langle D_{x}\rangle\Big)^{-1}\Big\|_{\mathcal{L}(L^2(\mathbb{R}_x))}\leq1,
\\&
\Big\|\Big(1+\delta\sqrt{n+\frac{1}{2}}+\delta\langle D_{x}\rangle\Big)^{-1}\Big\|_{\mathcal{L}(L^2(\mathbb{R}_x))}\leq1.
\end{align*}
Hence, by proceeding the similar procedure, we can obtain the second inequality in \eqref{trilinear-A}.
\end{proof}

Putting $\delta=0$ in Lemma \ref{trilinear-estimate-S}, which coincides with Lemma 3.5 in \cite{LMPX4}.

\begin{remark}%\label{trilinear-estimate-SS}
Let $f,g,h\in\mathcal{S}(\mathbb{R}^2_{x,v})$, then there exists a constant $C_0>0$ such that
\begin{equation*}%\label{trilinear-est1}
\begin{split}
&\left|(\Gamma(f,g),h)_{L^2_{x,v}}\right|
\leq C_{0}\left\|f\right\|_{L^2_{v}L^{\infty}_x}
\|\mathcal{H}^{\frac{s}{2}}g\|_{L^2_{x,v}}\|\mathcal{H}^{\frac{s}{2}}h\|_{L^2_{x,v}}.
\end{split}
\end{equation*}
\end{remark}

By the similar proof as in \cite{LMPX2}, we also have

\begin{lemma}\label{trilinear-estimate-SSS}
Let $f,g,h\in\mathcal{S}(\mathbb{R}^2_{x,v})$. Then there exists a constant $C_0>0$ such that
\begin{equation*}%\label{trilinear-est1S}
\begin{split}
\left\|\mathcal{H}^{-s}\Gamma(f,g)\right\|_{L^2_{x,v}}
\leq
C_{0}\left\|f\right\|_{L^2_{v}L^{\infty}_x}\|g\|_{L^2_{x,v}}.
\end{split}
\end{equation*}
\end{lemma}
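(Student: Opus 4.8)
The plan is to argue by duality and reuse the Hermite-basis expansion already set up in the proof of Lemma \ref{trilinear-estimate-S}. Since $\mathcal{H}^{-s}$ is a bounded, self-adjoint, positive operator acting only on the velocity variable, we have
$$\|\mathcal{H}^{-s}\Gamma(f,g)\|_{L^2_{x,v}}=\sup_{\|h\|_{L^2_{x,v}}=1}\big|(\Gamma(f,g),\mathcal{H}^{-s}h)_{L^2_{x,v}}\big|,$$
so it suffices to bound the right-hand side by $C_0\|f\|_{L^2_vL^\infty_x}\|g\|_{L^2_{x,v}}\|h\|_{L^2_{x,v}}$ uniformly in $h$. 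Decomposing $f,g,h$ into the Hermite basis as before, the effect of $\mathcal{H}^{-s}$ is simply to replace the component $h_n$ by $(n+\tfrac12)^{-s}h_n$, so that
$$\big(\Gamma(f,g),\mathcal{H}^{-s}h\big)_{L^2_{x,v}}=\sum_{n\ge0}\Big(n+\tfrac12\Big)^{-s}\sum_{\substack{k+l=n\\k,l\ge0}}\alpha_{k,l}\,(f_kg_l,h_n)_{L^2_x}.$$
As in the proof of Lemma \ref{trilinear-estimate-S}, I would split this into the diagonal contribution $k=0$ and the off-diagonal contribution $k\ge1$, $2k+l=n$.

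For the diagonal part, the formula \eqref{A-55} gives $|\alpha_{0,n}|\lesssim(n+\tfrac12)^s$, and the weight $(n+\tfrac12)^{-s}$ cancels this growth exactly; a single Cauchy--Schwarz in $n$ then yields
$$\sum_{n\ge0}\Big(n+\tfrac12\Big)^{-s}|\alpha_{0,n}|\,\|f_0\|_{L^\infty_x}\|g_n\|_{L^2_x}\|h_n\|_{L^2_x}\lesssim\|f_0\|_{L^\infty_x}\|g\|_{L^2_{x,v}}\|h\|_{L^2_{x,v}},$$
which is controlled by $\|f\|_{L^2_vL^\infty_x}\|g\|_{L^2_{x,v}}\|h\|_{L^2_{x,v}}$. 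For the off-diagonal part I would invoke Lemma \ref{J10-2} to bound $|\alpha_{2k,l}|$ by $\tilde{\mu}_{k,l}/k^{3/4}$, writing the sum as
$$\sum_{l\ge0}\|g_l\|_{L^2_x}\Big(\sum_{k\ge1}\frac{\tilde{\mu}_{k,l}}{k^{3/4}(2k+l+\tfrac12)^{s}}\|f_{2k}\|_{L^\infty_x}\|h_{2k+l}\|_{L^2_x}\Big),$$
and apply Cauchy--Schwarz first in $k$ (producing $\|f\|_{L^2_vL^\infty_x}$) and then in $l$ (producing $\|g\|_{L^2_{x,v}}$). Crucially, because the weight $(2k+l+\tfrac12)^{-s}=(n+\tfrac12)^{-s}$ is already present, I need not borrow any power of $(l+\tfrac12)$ from $g$, so only the plain norm $\|g\|_{L^2_{x,v}}$ appears instead of $\|\mathcal{H}^{s/2}g\|_{L^2_{x,v}}$.

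The remaining factor to control is
$$\Big(\sum_{n\ge0}\frac{\|h_n\|^2_{L^2_x}}{(n+\tfrac12)^{2s}}\sum_{\substack{2k+l=n\\k\ge1,l\ge0}}\frac{\tilde{\mu}^2_{k,l}}{k^{3/2}}\Big)^{1/2},$$
and the heart of the argument is the counting estimate $\sum_{2k+l=n}\tilde{\mu}^2_{k,l}/k^{3/2}\lesssim(n+\tfrac12)^{2s}$. This I would establish exactly as in \eqref{HHH}, splitting according to $k\ge l$ and $k\le l$ and using the two regimes $\tilde{\mu}_{k,l}\lesssim k^{1/4}$ and $\tilde{\mu}_{k,l}\lesssim(l+\tfrac12)^s$ from Lemma \ref{J10-2}: in the first regime $\sum 1/k$ over $n/3\le k\le n/2$ is $O(1)$, while in the second $\sum_{k\ge1}(n+\tfrac12)^{2s}/k^{3/2}\lesssim(n+\tfrac12)^{2s}$. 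Once this is in hand, the weight $(n+\tfrac12)^{-2s}$ cancels it and leaves $\sum_n\|h_n\|^2_{L^2_x}=\|h\|^2_{L^2_{x,v}}$, so taking the supremum over $\|h\|_{L^2_{x,v}}=1$ completes the bound. The main obstacle is precisely the bookkeeping of this counting estimate without the auxiliary $(l+\tfrac12)^{-s}$ weight that was available in Lemma \ref{trilinear-estimate-S}; everything else is a direct adaptation of that proof, which is why the statement follows by the similar proof as in \cite{LMPX2}.
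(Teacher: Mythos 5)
Your proof is correct and is essentially the argument the paper has in mind: it defers to \cite{LMPX2}, whose proof is exactly this Hermite-basis expansion with the splitting into the $k=0$ and $k\geq1$ parts via Lemma \ref{J10-1}, the bounds $|\alpha_{0,n}|\lesssim(n+\tfrac12)^{s}$ and $|\alpha_{2k,l}|\lesssim\tilde{\mu}_{k,l}k^{-3/4}$ from \eqref{A-55} and Lemma \ref{J10-2}, and the same counting estimate $\sum_{2k+l=n}\tilde{\mu}^2_{k,l}k^{-3/2}\lesssim(n+\tfrac12)^{2s}$ now cancelled by the weight $(n+\tfrac12)^{-2s}$ instead of being split between $g$ and $h$. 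Your duality formulation is a cosmetic repackaging of the direct $L^2$ computation, and all the key steps (including the two regimes $k\geq l$ and $k\leq l$ in the counting sum) check out.
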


We prove the following result in order to estimate the nonlinear collision operator in the framework of Besov spaces.
\begin{lemma}\label{trilinear-estimate-M-G}
There exists a constant $C_1>0$ such that for all $f,g\in\mathcal{S}(\mathbb{R}_x), t\geq0, 0<\kappa\leq1, m,n\geq0$,
\begin{equation}\label{trilinear-est-M-G}
\begin{split}
\|\mathcal{G}_{\kappa,m+n}(t)([(\mathcal{G}_{\kappa,m}(t))^{-1}f][(\mathcal{G}_{\kappa,n}(t))^{-1}g])\|_{L^2_x}
\leq C_1\|f\|_{B^{1/2}_{2,1}}\|g\|_{L^2_x},
\end{split}
\end{equation}
with the Fourier multiplier
\begin{equation}\label{G-N}
\mathcal{G}_{\kappa,n}(t)=\frac{\exp\Big(t\Big((n+\frac{1}{2})^{\frac{s+1}{2}}+\langle D_x\rangle^{\frac{3s+1}{2s+1}}\Big)^{\frac{2s}{3s+1}}\Big)}{1+\kappa\exp\Big(t\Big((n+\frac{1}{2})^{\frac{s+1}{2}}+\langle D_x\rangle^{\frac{3s+1}{2s+1}}\Big)^{\frac{2s}{3s+1}}\Big)}.
\end{equation}
\end{lemma}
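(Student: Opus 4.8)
The plan is to pass to the Fourier side in $x$ and reduce the product estimate to the boundedness of a single bilinear Fourier multiplier. Writing $\omega_n(\xi)=\big((n+\frac12)^{\frac{s+1}2}+\langle\xi\rangle^{\frac{3s+1}{2s+1}}\big)^{\frac{2s}{3s+1}}$, the operator $\mathcal{G}_{\kappa,n}(t)$ in \eqref{G-N} is the Fourier multiplier with symbol $\Theta_\kappa(t\omega_n(\xi))$, where $\Theta_\kappa(\tau)=\frac{e^\tau}{1+\kappa e^\tau}$ is increasing on $[0,\infty)$, while $(\mathcal{G}_{\kappa,n}(t))^{-1}$ has symbol $\Theta_\kappa(t\omega_n(\xi))^{-1}=e^{-t\omega_n(\xi)}+\kappa$. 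Since $\widehat{uv}=\hat u\ast\hat v$, one computes
$$\mathcal{F}\!\left[\mathcal{G}_{\kappa,m+n}(t)\big([(\mathcal{G}_{\kappa,m}(t))^{-1}f][(\mathcal{G}_{\kappa,n}(t))^{-1}g]\big)\right](\xi)=\int_{\mathbb{R}} M(\xi,\eta)\,\hat f(\xi-\eta)\hat g(\eta)\,d\eta,$$
with $M(\xi,\eta)=\Theta_\kappa(t\omega_{m+n}(\xi))\,\Theta_\kappa(t\omega_m(\xi-\eta))^{-1}\,\Theta_\kappa(t\omega_n(\eta))^{-1}$. If I can show $M(\xi,\eta)\le C$ uniformly in $t\ge0$, $0<\kappa\le1$ and $m,n\ge0$, then taking absolute values yields the pointwise majorization $\big|\mathcal{F}[\cdots](\xi)\big|\le C\int|\hat f(\xi-\eta)||\hat g(\eta)|\,d\eta$, and Plancherel together with the product law $\|uv\|_{L^2}\lesssim\|u\|_{B^{1/2}_{2,1}}\|v\|_{L^2}$ (applied to $\mathcal{F}^{-1}|\hat f|$ and $\mathcal{F}^{-1}|\hat g|$, whose Besov and $L^2$ norms coincide with those of $f,g$ since $\varphi\ge0$) gives \eqref{trilinear-est-M-G}. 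So everything reduces to the uniform bound on $M$.

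The bound on $M$ rests on two facts. First, $\Theta_\kappa$ is submultiplicative up to a constant on $[0,\infty)$: for $\tau_1,\tau_2\ge0$ one checks directly that $(1+\kappa e^{\tau_1})(1+\kappa e^{\tau_2})\le 3(1+\kappa e^{\tau_1+\tau_2})$ (using $e^{\tau_i}\le e^{\tau_1+\tau_2}$ and $\kappa\le1$), hence $\Theta_\kappa(\tau_1+\tau_2)\le 3\,\Theta_\kappa(\tau_1)\Theta_\kappa(\tau_2)$. Second, and this is the heart of the matter, the symbol is subadditive with constant one:
$$\omega_{m+n}(\xi)\le \omega_m(\xi-\eta)+\omega_n(\eta),\qquad \forall\,\xi,\eta\in\mathbb{R},\ m,n\ge0.$$
Granting this, monotonicity of $\Theta_\kappa$ and submultiplicativity give $\Theta_\kappa(t\omega_{m+n}(\xi))\le\Theta_\kappa(t\omega_m(\xi-\eta)+t\omega_n(\eta))\le 3\,\Theta_\kappa(t\omega_m(\xi-\eta))\Theta_\kappa(t\omega_n(\eta))$, whence $M(\xi,\eta)\le3$. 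It is essential that the constant in the subadditivity be exactly one: a multiplicative constant larger than one would appear inside the exponential weight and could not be absorbed uniformly in $t$.

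To prove the subadditivity I would exploit the outer concavity. Since $\theta:=\frac{2s}{3s+1}\in(0,1)$, the map $\tau\mapsto\tau^\theta$ is subadditive, so separating inside $\omega_N$ the velocity part $(N+\frac12)^{\frac{s+1}2}$ from the spatial part $\langle\cdot\rangle^{r}$, $r=\frac{3s+1}{2s+1}$, produces the effective exponents $a=\frac{s+1}2\theta=\frac{s(s+1)}{3s+1}$ and $b=r\theta=\frac{2s}{2s+1}$, both strictly below $1$. The velocity part is then controlled by the shift inequality $(m+n+\frac12)^{p}\le(m+\frac12)^{p}+(n+\frac12)^{p}$, valid for $p=\frac{s+1}2<1$ and for $p=a$. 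For the spatial part the key point is the sharp triangle inequality $\langle\xi\rangle\le\langle\xi-\eta\rangle+\langle\eta\rangle$ with constant one (from $\langle a+c\rangle^2<(\langle a\rangle+\langle c\rangle)^2$, which follows from $\sqrt{(1+a^2)(1+c^2)}\ge|ac|$), which composed with subadditivity of $\tau\mapsto\tau^{b}$ gives $\langle\xi\rangle^{b}\le\langle\xi-\eta\rangle^{b}+\langle\eta\rangle^{b}$.

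The delicate step, which I expect to be the main obstacle, is that these two subadditivities cannot be added termwise: after the power $\theta$ the velocity and spatial contributions sit inside a single bracket, and a naive splitting would require the false inequality $\langle\xi\rangle^{r}\le\langle\xi-\eta\rangle^{r}+\langle\eta\rangle^{r}$ with $r>1$. Instead I would prove the coupled inequality
$$\big(U+V+(X+Y)^{r}\big)^{\theta}\le\big(U+X^{r}\big)^{\theta}+\big(V+Y^{r}\big)^{\theta},$$
with $U=(m+\frac12)^{\frac{s+1}2}$, $V=(n+\frac12)^{\frac{s+1}2}$, $X=\langle\xi-\eta\rangle$, $Y=\langle\eta\rangle$, which implies the desired subadditivity through the velocity shift inequality and $\langle\xi\rangle\le X+Y$. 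This four-parameter inequality is invariant under the scaling $U\mapsto\lambda U,\ V\mapsto\lambda V,\ X\mapsto\lambda^{1/r}X,\ Y\mapsto\lambda^{1/r}Y$ (both sides are homogeneous of degree $\theta$), so it can be normalized and reduced to a verification on a compact set; the balanced configuration $U=V=X^{r}=Y^{r}$ is the critical case and reduces to the numerical inequality $2^{1+1/\theta}\ge 2+2^{r}$, i.e. $2^{\frac{5s+1}{2s}}\ge 2+2^{\frac{3s+1}{2s+1}}$, which holds for all $0<s<1$ since the left side is at least $2^{3}=8$ whereas the right side stays below $2+2^{4/3}$. Carrying out this reduction uniformly in the remaining directions is where the real work lies; once it is done, the chain (subadditivity) $\Rightarrow$ ($M\le C$) $\Rightarrow$ (product law) closes the proof.
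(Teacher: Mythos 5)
Your overall architecture is exactly the paper's: pass to the Fourier side, bound the bilinear multiplier $M(\xi,\eta)$ by combining the submultiplicativity $Z(x+y)\le 3\,Z(x)Z(y)$ of $Z(x)=e^{x}/(1+\kappa e^{x})$ (the paper's \eqref{Z}) with the subadditivity of the phase, then conclude via $\|\mathcal{F}^{-1}(|\hat f|)\|_{L^\infty_x}\lesssim\|f\|_{B^{1/2}_{2,1}}$ and Plancherel --- every one of these steps appears in the paper's proof, which, notably, \emph{asserts} the phase subadditivity without proof. The one place you go beyond the paper is your attempt to prove that subadditivity, and that is also where your argument has a genuine hole. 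Your reduction to the coupled inequality $\big(U+V+(X+Y)^{r}\big)^{\theta}\le(U+X^{r})^{\theta}+(V+Y^{r})^{\theta}$ is correct (using $(m+n+\frac12)^{\frac{s+1}{2}}\le(m+\frac12)^{\frac{s+1}{2}}+(n+\frac12)^{\frac{s+1}{2}}$ and $\langle\xi\rangle\le\langle\xi-\eta\rangle+\langle\eta\rangle$, both with constant one, plus monotonicity of $\tau\mapsto\tau^{\theta}$), and the anisotropic scaling invariance you record is real; but checking the single balanced configuration $U=V=X^{r}=Y^{r}$ does not verify the inequality on the normalized compact set, and nothing in your sketch shows that configuration is extremal. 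No soft argument can supply this either: a nonnegative concave function of two variables vanishing at the origin need not be subadditive (e.g.\ $\sqrt{xy}$ is \emph{super}additive on $\mathbb{R}^2_+$), so the ``outer concavity'' you invoke cannot by itself yield the constant-one estimate, which, as you rightly stress, is essential inside the exponential weight.

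The gap closes in two lines, with constant exactly one, via Minkowski's inequality. Set $\theta=\frac{2s}{3s+1}$, $r=\frac{3s+1}{2s+1}$, $p=1/\theta\ge1$, and observe that
\[
(U+X^{r})^{\theta}=\big((U^{\theta})^{p}+(X^{r\theta})^{p}\big)^{1/p}=\big\|(U^{\theta},X^{r\theta})\big\|_{\ell^{p}}.
\]
Since $\theta\le1$ and $r\theta=\frac{2s}{2s+1}\le1$, one has $(U+V)^{\theta}\le U^{\theta}+V^{\theta}$ and $(X+Y)^{r\theta}\le X^{r\theta}+Y^{r\theta}$; hence, by monotonicity of $\|\cdot\|_{\ell^{p}}$ on the nonnegative quadrant and the triangle inequality in $\ell^{p}$,
\[
\big(U+V+(X+Y)^{r}\big)^{\theta}
=\big\|\big((U+V)^{\theta},(X+Y)^{r\theta}\big)\big\|_{\ell^{p}}
\le\big\|\big(U^{\theta}+V^{\theta},\,X^{r\theta}+Y^{r\theta}\big)\big\|_{\ell^{p}}
\le(U+X^{r})^{\theta}+(V+Y^{r})^{\theta},
\]
which is precisely your coupled inequality. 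With this inserted, your chain (subadditivity) $\Rightarrow$ ($M\le3$) $\Rightarrow$ (product law) is complete and coincides with the paper's proof; your concluding Besov step is also fine, since $\varphi,\chi\ge0$ guarantees $\|\Delta_q\mathcal{F}^{-1}(|\hat f|)\|_{L^2_x}=\|\Delta_q f\|_{L^2_x}$, so the embedding $B^{1/2}_{2,1}(\mathbb{R})\hookrightarrow L^\infty(\mathbb{R})$ applies to $\mathcal{F}^{-1}(|\hat f|)$ with the same norm as $f$.
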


\begin{proof}
Notice that the operator $\mathcal{G}_{\kappa,n}(t)$ is a bounded isomorphism of $L^2(\mathbb{R}_x)$ such that
$$
(\mathcal{G}_{\kappa,n}(t))^{-1}=\frac{1+\kappa\exp\Big(t\Big((n+\frac{1}{2})^{\frac{s+1}{2}}+\langle D_x\rangle^{\frac{3s+1}{2s+1}}\Big)^{\frac{2s}{3s+1}}\Big)}{\exp\Big(t(n+\frac{1}{2})^{\frac{s+1}{2}}+\langle D_x\rangle^{\frac{3s+1}{2s+1}}\Big)^{\frac{2s}{3s+1}}\Big)}.
$$
Set
$$
h=\mathcal{G}_{\kappa,m+n}(t)([(\mathcal{G}_{\kappa,m}(t))^{-1}f][(\mathcal{G}_{\kappa,n}(t))^{-1}g]),
$$
then we have
\begin{equation}\label{trilinear-est-MM}
\begin{split}
&\hat{h}(\xi)=\frac{\exp\Big(t\Big(\left(m+n+\frac{1}{2}\right)^{\frac{s+1}{2}}+\langle\xi\rangle^{\frac{3s+1}{2s+1}}\Big)^{\frac{2s}{3s+1}}\Big)}
{1+\kappa\exp\Big(t\Big(\left(m+n+\frac{1}{2}\right)^{\frac{s+1}{2}}+\langle\xi\rangle^{\frac{3s+1}{2s+1}}\Big)^{\frac{2s}{3s+1}}\Big)}
\mathcal{F}(([(\mathcal{G}_{\kappa,m}(t))^{-1}f][(\mathcal{G}_{\kappa,n}(t))^{-1}g]))
\\&=\frac{1}{2\pi}\frac{\exp\Big(t\Big(\left(m+n+\frac{1}{2}\right)^{\frac{s+1}{2}}+\langle\xi\rangle^{\frac{3s+1}{2s+1}}\Big)^{\frac{2s}{3s+1}}\Big)}
{1+\kappa\exp\Big(t\Big(\left(m+n+\frac{1}{2}\right)^{\frac{s+1}{2}}+\langle\xi\rangle^{\frac{3s+1}{2s+1}}\Big)^{\frac{2s}{3s+1}}\Big)}
\mathcal{F}((\mathcal{G}_{\kappa,m}(t))^{-1}f)\ast\mathcal{F}((\mathcal{G}_{\kappa,n}(t))^{-1}g),
\end{split}
\end{equation}
where $\mathcal{F}$ denotes the Fourier transform. Consider the increasing function
$$
Z(x)=\frac{e^x}{1+\kappa e^x},
$$
we can calculate and obtain
\begin{align*}
\forall x, y\geq0, \quad  \frac{Z(x+y)}{Z(x)Z(y)}=\kappa+\frac{1-\kappa}{1+\kappa e^{x+y}}+\frac{\kappa(e^x+e^y)}{1+\kappa e^{x+y}}
\leq1+\frac{1}{e^x}+\frac{1}{e^y}\leq3,
\end{align*}
which implies that the function $Z(x)$ satisfies the inequality
\begin{equation}\label{Z}
\begin{split}
\forall x, y\geq0, \quad Z(x+y)\leq3Z(x)Z(y).
\end{split}
\end{equation}
Since for all $m,n\geq0, \xi,\eta\in\mathbb{R}$,
\begin{equation*}%\label{ZZZ}
\begin{split}
&\Big(\Big(m+n+\frac{1}{2}\Big)^{\frac{s+1}{2}}+\langle\xi\rangle^{\frac{3s+1}{2s+1}}\Big)^{\frac{2s}{3s+1}}
\\&
\leq\Big(\Big(m+\frac{1}{2}\Big)^{\frac{s+1}{2}}+\langle\eta\rangle^{\frac{3s+1}{2s+1}}\Big)^{\frac{2s}{3s+1}}+
\Big(\Big(n+\frac{1}{2}\Big)^{\frac{s+1}{2}}+\langle\xi-\eta\rangle^{\frac{3s+1}{2s+1}}\Big)^{\frac{2s}{3s+1}},
\end{split}
\end{equation*}
by using \eqref{Z}, we obtain
\begin{equation}\label{FF}
\begin{split}
&\frac{\exp\Big(t\Big(\left(m+n+\frac{1}{2}\right)^{\frac{s+1}{2}}+\langle\xi\rangle^{\frac{3s+1}{2s+1}}\Big)^{\frac{2s}{3s+1}}\Big)}
{1+\kappa\exp\Big(t\Big(\left(m+n+\frac{1}{2}\right)^{\frac{s+1}{2}}+\langle\xi\rangle^{\frac{3s+1}{2s+1}}\Big)^{\frac{2s}{3s+1}}\Big)}
\\&
\leq\frac{3\exp\Big(t\Big(\left(m+\frac{1}{2}\right)^{\frac{s+1}{2}}+\langle\eta\rangle^{\frac{3s+1}{2s+1}}\Big)^{\frac{2s}{3s+1}}\Big)}
{1+\kappa\exp\Big(t\Big(\left(m+\frac{1}{2}\right)^{\frac{s+1}{2}}+\langle\eta\rangle^{\frac{3s+1}{2s+1}}\Big)^{\frac{2s}{3s+1}}\Big)}
\\&\qquad
\times
\frac{\exp\Big(t\Big(\left(n+\frac{1}{2}\right)^{\frac{s+1}{2}}+\langle\xi-\eta\rangle^{\frac{3s+1}{2s+1}}\Big)^{\frac{2s}{3s+1}}\Big)}
{1+\kappa\exp\Big(t\Big(\left(n+\frac{1}{2}\right)^{\frac{s+1}{2}}+\langle\xi-\eta\rangle^{\frac{3s+1}{2s+1}}\Big)^{\frac{2s}{3s+1}}\Big)}.
\end{split}
\end{equation}
Then it follows from \eqref{trilinear-est-MM} and \eqref{FF} that
\begin{equation*}%\label{trilinear-est-MMM}
\begin{split}
\|h\|_{L^2_x}&\leq\frac{3}{(2\pi)^{\frac{3}{2}}}\||\hat{f}|\ast|\hat{g}|\|_{L^2_x}
=\frac{3}{(2\pi)^{\frac{3}{2}}}\|\mathcal{F}\mathcal{F}^{-1}(|\hat{f}|\ast|\hat{g})|\|_{L^2_x}
\\&
=\frac{3}{2\pi}\|\mathcal{F}^{-1}(|\hat{f}|\ast|\hat{g})|\|_{L^2_x}
=3\|\mathcal{F}^{-1}(|\hat{f}|)\mathcal{F}^{-1}(|\hat{g}|)\|_{L^2_x}
\\&
\leq3\|\mathcal{F}^{-1}(|\hat{f}|)\|_{L^\infty_x}\|\mathcal{F}^{-1}(|\hat{g}|)\|_{L^2_x}
\leq C_1\|f\|_{B^{1/2}_{2,1}}\|g\|_{L^2_x},
\end{split}
\end{equation*}
which leads to the desired \eqref{trilinear-est-M-G}. Here we used $\|\mathcal{F}^{-1}(|\hat{u}|)\|_{L^\infty_x}\leq C_d\|u\|_{B^{d/2}_{2,1}}$ on $\mathbb{R}^{d} (d\geq1)$ for $u\in\mathcal{S}(\mathbb{R}^{d})$. Indeed,
\begin{align*}
\|\mathcal{F}^{-1}(|\hat{u}|)\|_{L^\infty_x}&\leq\|\hat{u}\|_{L^1_x}
\leq\sum_{p\geq-1}\|\widehat{\Delta_pu}\|_{L^1_x}
\\&
=\int_{|\xi|\leq\frac{4}{3}}|\widehat{\Delta_{-1}u}|d\xi+\sum_{p\geq0}\int_{\frac{3}{4}2^p\leq|\xi|\leq\frac{8}{3}2^p}|\widehat{\Delta_pu}|d\xi
\\&
\leq\sum_{p\geq-1}C_d2^{\frac{d}{2}p}\|\Delta_pu\|_{L^2_x}
= C_d\|u\|_{B^{d/2}_{2,1}},
\end{align*}
where $C_d>0$ is a positive constant depending on the dimension $d$. Hence, the proof of Lemma \ref{trilinear-estimate-M-G} is finished.
\end{proof}

Now, we establish the key trilinear estimates for $\Gamma(f,g)$.

\begin{lemma}\label{trilinear-estimate-G}
Let $f,g,h\in\mathcal{S}(\mathbb{R}^2_{x,v})$. Then it holds that for all $t\geq0$ and $ 0<\kappa\leq1$
\begin{equation}\label{trilinear-est1-G}
\begin{split}
&\left|\left(G_{\kappa}(t)\Delta_p\Gamma((G_{\kappa}(t))^{-1}f,(G_{\kappa}(t))^{-1}g),\Delta_p h\right)_{L^2_{x,v}}\right|
\\&
\lesssim\sum_{|j-p|\leq4}\left\|\Delta_jf\right\|_{L^2_{x,v}}
\|\mathcal{H}^{\frac{s}{2}}g\|_{L^2_v(B^{1/2}_{2,1})}\|\mathcal{H}^{\frac{s}{2}}\Delta_ph\|_{L^2_{x,v}}
\\&\quad
+\sum_{j\geq\,p-4}\left\|f\right\|_{L^2_{v}(B^{1/2}_{2,1})}
\|\mathcal{H}^{\frac{s}{2}}\Delta_jg\|_{L^2_{x,v}}\|\mathcal{H}^{\frac{s}{2}}\Delta_ph\|_{L^2_{x,v}},
\end{split}
\end{equation}
with
\begin{equation*}%\label{MMA}
\begin{split}
&G_{\kappa}(t)=\frac{\exp\Big(t\Big(\mathcal{H}^{\frac{s+1}{2}}+\langle D_x\rangle^{\frac{3s+1}{2s+1}}\Big)^{\frac{2s}{3s+1}}\Big)}{1+\kappa\exp\Big(t\Big(\mathcal{H}^{\frac{s+1}{2}}+\langle D_x\rangle^{\frac{3s+1}{2s+1}}\Big)^{\frac{2s}{3s+1}}\Big)}.
\end{split}
\end{equation*}
\end{lemma}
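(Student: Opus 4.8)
The plan is to decouple the two variables: the position variable is handled by a Bony paraproduct decomposition together with Lemma~\ref{trilinear-estimate-M-G} (which absorbs the Gevrey-Gelfand-Shilov weight $G_\kappa(t)$), while the velocity variable is handled by the Hermite-basis summation already performed in the proof of Lemma~\ref{trilinear-estimate-S}. First I would expand $f,g,h$ in the Hermite basis in $v$, writing $f=\sum_k f_k(x)e_k(v)$ and similarly for $g,h$, and use that $G_\kappa(t)$, being defined by functional calculus from $\mathcal H$ and $\langle D_x\rangle$, acts diagonally on these coefficients as the Fourier multiplier $\mathcal G_{\kappa,n}(t)$ from \eqref{G-N}, and that $\Delta_p$ commutes with $G_\kappa(t)$. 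With $[\Gamma(u,w)]_n=\sum_{k+l=n}\alpha_{k,l}u_kw_l$ as in the proof of Lemma~\ref{trilinear-estimate-S}, the quantity in \eqref{trilinear-est1-G} becomes
$$
\Big|\sum_{n}\sum_{k+l=n}\alpha_{k,l}\big(\mathcal G_{\kappa,n}(t)\Delta_p\big([\mathcal G_{\kappa,k}(t)^{-1}f_k][\mathcal G_{\kappa,l}(t)^{-1}g_l]\big),\Delta_p h_n\big)_{L^2_x}\Big|.
$$

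Next, for each pair $(k,l)$ I would apply Bony's decomposition in $x$ to the product $[\mathcal G_{\kappa,k}(t)^{-1}f_k][\mathcal G_{\kappa,l}(t)^{-1}g_l]$, splitting it into the paraproduct with $f_k$ at high frequency, the paraproduct with $g_l$ at high frequency, and the remainder. Since all the operators $S_{j-1},\Delta_j,\mathcal G_{\kappa,\cdot}(t)$ are Fourier multipliers in $x$ and commute, each localized piece again has the form $[\mathcal G_{\kappa,k}(t)^{-1}(\cdot)][\mathcal G_{\kappa,l}(t)^{-1}(\cdot)]$ to which Lemma~\ref{trilinear-estimate-M-G} applies directly, with output index $k+l=n$ matching the outer weight $\mathcal G_{\kappa,n}(t)$. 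Placing the factor that is to be summed (the low-frequency factor in the paraproducts, the $f$-factor in the remainder) in the $B^{1/2}_{2,1}$ slot and the factor that stays localized in the $L^2_x$ slot, Lemma~\ref{trilinear-estimate-M-G} strips the weight and returns the product of a $B^{1/2}_{2,1}$ norm and a localized $L^2_x$ norm. Frequency support forces $|j-p|\le4$ for the high-$f$ paraproduct and $j\ge p-4$ for the high-$g$ paraproduct and the remainder, which is exactly the split between the two sums on the right-hand side of \eqref{trilinear-est1-G}; in the remainder the sum $\sum_{j'\sim j}2^{j'/2}\|\Delta_{j'}f_k\|_{L^2_x}$ reconstitutes the critical norm $\|f_k\|_{B^{1/2}_{2,1}}$.

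Having removed the weight, I am left with a weight-free, Littlewood-Paley-localized trilinear sum, which I would close exactly as in the proof of Lemma~\ref{trilinear-estimate-S}: split the $\alpha$-sum into the $k=0$ contribution and the $k\ge1$ contribution, insert the eigenvalue asymptotics \eqref{A-55} and the bounds on $\tilde\mu_{k,l}$ from Lemma~\ref{J10-2}, and apply Cauchy-Schwarz in the Hermite index together with $\sum_{2k+l=n}\tilde\mu_{k,l}^2k^{-3/2}(l+\tfrac12)^{-s}\lesssim(n+\tfrac12)^s$. This is what converts the bare factors into the weighted norms $\|\mathcal H^{s/2}g\|_{L^2_v(B^{1/2}_{2,1})}$, $\|\mathcal H^{s/2}\Delta_j g\|_{L^2_{x,v}}$ and $\|\mathcal H^{s/2}\Delta_p h\|_{L^2_{x,v}}$ appearing on the right of \eqref{trilinear-est1-G}, assembling the $x$-localized $L^2$ norms over the Hermite index into the $L^2_{x,v}$ and Chemin-Lerner-Besov norms.

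The genuinely new difficulty, compared with the weight-free trilinear estimate of the Remark following Lemma~\ref{trilinear-estimate-S}, is that the single weight $\mathcal G_{\kappa,n}(t)$ carried by the output index $n=k+l$ must be redistributed simultaneously across the two Hermite input indices $k,l$ and across the Fourier convolution variable $\xi=\eta+(\xi-\eta)$. This is precisely the content of Lemma~\ref{trilinear-estimate-M-G}, whose proof rests on the sub-multiplicativity $Z(x+y)\le3Z(x)Z(y)$ in \eqref{Z} applied to the combined symbol, i.e.\ the pointwise inequality \eqref{FF}. Thus once Lemma~\ref{trilinear-estimate-M-G} is in hand the coupling between the velocity and position weights is fully resolved, and the remainder of the argument is the standard paradifferential and Hermite bookkeeping described above; the only care needed is to match, in each of the three Bony pieces, the correct factor to the $B^{1/2}_{2,1}$ slot of Lemma~\ref{trilinear-estimate-M-G} so that the localization indices land in the ranges $|j-p|\le4$ and $j\ge p-4$.
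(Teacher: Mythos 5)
Your proposal is correct and takes essentially the same route as the paper's own proof: Hermite expansion of the trilinear form via Lemma~\ref{J10-1}, Bony's decomposition in $x$ using that $S_{j-1}$, $\Delta_j$ and $\mathcal{G}_{\kappa,\cdot}(t)$ are commuting Fourier multipliers, Lemma~\ref{trilinear-estimate-M-G} to strip the weight on each localized piece, and then the same Hermite bookkeeping as in Lemma~\ref{trilinear-estimate-S} (the asymptotics \eqref{A-55}, Lemma~\ref{J10-2}, and the summation bound \eqref{HHH}) to assemble the $\mathcal{H}^{\frac{s}{2}}$-weighted norms. The only (harmless) deviation is in the remainder term, where you put the $f$-factor in the $B^{1/2}_{2,1}$ slot while the paper puts $g$ there; both choices are legitimate --- yours yields the pairing of norms exactly as written in the lemma's statement, whereas the paper's proof actually concludes with the two pairings interchanged (the variant it then uses in Lemma~\ref{trilinear-G}).
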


\begin{proof}
Firstly, recalling Bony's decomposition, one can write $\Delta_p\left(uv\right)$ as follows
$$
\Delta_p\left(uv\right)=\Delta_p\left(\mathbf{T}_uv+\mathbf{T}_vu+\mathbf{R}(u,v)\right),
$$
where $\mathbf{T}$ and $\mathbf{R}$ are called as ``paraproduct" and ``remainder".  They are defined formally by
$$
\mathbf{T}_uv=\sum_{j}S_{j-1}u\Delta_jv,\quad \mathbf{R}(u,v)=\sum_{j}\sum_{|j-j'|\leq1}\Delta_{j'}u\Delta_{j}v, \quad\text{for}\,\, u,v\in\mathcal{S}'(\mathbb{R}).
$$
Notice that
\begin{align*}
&\Delta_p\mathbf{T}_uv=\sum_{j}\Delta_p(S_{j-1}u\Delta_jv)=\sum_{|j-p|\leq4}\Delta_p(S_{j-1}u\Delta_jv),\\
&\Delta_p\mathbf{T}_vu=\sum_{j}\Delta_p(\Delta_juS_{j-1}v)=\sum_{|j-p|\leq4}\Delta_p(\Delta_juS_{j-1}v),\\
&\Delta_p\mathbf{R}(u,v)=\sum_{j}\sum_{|j-j'|\leq1}\Delta_p(\Delta_{j}u\Delta_{j'}v)
=\sum_{\max(j,j')\geq\,p-2}\sum_{|j-j'|\leq1}\Delta_p(\Delta_{j}u\Delta_{j'}v),
\end{align*}
so it holds that
\begin{align*}
\Delta_p\left(uv\right)=\sum_{|j-p|\leq4}\Delta_p(S_{j-1}u\Delta_jv)+\sum_{|j-p|\leq4}\Delta_p(\Delta_juS_{j-1}v)
+\sum_{\max(j,j')\geq\,p-2}\sum_{|j-j'|\leq1}\Delta_p(\Delta_{j}u\Delta_{j'}v).
\end{align*}
Since
\begin{equation*}%\label{MMA}
\begin{split}
&G_{\kappa}(t)=\frac{\exp\Big(t\Big(\mathcal{H}^{\frac{s+1}{2}}+\langle D_x\rangle^{\frac{3s+1}{2s+1}}\Big)^{\frac{2s}{3s+1}}\Big)}{1+\kappa\exp\Big(t\Big(\mathcal{H}^{\frac{s+1}{2}}+\langle D_x\rangle^{\frac{3s+1}{2s+1}}\Big)^{\frac{2s}{3s+1}}\Big)}=\sum_{n=0}^{+\infty}\mathcal{G}_{\kappa,n}\mathbb{P}_n,
\end{split}
\end{equation*}
where $\mathcal{G}_{\kappa,n}$ is given by \eqref{G-N} and $\mathbb{P}_n$ denotes the orthogonal projections onto the Hermite basis described in Section \ref{S5}. Taking $u=(\mathcal{G}_{\kappa,k}(t))^{-1}f_{k}, v=(\mathcal{G}_{\kappa,l}(t))^{-1}g_{l}$ for $f,g,h\in\mathcal{S}(\mathbb{R}^2_{x,v})$, it follows from Lemma \ref{J10-1} that
\begin{align*}
&\left(G_{\kappa}(t)\Delta_p\Gamma((G_{\kappa}(t))^{-1}f,(G_{\kappa}(t))^{-1}g),\Delta_ph\right)_{L^2_{x,v}}
\\&
=\sum_{n=0}^{+\infty}\sum_{\substack{k+l=n\\k,l\geq0}}\alpha_{k,l}
\left(\mathcal{G}_{\kappa,n}(t)\Delta_p\left([(\mathcal{G}_{\kappa,k}(t))^{-1}f_{k}][(\mathcal{G}_{\kappa,l}(t))^{-1}g_{l}]\right),\Delta_ph_{n}\right)_{L^2_x}
\\&
=\sum_{|j-p|\leq4}\sum_{n=0}^{+\infty}\sum_{\substack{k+l=n\\k,l\geq0}}\alpha_{k,l}\left(
\mathcal{G}_{\kappa,n}(t)\Delta_p\left(S_{j-1}[(\mathcal{G}_{\kappa,k}(t))^{-1}f_{k}]\Delta_{j}[(\mathcal{G}_{\kappa,l}(t))^{-1}g_{l}]\right),\Delta_ph_{n}\right)_{L^2_x}
\\& \hspace{5mm}
+\sum_{|j-p|\leq4}\sum_{n=0}^{+\infty}\sum_{\substack{k+l=n\\k,l\geq0}}\alpha_{k,l}\left(
\mathcal{G}_{\kappa,n}(t)\Delta_p\left(\Delta_{j}[(\mathcal{G}_{\kappa,k}(t))^{-1}f_{k}]S_{j-1}[(\mathcal{G}_{\kappa,l}(t))^{-1}g_{l}]\right),\Delta_ph_{n}\right)_{L^2_x}
\\& \hspace{5mm}
+\sum_{\max(j,j')\geq p-2}\sum_{|j-j'|\leq1}\sum_{n=0}^{+\infty}\sum_{\substack{k+l=n\\k,l\geq0}}\alpha_{k,l}\left(
\mathcal{G}_{\kappa,n}(t)\Delta_p\left(\Delta_{j}[(\mathcal{G}_{\kappa,k}(t))^{-1}f_{k}]\Delta_{j'}[(\mathcal{G}_{\kappa,l}(t))^{-1}g_{l}]\right),\Delta_ph_{n}\right)_{L^2_x}
\\& \triangleq A_1+A_2+A_3.
\end{align*}
For $A_1$, since $[S_{j-1}, (\mathcal{G}_{\kappa,k}(t))^{-1}]=0$ and $[\Delta_{j}, (\mathcal{G}_{\kappa,l}(t))^{-1}]=0$ with $j\geq1, 0<\kappa\leq1, k\geq0$, we obtain
\begin{equation*}
\begin{split}
&|A_1|
\\&
\leq\sum_{|j-p|\leq4}\sum_{n=0}^{+\infty}\sum_{\substack{k+l=n\\k,l\geq0}}|\alpha_{k,l}|
\|\mathcal{G}_{\kappa,n}(t)\left(S_{j-1}[(\mathcal{G}_{\kappa,k}(t))^{-1}f_{k}]\Delta_{j}[(\mathcal{G}_{\kappa,l}(t))^{-1}g_{l}]\right)\|_{L^2_x}
\|\Delta_p^2h_{n}\|_{L^2_x}
\\&
=\sum_{|j-p|\leq4}\sum_{n=0}^{+\infty}\sum_{\substack{k+l=n\\k,l\geq0}}|\alpha_{k,l}|
\|\mathcal{G}_{\kappa,n}(t)\left((\mathcal{G}_{\kappa,k}(t))^{-1}[S_{j-1}f_{k}](\mathcal{G}_{\kappa,l}(t))^{-1}[\Delta_{j}g_{l}]\right)\|_{L^2_x}
\|\Delta_p^2h_{n}\|_{L^2_x}
\\&
\leq\sum_{|j-p|\leq4}\sum_{n=0}^{+\infty}\sum_{\substack{k+l=n\\k,l\geq0}}|\alpha_{k,l}|
\|S_{j-1}f_{k}\|_{B^{1/2}_{2,1}}\|\Delta_{j}g_{l}\|_{L^2_x}\|\Delta_ph_{n}\|_{L^2_x}
\\&
\leq\sum_{|j-p|\leq4}\sum_{n=0}^{+\infty}|\alpha_{0,n}|\|f_{0}\|_{B^{1/2}_{2,1}}\|\Delta_{j}g_{n}\|_{L^2_x}\|\Delta_ph_{n}\|_{L^2_x}
\\&
\hspace{5mm}+\sum_{|j-p|\leq4}\sum_{n=0}^{+\infty}\sum_{\substack{2k+l=n\\k\geq1,l\geq0}}|\alpha_{2k,l}|
\|f_{2k}\|_{B^{1/2}_{2,1}}\|\Delta_{j}g_{l}\|_{L^2_x}\|\Delta_ph_{n}\|_{L^2_x},
\end{split}
\end{equation*}
where we used Lemma \ref{trilinear-estimate-M-G} in the forth line, and Lemma \ref{J10-1} and Lemma \ref{J7-3} in the last two line. Bounding
$A_2, A_3$ are similar, we have
\begin{equation*}
\begin{split}
&|A_2|
\\&
\leq\sum_{|j-p|\leq4}\sum_{n=0}^{+\infty}\sum_{\substack{k+l=n\\k,l\geq0}}|\alpha_{k,l}|
\|\mathcal{G}_{\kappa,n}(t)\left(\Delta_{j}[(\mathcal{G}_{\kappa,k}(t))^{-1}f_{k}]S_{j-1}[(\mathcal{G}_{\kappa,l}(t))^{-1}g_{l}]\right)\|_{L^2_x}
\|\Delta_p^2h_{n}\|_{L^2_x}
%\\&
%\leq\sum_{|j-p|\leq4}\sum_{n=0}^{+\infty}\sum_{\substack{k+l=n\\k,l\geq0}}|\alpha_{k,l}|
%\|\Delta_{j}f_{k}\|_{L^2_x}\|S_{j-1}g_{l}\|_{B^{1/2}_{2,1}}\|\Delta_ph_{n}\|_{L^2_x}
%\\&
%\leq\sum_{|j-p|\leq4}\sum_{n=0}^{+\infty}|\alpha_{0,n}|\|\Delta_{j}f_{0}\|_{L^2_x}\|g_{n}\|_{B^{1/2}_{2,1}}\|\Delta_ph_{n}\|_{L^2_x}
\\
&\hspace{5mm} +\sum_{|j-p|\leq4}\sum_{n=0}^{+\infty}\sum_{\substack{2k+l=n\\k\geq1,l\geq0}}|\alpha_{2k,l}|
\|\Delta_{j}f_{2k}\|_{L^2_x}\|g_{l}\|_{B^{1/2}_{2,1}}\|\Delta_ph_{n}\|_{L^2_x},
\end{split}
\end{equation*}
\begin{equation*}
\begin{split}
&|A_3|
\\&
\leq\sum_{\max(j,j')\geq p-2}\sum_{|j-j'|\leq1}\sum_{n=0}^{+\infty}\sum_{\substack{k+l=n\\k,l\geq0}}|\alpha_{k,l}|
\|\mathcal{G}_{\kappa,n}(t)\left(\Delta_{j}[(\mathcal{G}_{\kappa,k}(t))^{-1}f_{k}]\Delta_{j'}[(\mathcal{G}_{\kappa,l}(t))^{-1}g_{l}]\right)\|_{L^2_x}
\|\Delta_p^2h_{n}\|_{L^2_x}
%\\&
%\leq\sum_{\max(j,j')\geq p-2}\sum_{|j-j'|\leq1}\sum_{n=0}^{+\infty}\sum_{\substack{k+l=n\\k,l\geq0}}|\alpha_{k,l}|
%\|\Delta_{j}f_{k}\|_{L^2_x}\|\Delta_{j'}g_{l}\|_{B^{1/2}_{2,1}}\|\Delta_ph_{n}\|_{L^2_x}
%\\&
%\leq\sum_{j\geq p-3}\sum_{n=0}^{+\infty}|\alpha_{0,n}|\|\Delta_{j}f_{0}\|_{L^2_x}\|g_{n}\|_{B^{1/2}_{2,1}}\|\Delta_ph_{n}\|_{L^2_x}
\\&
\hspace{5mm} +\sum_{j\geq p-3}\sum_{n=0}^{+\infty}\sum_{\substack{2k+l=n\\k\geq1,l\geq0}}|\alpha_{2k,l}|
\|\Delta_{j}f_{2k}\|_{L^2_x}\|g_{l}\|_{B^{1/2}_{2,1}}\|\Delta_ph_{n}\|_{L^2_x}.
\end{split}
\end{equation*}
Combining the three estimates for $A_1, A_2, A_3$ implies that
\begin{align*}
&\left|\left(G_{\kappa}(t)\Delta_p\Gamma((G_{\kappa}(t))^{-1}f,(G_{\kappa}(t))^{-1}g),\Delta_ph\right)_{L^2_{x,v}}\right|
\\&
\leq\sum_{|j-p|\leq4}\sum_{n=0}^{+\infty}|\alpha_{0,n}|\|f_{0}\|_{B^{1/2}_{2,1}}\|\Delta_{j}g_{n}\|_{L^2_x}\|\Delta_ph_{n}\|_{L^2_x}
\\&
+\sum_{j\geq p-4}\sum_{n=0}^{+\infty}|\alpha_{0,n}|\|\Delta_{j}f_{0}\|_{L^2_x}\|g_{n}\|_{B^{1/2}_{2,1}}\|\Delta_ph_{n}\|_{L^2_x}
\\&
+\sum_{|j-p|\leq4}\sum_{n=0}^{+\infty}\sum_{\substack{2k+l=n\\k\geq1,l\geq0}}|\alpha_{2k,l}|
\|f_{2k}\|_{B^{1/2}_{2,1}}\|\Delta_{j}g_{l}\|_{L^2_x}\|\Delta_ph_{n}\|_{L^2_x}
\\&
+\sum_{j\geq p-4}\sum_{n=0}^{+\infty}\sum_{\substack{2k+l=n\\k\geq1,l\geq0}}|\alpha_{2k,l}|
\|\Delta_{j}f_{2k}\|_{L^2_x}\|g_{l}\|_{B^{1/2}_{2,1}}\|\Delta_ph_{n}\|_{L^2_x}
\\&
\triangleq J_{1}+J_{2}+J_{3}+J_{4}.
\end{align*}
By using the formula \eqref{A-55} and \eqref{non-1}, we arrive at
\begin{equation}\label{J1-2}
\begin{split}
J_{1}+J_{2}
&\leq\sum_{|j-p|\leq4}\sum_{n=0}^{+\infty}(n+\frac{1}{2})^{s}\|f_{0}\|_{B^{1/2}_{2,1}}\|\Delta_{j}g_{n}\|_{L^2_x}\|\Delta_ph_{n}\|_{L^2_x}
\\&\quad
+\sum_{j\geq\,p-4}\sum_{n=0}^{+\infty}(n+\frac{1}{2})^{s}\|\Delta_{j}f_{0}\|_{L^2_x}\|g_{n}\|_{B^{1/2}_{2,1}}\|\Delta_ph_{n}\|_{L^2_x}
\\&
\leq\sum_{|j-p|\leq4}\|f_{0}\|_{B^{1/2}_{2,1}}\|\Delta_j\mathcal{H}^{\frac{s}{2}}g\|_{L^2_{v}L^2_x}\|\mathcal{H}^{\frac{s}{2}}\Delta_ph\|_{L^2_{x,v}}
\\&\quad
+\sum_{j\geq\,p-4}\|\Delta_jf_{0}\|_{L^2_x}\|\mathcal{H}^{\frac{s}{2}}g\|_{L^2_{v}(B^{1/2}_{2,1})}\|\mathcal{H}^{\frac{s}{2}}\Delta_ph\|_{L^2_{x,v}}.
\end{split}
\end{equation}
On the other hand, for $J_{3}$ and $J_{4}$, by using the Lemma \ref{J10-2} once again, we obtain
\begin{equation}\label{J3-4}
\begin{split}
J_{3}+J_{4}
&\leq\sum_{|j-p|\leq4}\sum_{k\geq1,l\geq0}\frac{\tilde{\mu}_{k,l}}{k^{\frac{3}{4}}}\|f_{2k}\|_{B^{1/2}_{2,1}}\|\Delta_{j}g_{l}\|_{L^2_x}
\|\Delta_ph_{2k+l}\|_{L^2_x}
\\&
\quad+\sum_{j\geq\,p-4}\sum_{k\geq1,l\geq0}\frac{\tilde{\mu}_{k,l}}{k^{\frac{3}{4}}}\|\Delta_{j}f_{2k}\|_{L^2_x}\|g_{l}\|_{B^{1/2}_{2,1}}\|\Delta_ph_{2k+l}\|_{L^2_x}
\\&
\leq\sum_{|j-p|\leq4}\left\|f\right\|_{L^2_{v}(B^{1/2}_{2,1})}\|\mathcal{H}^{\frac{s}{2}}\Delta_jg\|_{L^2_{x,v}}
\Big(\sum_{l=0}^{+\infty}\sum_{k=1}^{+\infty}\frac{\tilde{\mu}^{2}_{k,l}}{k^{\frac{3}{2}}(l+\frac{1}{2})^{s}}\|\Delta_{p}h_{2k+l}\|^{2}_{L^2_x}\Big)^{\frac{1}{2}}
\\&
\quad+\sum_{j\geq\,p-4}\left\|\Delta_jf\right\|_{L^2_{x,v}}\|\mathcal{H}^{\frac{s}{2}}g\|_{L^2_{v}(B^{1/2}_{2,1})}
\Big(\sum_{l=0}^{+\infty}\sum_{k=1}^{+\infty}\frac{\tilde{\mu}^{2}_{k,l}}{k^{\frac{3}{2}}(l+\frac{1}{2})^{s}}\|\Delta_{p}h_{2k+l}\|^{2}_{L^2_x}\Big)^{\frac{1}{2}}.
\end{split}
\end{equation}
%Here, we may calculate that
%\begin{align*}
%\Big(\sum_{l=0}^{+\infty}\sum_{k=1}^{+\infty}\frac{\tilde{\mu}^{2}_{k,l}}{k^{\frac{3}{2}}(l+\frac{1}{2})^{s}}
%\big\|\Delta_{p}h_{2k+l}\big\|^{2}_{L^2_x}\Big)^{\frac{1}{2}}
%=\Big[\sum_{n=0}^{+\infty}\big\|\Delta_{p}h_{n}\big\|^{2}_{L^2_x}\Big(\sum_{\substack{2k+l=n\\k\geq1,l\geq0}}
%\frac{\tilde{\mu}^{2}_{k,l}}{k^{\frac{3}{2}}(l+\frac{1}{2})^{s}}\Big)\Big]^{\frac{1}{2}}.
%\end{align*}
%It follows from \eqref{KK} again that
%\begin{align*}
%\Big(\sum_{l=0}^{+\infty}\sum_{k=1}^{+\infty}\frac{\tilde{\mu}^{2}_{k,l}}{k^{\frac{3}{2}}(l+\frac{1}{2})^{s}}\|\Delta_{p}h_{2k+l}\|^{2}_{L^2_x}\Big)^{\frac{1}{2}}
%\lesssim\|\mathcal{H}^{\frac{s}{2}}\Delta_{p}h\|_{L^2_{x,v}}.
%\end{align*}
Therefore, we conclude that from \eqref{HHH}, \eqref{J1-2} and \eqref{J3-4}
\begin{align*}
&\left|(G_{\kappa}(t)\Delta_{p}\Gamma((G_{\kappa}(t))^{-1}f,(G_{\kappa}(t))^{-1}g),\Delta_{p}h)_{L^2_{x,v}}\right|
\\&
\lesssim \sum_{|j-p|\leq4}\left\|f\right\|_{L^2_{v}(B^{1/2}_{2,1})}\|\mathcal{H}^{\frac{s}{2}}\Delta_jg\|_{L^2_{x,v}}\|\mathcal{H}^{\frac{s}{2}}\Delta_{p}h\|_{L^2_{x,v}}
\\&\quad
+\sum_{j\geq\,p-4}\left\|\Delta_jf\right\|_{L^2_{x,v}}\|\mathcal{H}^{\frac{s}{2}}g\|_{L^2_{v}(B^{1/2}_{2,1})}\|\mathcal{H}^{\frac{s}{2}}\Delta_{p}h\|_{L^2_{x,v}},
\end{align*}
which is \eqref{trilinear-est1-G} exactly. The proof of Lemma \ref{trilinear-estimate-G} is completed.
\end{proof}

Take $\kappa=t=0$ in  Lemma \ref{trilinear-estimate-G}, we have the following consequence.
\begin{remark}\label{Tri}
Let $f,g,h\in\mathcal{S}(\mathbb{R}^2_{x,v})$. Then it holds that
\begin{equation*}%\label{trilinear-est1-GA}
\begin{split}
&\left|\left(\Delta_p\Gamma(f,(g),\Delta_p h\right)_{L^2_{x,v}}\right|
\\&
\lesssim\sum_{|j-p|\leq4}\left\|\Delta_jf\right\|_{L^2_{x,v}}
\|\mathcal{H}^{\frac{s}{2}}g\|_{L^2_vL^\infty_x}\|\mathcal{H}^{\frac{s}{2}}\Delta_ph\|_{L^2_{x,v}}
\\&\quad
+\sum_{j\geq\,p-4}\left\|f\right\|_{L^2_{v}L^\infty_x}
\|\mathcal{H}^{\frac{s}{2}}\Delta_jg\|_{L^2_{x,v}}\|\mathcal{H}^{\frac{s}{2}}\Delta_ph\|_{L^2_{x,v}}.
\end{split}
\end{equation*}
\end{remark}

\begin{lemma}\label{trilinear-G}
Assume $\sigma>0, T>0$ and $ 0<\kappa\leq1$. Let $f=f(t,x,v)$, $g=g(t,x,v)$ and $h=h(t,x,v)$ be three suitably functions such that all norms on the right of the following inequalities are well defined. Then there exists a constant $C_1>0$ such that
\begin{equation}\label{trilinear-est2-G}
\begin{split}
&\sum_{p\geq-1}2^{p\sigma}\left[\int^T_0\left|\left(G_{\kappa}(t)\Delta_p\Gamma((G_{\kappa}(t))^{-1}f,(G_{\kappa}(t))^{-1}g),\Delta_ph\right)
_{L^2_{x,v}}\right|dt\right]^{1/2}
\\&
\leq C_{1}\|f\|^{1/2}_{\widetilde{L}_T^{\infty}\widetilde{L}^2_{v}(B^\sigma_{2,1})}
\|\mathcal{H}^{\frac{s}{2}}g\|^{1/2}_{L_T^2L^2_v(B^{1/2}_{2,1})}
\|\mathcal{H}^{\frac{s}{2}}h\|^{1/2}_{\widetilde{L}_T^2\widetilde{L}^2_{v}(B^\sigma_{2,1})}
\\&\quad
+C_{1}\left\|f\right\|^{1/2}_{L_T^{\infty}L^2_v(B^{1/2}_{2,1})}
\|\mathcal{H}^{\frac{s}{2}}g\|^{1/2}_{\widetilde{L}_T^2\widetilde{L}^2_{v}(B^\sigma_{2,1})}
\|\mathcal{H}^{\frac{s}{2}}h\|^{1/2}_{\widetilde{L}_T^2\widetilde{L}^2_{v}(B^\sigma_{2,1})}.
\end{split}
\end{equation}
\end{lemma}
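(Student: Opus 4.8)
The plan is to upgrade the pointwise-in-time trilinear bound \eqref{trilinear-est1-G} of Lemma \ref{trilinear-estimate-G} to the Chemin-Lerner setting by a time integration followed by a frequency summation. For fixed $p\ge-1$ I would integrate \eqref{trilinear-est1-G} over $[0,T]$ and apply H\"older's inequality in $t$ with exponents $(\infty,2,2)$, placing $\Delta_jf$ (respectively the non-localized factor $\|f\|_{L^2_v(B^{1/2}_{2,1})}$) in $L^\infty_T$ and both $\mathcal{H}^{s/2}$ factors in $L^2_T$. This converts the right-hand side, with no loss, into sums of products of $L^\infty_TL^2_{x,v}$, $L^2_TL^2_v(B^{1/2}_{2,1})$ and $L^2_TL^2_{x,v}$ norms. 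After taking the square root of the resulting bound, splitting its two summands via $\sqrt{A+B}\le\sqrt A+\sqrt B$, multiplying by $2^{p\sigma}$ and summing over $p$, the two halves will be treated separately.

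For the paraproduct part (the $|j-p|\le4$ sum) I would factor out $\|\mathcal{H}^{s/2}g\|^{1/2}_{L^2_TL^2_v(B^{1/2}_{2,1})}$, use that the inner sum is finite so that $\bigl[\sum_{|j-p|\le4}a_{j,p}\bigr]^{1/2}\le\sum_{|j-p|\le4}a_{j,p}^{1/2}$, and exploit the localization $2^{p\sigma}\approx 2^{j\sigma}$ valid for $|j-p|\le4$. Writing $2^{p\sigma}=2^{p\sigma/2}\,2^{p\sigma/2}$ and distributing the halves onto the $f$ and $h$ factors, the remaining double sum is a near-diagonal convolution in the Littlewood-Paley index, which I would control by Cauchy-Schwarz (equivalently Young's inequality over the finitely many shifts $|j-p|\le4$). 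The two resulting $\ell^2$ norms assemble into exactly $\|f\|^{1/2}_{\widetilde{L}^\infty_T\widetilde{L}^2_v(B^\sigma_{2,1})}$ and $\|\mathcal{H}^{s/2}h\|^{1/2}_{\widetilde{L}^2_T\widetilde{L}^2_v(B^\sigma_{2,1})}$, giving the first term on the right of \eqref{trilinear-est2-G}.

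For the remainder part (the $j\ge p-4$ sum) I would factor out $\|f\|^{1/2}_{L^\infty_TL^2_v(B^{1/2}_{2,1})}$, again split $2^{p\sigma}=2^{p\sigma/2}\,2^{p\sigma/2}$, and apply Cauchy-Schwarz in $p$, keeping $2^{p\sigma/2}\|\mathcal{H}^{s/2}\Delta_ph\|^{1/2}_{L^2_TL^2_{x,v}}$ on one side (which reassembles $\|\mathcal{H}^{s/2}h\|^{1/2}_{\widetilde{L}^2_T\widetilde{L}^2_v(B^\sigma_{2,1})}$) and $2^{p\sigma/2}\bigl(\sum_{j\ge p-4}\|\mathcal{H}^{s/2}\Delta_jg\|_{L^2_TL^2_{x,v}}\bigr)^{1/2}$ on the other. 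Squaring the latter and summing over $p$ yields $\sum_j\|\mathcal{H}^{s/2}\Delta_jg\|_{L^2_TL^2_{x,v}}\sum_{p\le j+4}2^{p\sigma}$. This is \emph{the one place where the hypothesis $\sigma>0$ is essential}: the geometric series $\sum_{p\le j+4}2^{p\sigma}\lesssim 2^{j\sigma}$ converges precisely because $\sigma>0$, reproducing $\|\mathcal{H}^{s/2}g\|_{\widetilde{L}^2_T\widetilde{L}^2_v(B^\sigma_{2,1})}$ and hence the second term of \eqref{trilinear-est2-G}.

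I expect the main difficulty to be bookkeeping rather than any deep inequality: the half-power structure $\bigl[\int_0^T|\cdots|\,dt\bigr]^{1/2}$ sitting inside the frequency sum forces one to split the weight $2^{p\sigma}$ symmetrically between two factors and to push the square root only past \emph{finite} sums, so that Cauchy-Schwarz in the Littlewood-Paley index lands exactly on the product of half-powers displayed in the statement. One must also take care to keep the non-localized factors $\|\mathcal{H}^{s/2}g\|_{L^2_v(B^{1/2}_{2,1})}$ (in the paraproduct term) and $\|f\|_{L^2_v(B^{1/2}_{2,1})}$ (in the remainder term) outside the frequency summation, since these stay in the ordinary mixed spaces $L^2_TL^2_v(B^{1/2}_{2,1})$ and $L^\infty_TL^2_v(B^{1/2}_{2,1})$ rather than in the Chemin-Lerner spaces.
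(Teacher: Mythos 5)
Your proposal is correct and takes essentially the same route as the paper's own proof: H\"older in time with exponents $(\infty,2,2)$, Cauchy--Schwarz in the Littlewood--Paley index after splitting $2^{p\sigma}=2^{p\sigma/2}\cdot 2^{p\sigma/2}$, a Young/convolution bound on the near-diagonal sum $|j-p|\le 4$, and the geometric series $\sum_{p\le j+4}2^{p\sigma}\lesssim 2^{j\sigma}$ on the tail sum $j\ge p-4$, which is indeed the only place where $\sigma>0$ enters. The sole cosmetic difference is that you start from \eqref{trilinear-est1-G} exactly as printed, whereas the paper's proof uses the variant established at the end of the proof of Lemma \ref{trilinear-estimate-G} (with the localized factor on $g$ in the $|j-p|\le4$ sum and on $f$ in the $j\ge p-4$ sum), so your two parts produce the two terms of \eqref{trilinear-est2-G} in the opposite pairing --- harmless, since both terms appear on the right-hand side.
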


\begin{proof}
Based on Lemma \ref{trilinear-estimate-G}, it follows from Cauchy-Schwarz inequality that
\begin{align*}
&\sum_{p\geq-1}2^{p\sigma}\left[\int^T_0\left|\left(G_{\kappa}(t)\Delta_p\Gamma((G_{\kappa}(t))^{-1}f,(G_{\kappa}(t))^{-1}g),
\Delta_ph\right)_{L^2_{x,v}}\right|dt\right]^{1/2}
\\&
\lesssim\sum_{p\geq-1}2^{p\sigma}\left[\sum_{|j-p|\leq4}\int^T_0\left\|f\right\|_{L^2_v(B^{1/2}_{2,1})}
\left\|\mathcal{H}^{\frac{s}{2}}\Delta_jg\right\|_{L^2_{x,v}}
\left\|\mathcal{H}^{\frac{s}{2}}\Delta_ph\right\|_{L^2_{x,v}}dt\right]^{1/2}
\\&\quad
+\sum_{p\geq-1}2^{p\sigma}\left[\sum_{j\geq\,p-4}\int^T_0\left\|\Delta_jf\right\|_{L^2_{x,v}}
\left\|\mathcal{H}^{\frac{s}{2}}g\right\|_{L^2_{v}(B^{1/2}_{2,1})}
\left\|\mathcal{H}^{\frac{s}{2}}\Delta_ph\right\|_{L^2_{x,v}}dt\right]^{1/2}
\\&
\lesssim\left\|f\right\|_{L^\infty_{T}L^2_v(B^{1/2}_{2,1})}^{1/2}\sum_{p\geq-1}2^{p\sigma}
\Big(\sum_{|j-p|\leq4}\left\|\mathcal{H}^{\frac{s}{2}}\Delta_jg\right\|_{L^{2}_{T}L^2_{x,v}}\Big)^{1/2}
\left\|\mathcal{H}^{\frac{s}{2}}\Delta_ph\right\|_{L^2_{T}L^2_{x,v}}^{1/2}
\\&\quad
+\left\|\mathcal{H}^{\frac{s}{2}}g\right\|_{L^2_{T}L^2_{v}(B^{1/2}_{2,1})}^{1/2}
\sum_{p\geq-1}2^{p\sigma}\Big(\sum_{j\geq\,p-4}\left\|\Delta_jf\right\|_{L^{\infty}_{T}L^2_{x,v}}\Big)^{1/2}
\left\|\mathcal{H}^{\frac{s}{2}}\Delta_ph\right\|_{L^2_{T}L^2_{x,v}}^{1/2}
\\&
\lesssim\left\|f\right\|_{L^{\infty}_{T}L^2_{v}(B^{1/2}_{2,1})}^{1/2}
\|\mathcal{H}^{\frac{s}{2}}g\|^{1/2}_{\widetilde{L}_T^2\widetilde{L}^2_{v}(B^\sigma_{2,1})}
\|\mathcal{H}^{\frac{s}{2}}h\|^{1/2}_{\widetilde{L}_T^2\widetilde{L}^2_{v}(B^\sigma_{2,1})}
\left(\sum_{p\geq-1}\sum_{|j-p|\leq4}2^{(p-j)\sigma}c(j)\right)^{1/2}
\\&\quad
+\|\mathcal{H}^{\frac{s}{2}}g\|^{1/2}_{L_T^2L^2_v(B^{1/2}_{2,1})}
\|\mathcal{H}^{\frac{s}{2}}h\|^{1/2}_{\widetilde{L}_T^2\widetilde{L}^2_{v}(B^\sigma_{2,1})}
\left(\sum_{p\geq-1}\sum_{j\geq\,p-4}2^{p\sigma}\left\|\Delta_jf\right\|_{L^{\infty}_{T}L^2_{x,v}}\right)^{1/2},
\end{align*}
where $c(j)=2^{j\sigma}\left\|\mathcal{H}^{\frac{s}{2}}\Delta_jg\right\|_{L_T^{2}L^2_{x,v}}/
\left\|\mathcal{H}^{\frac{s}{2}}g\right\|_{\widetilde{L}_T^{2}\widetilde{L}^2_{v}(B^\sigma_{2,1})}$
and $\|c(j)\|_{\ell^{1}}\leq1$. Hence, with Fubini's theorem and Young's inequality, we have
\begin{align*}
\sum_{p\geq-1}\sum_{|j-p|\leq4}2^{(p-j)\sigma}c(j)&=\sum_{p\geq-1}[(\mathbf{1}_{|j|\leq4}2^{j\sigma})\ast c(j)](p)
\\&\leq\|\mathbf{1}_{|j|\leq4}2^{j\sigma}\|_{\ell^{1}}\|c(j)\|_{\ell^{1}}<+\infty.
\end{align*}
Since
$$
\sum_{-1\leq\,p\leq\,j+4}2^{(p-j)\sigma}=2^{-(j+1)\sigma}+2^{-j\sigma}+\cdots+2^{4\sigma}=\frac{2^{4\sigma}(1-2^{-j\sigma})}{1-2^{-\sigma}}<+\infty,
$$
it follows that
\begin{align*}
\sum_{p\geq-1}\sum_{j\geq\,p-4}2^{p\sigma}\left\|\Delta_jf\right\|_{L^{\infty}_{T}L^2_{x,v}}
&=\sum_{j\geq-1}2^{j\sigma}\Big(\sum_{-1\leq\,p\leq\,j+4}2^{(p-j)\sigma}\Big)\left\|\Delta_jf\right\|_{L^{2}_{T}L^2_{x,v}}
\\&\lesssim\sum_{j\geq-1}2^{j\sigma}\left\|\Delta_jf\right\|_{L^{\infty}_{T}L^2_{x,v}}
=\|f\|_{\widetilde{L}_T^\infty\widetilde{L}^2_{v}(B^\sigma_{2,1})}.
\end{align*}
Consequently, we conclude that
\begin{align*}
&\sum_{p\geq-1}2^{p\sigma}\left[\int^T_0\left|\left(G_{\kappa}(t)\Delta_p\Gamma((G_{\kappa}(t))^{-1}f,(G_{\kappa}(t))^{-1}g),\Delta_ph\right)\right|dt\right]^{1/2}
\\&
\leq C_{1}\left\|f\right\|^{1/2}_{L_T^{\infty}L^2_v(B^{1/2}_{2,1})}
\|\mathcal{H}^{\frac{s}{2}}g\|^{1/2}_{\widetilde{L}_T^2\widetilde{L}^2_{v}(B^\sigma_{2,1})}
\|\mathcal{H}^{\frac{s}{2}}h\|^{1/2}_{\widetilde{L}_T^2\widetilde{L}^2_{v}(B^\sigma_{2,1})}
\\&\quad
+C_{1}\|f\|^{1/2}_{\widetilde{L}_T^{\infty}\widetilde{L}^2_{v}(B^\sigma_{2,1})}
\|\mathcal{H}^{\frac{s}{2}}g\|^{1/2}_{L_T^2L^2_v(B^{1/2}_{2,1})}
\|\mathcal{H}^{\frac{s}{2}}h\|^{1/2}_{\widetilde{L}_T^2\widetilde{L}^2_{v}(B^\sigma_{2,1})}.
\end{align*}
Hence, the proof of Lemma \ref{trilinear-G} is complete.
\end{proof}

Similarly, it follows from Remark \ref{Tri}, Lemma \ref{J7-1} and Lemma \ref{J7-5} that

\begin{corollary}\label{trilinear-G-R}
Set $T>0$. Let $f=f(t,x,v)$, $g=g(t,x,v)$ and $h=h(t,x,v)$ be three suitably functions. Then it holds that
\begin{equation*}
\sum_{p\geq-1}2^{\frac{p}{2}}\left(\int_{0}^{T}\left|\left(\Delta_{p}\Gamma(f,g),\Delta_{p}g\right)_{L^{2}_{x,v}}\right|dt\right)^{1/2}
\lesssim\left\|f\right\|^{1/2}_{\widetilde{L}_T^{\infty}\widetilde{L}^2_v(B^{1/2}_{2,1})}
\|\mathcal{H}^{\frac{s}{2}}g\|^{1/2}_{\widetilde{L}_T^2\widetilde{L}^2_{v}(B^{1/2}_{2,1})}
\|\mathcal{H}^{\frac{s}{2}}h\|^{1/2}_{\widetilde{L}_T^2\widetilde{L}^2_{v}(B^{1/2}_{2,1})}.
\end{equation*}
\end{corollary}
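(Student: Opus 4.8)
The plan is to reproduce, almost verbatim, the argument of Lemma~\ref{trilinear-G} in the degenerate case $\kappa=t=0$ (where $G_\kappa(t)$ reduces to the identity, so the weighted bracket becomes exactly $(\Delta_p\Gamma(f,g),\Delta_p h)_{L^2_{x,v}}$) with $\sigma=\tfrac12$, and then to collapse the two resulting mixed-norm terms into the single pure Chemin--Lerner bound by means of two embeddings. The starting point is Remark~\ref{Tri}, which controls $|(\Delta_p\Gamma(f,g),\Delta_p h)_{L^2_{x,v}}|$, for each $p\ge-1$, by a sum over $|j-p|\le4$ and a sum over $j\ge p-4$, in each of which one of $f,g$ enters through an unlocalized $L^2_vL^\infty_x$ norm while the remaining two factors enter as frequency-localized $L^2_{x,v}$ norms weighted by $\mathcal{H}^{s/2}$. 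Since the Besov index in play is precisely the critical exponent $1/2$, I would first invoke the one-dimensional critical embedding $B^{1/2}_{2,1}(\mathbb{R}_x)\hookrightarrow L^\infty(\mathbb{R}_x)$ (Lemma~\ref{J7-1}), already exploited in the proof of Lemma~\ref{trilinear-estimate-M-G}, to replace every $L^2_vL^\infty_x$ factor by the corresponding $L^2_v(B^{1/2}_{2,1})$ factor.

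Next I would run the summation machinery of Lemma~\ref{trilinear-G} with $\sigma=\tfrac12$: multiply by $2^{p/2}$, integrate over $t\in[0,T]$, take square roots, and apply the Cauchy--Schwarz inequality in time to each term. This factors off the unlocalized factor in its $L^\infty_T L^2_v(B^{1/2}_{2,1})$ (resp.\ $L^2_T L^2_v(B^{1/2}_{2,1})$) norm and assembles, through the weighted dyadic $\ell^1$ summation in $p$, the two localized factors into the Chemin--Lerner norms $\|\mathcal{H}^{s/2}h\|_{\widetilde{L}^2_T\widetilde{L}^2_v(B^{1/2}_{2,1})}$ and the Chemin--Lerner norm of whichever of $f,g$ is localized in that term. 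The two dyadic double-sums are then handled exactly as before: the sum over $|j-p|\le4$ by Young's inequality against the finitely supported sequence $\mathbf{1}_{|j|\le4}2^{j/2}$ together with an $\ell^1$-normalized sequence $c(j)$, and the sum over $j\ge p-4$ by summing the geometric series $\sum_{-1\le p\le j+4}2^{(p-j)/2}<+\infty$ after interchanging the order of summation by Fubini. This delivers precisely the two-term right-hand side of \eqref{trilinear-est2-G} specialized to $\sigma=\tfrac12$.

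Finally, I would remove the remaining ordinary (non-tilde) norms using the comparison valid at Besov summability index $r=1$ (Lemma~\ref{J7-5}), namely $\|u\|_{L^\varrho_T L^2_v(B^{1/2}_{2,1})}\le\|u\|_{\widetilde{L}^\varrho_T\widetilde{L}^2_v(B^{1/2}_{2,1})}$, which is nothing but the triangle inequality moving the $\ell^1$ dyadic summation outside the $L^\varrho_T L^2_v$ norm. Applying it to the unlocalized factor of each of the two terms shows that both are dominated by the single product $\|f\|^{1/2}_{\widetilde{L}^\infty_T\widetilde{L}^2_v(B^{1/2}_{2,1})}\|\mathcal{H}^{s/2}g\|^{1/2}_{\widetilde{L}^2_T\widetilde{L}^2_v(B^{1/2}_{2,1})}\|\mathcal{H}^{s/2}h\|^{1/2}_{\widetilde{L}^2_T\widetilde{L}^2_v(B^{1/2}_{2,1})}$, and summing the two gives the claim. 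The only point demanding care is checking that these embeddings point in the favorable direction; this is exactly what the critical choice $\sigma=1/2$ guarantees, since at $r=1$ the triangle inequality upgrades $L^\varrho_T$ to $\widetilde{L}^\varrho_T$ rather than the reverse, so that no regularity is lost in passing from the mixed bound of Lemma~\ref{trilinear-G} to the homogeneous-looking Chemin--Lerner bound of the corollary.
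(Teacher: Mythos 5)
Your proposal is correct and follows exactly the route the paper intends: the paper proves this corollary in one line (``it follows from Remark~\ref{Tri}, Lemma~\ref{J7-1} and Lemma~\ref{J7-5}''), i.e.\ the $\kappa=t=0$ trilinear estimate, the critical embedding $B^{1/2}_{2,1}\hookrightarrow L^\infty_x$, the dyadic summation with Cauchy--Schwarz in time from Lemma~\ref{trilinear-G} at $\sigma=\tfrac12$, and the $r=1$ comparison $\|u\|_{L^{\varrho}_T L^2_v(B^{1/2}_{2,1})}\leq\|u\|_{\widetilde{L}^{\varrho}_T\widetilde{L}^2_v(B^{1/2}_{2,1})}$, which is precisely what you spell out (also correctly reading the inner product as $(\Delta_p\Gamma(f,g),\Delta_p h)_{L^2_{x,v}}$, repairing the paper's evident typo).
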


\section{The local existence of weak solution}\label{S3}
This section is devoted to proving the local existence of weak solution to the Cauchy problem \eqref{eq-1}.

\subsection{The local existence of weak solution}%\label{S4-1}
We first state the local-in-time existence of weak solution to \eqref{eq-1}.
\begin{theorem}[{\bf Local existence}]\label{local existence}
Let $0<T<+\infty$. We assume that the collision cross section satisfies \eqref{A-5} with $0<s<1$. There exists a constant $\varepsilon_{0}>0$ such that for all $g_0\in\widetilde{L}^{2}_{v}(B^{1/2}_{2,1})$ fulfilling
$$
\|g_0\|_{\widetilde{L}_v^2(B^{1/2}_{2,1})}\leq\,\varepsilon_{0},
$$
then \eqref{eq-1} admits a weak solution $g\in L^{\infty}([0,T];L^{2}(\mathbb{R}^{2}_{x,v}))$ satisfying
\begin{equation}\label{local-A}
\begin{split}	&\|g\|_{\widetilde{L}^{\infty}_T\widetilde{L}^2_{v}(B^{1/2}_{2,1})}+\|\mathcal{H}^{\frac{s}{2}}g\|_{\widetilde{L}^{2}_T\widetilde{L}^2_{v}(B^{1/2}_{2,1})}
\leq c_{0}e^{T}\|g_{0}\|_{\widetilde{L}^2_{v}(B^{1/2}_{2,1})},
\end{split}
\end{equation}	
for some constant $c_{0}>1$.
\end{theorem}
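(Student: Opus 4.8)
The plan is to construct the solution by a standard iteration scheme and to close a uniform estimate in the critical space $\widetilde{L}^2_v(B^{1/2}_{2,1})$ using the coercivity of $\mathcal{K}$ (Lemma \ref{JJ}) together with the trilinear bound of Corollary \ref{trilinear-G-R}. I would set $g^0=0$ and define $g^{n+1}$ as the solution of the \emph{linear} Cauchy problem
\begin{equation*}
\partial_t g^{n+1}+v\partial_x g^{n+1}+\mathcal{K}g^{n+1}=\Gamma(g^n,g^n),\qquad g^{n+1}|_{t=0}=g_0.
\end{equation*}
Each step is solvable because $\mathcal{K}=\sum_{k\geq1}\lambda_k\mathbb{P}_k$ is non-negative and diagonal in the Hermite basis while $v\partial_x$ is skew-adjoint in $x$; the source $\Gamma(g^n,g^n)$ is admissible by the trilinear estimates. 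Alternatively one may first truncate to low frequencies in $x$ and to finitely many Hermite modes, solve the resulting ODE system, and remove the truncation at the end.

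The heart of the argument is a uniform bound on the iterates. Applying $\Delta_p$ to the equation for $g^{n+1}$ and pairing with $\Delta_p g^{n+1}$ in $L^2_{x,v}$, the transport term drops out since $(v\partial_x\Delta_p g^{n+1},\Delta_p g^{n+1})_{L^2_{x,v}}=0$ by the skew-adjointness of $v\partial_x$ in $x$, leaving
\begin{equation*}
\tfrac12\tfrac{d}{dt}\|\Delta_p g^{n+1}\|_{L^2_{x,v}}^2+(\Delta_p\mathcal{K}g^{n+1},\Delta_p g^{n+1})_{L^2_{x,v}}=(\Delta_p\Gamma(g^n,g^n),\Delta_p g^{n+1})_{L^2_{x,v}}.
\end{equation*}
Invoking the lower bound in Lemma \ref{JJ}, absorbing the harmless zeroth-order term $\|\Delta_p g^{n+1}\|_{L^2_{x,v}}^2$ by a Grönwall factor $e^{2t}$ frequency by frequency, then taking square roots, multiplying by $2^{p/2}$ and summing over $p$, I expect to reach an estimate of the form
\begin{equation*}
X_T^{n+1}+Y_T^{n+1}\lesssim e^{T}\Big(\|g_0\|_{\widetilde{L}^2_v(B^{1/2}_{2,1})}+\sum_{p\geq-1}2^{p/2}\big(\textstyle\int_0^T|(\Delta_p\Gamma(g^n,g^n),\Delta_p g^{n+1})_{L^2_{x,v}}|\,dt\big)^{1/2}\Big),
\end{equation*}
where $X_T^{n}=\|g^{n}\|_{\widetilde{L}^\infty_T\widetilde{L}^2_v(B^{1/2}_{2,1})}$ and $Y_T^{n}=\|\mathcal{H}^{s/2}g^{n}\|_{\widetilde{L}^2_T\widetilde{L}^2_v(B^{1/2}_{2,1})}$. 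Bounding the last sum by Corollary \ref{trilinear-G-R} (with $f=g=g^n$ and $h=g^{n+1}$) produces the factor $(X_T^n)^{1/2}(Y_T^n)^{1/2}(Y_T^{n+1})^{1/2}$; a Young inequality moves $(Y_T^{n+1})^{1/2}$ to the left, giving $X_T^{n+1}+Y_T^{n+1}\lesssim e^T\|g_0\|_{\widetilde{L}^2_v(B^{1/2}_{2,1})}+e^T X_T^n Y_T^n$. Choosing $\varepsilon_0$ small enough, an induction then shows the whole sequence stays in the ball of radius $c_0 e^T\|g_0\|_{\widetilde{L}^2_v(B^{1/2}_{2,1})}$, which is exactly \eqref{local-A}.

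To obtain an actual solution I would show $(g^n)$ is Cauchy. The difference $w^n=g^{n+1}-g^n$ solves the same linear problem with zero data and source $\Gamma(w^{n-1},g^n)+\Gamma(g^{n-1},w^{n-1})$; rerunning the previous energy computation and using the bilinearity of $\Gamma$ together with Corollary \ref{trilinear-G-R} yields a bound of the type $\|w^n\|\lesssim e^T\varepsilon_0\,\|w^{n-1}\|$ in the energy space, the smallness coming from the uniform bound $\lesssim\varepsilon_0$. Thus the iteration contracts for $\varepsilon_0$ small, $(g^n)$ converges, the limit $g$ solves \eqref{eq-1} in the weak sense, and \eqref{local-A} for $g$ follows from the uniform bound by lower semicontinuity of the Chemin--Lerner norms; the nonlinear term passes to the limit thanks to the same trilinear estimates.

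The step I expect to be the main obstacle is closing the nonlinear estimate in the scaling-critical space $\widetilde{L}^2_v(B^{1/2}_{2,1})$: one must exploit the precise structure of Corollary \ref{trilinear-G-R}, in which the quadratic contribution appears as $\|f\|^{1/2}\,\|\mathcal{H}^{s/2}g\|^{1/2}\,\|\mathcal{H}^{s/2}h\|^{1/2}$ with two dissipative factors measured in $\widetilde{L}^2_T$, so that it can be absorbed into the coercive $\mathcal{H}^{s/2}$ term supplied by Lemma \ref{JJ}. Producing the correct constant $e^T$ while controlling the lower-order contribution from the coercivity estimate (via the per-frequency Grönwall argument) and passing to the limit in the nonlinearity given only bilinear control are the delicate points; the remainder is routine.
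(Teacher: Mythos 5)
Your proposal is correct in its core mechanism and reproduces the paper's actual a priori estimate almost verbatim: coercivity of $\mathcal{K}$ from Lemma \ref{JJ} with the zeroth-order defect absorbed by a per-frequency Gr\"onwall factor $e^{2t}$, square roots, multiplication by $2^{p/2}$, summation over $p$, the trilinear bound to produce $(X^n)^{1/2}(Y^n)^{1/2}(Y^{n+1})^{1/2}$, Young absorption into the dissipation, smallness induction, and a contraction argument for the differences --- this is exactly the computation in Propositions \ref{local-regularity-g} and \ref{local-regularity-gB} and the Cauchy-sequence argument closing Section \ref{S3}. You diverge from the paper in two genuine respects. First, your iteration is a pure Picard scheme with quadratic source $\Gamma(g^n,g^n)$, whereas the paper's scheme \eqref{equationA} freezes only the first slot and solves $\partial_t\tilde g_{n+1}+v\partial_x\tilde g_{n+1}+\mathcal{K}\tilde g_{n+1}=\Gamma(\tilde g_n,\tilde g_{n+1})$, so that each step is precisely the linearized problem of Theorem \ref{local-regularity}; the paper then dismisses the nonlinear iteration in two lines by citing \cite{LMPX2}, because all the labor sits in the linearized theorem. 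Your scheme is workable, but two small repairs are needed: Corollary \ref{trilinear-G-R} as literally stated pairs $\Gamma(f,g)$ against $\Delta_p g$ (the $h$ on its right-hand side is a misprint), so you must invoke the genuine three-entry version coming from Remark \ref{Tri} and Lemma \ref{trilinear-estimate-G} with $h=g^{n+1}$, which does hold; and solvability of each linear step with a given source requires $\Gamma(g^n,g^n)$ in a negative space, which Lemma \ref{trilinear-estimate-SSS} supplies. Second --- and this is the one place your write-up is genuinely thinner than the paper --- the sentence ``each step is solvable\dots alternatively Galerkin'' compresses the entire content of Subsections 3.2--3.4. Starting from a weak solution that is merely in $L^\infty_TL^2_{x,v}$ (Proposition \ref{local-1A}), neither the identity $\frac{d}{dt}\|\Delta_pg\|^2_{L^2_{x,v}}=2\,\mathrm{Re}(\partial_t\Delta_pg,\Delta_pg)_{L^2_{x,v}}$ nor the vanishing of the transport pairing is available for free: the paper mollifies with $(1+\delta\sqrt{\mathcal{H}}+\delta\langle D_x\rangle)^{-1}$ (paying a commutator with $v\partial_x$, controlled via the estimate (4.10) of \cite{LMPX2}, which is exactly where the lower-order Gr\"onwall term originates), truncates the coefficient to $f_N=S_Nf$ so that Lemma \ref{local-11} produces an $N$-dependent but finite bound, and damps the Besov weights to $2^{p\sigma}/(1+\kappa2^{2p\sigma})$ so all sums converge before letting $\delta,\kappa\rightarrow0$ and $N\rightarrow\infty$ by monotone convergence. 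A carefully executed Galerkin truncation in $x$-frequency and Hermite modes can replace this regularization machinery, so your route is viable, but as written your energy identity is formal at precisely the point where the paper spends most of its effort; the trade-off is that the paper's route yields the reusable linearized Theorem \ref{local-regularity} (with smallness only on the frozen coefficient), which is also what the Gelfand--Shilov scheme \eqref{equationA} in Section \ref{S4} is built on, while your Picard scheme, if rigorously completed, gives a marginally more self-contained proof of Theorem \ref{local existence} alone.
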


\begin{remark}
Furthermore, we can obtain the uniqueness of solutions to the Cauchy problem \eqref{eq-1} among the small solutions satisfying \eqref{local-A}. The uniqueness of solutions will be used in establishing the Gelfand-Shilov and Gevrey regularizing effects as in Section \ref{S4}.
\end{remark}

Actually, in order to prove the above theorem, the following local existence of linearized Kac equation is necessary.
\begin{theorem}[{\bf Local existence for linearized equation}]\label{local-regularity}
There exists a constant $\varepsilon_{0}>0$ such that for all $T>0, g_{0}\in\widetilde{L}^2_{v}(B^{1/2}_{2,1}), f\in\widetilde{L}^\infty_{T}\widetilde{L}^2_{v}(B^{1/2}_{2,1}),$ satisfying
$$
\|f\|_{\widetilde{L}^{\infty}_T\widetilde{L}^2_{v}(B^{1/2}_{2,1})}\leq\varepsilon_{0},
$$
then the Cauchy problem
\begin{equation}\label{local-B1}
\left\{\begin{aligned}
&\partial_tg+v\partial_xg+\mathcal{K}g=\Gamma(f,g),\\
&g(t,x,v)|_{t=0}=g_{0}(x,v),
\end{aligned} \right.
\end{equation}
admits a weak solution $g\in L^{\infty}([0,T];L^{2}(\mathbb{R}^{2}_{x,v}))$ satisfying
\begin{equation}\label{local-B2}
\begin{split}	&\|g\|_{\widetilde{L}^{\infty}_T\widetilde{L}^2_{v}(B^{1/2}_{2,1})}+\|\mathcal{H}^{\frac{s}{2}}g\|_{\widetilde{L}^{2}_T\widetilde{L}^2_{v}(B^{1/2}_{2,1})}
\leq c_{0}e^{T}\|g_{0}\|_{\widetilde{L}^2_{v}(B^{1/2}_{2,1})},
\end{split}
\end{equation}
for some constant $c_{0}>1$.	
\end{theorem}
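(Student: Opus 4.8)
The plan is to regard \eqref{local-B1} as a genuinely linear Cauchy problem for $g$, with $f$ a prescribed coefficient, and to prove \eqref{local-B2} as a uniform a priori bound; the solution is then produced by a Friedrichs-type approximation together with a compactness argument. Two structural facts drive the estimate: the coercivity of the linearized Kac operator (Lemma~\ref{JJ}) and the trilinear control of the collision term (Corollary~\ref{trilinear-G-R}), while the transport term is neutralized by its skew-symmetry in $x$.

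For the a priori estimate, I would localize in the position frequency. Since $\Delta_p$ is a Fourier multiplier in $x$ and both $v\partial_x$ and $\mathcal K$ commute with it, applying $\Delta_p$ to \eqref{local-B1} and pairing with $\Delta_p g$ in $L^2_{x,v}$ gives
\begin{equation*}
\frac12\frac{d}{dt}\|\Delta_pg\|^2_{L^2_{x,v}}+\big(\Delta_p\mathcal Kg,\Delta_pg\big)_{L^2_{x,v}}=\big(\Delta_p\Gamma(f,g),\Delta_pg\big)_{L^2_{x,v}},
\end{equation*}
the transport contribution $(v\partial_x\Delta_pg,\Delta_pg)_{L^2_{x,v}}$ vanishing after integration by parts in $x$. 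Bounding the Kac term below by $\frac1C\|\mathcal H^{s/2}\Delta_pg\|^2_{L^2_{x,v}}-\|\Delta_pg\|^2_{L^2_{x,v}}$ via Lemma~\ref{JJ} and multiplying by the integrating factor $e^{-2t}$ to remove the lower-order term, an integration on $[0,t]$ together with $e^{-2\tau}\geq e^{-2T}$ for $\tau\in[0,T]$ yields
\begin{equation*}
\|\Delta_pg(t)\|^2_{L^2_{x,v}}+\frac2C\int_0^t\|\mathcal H^{s/2}\Delta_pg\|^2_{L^2_{x,v}}\,d\tau\leq e^{2T}\Big(\|\Delta_pg_0\|^2_{L^2_{x,v}}+2\int_0^T\big|(\Delta_p\Gamma(f,g),\Delta_pg)_{L^2_{x,v}}\big|\,d\tau\Big).
\end{equation*}
Taking square roots, multiplying by $2^{p/2}$ and summing over $p\geq-1$, the left-hand side dominates $\|g\|_{\widetilde L^\infty_T\widetilde L^2_v(B^{1/2}_{2,1})}+\|\mathcal H^{s/2}g\|_{\widetilde L^2_T\widetilde L^2_v(B^{1/2}_{2,1})}$, while Corollary~\ref{trilinear-G-R} controls the collision sum by $Ce^T\|f\|^{1/2}_{\widetilde L^\infty_T\widetilde L^2_v(B^{1/2}_{2,1})}\|\mathcal H^{s/2}g\|_{\widetilde L^2_T\widetilde L^2_v(B^{1/2}_{2,1})}$. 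Choosing $\varepsilon_0$ small enough lets this dissipative term be absorbed into the left-hand side, leaving exactly \eqref{local-B2} with the factor $e^T$ inherited from the integrating factor.

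To construct a solution realizing this bound, I would introduce a Friedrichs regularization: truncate the position frequencies by a cutoff $J_N$ and the velocity variable to finitely many Hermite modes, turning \eqref{local-B1} into an ODE in a Banach space solvable by the Cauchy--Lipschitz theorem. As the truncations preserve the skew-symmetry of transport, the coercivity of $\mathcal K$ and the trilinear structure, the estimate above holds uniformly in $N$. Weak-$*$ compactness in $\widetilde L^\infty_T\widetilde L^2_v(B^{1/2}_{2,1})$ together with the linearity of the equation in $g$ then allows passage to the limit in every term, identifying a weak solution, and \eqref{local-B2} survives by lower semicontinuity of the norm. Uniqueness among such small solutions is then immediate, since the difference of two solutions solves the homogeneous linear problem and the same estimate forces it to vanish.

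The main obstacle is the bookkeeping that makes the a priori estimate close in the Chemin--Lerner scale. Because Corollary~\ref{trilinear-G-R} delivers the collision term only in the ``integrate in $t$, take square root, then sum in $p$'' form, I cannot first square the inequality and apply Gronwall in the classical way; the delicate point is to absorb the dissipative nonlinear contribution using the smallness of $\|f\|$ while simultaneously extracting the growth factor $e^T$ from the lower-order term, keeping $c_0$ dependent only on the coercivity constant. A secondary difficulty is that $v\partial_x$ carries an unbounded coefficient and $\mathcal K$ is an unbounded nonlocal operator in $v$, so the formal steps---integration by parts in $x$ and the spectral coercivity---must be justified at the regularized level before passing to the limit.
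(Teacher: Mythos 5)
Your a priori estimate coincides with the paper's closing energy estimate (Proposition \ref{local-regularity-gB}): the same $\Delta_p$-localization, skew-symmetry of $v\partial_x$, coercivity from Lemma \ref{JJ}, integrating factor $e^{-2t}$, square root, $2^{p/2}$-weighted summation, and absorption via Corollary \ref{trilinear-G-R} under smallness of $\varepsilon_0$. Where you genuinely diverge is the construction, and both routes are answering the same hidden difficulty, which you correctly flagged: the absorption step is vacuous unless one already knows $\|g\|_{\widetilde{L}^{\infty}_T\widetilde{L}^2_{v}(B^{1/2}_{2,1})}+\|\mathcal{H}^{\frac{s}{2}}g\|_{\widetilde{L}^{2}_T\widetilde{L}^2_{v}(B^{1/2}_{2,1})}<\infty$. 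The paper never discretizes the equation; it imports a rough weak solution $g_N\in L^{\infty}_TL^2_{x,v}$ for the problem with mollified coefficient $f_N=S_Nf$ from \cite{LMPX2} (Proposition \ref{local-1A}), justifies the $L^2$-level energy identity via the regularizer $(1+\delta\sqrt{\mathcal{H}}+\delta\langle D_{x}\rangle)^{-1}$ and Theorem 3 of \cite{E} (Proposition \ref{local existence-gA}), and then establishes \emph{finiteness} of the Chemin--Lerner norm before absorbing, using the tempered weights $2^{p\sigma}/(1+\kappa2^{2p\sigma})$, which are summable against the bare $L^2$ bound, at the price of $N$-dependent constants coming from the frequency truncation of $f$ (Lemma \ref{local-11}, Proposition \ref{local-regularity-g}); only then does the uniform estimate become legitimate, and $g$ is produced as the limit of the Cauchy sequence $\{g_N\}$ via Proposition \ref{Cauchy-f}. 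Your Galerkin/Friedrichs scheme attacks the finiteness issue from the other end: band-limited approximants make every norm finite by construction, so absorption is licit at once, and since the equation is linear in $g$, weak-$*$ compactness plus blockwise lower semicontinuity replaces the paper's strong Cauchy-sequence argument, with no $\kappa$-weights and no appeal to Lemma 4.1 of \cite{LMPX2}. Two details you must verify that the paper never needs: the Hermite projection $P_K=\sum_{n\leq K}\mathbb{P}_n$ does \emph{not} commute with multiplication by $v$ (recall $ve_n=\sqrt{n+1}\,e_{n+1}+\sqrt{n}\,e_{n-1}$), so skew-symmetry of the truncated transport holds only for the symmetric formulation $\partial_t g=P_KJ_N(\cdots)$ with data and evolution confined to the range of $P_KJ_N$; and because $P_K$, $J_N$ are orthogonal projections commuting with each $\Delta_p$, the truncated collision pairing collapses to the untruncated trilinear form, so Corollary \ref{trilinear-G-R} applies with truncation-independent constants. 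With these checks your route closes and is arguably more self-contained; the paper's route buys direct reuse of the regularization machinery of \cite{LMPX2}. One caveat: your remark that uniqueness is ``immediate'' glosses over the fact that running the energy identity on a mere $L^{\infty}_TL^2$ weak solution requires the same $\delta$-mollification justification as in Proposition \ref{local existence-gA}; since the statement only asserts existence with the bound \eqref{local-B2}, this does not affect the proof, but it is not free.
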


Once having this result, Theorem \ref{local existence} follows from the standard procedure.
\begin{proof}
Let $0<\lambda<1, T>0$ and $g_{0}\in\widetilde{L}^2_{v}(B^{1/2}_{2,1})$ be the initial fluctuation satisfying
\begin{align}\label{g}
\|g_{0}\|_{\widetilde{L}^2_{v}(B^{1/2}_{2,1})}\leq\tilde{\varepsilon}_{0}, \ \text{with} \ \ 0<\tilde{\varepsilon}_{0}
=\inf\Big(\frac{\varepsilon_0}{c_{0}e^{T}},\frac{1}{4c_{0}CC_{1}^{2}e^{3T}},\frac{\lambda}{\sqrt{2C}c_{0}C_{1}^{2}e^{3T}}\Big)\leq\varepsilon_0,
\end{align}
where $C ,C_{1},c_{0},\varepsilon_0$ are those constants defined in Lemma \ref{JJ}, Lemma \ref{trilinear-G} and Theorem \ref{local existence}. We define
\begin{align*}%\label{g2}
\tilde{g}_{0}=\exp\left(-\delta t(\sqrt{\mathcal{H}}+\langle D_x\rangle)^{\frac{2s}{2s+1}}\right)g_0, \ \ 0\leq t\leq T
\end{align*}
with $0\leq\delta\leq1$. With aid of Theorem \ref{local-regularity}, we prove the local existence of solutions to the nonlinear Kac equation by constructing a local solution to the Cauchy problem \eqref{eq-1} for the nonlinear Kac equation as the limit of the following sequence of iterating approximate solutions:
\begin{equation}\label{equationA}
\left\{
\begin{aligned}
&\partial_t \tilde{g}_{n+1}+v\partial_x\tilde{g}_{n+1}+\mathcal{K}\tilde{g}_{n+1}=\Gamma(\tilde{g}_n, \tilde{g}_{n+1}),\,\,\,\, n\geq0,
\\&\tilde{g}_{n+1}(t,x,v)|_{t=0}=g_{0}(x,v).
\end{aligned} \right.
\end{equation}
The procedure is standard, which is similar to that of Theorem $4.2$ in \cite{LMPX2}. Here, we omit details for simplicity.
\end{proof}

In order to show Theorem \ref{local-regularity}, we need to develop the regularization method in \cite{LMPX2}. The proof can be divided into several steps for clarity.
\subsection{The local weak solution of linearized Kac equation}
In the first step, we give the existence of local weak solution with the rough initial datum, the interested reader is referred to Lemma 4.1 in \cite{LMPX2} for similar details.
\begin{proposition}\label{local-1A}
There exists a constant $\varepsilon_{0}>0$ such that for all $T>0, g_{0}\in L^{2}(\mathbb{R}^2_{x,v}), f\in L^{\infty}([0,T]\times\mathbb{R}_{x};L^{2}(\mathbb{R}_{v}))$ satisfying
$$
\|f\|_{L^{\infty}([0,T]\times\mathbb{R}_{x};L^{2}_{v})}\leq\varepsilon_{0},
$$
then the Cauchy problem \eqref{local-B1} admits a weak solution
$$
g\in L^{\infty}([0,T];L^{2}(\mathbb{R}^{2}_{x,v})).
$$
\end{proposition}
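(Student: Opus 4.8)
The plan is to obtain the solution as a limit of solutions to regularized problems, the whole argument being driven by a single a priori energy estimate that survives the regularization. Since $f$ is \emph{fixed}, equation \eqref{local-B1} is linear in the unknown $g$, which will make the passage to the limit in the collision term painless.

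First I would derive the basic energy estimate formally. Testing \eqref{local-B1} against $g$ in $L^2(\mathbb{R}^2_{x,v})$, the transport term drops out by skew-symmetry,
$$(v\partial_x g, g)_{L^2_{x,v}} = \tfrac{1}{2}\int_{\mathbb{R}^2} v\,\partial_x(g^2)\,dx\,dv = 0,$$
the spectral coercivity of the linearized operator (the velocity-variable estimate of Section \ref{S5-2} underlying Lemma \ref{JJ}, integrated in $x$) gives
$$(\mathcal{K}g, g)_{L^2_{x,v}} \geq \tfrac{1}{C}\|\mathcal{H}^{\frac{s}{2}}g\|_{L^2_{x,v}}^2 - \|g\|_{L^2_{x,v}}^2,$$
and the collision term is controlled by the trilinear estimate in the Remark following Lemma \ref{trilinear-estimate-S},
$$|(\Gamma(f,g),g)_{L^2_{x,v}}| \leq C_0\|f\|_{L^2_vL^\infty_x}\|\mathcal{H}^{\frac{s}{2}}g\|_{L^2_{x,v}}^2 \leq C_0\varepsilon_0\|\mathcal{H}^{\frac{s}{2}}g\|_{L^2_{x,v}}^2.$$
Choosing $\varepsilon_0$ so small that $C_0\varepsilon_0 \leq \tfrac{1}{2C}$, the collision term is absorbed into the coercive term, yielding
$$\tfrac{d}{dt}\|g\|_{L^2_{x,v}}^2 + \tfrac{1}{C}\|\mathcal{H}^{\frac{s}{2}}g\|_{L^2_{x,v}}^2 \leq 2\|g\|_{L^2_{x,v}}^2,$$
and Gr\"onwall's inequality then produces $g \in L^\infty([0,T]; L^2)$ together with $\mathcal{H}^{\frac{s}{2}}g \in L^2([0,T]; L^2)$, both bounded by $e^T\|g_0\|_{L^2}$.

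To make this rigorous I would introduce a regularization restoring a nondegenerate parabolic structure: replace the velocity multiplier $v$ by a bounded truncation $v_N=v\,\chi(v/N)$ and add a vanishing diffusion in position,
$$\partial_t g^{\eta,N} + v_N\partial_x g^{\eta,N} + \mathcal{K}g^{\eta,N} + \eta(-\partial_x^2)g^{\eta,N} = \Gamma(f, g^{\eta,N}), \qquad g^{\eta,N}|_{t=0} = g_0.$$
For fixed $\eta>0$ and $N$, the bounded multiplier $v_N$ makes $v_N\partial_x$ map the energy space $V=\{u:\mathcal{H}^{\frac{s}{2}}u\in L^2,\ \partial_x u\in L^2\}$ into its dual, while $\mathcal{K}+\eta(-\partial_x^2)$ is coercive on $V$ and $\Gamma(f,\cdot)$ is a bounded linear perturbation by Lemma \ref{trilinear-estimate-SSS}. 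The associated bilinear form satisfies a G\aa rding inequality, so existence and uniqueness of $g^{\eta,N}\in L^2([0,T];V)\cap C([0,T];L^2)$ follow from a standard linear evolution argument (J.-L. Lions' theorem on coercive parabolic equations, or a Galerkin scheme in a Hermite--Fourier basis). The crucial point is that the energy estimate above holds verbatim for $g^{\eta,N}$: the skew-symmetry $(v_N\partial_x g^{\eta,N},g^{\eta,N})=0$ is preserved because $v_N$ depends only on $v$, and the extra term $\eta(-\partial_x^2)g^{\eta,N}$ is nonnegative when tested against $g^{\eta,N}$ and may simply be discarded. Hence the bounds are uniform in $\eta$ and $N$.

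Finally I would pass to the limit $\eta\to0$, $N\to\infty$. The uniform bounds give, along a subsequence, $g^{\eta,N}\rightharpoonup g$ weakly-$*$ in $L^\infty([0,T];L^2)$ and $\mathcal{H}^{\frac{s}{2}}g^{\eta,N}\rightharpoonup\mathcal{H}^{\frac{s}{2}}g$ weakly in $L^2([0,T];L^2)$. Working with the weak formulation, in which $v_N\partial_x$ is moved onto a Schwartz test function $\phi$, each term is weakly continuous: the pairing $\int_0^T(g^{\eta,N},v_N\partial_x\phi)\,dt$ converges to $\int_0^T(g,v\partial_x\phi)\,dt$ since $v_N\partial_x\phi\to v\partial_x\phi$ strongly while $g^{\eta,N}$ converges weakly-$*$; the regularizing contribution vanishes as $\eta\to0$; the $\mathcal{K}$-term passes to the limit through $\mathcal{H}^{\frac{s}{2}}$; and the collision term passes to the limit because, by Lemma \ref{trilinear-estimate-SSS}, $g\mapsto\mathcal{H}^{-s}\Gamma(f,g)$ is a bounded linear operator on $L^2$ and hence weakly continuous. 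The limit $g$ is the desired weak solution. The main obstacle is the \emph{degeneracy}: dissipation is available only in the velocity direction through $\mathcal{H}^{\frac{s}{2}}$, whereas $v\partial_x$ carries both an unbounded multiplier and an $x$-derivative uncontrolled by the energy norm, so $v\partial_x g$ exists only as a distribution. The regularization must therefore be engineered both to render the approximate problems solvable and to preserve the energy estimate uniformly; checking that the skew-symmetric transport term keeps making sense, contributes nothing in the weak formulation, and keeps the unbounded factor $v$ controlled in the pairing with test functions is the delicate part.
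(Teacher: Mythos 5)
Your proposal is correct in substance, but it takes a genuinely different route from the paper. The paper offers no self-contained proof of Proposition \ref{local-1A}: it defers to Lemma 4.1 of \cite{LMPX2}, where existence for this linear problem is obtained non-constructively by duality --- an a priori estimate for the backward adjoint problem governed by $-\partial_t-v\partial_x+\mathcal{K}-\Gamma(f,\cdot)^{*}$ (resting on the same two ingredients you use: the coercivity of $\mathcal{K}$ and absorption of the collision term by the smallness of $f$), followed by a Hahn--Banach extension and representation of the solution in $L^{\infty}([0,T];L^{2})=\bigl(L^{1}([0,T];L^{2})\bigr)^{*}$. That route never needs $v\partial_x g$ to make sense for the solution itself, since the transport term only ever acts on smooth test functions, so no regularization, no solvability theory for approximate problems, and no compactness argument are required. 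Your route --- truncating $v$, adding $\eta\partial_x^2$, solving by Lions/Galerkin in $V=\{u:\mathcal{H}^{\frac{s}{2}}u\in L^2,\ \partial_x u\in L^2\}$, and passing to the limit --- is heavier (you must check the G\aa rding inequality and boundedness of the form on $V$, and run a two-parameter limit), but it is constructive and delivers the energy inequality for the limit solution by weak lower semicontinuity, something the duality argument does not directly give and which the paper recovers only later, in Proposition \ref{local existence-gA}, via the mollifier $(1+\delta\sqrt{\mathcal{H}}+\delta\langle D_x\rangle)^{-1}$. Your limit passage itself is sound: thanks to linearity in $g$, every term sits on the test-function side, and the collision term passes by the weak--weak continuity of the bounded linear map $g\mapsto\mathcal{H}^{-s}\Gamma(f,g)$ from Lemma \ref{trilinear-estimate-SSS}.

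One step needs repair. In your absorption estimate you write
\begin{equation*}
|(\Gamma(f,g),g)_{L^2_{x,v}}|\leq C_0\|f\|_{L^2_vL^\infty_x}\|\mathcal{H}^{\frac{s}{2}}g\|^2_{L^2_{x,v}}\leq C_0\varepsilon_0\|\mathcal{H}^{\frac{s}{2}}g\|^2_{L^2_{x,v}},
\end{equation*}
but the hypothesis of the proposition controls only $\|f\|_{L^{\infty}_{x}L^{2}_{v}}$, and by Minkowski $\|f\|_{L^{\infty}_{x}L^{2}_{v}}\leq\|f\|_{L^{2}_{v}L^{\infty}_{x}}$, not conversely; the second inequality above is therefore unjustified as written. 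The fix is immediate: since $\Gamma$ acts in the velocity variable only, pointwise in $x$, apply the spatially homogeneous trilinear estimate (Lemma 3.5 of \cite{LMPX4}, the $\delta=0$ case underlying the remark after Lemma \ref{trilinear-estimate-S}) at each fixed $x$ and then use Cauchy--Schwarz in $x$, which gives
\begin{equation*}
|(\Gamma(f,g),h)_{L^2_{x,v}}|\leq C_0\Bigl(\sup_{x}\|f(x,\cdot)\|_{L^2_v}\Bigr)\|\mathcal{H}^{\frac{s}{2}}g\|_{L^2_{x,v}}\|\mathcal{H}^{\frac{s}{2}}h\|_{L^2_{x,v}},
\end{equation*}
exactly adapted to the hypothesis. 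The paper commits the same conflation of norms in the proof of Proposition \ref{local existence-gA}, so this is a slip inherited from the source rather than a flaw in your strategy; with this version of the trilinear bound, both your G\aa rding inequality for the regularized problems and your Gr\"onwall absorption go through unchanged.
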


Next, we turn to prove the regularity with respect to $x$ and $v$, which is shown by the following two subsections.

\subsection{Regularity of weak solution in velocity variable}%\label{S4-4}
A rigorous proof of Theorem \ref{local-regularity} is to mollifier the weak solution $g\in L^{\infty}([0,T];L^{2}(\mathbb{R}^{2}_{x,v}))$ in velocity and position variables. To do this, we mollifier the function $f$, that is, setting $f_{N}=\sum^{N-1}_{p\geq-1}\Delta_{p}f$ for $N\in\mathbb{N}$, then we have $f_N\in \widetilde{L}^\infty_{T}\widetilde{L}^2_{v}(H^{+\infty}_{x})$. For each $f_N(N\in\mathbb{N})$, we consider a weak solution $g_{N}\in L^{\infty}([0,T]; L^{2}(\mathbb{R}^{2}_{x,v}))$ to the following Cauchy problem
\begin{equation}\label{local-111A}
\left\{\begin{aligned}
&\partial_tg_N+v\partial_xg_N+\mathcal{K}g_N=\Gamma(f_N,g_N),\\
&g_N(t,x,v)|_{t=0}=g_{0}(x,v).
\end{aligned} \right.
\end{equation}
Some simple calculations enable us to obtain the following proposition for $f_N$.

\begin{proposition}\label{Cauchy-f}	
If $f\in\widetilde{L}^{\infty}_{T}\widetilde{L}^{2}_{v}(B^{1/2}_{2,1})$. For $N\in\mathbb{N}$, put $f_N=S_Nf=\sum^{N-1}_{p\geq-1}\Delta_{p}f$. Then we get

i) If $f_N\in\widetilde{L}^{\infty}_{T}\widetilde{L}^{2}_{v}(B^{1/2}_{2,1})$, then
$\{f_N\}$ is a Cauchy sequence in $\widetilde{L}^{\infty}_{T}\widetilde{L}^{2}_{v}(B^{1/2}_{2,1})$.

ii) For $0<\sigma\leq1/2$, $f_N$ satisfies $\|f_N\|_{L^{\infty}_TL^2_{v}L^2_{x}}\leq C\|f\|_{L^{\infty}_TL^2_{v}L^2_{x}}$ and
\begin{equation*}%\label{fff1}	
\|f_N\|_{L^{\infty}_TL^2_{v}L^2_{x}}\leq C_{2}\|f_N\|_{L^{\infty}_TL^2_{v}(B^{\sigma}_{2,1})}
\leq C_{3}\|f_N\|_{\widetilde{L}^{\infty}_T\widetilde{L}^2_{v}(B^{\sigma}_{2,1})}
\leq C_{4}\|f\|_{\widetilde{L}^{\infty}_T\widetilde{L}^2_{v}(B^{\sigma}_{2,1})},
\end{equation*}
where $C_{2},C_{3},C_{4}>0$ are constants independent of $N$.
\end{proposition}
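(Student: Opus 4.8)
The plan is to treat parts (i) and (ii) separately, relying only on the Littlewood--Paley definitions recalled in Section~\ref{S2} (see also \cite{BCD}) together with the elementary observation that $S_N=\chi(2^{-N}D_x)$ is a Fourier multiplier whose symbol is valued in $[0,1]$, so that $\|S_N u\|_{L^2_x}\le\|u\|_{L^2_x}$ pointwise in $(t,v)$ by Plancherel, with a bound independent of $N$. Keeping this uniformity in $N$ is the recurring point to verify, since every constant in the statement is claimed to be $N$-independent. Note first that since $\Delta_q f_N=S_N\Delta_q f$ is dominated by $\Delta_q f$ in $L^2_x$, the hypothesis $f_N\in\widetilde{L}^{\infty}_{T}\widetilde{L}^{2}_{v}(B^{1/2}_{2,1})$ in (i) is automatic whenever $f$ lies in that space.

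For (i), I would start from the fact that $f\in\widetilde{L}^\infty_T\widetilde{L}^2_v(B^{1/2}_{2,1})$ means exactly that the series $\sum_{q\ge-1}2^{q/2}\|\Delta_q f\|_{L^\infty_T L^2_v L^2_x}$ converges, so that its tail $R_N:=\sum_{q\ge N}2^{q/2}\|\Delta_q f\|_{L^\infty_T L^2_v L^2_x}$ tends to $0$ as $N\to\infty$. For $M>N$ one writes $f_M-f_N=\sum_{p=N}^{M-1}\Delta_p f$, applies a block $\Delta_q$, and uses the spectral localisation $\Delta_q\Delta_p=0$ for $|q-p|\ge2$: the only surviving indices are $N-1\le q\le M$, and there $\|\Delta_q(f_M-f_N)\|_{L^2_x}\lesssim\sum_{|p-q|\le1}\|\Delta_p f\|_{L^2_x}$. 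Inserting the weights $2^{q/2}$ and summing then bounds $\|f_M-f_N\|_{\widetilde{L}^\infty_T\widetilde{L}^2_v(B^{1/2}_{2,1})}$ by a fixed multiple of $R_{N-2}$, uniformly in $M$, which is precisely the Cauchy property.

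For (ii), the bound $\|f_N\|_{L^\infty_T L^2_v L^2_x}\le C\|f\|_{L^\infty_T L^2_v L^2_x}$ follows at once from the multiplier estimate on $S_N$ above (one may take $C=1$). The first inequality of the chain is the embedding $B^\sigma_{2,1}\hookrightarrow L^2$ for $\sigma>0$: since $2^{q\sigma}\ge2^{-\sigma}$ for every $q\ge-1$, one has $\|u\|_{L^2_x}\le\sum_q\|\Delta_q u\|_{L^2_x}\le2^\sigma\sum_q2^{q\sigma}\|\Delta_q u\|_{L^2_x}$, and integrating in $v$ and taking the supremum in $t$ gives $C_2=2^\sigma$. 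The second inequality is Minkowski's inequality: in the non-tilde norm the $\ell^1$-sum over dyadic blocks sits inside the $L^2_v$ and $L^\infty_T$ norms, whereas in the tilde norm it sits outside, and pulling the sum out can only decrease the value, so $C_3=1$. For the last inequality I would again use $\Delta_q f_N=S_N\Delta_q f$ together with $\|S_N\Delta_q f\|_{L^2_x}\le\|\Delta_q f\|_{L^2_x}$, which dominates each dyadic piece and yields $\|f_N\|_{\widetilde{L}^\infty_T\widetilde{L}^2_v(B^\sigma_{2,1})}\le\|f\|_{\widetilde{L}^\infty_T\widetilde{L}^2_v(B^\sigma_{2,1})}$, so $C_4=1$.

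None of these steps is genuinely hard; the only points demanding care are the Minkowski step, where the precise order of the $\ell^1$-summation relative to the $L^2_v$ and $L^\infty_T$ norms must be respected, and the bookkeeping that ensures every constant $C,C_2,C_3,C_4$ is independent of $N$. In each case the latter reduces to the single fact that the symbol of $S_N$ is bounded by $1$ uniformly in $N$, so this is where I would concentrate the verification.
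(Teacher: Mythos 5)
Your proof is correct and is essentially the argument the paper has in mind: the paper dismisses this proposition with ``some simple calculations,'' and those calculations are exactly your ingredients --- the uniform $L^2_x$-boundedness of $S_N$ (symbol valued in $[0,1]$), the near-orthogonality $\Delta_q\Delta_p\equiv0$ for $|p-q|\geq2$ to reduce the Cauchy property in (i) to the vanishing tail $R_N$, the embedding $B^{\sigma}_{2,1}\hookrightarrow L^2_x$ with $C_2=2^{\sigma}$, and Minkowski's inequality for the tilde/non-tilde comparison, which is precisely the paper's Lemma \ref{J7-5}(1) with $r=1\leq\min\{\infty,2\}$. Two small remarks. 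First, your sentence ``pulling the sum out can only decrease the value'' states Minkowski backwards: moving the $\ell^1$-sum over dyadic blocks outside the $L^{\infty}_T L^2_v$ norms can only \emph{increase} the quantity, which is exactly why the non-tilde norm is dominated by the tilde norm with $C_3=1$; the displayed inequality you actually use has the correct direction, so this is a wording slip rather than a gap. Second, the identification $S_N=\chi(2^{-N}D_x)$ presupposes the telescoping convention $\varphi(\xi)=\chi(\xi/2)-\chi(\xi)$, which the paper does not state; but this is harmless, since the unit decomposition alone gives that the symbol of $S_N$ equals $1-\sum_{q\geq N}\varphi(2^{-q}\xi)\in[0,1]$, which is all your argument needs.
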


Then, we can establish the following proposition for the weak solution $g_N$.

\begin{proposition}\label{local existence-gA}	
For $N\in\mathbb{N}$, put $f_N=\sum^{N-1}_{p\geq-1}\Delta_{p}f$. There exists a constant $\varepsilon_{0}>0$ such that for all $T>0, g_{0} \in L^2(\mathbb{R}^{2}_{x,v}), f\in L^{\infty}([0,T]\times\mathbb{R}_{x};L^{2}(\mathbb{R}_{v}))$ satisfying
$$
\|f\|_{L^{\infty}([0,T]\times\mathbb{R}_{x};L^{2}_{v})}\leq\varepsilon_{0},
$$
then the Cauchy problem \eqref{local-111A}
%\begin{equation}\label{local-111AA}
%\left\{\begin{aligned}
%&\partial_tg_N+v\partial_xg_N+\mathcal{K}g_N=\Gamma(f_N,g_N),\\
%&g_N(t,x,v)|_{t=0}=g_{0}(x,v),
%\end{aligned} \right.
%\end{equation}
admits a weak solution $g_N(t,x,v)\in L^{\infty}([0,T];L^{2}(\mathbb{R}^{2}_{x,v}))$ such that
\begin{equation}\label{local-112A}
\begin{split}
\|g_N\|_{L^{\infty}_TL^2_{v}L^2_{x}}+\|\mathcal{H}^{\frac{s}{2}}g_N\|_{L^{2}_TL^2_{v}L^2_{x}}
\leq c_{0}e^{3T}\|g_{0}\|_{L^{2}_{v}L^2_{x}},
\end{split}
\end{equation}
for some constant $c_{0}>1$.
\end{proposition}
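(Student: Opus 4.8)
The existence of a weak solution $g_N\in L^\infty([0,T];L^2(\mathbb{R}^2_{x,v}))$ to \eqref{local-111A} is already furnished by Proposition \ref{local-1A} applied with $f$ replaced by its low-frequency truncation $f_N=S_Nf$. Indeed, since $S_N=\chi(2^{-N}D_x)$ is convolution in $x$ with a kernel of $L^1$-norm bounded uniformly in $N$, Minkowski's integral inequality gives $\|f_N\|_{L^\infty_{t,x}L^2_v}\le C\|f\|_{L^\infty_{t,x}L^2_v}\le C\varepsilon_0$, so the hypothesis of Proposition \ref{local-1A} holds after shrinking $\varepsilon_0$. Thus the genuine content of the statement is the quantitative a priori bound \eqref{local-112A}, which I would establish by the standard $L^2$ energy method applied to \eqref{local-111A}.

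The plan is to test \eqref{local-111A} against $g_N$ in $L^2(\mathbb{R}^2_{x,v})$ and analyse the three resulting terms. The transport term vanishes, $(v\partial_xg_N,g_N)_{L^2_{x,v}}=0$, by integration by parts in $x$ (the operator $v\partial_x$ is skew-adjoint on $L^2_{x,v}$). For the linearized term I would use the spectral coercivity of $\mathcal{K}$ on $L^2(\mathbb{R}_v)$ recalled in Section \ref{S5-2} (the pointwise-in-$x$, $\Delta_p$-free form of the lower bound in Lemma \ref{JJ}), applied for a.e.\ $x$ and integrated in $x$, to obtain
\begin{equation*}
(\mathcal{K}g_N,g_N)_{L^2_{x,v}}\ge\frac1C\|\mathcal{H}^{\frac{s}{2}}g_N\|^2_{L^2_{x,v}}-\|g_N\|^2_{L^2_{x,v}}.
\end{equation*}
The crucial point is the nonlinear term, where the matching of norms is delicate: the hypothesis controls $f$ only in $L^\infty_xL^2_v$, so I would \emph{not} invoke the $L^2_vL^\infty_x$ trilinear bound of the Remark following Lemma \ref{trilinear-estimate-S}, but instead apply the velocity-variable trilinear estimate for the homogeneous Kac operator from \cite{LMPX4} pointwise in $x$,
\begin{equation*}
|(\Gamma(f_N,g_N),g_N)_{L^2_v}|\le C_0\|f_N(x,\cdot)\|_{L^2_v}\|\mathcal{H}^{\frac{s}{2}}g_N(x,\cdot)\|_{L^2_v}^2,
\end{equation*}
and then integrate in $x$ and use Hölder, which yields $|(\Gamma(f_N,g_N),g_N)_{L^2_{x,v}}|\le C_0\|f_N\|_{L^\infty_xL^2_v}\|\mathcal{H}^{\frac{s}{2}}g_N\|^2_{L^2_{x,v}}\le C_0C\varepsilon_0\|\mathcal{H}^{\frac{s}{2}}g_N\|^2_{L^2_{x,v}}$, exactly in terms of the available norm on $f$.

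Combining these gives the differential inequality
\begin{equation*}
\frac12\frac{d}{dt}\|g_N\|^2_{L^2_{x,v}}+\Big(\frac1C-C_0C\varepsilon_0\Big)\|\mathcal{H}^{\frac{s}{2}}g_N\|^2_{L^2_{x,v}}\le\|g_N\|^2_{L^2_{x,v}}.
\end{equation*}
Choosing $\varepsilon_0$ so small that $C_0C\varepsilon_0\le\frac{1}{2C}$ keeps the dissipative coefficient positive; Grönwall's lemma then gives $\|g_N(t)\|_{L^2_{x,v}}\le e^t\|g_0\|_{L^2_{x,v}}$, and integrating the inequality in time and inserting this bound controls $\|\mathcal{H}^{\frac{s}{2}}g_N\|_{L^2_TL^2_{x,v}}$ by a multiple of $e^T\|g_0\|_{L^2_{x,v}}$. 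Together these give \eqref{local-112A} with a suitable $c_0>1$, all constants uniform in $N$ (the exponent $3T$ being a comfortable over-estimate). The main obstacle is not this algebra but the rigorous justification of the energy identity: the weak solution of Proposition \ref{local-1A} lies only in $L^\infty_TL^2_{x,v}$, so differentiating $\|g_N\|^2$ after testing against $g_N$ is not immediately licit because of the unbounded, $v$-weighted transport term $v\partial_xg_N$. I would resolve this in the usual way, by regularizing with a Friedrichs mollifier in $(x,v)$, deriving the energy identity for the mollified solution, controlling the transport commutator, and passing to the limit, the skew-symmetry of $v\partial_x$ ensuring its contribution disappears. This is the step requiring care, although it is routine and parallels the regularization scheme of \cite{LMPX2} already used in this section.
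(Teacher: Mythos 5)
Your outline gets the formal energy algebra right, and two of your observations are sound: existence does come from Proposition \ref{local-1A} applied to $f_N$, with $\|f_N\|_{L^\infty_{t,x}L^2_v}\leq C\|f\|_{L^\infty_{t,x}L^2_v}$ uniformly in $N$ by the $L^1$-kernel bound for $S_N$; and applying the homogeneous trilinear estimate of \cite{LMPX4} pointwise in $x$ and integrating yields the factor $\|f_N\|_{L^\infty_xL^2_v}$, which indeed matches the stated hypothesis at least as cleanly as the $\|f_N\|_{L^2_vL^\infty_x}$ norm appearing in Lemma \ref{trilinear-estimate-S}. The Gr\"onwall step and the resulting $e^{T}\leq e^{3T}$ bookkeeping are also fine once the energy inequality is available.

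The genuine gap is in the step you declare ``routine'': the justification of the energy identity. First, for a weak solution that is merely $L^\infty_TL^2_{x,v}$, the pairings $(\mathcal{K}g_N,g_N)_{L^2_{x,v}}$ and $(\Gamma(f_N,g_N),g_N)_{L^2_{x,v}}$ are not a priori defined: by \eqref{A-2}--\eqref{A-3} and Lemma \ref{trilinear-estimate-SSS} one only knows $\mathcal{H}^{-s}\mathcal{K}g_N,\ \mathcal{H}^{-s}\Gamma(f_N,g_N)\in L^2$, so pairing with $g_N$ presupposes $\mathcal{H}^{s/2}g_N\in L^2_TL^2_{x,v}$ --- which is the conclusion \eqref{local-112A}, not a hypothesis. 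Mollification does not dissolve this circularity unless you can commute the mollifier through $\mathcal{K}$ and through the second slot of $\Gamma$; and here lies the second problem: a Friedrichs convolution in $v$ does \emph{not} commute with $\mathcal{K}$, which is nonlocal and not translation-invariant in $v$ (it is diagonal in the Hermite basis), and the commutator is an operator of order $s$ in the $\mathcal{H}$-calculus with polynomial $v$-weights, not small in any norm you control at this stage. So the transport commutator is not the only one, and the others are the hard ones. The paper's device is precisely engineered to avoid this: set $g_\delta=(1+\delta\sqrt{\mathcal{H}}+\delta\langle D_x\rangle)^{-1}g_N$. This regularizer commutes \emph{exactly} with $\mathcal{K}$ (a function of $\mathcal{H}$); since $s/2<1/2$, the operator $\mathcal{H}^{s/2}(1+\delta\sqrt{\mathcal{H}})^{-1}$ is bounded, so all pairings become finite and Theorem 3 of \cite{E} gives absolute continuity of $t\mapsto\|g_\delta(t)\|^2_{L^2_{x,v}}$; its commutator with $v$ in the transport term is controlled explicitly by the estimate (4.10) of \cite{LMPX2}, costing $2\|g_\delta\|^2_{L^2_{x,v}}$ (this is where the exponent $3T$ actually comes from); and the regularized nonlinear pairings are exactly what the $\delta$-weighted estimates \eqref{trilinear-A} of Lemma \ref{trilinear-estimate-S} (with $j_1=0$, $j_2=1$, and the $\delta\langle D_x\rangle$ variant) were proved for --- see \eqref{local-S2}--\eqref{local-S3}. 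Finally the bound for $g_N$ itself is recovered by letting $\delta\rightarrow0_{+}$ via monotone convergence through the Hermite--Fourier representation of the norms. Your proposal is missing this regularization mechanism, and supplying a working substitute essentially amounts to reconstructing the paper's proof.
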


\begin{proof}
By applying Proposition \ref{local-1A}, we see that the Cauchy problem \eqref{local-111A} admits a weak solution $g_N(t,x,v)\in L^{\infty}([0,T];L^{2}(\mathbb{R}^{2}_{x,v}))$. It only need to show \eqref{local-112A} for a weak solution $g_N\in L^{\infty}([0,T];L^{2}(\mathbb{R}^{2}_{x,v}))$ under the assumption that $\|f_N\|_{L^{\infty}([0,T]\times\mathbb{R}_{x};L^{2}_{v})}$ is sufficiently small, independent of $N$.

It follows from \eqref{A-2}, \eqref{A-3} and Lemma \ref{trilinear-estimate-SSS} that
\begin{align*}
\mathcal{H}^{-s}\mathcal{K}g\in L^{\infty}([0,T];L^{2}(\mathbb{R}^{2}_{x,v})), \ \ \mathcal{H}^{-s}\Gamma(f,g)\in L^{\infty}([0,T];L^{2}(\mathbb{R}^{2}_{x,v})),
\end{align*}
for $f\in L^{\infty}([0,T]\times\mathbb{R}_{x};L^{2}(\mathbb{R}_{v})),g\in L^{\infty}([0,T];L^{2}(\mathbb{R}^{2}_{x,v}))$. Define
\begin{align}\label{KK1}
g_{\delta}=(1+\delta\sqrt{\mathcal{H}}+\delta\langle D_{x}\rangle)^{-1}g_{N}, \ \ 0<\delta\leq1.
\end{align}
Notice that
$$
(1+\delta\sqrt{\mathcal{H}}+\delta\langle D_{x}\rangle)g_{\delta}\in L^{\infty}([0,T];L^{2}(\mathbb{R}^{2}_{x,v}))\subset L^{2}([0,T];L^{2}(\mathbb{R}^{2}_{x,v})).
$$
According to Theorem 3 in \cite{E}, we deduce that the mapping
$$
t\mapsto\|g_{\delta}(t)\|^{2}_{L^{2}_{x,v}},
$$
is absolutely continuous with
\begin{align}\label{KK-1}
\frac{d}{dt}\Big(\|g_{\delta}\|^{2}_{L^{2}_{x,v}}\Big)=2\mathrm{Re}(\partial_{t}g_{\delta},g_{\delta})_{L^{2}_{x,v}}.
\end{align}
Taking the inner product of \eqref{local-111A} with $(1+\delta\sqrt{\mathcal{H}}+\delta\langle D_{x}\rangle)^{-2}g$ and integrating the resulting inequality with respect to $(x,v)\in\mathbb{R}^{2}$. It follows from \eqref{KK1} and \eqref{KK-1} that
\begin{equation*}%\label{local-S}
\begin{split}
&\frac{1}{2}\frac{d}{dt}\left(\left\|g_{\delta}\right\|^{2}_{L^{2}_{x,v}}\right)+\mathrm{Re}\left(\mathcal{K}g_{\delta},g_{\delta}\right)_{L^{2}_{x,v}}
+\mathrm{Re}\left(v\partial_{x}g_{\delta},g_{\delta}\right)_{L^{2}_{x,v}}
\\&\quad
+\mathrm{Re}([(1+\delta\sqrt{\mathcal{H}}+\delta\langle D_{x}\rangle)^{-1},v](1+\delta\sqrt{\mathcal{H}}+\delta\langle D_{x}\rangle)\partial_{x}g_{\delta},g_{\delta})_{L^{2}_{x,v}}
\\&
=\mathrm{Re}((1+\delta\sqrt{\mathcal{H}}+\delta\langle D_{x}\rangle)^{-1}\Gamma(f_{N},(1+\delta\sqrt{\mathcal{H}}+\delta\langle D_{x}\rangle)g_{\delta}),g_{\delta})_{L^{2}_{x,v}},
\end{split}
\end{equation*}
since $[(1+\delta\sqrt{\mathcal{H}}+\delta\langle D_{x}\rangle)^{-1},\mathcal{K}]=0$. Due to the coercivity estimate of the linearized Kac collision operator $\mathcal{K}$, we obtain, for all $0\leq t\leq T$,
\begin{equation}\label{local-S1}
\begin{split}
&\frac{1}{2}\frac{d}{dt}(\|g_{\delta}\|^{2}_{L^{2}_{x,v}})+\frac{1}{C}\|\mathcal{H}^{\frac{s}{2}}g_{\delta}\|^{2}_{L^{2}_{x,v}}-\|g_{\delta}\|^{2}_{L^{2}_{x,v}}
\\&
\leq|((1+\delta\sqrt{\mathcal{H}}+\delta\langle D_{x}\rangle)^{-1},v](1+\delta\sqrt{\mathcal{H}}+\delta\langle D_{x}\rangle)\partial_{x}g_{\delta},g_{\delta})_{L^{2}_{x,v}}|
\\&\quad
+|((1+\delta\sqrt{\mathcal{H}}+\delta\langle D_{x}\rangle)^{-1}\Gamma(f_{N},(1+\delta\sqrt{\mathcal{H}})g_{\delta}),g_{\delta})_{L^{2}_{x,v}}|
\\&\quad
+|((1+\delta\sqrt{\mathcal{H}}+\delta\langle D_{x}\rangle)^{-1}\Gamma(f_{N},\delta\langle D_{x}\rangle g_{\delta}),g_{\delta})_{L^{2}_{x,v}}|,
\end{split}
\end{equation}
since $\mathcal{K}$ is a selfadjoint operator and $\mathrm{Re}(v\partial_{x}g_{\delta},g_{\delta})_{L^{2}_{x,v}}=0$. Furthermore, it follows from Lemma \ref{trilinear-estimate-S} with $j_1=0,j_2=1$ that for all $0\leq\delta\leq1$,
\begin{equation}\label{local-S2}
\begin{split}
&\left|\left(\left(1+\delta\sqrt{\mathcal{H}}+\delta\langle D_{x}\rangle\right)^{-1}\Gamma\left(f_{N},(1+\delta\sqrt{\mathcal{H}})g_{\delta}\right),g_{\delta}\right)_{L^{2}_{x,v}}\right|
\\&\qquad
\leq C_{0}\|f_{N}\|_{L^{2}_{v}L^{\infty}_{x}}\|\mathcal{H}^{\frac{s}{2}}g_{\delta}\|^{2}_{L^{2}_{x,v}}.
\end{split}
\end{equation}
Similarly,
\begin{equation}\label{local-S3}
\begin{split}
&\left|\left((1+\delta\sqrt{\mathcal{H}}+\delta\langle D_{x}\rangle)^{-1}\Gamma(f_{N},\delta\langle D_{x}\rangle g_{\delta}),g_{\delta}\right)_{L^{2}_{x,v}}\right|
\\&
=\left|\left(\Gamma(f_{N},\delta\langle D_{x}\rangle(1+\delta\sqrt{\mathcal{H}}+\delta\langle D_{x}\rangle)^{-1}g_{N}),M_{\delta}g_{\delta}\right)_{L^{2}_{x,v}}\right|
\\&
\leq C_{0}\|f_{N}\|_{L^{2}_{v}L^{\infty}_{x}}\|\mathcal{H}^{\frac{s}{2}}g_{N}\|_{L^{2}_{x,v}}\|\mathcal{H}^{\frac{s}{2}}g_{\delta}\|_{L^{2}_{x,v}}.
\end{split}
\end{equation}
Thanks to the commutator estimate in (4.10) of \cite{LMPX2}, we have
$$
\left\|\left[(1+\delta\sqrt{\mathcal{H}}+\delta\langle D_{x}\rangle)^{-1},v\right]\left(1+\delta\sqrt{\mathcal{H}}+\delta\langle D_{x}\rangle\right)\partial_{x}f\right\|^{2}_{L^{2}_{x,v}}\leq4\|f\|^{2}_{L^{2}_{x,v}},
$$
which leads to
\begin{equation}\label{local-S4}
\begin{split}
\left|\left([(1+\delta\sqrt{\mathcal{H}}+\delta\langle D_{x}\rangle)^{-1},v](1+\delta\sqrt{\mathcal{H}}+\delta\langle D_{x}\rangle)\partial_{x}g_{\delta},g_{\delta}\right)_{L^{2}_{x,v}}\right|\leq2\|g_{\delta}\|^{2}_{L^{2}_{x,v}}.
\end{split}
\end{equation}
Consequently, we can deduce from \eqref{local-S1}, \eqref{local-S2}, \eqref{local-S3} and \eqref{local-S4} that for all $0\leq t\leq T, 0<\delta\leq1$
\begin{equation*}%\label{local-S5}
\begin{split}
&\frac{1}{2}\frac{d}{dt}\Big(\|g_{\delta}\|^{2}_{L^{2}_{x,v}}\Big)+\frac{1}{C}\|\mathcal{H}^{\frac{s}{2}}g_{\delta}\|^{2}_{L^{2}_{x,v}}
\\&
\leq3\|g_{\delta}\|^{2}_{L^{2}_{x,v}}+C_{0}\|f_{N}\|_{L^{2}_{v}L^{\infty}_{x}}
(\|\mathcal{H}^{\frac{s}{2}}g_{N}\|_{L^{2}_{x,v}}+\|\mathcal{H}^{\frac{s}{2}}g_{\delta}\|_{L^{2}_{x,v}})
\|\mathcal{H}^{\frac{s}{2}}g_{\delta}\|_{L^{2}_{x,v}}.
\end{split}
\end{equation*}
Furthermore, if taking
\begin{align*}%\label{ff}
\left\|f\right\|_{L^{\infty}([0,T]\times\mathbb{R}_{x};L^{2}_{v})}\leq\frac{1}{4CC_{0}}, \ \ \ \ T>0,
\end{align*} then we obtain
\begin{equation*}%\label{local-S6}
\begin{split}
&\frac{d}{dt}\Big(\|g_{\delta}\|^{2}_{L^{2}_{x,v}}\Big)+\frac{1}{C}\|\mathcal{H}^{\frac{s}{2}}g_{\delta}\|^{2}_{L^{2}_{x,v}}
\leq6\|g_{\delta}\|^{2}_{L^{2}_{x,v}}+2C_{0}\|f_{N}\|_{L^{2}_{v}L^{\infty}_{x}}\|\mathcal{H}^{\frac{s}{2}}g_{N}\|^{2}_{L^{2}_{x,v}},
\end{split}
\end{equation*}
which leads to
\begin{equation*}%\label{local-S7}
\begin{split}
&\|g_{\delta}\|^{2}_{L^{2}_{x,v}}+\frac{1}{C}\int_{0}^{t}e^{6(t-\tau)}\|\mathcal{H}^{\frac{s}{2}}g_{\delta}(\tau)\|^{2}_{L^{2}_{x,v}}d\tau
\\&
\leq e^{6t}\|g_{0}\|^{2}_{L^{2}_{x,v}}+2C_{0}\|f_{N}(\tau)\|_{L^{\infty}([0,T]\times\mathbb{R}_{x};L^{2}_{v})}\int_{0}^{t}e^{6(t-\tau)}
\|\mathcal{H}^{\frac{s}{2}}g_{N}(\tau)\|^{2}_{L^{2}_{x,v}}d\tau
\end{split}
\end{equation*}
for $0\leq t\leq T$ and $ 0<\delta\leq1$. Consequently, we obtain
\begin{equation*}%\label{local-S8}
\begin{split}
&\|g_{\delta}\|^{2}_{L^\infty_TL^{2}_{x,v}}+\frac{1}{C}\|\mathcal{H}^{\frac{s}{2}}g_{\delta}(\tau)\|^{2}_{L^{2}([0,T]\times\mathbb{R}^{2}_{x,v})}
\\&
\leq e^{6T}\|g_{0}\|^{2}_{L^{2}_{x,v}}+2C_{0}e^{6T}\|f_{N}(\tau)\|_{L^{\infty}([0,T]\times\mathbb{R}_{x};L^{2}_{v})}
\|\mathcal{H}^{\frac{s}{2}}g_{N}(\tau)\|^{2}_{L^{2}([0,T]\times\mathbb{R}^{2}_{x,v})}.
\end{split}
\end{equation*}
On the other hand, noticing that
\begin{align*}
&\|g_{\delta}\|^{2}_{L^{2}_{x,v}}=\frac{1}{2\pi}\sum_{n=0}^{+\infty}\int_{\mathbb{R}}\Big(1+\delta\sqrt{n+\frac{1}{2}}+\delta\langle\xi\rangle\Big)^{-2}
|\mathcal{F}_{x}\bar{g}_N(t,\xi)|^2d\xi,
\\&
\|\mathcal{H}^{\frac{s}{2}}g_{\delta}(\tau)\|^{2}_{L^{2}([0,T]\times\mathbb{R}^{2}_{x,v})}
\\&
=\frac{1}{2\pi}\int_{0}^{T}\sum_{n=0}^{+\infty}(n+\frac{1}{2})^{s}
\int_{\mathbb{R}}\Big(1+\delta\sqrt{n+\frac{1}{2}}+\delta\langle\xi\rangle\Big)^{-2}|\mathcal{F}_{x}\bar{g}_N(t,\xi)|^2d\xi dt,
\end{align*}
with $\bar{g}_N=(g_{N}(t,x,\cdot),e_{n})_{L^{2}(\mathbb{R}_{v})}$, where $\mathcal{F}_{x}$ denotes the partial Fourier transform in the position variable,
it follows from the monotone convergence theorem (passing to the limit $\delta\rightarrow0_{+}$) that
\begin{align*}
&\|g_{N}\|^{2}_{L^\infty_TL^{2}_{x,v}}+\frac{1}{C}\|\mathcal{H}^{\frac{s}{2}}g_{N}(\tau)\|^{2}_{L^{2}([0,T]\times\mathbb{R}^{2}_{x,v})}
\\&
\leq e^{6T}\|g_{0}\|^{2}_{L^{2}_{x,v}}+2C_{0}e^{6T}\|f_{N}(\tau)\|_{L^{\infty}([0,T]\times\mathbb{R}_{x};L^{2}_{v})}
\|\mathcal{H}^{\frac{s}{2}}g_{N}(\tau)\|^{2}_{L^{2}([0,T]\times\mathbb{R}^{2}_{x,v})}.
\end{align*}
Thanks to the smallness of $\|f_{N}(\tau)\|_{L^{\infty}([0,T]\times\mathbb{R}_{x};L^{2}_{v})}$ (taking $\|f_{N}(\tau)\|_{L^{\infty}([0,T]\times\mathbb{R}_{x};L^{2}_{v})}\leq\frac{1}{4CC_{0}e^{6T}}$), we arrive at
\begin{align*}
\|g_{N}\|^{2}_{L^\infty_TL^{2}_{x,v}}+\|\mathcal{H}^{\frac{s}{2}}g_{N}(\tau)\|^{2}_{L^{2}([0,T]\times\mathbb{R}^{2}_{x,v})}\leq 2(C+1)e^{6T}\|g_{0}\|^{2}_{L^{2}_{x,v}}.
\end{align*}
Hence, the proof of Proposition \ref{local existence-gA} is finished.
\end{proof}

\begin{remark}%\label{fff}
Owing to the embedding $\widetilde{L}^{\infty}_T\widetilde{L}^2_{v}(B^{1/2}_{2,1})\hookrightarrow L^{\infty}([0,T]\times\mathbb{R}_{x};L^{2}(\mathbb{R}_{v}))$, we deduce that the norm $\|f\|_{L^{\infty}([0,T]\times\mathbb{R}_{x};L^{2}_{v})}$ is small, since $\|f\|_{\widetilde{L}^{\infty}_T\widetilde{L}^2_{v}(B^{1/2}_{2,1})}$ is sufficiently small. However, there is no regularity available in position variable $x$ for the weak solution $g_N$ according to Proposition \ref{local existence-gA}. In that case, we cannot attain desired solutions presented by Theorem \ref{local-regularity}.
\end{remark}

\subsection{Regularity of weak solution in position variable}%\label{S4-5}

In what follow, we establish the regularity of $g_N$ with respect to $x$.

\begin{lemma}\label{local-11}
Let $0<\sigma\leq1/2$ and $0<T<+\infty$. For $N\in\mathbb{N}$, setting $f_N=\sum^{N-1}_{p\geq-1}\Delta_{p}f$. If $g_N$ satisfies
$$
g_N\in L^{\infty}([0,T];L^2(\mathbb{R}^{2}_{x,v})), \quad\quad \mathcal{H}^{\frac{s}{2}}g_N\in L^{2}([0,T]\times\mathbb{R}^{2}_{x,v}),
$$
then there exists a $\tilde{C}>0$ independent of $N$ such that for any $\kappa>0$
\begin{equation}\label{local-13}
\begin{split}
&\sum_{p\geq-1}\frac{2^{p\sigma}}{1+\kappa2^{2p\sigma}}\left(\int_{0}^{T}\left|\left(\Delta_{p}\Gamma(f_N,g_N),\Delta_{p}g_N\right)_{x,v}\right|dt\right)^{1/2}
\\&\quad
\leq \widetilde{C}\|f_N\|^{1/2}_{\widetilde{L}^{\infty}_T\widetilde{L}^2_{v}(B^{1/2}_{2,1})}\|\mathcal{H}^{\frac{s}{2}}g_N\|
_{\widetilde{L}^{2}_T\widetilde{L}^2_{v}(B^{\sigma,\kappa}_{2,1})}
\\&\quad\quad
+C_N\|f_N\|^{1/2}_{\widetilde{L}^{\infty}_T\widetilde{L}^2_{v}(B^{1/2}_{2,1})}\|\mathcal{H}^{\frac{s}{2}}g_N\|_{L^{2}_TL^2_{v}L^{2}_{x}},
\end{split}
\end{equation}
where $C_{N}>0$ is a constant depending only on $N$ and
\begin{align*}
\|g_N\|_{\widetilde{L}^{2}_T\widetilde{L}^2_{v}(B^{\sigma,\kappa}_{2,1})}=\sum_{p\geq-1}\frac{2^{p\sigma}}{1+\kappa2^{2p\sigma}}\|\Delta_{p}g_N\|_{L^{2}_TL^2_{v}L^{2}_{x}}.
\end{align*}
\end{lemma}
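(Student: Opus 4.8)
The plan is to run the Bony/dyadic decomposition already used for Lemma~\ref{trilinear-estimate-G} and Lemma~\ref{trilinear-G}, but to keep careful track of the regularizing weight $w_p:=2^{p\sigma}/(1+\kappa 2^{2p\sigma})$ and to exploit the frequency cut-off $f_N=S_Nf$. Writing $\Delta_p\Gamma(f_N,g_N)$ via Bony's decomposition in $x$ (together with the Hermite expansion in $v$ underlying Remark~\ref{Tri}), I would split $(\Delta_p\Gamma(f_N,g_N),\Delta_pg_N)_{L^2_{x,v}}$ into the paraproduct pieces $\mathbf{T}_{f_N}g_N$, $\mathbf{T}_{g_N}f_N$ and the remainder $\mathbf{R}(f_N,g_N)$. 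The structural point is that in $\mathbf{T}_{f_N}g_N$ the factor $g_N$ is localized at frequency $j$ with $|j-p|\le4$, whereas in $\mathbf{T}_{g_N}f_N$ and $\mathbf{R}(f_N,g_N)$ it is $f_N$ that carries the localization (at frequency $j\ge p-4$).

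For the first piece I would use that $w_p\approx w_j$ whenever $|j-p|\le4$: since $w_p/w_j=2^{(p-j)\sigma}(1+\kappa2^{2j\sigma})/(1+\kappa2^{2p\sigma})$ stays in $[2^{-12\sigma},2^{12\sigma}]$ uniformly in $\kappa$, the weight passes freely between $j$ and $p$. Thus the argument of Lemma~\ref{trilinear-G} carries over verbatim: one bounds the low-frequency factor by $\|f_N\|_{L^\infty_TL^2_v(B^{1/2}_{2,1})}$ through $B^{1/2}_{2,1}\hookrightarrow L^\infty_x$, applies Cauchy--Schwarz in time, and then uses $w_p\approx w_j^{1/2}w_p^{1/2}$ together with a Cauchy--Schwarz over $p$ (Young's convolution inequality) so that both copies of $g_N$ are measured in the weighted norm. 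This yields the main term $\widetilde C\|f_N\|^{1/2}_{B^{1/2}_{2,1}}\|\mathcal H^{s/2}g_N\|_{\widetilde L^2_T\widetilde L^2_v(B^{\sigma,\kappa}_{2,1})}$, with $\widetilde C$ depending only on $\sigma$, hence independent of $N$ and $\kappa$.

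The remaining two pieces are where the weight fails to cooperate, and this is the heart of the matter: for $j\ge p-4$ one would need $\sum_{-1\le p\le j+4}w_p\lesssim w_j$ in order to transfer the weight as in Lemma~\ref{trilinear-G}, but $\sum_{-1\le p\le j+4}w_p\sim\kappa^{-1/2}$ while $w_j\sim\kappa^{-1}2^{-j\sigma}$ for large $j$, so no such bound holds and these terms cannot be absorbed into the weighted norm. To circumvent this I would invoke the truncation $f_N=S_Nf$: its $x$-frequencies satisfy $|\xi|\lesssim2^N$, so $\Delta_jf_N=0$ for $j>N$, and since $f_N$ is precisely the localized factor here the sum $\sum_{j\ge p-4}$ collapses to $\max(p-4,-1)\le j\le N$, forcing $p\le N+4$. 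To avoid the full norm $\|\mathcal H^{s/2}g_N\|_{L^2_v(B^{1/2}_{2,1})}$ (which need not be finite, as $g_N$ has no position regularity), I would keep $g_N$ localized and move the critical half-derivative onto the band-limited factor via Bernstein's inequality $\|\Delta_jf_N\|_{L^\infty_x}\lesssim2^{j/2}\|\Delta_jf_N\|_{L^2_x}$; then $\mathbf{T}_{g_N}f_N$ retains $g_N$ only through $\|S_{j-1}g_N\|_{L^2_x}\le\|g_N\|_{L^2_x}$, and $\mathbf{R}(f_N,g_N)$ only through the localized pieces $\|\mathcal H^{s/2}\Delta_jg_N\|_{L^2}$, $\|\mathcal H^{s/2}\Delta_pg_N\|_{L^2}$. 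On the finite range $p\le N+4$, $j\le N$ I would crudely bound $w_p\le2^{(N+4)\sigma}$ and sum the finitely many terms, producing the error term $C_N\|f_N\|^{1/2}_{B^{1/2}_{2,1}}\|\mathcal H^{s/2}g_N\|_{L^2_TL^2_vL^2_x}$ with $C_N$ depending only on $N$.

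Adding the two contributions gives \eqref{local-13}. The routine part is the first piece, which merely re-runs Lemma~\ref{trilinear-G} with $w_p$ in place of $2^{p\sigma}$; the genuinely delicate point, and the main obstacle, is that the regularizing weight is not almost shift-invariant across widely separated frequencies, so the interaction of a low output frequency $p$ with high input frequencies in the remainder can only be controlled by sacrificing the weight and paying the $N$-dependent constant $C_N$ — which is harmless here because $N$ is fixed, whereas the uniform constant $\widetilde C$ multiplies the weighted norm that will later be absorbed.
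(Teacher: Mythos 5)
Your proposal is correct and follows essentially the same route as the paper's proof: the identical Bony splitting into $\Gamma^{1}=\sum_j\Gamma(S_{j-1}f_N,\Delta_jg_N)$, $\Gamma^{2}=\sum_j\Gamma(\Delta_jf_N,S_{j-1}g_N)$ and the remainder, the uniform comparability of the weight $w_p=2^{p\sigma}/(1+\kappa2^{2p\sigma})$ across $|j-p|\le4$ to produce the $N$- and $\kappa$-independent constant $\widetilde C$ for the first piece, and the band-limitation $\Delta_jf_N=0$ for $j>N$ to reduce the other two pieces to finitely many terms with $p\lesssim N$, paid for by $C_N\sim N2^{N\sigma}$ exactly as in the paper. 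The only cosmetic deviations are that the paper channels the trilinear bounds through Remark~\ref{Tri}, so the low-frequency slot appears as $\|\mathcal{H}^{\frac{s}{2}}S_{j-1}g_N\|_{L^2_{x,v}}$ rather than your $\|S_{j-1}g_N\|_{L^2_x}$ (harmless, since your final error term correctly carries $\|\mathcal{H}^{\frac{s}{2}}g_N\|_{L^{2}_TL^2_{v}L^{2}_{x}}$), and that the paper begins by verifying the finiteness of the weighted norm on the right-hand side, which follows trivially from $w_p\le C_\kappa 2^{-p\sigma}$ under your hypotheses.
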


\begin{proof}
For $\sigma, \kappa>0$, we have
\begin{align*}
&\|\mathcal{H}^{\frac{s}{2}}g_N\|_{\widetilde{L}^{2}_T\widetilde{L}^2_{v}(B^{\sigma,\kappa}_{2,1})}
=\sum_{p\geq-1}\frac{2^{p\sigma}}{1+\kappa2^{2p\sigma}}\|\Delta_{p}\mathcal{H}^{\frac{s}{2}}g_N\|_{L^{2}_TL^2_{v}L^{2}_{x}}
\\&\leq C_{\kappa}\sum_{p\geq-1}2^{-p\sigma}\|\mathcal{H}^{\frac{s}{2}}g_N\|_{L^{2}_TL^2_{v}L^{2}_{x}}
\leq C_{\kappa}\|\mathcal{H}^{\frac{s}{2}}g_N\|_{L^{2}_TL^2_{v}L^{2}_{x}},
\end{align*}
where $C_{\kappa}>0$ is a constant depending only on $\kappa$. Hence, one has $\|\mathcal{H}^{\frac{s}{2}}g_N\|_{\widetilde{L}^{2}_T\widetilde{L}^2_{v}(B^{\sigma,\kappa}_{2,1})}<+\infty$ due to
$\mathcal{H}^{\frac{s}{2}}g_N\in L^{2}([0,T]\times\mathbb{R}^{2}_{x,v})$.

By using Bony's decomposition, we divide the inner product into three parts:
\begin{align*}
\left(\Delta_{p}\Gamma(f_N,g_N),\Delta_{p}g_N\right)=\left(\Delta_{p}(\Gamma^{1}(f_N,g_N)+\Gamma^{2}(f_N,g_N)+ \Gamma^{3}(f_N,g_N)),\Delta_{p}g_N\right),
\end{align*}
where $\Gamma^{1}(f_N,g_N)\triangleq\sum_{j}\Gamma(S_{j-1}f_N,\Delta_{j}g_N), \Gamma^{2}(f_N,g_N)\triangleq\sum_{j}\Gamma(\Delta_{j}f_N,S_{j-1}g_N)$
and $\Gamma^{3}(f_N,g_N)\triangleq\sum_{j}\sum_{|j-j'|\leq1}\Gamma(\Delta_{j'}f_N,\Delta_{j}g_N)$.
For $\Gamma^{1}(f_N,g_N)$, note that
$$
\Delta_{p}\sum_{j}(S_{j-1}f_N\Delta_{j}g_N)=\Delta_{p}\sum_{|j-p|\leq4}(S_{j-1}f_N\Delta_{j}g_N).
$$
It follows from Remark \ref{Tri} that
\begin{align*}
&\sum_{p\geq-1}\frac{2^{p\sigma}}{1+\kappa2^{2p\sigma}}\left(\int_{0}^{T}\left|\left(\Delta_{p}\Gamma^{1}(f_N,g_N),\Delta_{p}g_N\right)_{x,v}\right|dt\right)^{1/2}
\\&
\lesssim\sum_{p\geq-1}\frac{2^{p\sigma}}{1+\kappa2^{2p\sigma}}\left(\sum_{|j-p|\leq4}\int_{0}^{T}\left\|S_{j-1}f_N\right\|_{L^{2}_{v}L^{\infty}_{x}}
\left\|\mathcal{H}^{\frac{s}{2}}\Delta_jg_N\right\|_{L^2_{x,v}}
\left\|\mathcal{H}^{\frac{s}{2}}\Delta_pg_N\right\|_{L^2_{x,v}}dt\right)^{1/2}
\\&
\lesssim\left\|f_N\right\|^{1/2}_{\widetilde{L}^{\infty}_T\widetilde{L}^2_{v}(B^{1/2}_{2,1})}
\|\mathcal{H}^{\frac{s}{2}}g_N\|_{\widetilde{L}^{2}_T\widetilde{L}^2_{v}(B^{\sigma,\kappa}_{2,1})}
\left(\sum_{p\geq-1}\sum_{|j-p|\leq4}\frac{1+\kappa2^{2j\sigma}}{1+\kappa2^{2p\sigma}}2^{(p-j)s}c(j)\right)^{1/2}
\\&
\lesssim\left\|f_N\right\|^{1/2}_{\widetilde{L}^{\infty}_T\widetilde{L}^2_{v}(B^{1/2}_{2,1})}
\|\mathcal{H}^{\frac{s}{2}}g_N\|_{\widetilde{L}^{2}_T\widetilde{L}^2_{v}(B^{\sigma,\kappa}_{2,1})},
\end{align*}
where we used Lemmas \ref{J7-3}, \ref{J7-1} and \ref{J7-5} in the third line and the following sequence $\{c(j)\}$
\begin{align*}
&c(j):=\frac{\frac{2^{j\sigma}}{1+\kappa2^{2j\sigma}}
\|\mathcal{H}^{\frac{s}{2}}\Delta_jg_N\|_{L^{2}_{T}L^2_{x,v}}}
{\|\mathcal{H}^{\frac{s}{2}}g_N\|_{\widetilde{L}^{2}_T\widetilde{L}^2_{v}(B^{\sigma,\kappa}_{2,1})}}
%\\&
%\sum_{p\geq-1}\sum_{|j-p|\leq4}\frac{1+\kappa2^{2j\sigma}}{1+\kappa2^{2p\sigma}}2^{(p-j)\sigma}c(j)
%\\&
%\leq C\sum_{p\geq-1}\sum_{|j-p|\leq4}2^{(p-j)\sigma}c(j)
%\leq C\sum_{p\geq-1}[(\mathbf{1}_{|j|\leq4}2^{j\sigma})\ast c(j)](p)
%\\&
%\leq C\|\mathbf{1}_{|j|\leq4}2^{j\sigma}\|_{\ell^{1}}\|c(j)\|_{\ell^{1}}<+\infty,
\end{align*}
satisfying $\|c(j)\|_{\ell^{1}}\leq1$.

For $\Gamma^{2}(f_N,g_N)$, similarly, we get
\begin{align*}
&\sum_{p\geq-1}\frac{2^{p\sigma}}{1+\kappa2^{2p\sigma}}\left(\int_{0}^{T}\left|\left(\Delta_{p}\Gamma^{2}(f_N,g_N),\Delta_{p}g_N\right)_{x,v}\right|dt\right)^{1/2}
\\&
\lesssim\sum_{p\geq-1}\frac{2^{p\sigma}}{1+\kappa2^{2p\sigma}}\left(\int_{0}^{T}\sum_{\substack{|j-p|\leq4\\ j\leq N}}
\|\Delta_jf_N\|_{L^{2}_{v}L^{\infty}_{x}}\left\|\mathcal{H}^{\frac{s}{2}}S_{j-1}g_N\right\|_{L^2_{x,v}}
\left\|\mathcal{H}^{\frac{s}{2}}\Delta_pg_N\right\|_{L^2_{x,v}}dt\right)^{1/2}
\\&
\leq C_N\|f_N\|^{1/2}_{\widetilde{L}^{\infty}_T\widetilde{L}^2_{v}(B^{1/2}_{2,1})}\|\mathcal{H}^{\frac{s}{2}}g_N\|_{L^{2}_TL^2_{v}L^{2}_{x}},
\end{align*}
where $C_N=CN2^{N\sigma}$ with $N\in\mathbb{N}$. Owing to
\begin{align*}
&\Delta_{p}\left(\sum_{j}\sum_{|j-j'|\leq1}(\Delta_{j'}f_N\Delta_{j}g_N)\right)=\Delta_{p}\left(\sum_{\max{j,j'}\geq p-2}\sum_{|j-j'|\leq1}(\Delta_{j'}f_N\Delta_{j}g_N)\right)
\\&\qquad=0, \ \ \text{if} \ \ p\geq N+3,
\end{align*}
then $\Gamma^{3}(f_N,g_N)$ can be estimated as follows:
\begin{align*}
&\sum_{p\geq-1}\frac{2^{p\sigma}}{1+\kappa2^{2p\sigma}}\left(\int_{0}^{T}\left|\left(\Delta_{p}  \Gamma^{3}(f_N,g_N),\Delta_{p}g_N\right)_{x,v}\right|dt\right)^{1/2}
\\&
=\sum_{p\geq-1}^{N+2}\frac{2^{p\sigma}}{1+\kappa2^{2p\sigma}}\left(\int_{0}^{T}\left|(\Delta_{p}\Gamma^{3}(f_N,g_N),\Delta_{p}g_N)_{x,v}\right|dt\right)^{1/2}
\\&
\lesssim\sum_{p\geq-1}^{N+2}\sum_{j\leq N+1}\frac{2^{p\sigma}}{1+\kappa2^{2p\sigma}}
\left(\int_{0}^{T}\left\|\Delta_{j'}f_N\right\|_{L^{2}_{v}L^{\infty}_{x}}\left\|\mathcal{H}^{\frac{s}{2}}\Delta_jg_N\right\|_{L^2_{x,v}}
\left\|\mathcal{H}^{\frac{s}{2}}\Delta_{p}g_N\right\|_{L^2_{x,v}}dt\right)^{1/2}
\\&
\lesssim C_N\|f_N\|^{1/2}_{\widetilde{L}^{\infty}_T\widetilde{L}^2_{v}(B^{1/2}_{2,1})}
\|\mathcal{H}^{\frac{s}{2}}g_N\|_{L^{2}_TL^2_{v}L^{2}_{x}}.
\end{align*}
Together the above three inequalities, we can get \eqref{local-13}.
\end{proof}

Based on Proposition \ref{local existence-gA} and Lemma \ref{local-11}, we obtain the regularity of the weak solution $g_N$ to \eqref{local-111A}.

\begin{proposition}\label{local-regularity-g}
There exists a constant $\varepsilon_{0}>0$ such that for all $T>0, g_{0}\in\widetilde{L}^2_{v}(B^{1/2}_{2,1}), f\in\widetilde{L}^\infty_{T}\widetilde{L}^2_{v}(B^{1/2}_{2,1})$ fulfilling
$$
\|f\|_{\widetilde{L}^{\infty}_T\widetilde{L}^2_{v}(B^{1/2}_{2,1})}\leq\varepsilon_{0},
$$
then \eqref{local-111A}
%\begin{equation}\label{local-111A}
%\left\{\begin{aligned}
%&\partial_tg_N+v\partial_xg_N+\mathcal{K}g_N=\Gamma(f_N,g_N),\\
%&g_N(t,x,v)|_{t=0}=g_{0}(x,v),
%\end{aligned} \right.
%\end{equation}
admits a weak solution $g_N\in L^{\infty}([0,T]; L^{2}(\mathbb{R}^{2}_{x,v}))$ satisfying
\begin{equation}\label{local-2A}
\begin{split}
&\|g_N\|_{\widetilde{L}^{\infty}_T\widetilde{L}^2_{v}(B^{1/2}_{2,1})}+\frac{1}{\sqrt{2C}}
\|\mathcal{H}^{\frac{s}{2}}g_N\|_{\widetilde{L}^{2}_T\widetilde{L}^2_{v}(B^{1/2}_{2,1})}
\\&
\leq e^{T}\|g_{0}\|_{\widetilde{L}^2_{v}(B^{1/2}_{2,1})}+\sqrt{2}C_Ne^{T}
\|f_N\|^{1/2}_{\widetilde{L}^{\infty}_T\widetilde{L}^2_{v}(B^{1/2}_{2,1})}\|\mathcal{H}^{\frac{s}{2}}g_N\|_{L^{2}_TL^2_{v}L^{2}_{x}},
\end{split}
\end{equation}
where $C_N>0$ is some constant depending on $N$.
\end{proposition}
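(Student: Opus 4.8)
The plan is to run a dyadic (Littlewood--Paley) energy estimate on the linearized equation \eqref{local-111A}, combining the coercivity of $\mathcal{K}$ from Lemma \ref{JJ} with the refined trilinear bound of Lemma \ref{local-11}, and then to remove an auxiliary regularizing cut-off by monotone convergence. The existence of the weak solution $g_N$ and the a priori dissipation control $\|\mathcal{H}^{\frac{s}{2}}g_N\|_{L^2_TL^2_vL^2_x}<+\infty$ are already supplied by Proposition \ref{local existence-gA}; note that by Proposition \ref{Cauchy-f} the smallness of $\|f\|_{\widetilde{L}^{\infty}_T\widetilde{L}^2_{v}(B^{1/2}_{2,1})}$ forces $\|f_N\|$ to be small uniformly in $N$, so only the upgrade to the Chemin--Lerner norm \eqref{local-2A} remains to be done.

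First I would localize by applying $\Delta_p$ and taking the $L^2_{x,v}$ inner product with $\Delta_p g_N$. Since $\Delta_p$ acts only on $x$ it commutes with both $v\partial_x$ and $\mathcal{K}$, while $\mathrm{Re}(v\partial_x\Delta_p g_N,\Delta_p g_N)_{L^2_{x,v}}=0$ by integration by parts. The coercivity estimate in Lemma \ref{JJ} then yields the per-block differential inequality
\[
\frac{d}{dt}\|\Delta_p g_N\|^2_{L^2_{x,v}}+\frac{2}{C}\|\Delta_p\mathcal{H}^{\frac{s}{2}}g_N\|^2_{L^2_{x,v}}\le 2\|\Delta_p g_N\|^2_{L^2_{x,v}}+2\big|(\Delta_p\Gamma(f_N,g_N),\Delta_p g_N)_{L^2_{x,v}}\big|.
\]
A Gronwall argument in time, in which the factor $e^{2(t-\tau)}$ ranges in $[1,e^{2T}]$, gives for each $p\ge-1$ the integrated bound
\[
\|\Delta_p g_N\|^2_{L^\infty_TL^2_{x,v}}+\frac{2}{C}\|\Delta_p\mathcal{H}^{\frac{s}{2}}g_N\|^2_{L^2_TL^2_{x,v}}\le e^{2T}\|\Delta_p g_0\|^2_{L^2_{x,v}}+2e^{2T}\int_0^T\big|(\Delta_p\Gamma(f_N,g_N),\Delta_p g_N)\big|\,dt.
\]

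Next I would take square roots (using $(a+b)^2\le 2a^2+2b^2$ to recombine the two left-hand terms into $\|\Delta_p g_N\|_{L^\infty_TL^2_{x,v}}+\tfrac{1}{\sqrt{2C}}\|\Delta_p\mathcal{H}^{\frac{s}{2}}g_N\|_{L^2_TL^2_{x,v}}$), multiply by the regularizing weight $2^{p/2}/(1+\kappa 2^{p})$, and sum over $p\ge-1$. This produces the cut-off norms $\|g_N\|_{\widetilde{L}^\infty_T\widetilde{L}^2_v(B^{1/2,\kappa}_{2,1})}$ and $\|\mathcal{H}^{\frac{s}{2}}g_N\|_{\widetilde{L}^2_T\widetilde{L}^2_v(B^{1/2,\kappa}_{2,1})}$ on the left, the datum $\|g_0\|_{\widetilde{L}^2_v(B^{1/2}_{2,1})}$ on the right (the weight being $\le 1$), and the summed nonlinear contribution $\sum_p \tfrac{2^{p/2}}{1+\kappa 2^p}\big(\int_0^T|(\Delta_p\Gamma,\Delta_p g_N)|\,dt\big)^{1/2}$. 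To the last term I apply Lemma \ref{local-11} with $\sigma=1/2$, which splits it into a principal part $\widetilde{C}\|f_N\|^{1/2}\|\mathcal{H}^{\frac{s}{2}}g_N\|_{\widetilde{L}^2_T\widetilde{L}^2_v(B^{1/2,\kappa}_{2,1})}$ and a low-frequency part $C_N\|f_N\|^{1/2}\|\mathcal{H}^{\frac{s}{2}}g_N\|_{L^2_TL^2_vL^2_x}$.

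I expect the crux to be the absorption step together with the role of the cut-off. Because the weight satisfies $2^{p/2}/(1+\kappa 2^{p})\le C_\kappa 2^{-p/2}$, the regularized norm $\|\mathcal{H}^{\frac{s}{2}}g_N\|_{\widetilde{L}^2_T\widetilde{L}^2_v(B^{1/2,\kappa}_{2,1})}$ is a priori \emph{finite}, being dominated by $C_\kappa\|\mathcal{H}^{\frac{s}{2}}g_N\|_{L^2_TL^2_vL^2_x}$, which is finite by Proposition \ref{local existence-gA}. This finiteness is precisely what legitimizes moving the principal nonlinear term to the left and, under the smallness $\|f\|\le\varepsilon_0$, absorbing it into the dissipation term, leaving only the residual $C_N$ term in front of the plain $L^2$ norm; all constants here are independent of $\kappa$. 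Finally, letting $\kappa\to 0^+$ and using the monotone convergence theorem (the weights $2^{p/2}/(1+\kappa 2^{p})$ increase to $2^{p/2}$) upgrades the cut-off norms to the genuine $\|g_N\|_{\widetilde{L}^\infty_T\widetilde{L}^2_v(B^{1/2}_{2,1})}$ and $\|\mathcal{H}^{\frac{s}{2}}g_N\|_{\widetilde{L}^2_T\widetilde{L}^2_v(B^{1/2}_{2,1})}$, yielding \eqref{local-2A}. The essential point is that one cannot absorb $\|\mathcal{H}^{\frac{s}{2}}g_N\|_{\widetilde{L}^2_T\widetilde{L}^2_v(B^{1/2}_{2,1})}$ directly, since its finiteness is exactly the conclusion being sought; the parameter $\kappa$ renders the subtraction rigorous before the limit is taken.
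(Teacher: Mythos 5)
Your proposal is correct and follows essentially the same route as the paper's proof: the same per-block energy estimate from the coercivity in Lemma \ref{JJ}, Gronwall with the factor $e^{2(t-\tau)}$, multiplication by the weight $2^{p/2}/(1+\kappa 2^{p})$ and summation, and then Lemma \ref{local-11} with $\sigma=1/2$ followed by absorption of the principal nonlinear term and the limit $\kappa\to 0^{+}$ by monotone convergence. In particular, your identification of the crux --- that the $\kappa$-cut-off norm is a priori finite (being dominated by $C_{\kappa}\|\mathcal{H}^{\frac{s}{2}}g_N\|_{L^{2}_TL^2_{v}L^{2}_{x}}$ via Proposition \ref{local existence-gA}), which is exactly what legitimizes the absorption before removing the cut-off --- is precisely the mechanism the paper uses.
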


\begin{proof}
Applying $\Delta_{p}(p\geq-1)$ to \eqref{local-111A}, and then taking the inner product with $\Delta_{p}g_N$ over $\mathbb{R}_{x}\times \mathbb{R}_{v}$ gives
\begin{align*}
&\frac{1}{2}\frac{d}{dt}\left(\|\Delta_{p}g_N\|^{2}_{L^{2}_{x,v}}\right)+\frac{1}{C}\|\mathcal{H}^{\frac{s}{2}}\Delta_{p}g_N\|^{2}_{L^{2}_{x,v}}
\leq \|\Delta_{p}g_N\|^{2}_{L^{2}_{x,v}}+\left(\Delta_{p}\Gamma(f_N,g_N),\Delta_{p}g_N\right)_{L^{2}_{x,v}},
\end{align*}
where we used the coercivity estimate of $\mathcal{K}$. It follows that
\begin{align*}
\frac{d}{dt}\left(e^{-2t}\|\Delta_{p}g_N\|^{2}_{L^{2}_{x,v}}\right)+\frac{2}{C}e^{-2t}\|\mathcal{H}^{\frac{s}{2}}\Delta_{p}g_N\|^{2}_{L^{2}_{x,v}}
\leq2e^{-2t}|\left(\Delta_{p}\Gamma(f_N,g_N),\Delta_{p}g_N\right)_{L^{2}_{x,v}}|
\end{align*}
for $0\leq t\leq T$.

Integrating the above inequality with respect to the time variable over $[0,t]$ with $0\leq t\leq T$ and taking the square root of both sides of the resulting inequality, we get
\begin{align*}
&\|\Delta_{p}g_N\|_{L^{2}_{x,v}}+\sqrt{\frac{2}{C}}\Big(\int_{0}^te^{2(t-\tau)}\|\mathcal{H}^{\frac{s}{2}}\Delta_{p}g_N(\tau)\|^{2}_{L^{2}_{x,v}}d\tau\Big)^{1/2}
\\&\qquad\leq e^{t}\left\|\Delta_{p}g_0\right\|_{L^2_{v}L^2_{x}}
+\sqrt{2}\Big(\int_{0}^te^{2(t-\tau)}|\left(\Delta_{p}\Gamma(f_N,g_N),\Delta_{p}g_N\right)_{L^{2}_{x,v}}|d\tau\Big)^{1/2},
\end{align*}
then, taking supremum over $0\leq t\leq T$ on the left side and multiplying the resulting inequality by $\frac{2^{p/2}}{1+\kappa2^{p}}$, we obtain
\begin{align*}
&\frac{2^{p/2}}{1+\kappa2^{p}}\left\|\Delta_{p}g_N\right\|_{L^2_{v}L^2_{x}}
+\sqrt{\frac{2}{C}}\frac{2^{p/2}}{1+\kappa2^{p}}\left(\int_{0}^{t}\left\|\mathcal{H}^{\frac{s}{2}}\Delta_{p}g_N\right\|^{2}_{L^2_{v}L^2_{x}}dt\right)^{1/2}
\\&
\leq e^{T}\frac{2^{p/2}}{1+\kappa2^{p}}\|\Delta_{p}g_{0}\|_{L^2_{v}L^2_{x}}
+\sqrt{2}e^{T}\frac{2^{p/2}}{1+\kappa2^{p}}\left(\int_{0}^{T}\left|\left(\Delta_{p}\Gamma(f_N,g_N),\Delta_{p}g_N\right)_{L^{2}_{x,v}}\right|dt\right)^{1/2}.
\end{align*}
Further taking the summation over $p\geq-1$, the above inequality implies
\begin{align*}
&\|g_N\|_{\widetilde{L}^{\infty}_T\widetilde{L}^2_{v}(B^{1/2,\kappa}_{2,1})}+\sqrt{\frac{2}{C}}
\|\mathcal{H}^{\frac{s}{2}}g_N\|_{\widetilde{L}^{2}_T\widetilde{L}^2_{v}(B^{1/2,\kappa}_{2,1})}
\\&
\leq e^{T}\|g_{0}\|_{\widetilde{L}^2_{v}(B^{1/2}_{2,1})}+\sqrt{2}\widetilde{C}e^{T}\|f_N\|^{1/2}_{\widetilde{L}^{\infty}_T\widetilde{L}^2_{v}(B^{1/2}_{2,1})}
\|\mathcal{H}^{\frac{s}{2}}g_N\|_{\widetilde{L}^{2}_T\widetilde{L}^2_{v}(B^{1/2,\kappa}_{2,1})}
\\&\quad
+\sqrt{2}C_Ne^{T}\|f_N\|^{1/2}_{\widetilde{L}^{\infty}_T\widetilde{L}^2_{v}(B^{1/2}_{2,1})}\|\mathcal{H}^{\frac{s}{2}}g_N\|_{L^{2}_TL^2_{v}L^{2}_{x}},
\end{align*}
where we used the Proposition \ref{Cauchy-f} and Lemma \ref{local-11}.
Then, by taking $\|f\|_{\widetilde{L}^{\infty}_T\widetilde{L}^2_{v}(B^{1/2}_{2,1})}\leq\frac{1}{4e^{2T}C\widetilde{C}^{2}}$ and letting $\kappa\rightarrow0$, we obtain
\begin{align*}
&\|g_N\|_{\widetilde{L}^{\infty}_T\widetilde{L}^2_{v}(B^{1/2}_{2,1})}+\frac{1}{\sqrt{2C}}
\|\mathcal{H}^{\frac{s}{2}}g_N\|_{\widetilde{L}^{2}_T\widetilde{L}^2_{v}(B^{1/2}_{2,1})}
\\&
\leq e^{T}\|g_{0}\|_{\widetilde{L}^2_{v}(B^{1/2}_{2,1})}+\sqrt{2}C_Ne^{T}
\|f_N\|^{1/2}_{\widetilde{L}^{\infty}_T\widetilde{L}^2_{v}(B^{1/2}_{2,1})}\|\mathcal{H}^{\frac{s}{2}}g_N\|_{L^{2}_TL^2_{v}L^{2}_{x}},
\end{align*}
which ends the proof of Proposition \ref{local-regularity-g}.
\end{proof}

\subsection{Energy estimates in Besov space}%\label{S4-6}

It follows from \eqref{local-2A} in Proposition \ref{local-regularity-g} that
$$
\|g_N\|_{\widetilde{L}^{\infty}_{T}\widetilde{L}^{2}_{v}(B^{1/2}_{2,1})}+\|\mathcal{H}^{\frac{s}{2}}g_N\|
_{\widetilde{L}^{2}_T\widetilde{L}^2_{v}(B^{1/2}_{2,1})}<+\infty.
$$
Then applying the Corollary \ref{trilinear-G-R} to $f_N$ and $ g_N$, we get the following inequality
\begin{equation}\label{local-13B}
\begin{split}
\sum_{p\geq-1}2^{\frac{p}{2}}\left(\int_{0}^{T}\left|\left(\Delta_{p}\Gamma(f_N,g_N),\Delta_{p}g_N\right)_{x,v}\right|dt\right)^{1/2}
\leq C_{1}\|f_N\|^{1/2}_{\widetilde{L}^{\infty}_T\widetilde{L}^2_{v}(B^{1/2}_{2,1})}
\|\mathcal{H}^{\frac{s}{2}}g_N\|_{\widetilde{L}^{2}_T\widetilde{L}^2_{v}(B^{1/2}_{2,1})},
\end{split}
\end{equation}
for some constant $C_{1}>0$ independent of $N$.

With aid of \eqref{local-13B}, one can obtain the further energy estimate, which is independent of $N$ for the weak solution $g_N$.

\begin{proposition}\label{local-regularity-gB}
There exists a constant $\varepsilon_{0}>0$ such that for $T>0, g_{0} \in\widetilde{L}^2_{v}(B^{1/2}_{2,1}), f\in\widetilde{L}^\infty_{T}\widetilde{L}^2_{v}(B^{1/2}_{2,1})$ fulfilling
$$
\|f\|_{\widetilde{L}^{\infty}_T\widetilde{L}^2_{v}(B^{1/2}_{2,1})}\leq\varepsilon_{0},
$$
then \eqref{local-111A} admits a weak solution $g_N\in L^{\infty}([0,T]; L^{2}(\mathbb{R}^{2}_{x,v}))$ satisfying
\begin{equation}\label{local-2A-B}
\begin{split}
&\|g_N\|_{\widetilde{L}^{\infty}_T\widetilde{L}^2_{v}(B^{1/2}_{2,1})}+\frac{1}{\sqrt{2C}}
\|\mathcal{H}^{\frac{s}{2}}g_N\|_{\widetilde{L}^{2}_T\widetilde{L}^2_{v}(B^{1/2}_{2,1})}
\leq e^{T}\|g_{0}\|_{\widetilde{L}^2_{v}(B^{1/2}_{2,1})},
\end{split}
\end{equation}
where $C>0$ is some constant independent of $N$.
\end{proposition}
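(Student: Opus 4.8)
The plan is to close a bootstrap argument. The existence of the weak solution $g_N$ to \eqref{local-111A} is already furnished by Proposition \ref{local existence-gA}, and Proposition \ref{local-regularity-g} already guarantees, for each fixed $N$, that $g_N\in\widetilde{L}^\infty_T\widetilde{L}^2_v(B^{1/2}_{2,1})$ with $\mathcal{H}^{\frac{s}{2}}g_N\in\widetilde{L}^2_T\widetilde{L}^2_v(B^{1/2}_{2,1})$, albeit with a constant $C_N$ that degenerates as $N\to+\infty$. The sole task here is to upgrade this to the $N$-independent bound \eqref{local-2A-B}. The mechanism is that the qualitative finiteness of these two Besov norms is exactly the hypothesis needed to invoke the sharp trilinear estimate \eqref{local-13B} (Corollary \ref{trilinear-G-R}), whose constant $C_1$ does not depend on $N$; reinjecting \eqref{local-13B} into the energy identity then closes the estimate without any $N$-dependence.

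Concretely, I would repeat the energy computation of Proposition \ref{local-regularity-g}, but now directly at $\sigma=1/2$ and $\kappa=0$, the regularizing cut-off $(1+\kappa2^{2p\sigma})^{-1}$ being no longer necessary since the Besov finiteness is already in hand. Applying $\Delta_p$ to \eqref{local-111A}, pairing with $\Delta_pg_N$ in $L^2_{x,v}$, and using the coercivity of $\mathcal{K}$ from Lemma \ref{JJ} together with $\mathrm{Re}(v\partial_x\Delta_pg_N,\Delta_pg_N)_{L^2_{x,v}}=0$, one obtains
$$\frac{1}{2}\frac{d}{dt}\|\Delta_pg_N\|^2_{L^2_{x,v}}+\frac{1}{C}\|\mathcal{H}^{\frac{s}{2}}\Delta_pg_N\|^2_{L^2_{x,v}}\leq\|\Delta_pg_N\|^2_{L^2_{x,v}}+\left|\left(\Delta_p\Gamma(f_N,g_N),\Delta_pg_N\right)_{L^2_{x,v}}\right|.$$
Multiplying by $e^{-2t}$, integrating over $[0,t]$, restoring the weight $e^{2t}$, taking square roots, multiplying by $2^{p/2}$ and summing over $p\geq-1$ leads to
$$\|g_N\|_{\widetilde{L}^\infty_T\widetilde{L}^2_v(B^{1/2}_{2,1})}+\sqrt{\tfrac{2}{C}}\,\|\mathcal{H}^{\frac{s}{2}}g_N\|_{\widetilde{L}^2_T\widetilde{L}^2_v(B^{1/2}_{2,1})}\leq e^T\|g_0\|_{\widetilde{L}^2_v(B^{1/2}_{2,1})}+\sqrt{2}\,e^T\sum_{p\geq-1}2^{p/2}\Big(\int_0^T\left|\left(\Delta_p\Gamma(f_N,g_N),\Delta_pg_N\right)_{x,v}\right|dt\Big)^{1/2}.$$

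I would then insert \eqref{local-13B}, bounding the last sum by $C_1\|f_N\|^{1/2}_{\widetilde{L}^\infty_T\widetilde{L}^2_v(B^{1/2}_{2,1})}\|\mathcal{H}^{\frac{s}{2}}g_N\|_{\widetilde{L}^2_T\widetilde{L}^2_v(B^{1/2}_{2,1})}$ with $C_1$ independent of $N$. Since $S_N$ is uniformly bounded on $\widetilde{L}^\infty_T\widetilde{L}^2_v(B^{1/2}_{2,1})$ by Proposition \ref{Cauchy-f}, one has $\|f_N\|_{\widetilde{L}^\infty_T\widetilde{L}^2_v(B^{1/2}_{2,1})}\lesssim\|f\|_{\widetilde{L}^\infty_T\widetilde{L}^2_v(B^{1/2}_{2,1})}\leq\varepsilon_0$. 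Choosing $\varepsilon_0$ small enough (depending on $T$, $C$ and $C_1$) that $\sqrt{2}\,C_1e^T\|f_N\|^{1/2}_{\widetilde{L}^\infty_T\widetilde{L}^2_v(B^{1/2}_{2,1})}\leq\frac{1}{\sqrt{2C}}$, the nonlinear contribution can be absorbed into the dissipation term on the left; because $\sqrt{2/C}-\frac{1}{\sqrt{2C}}=\frac{1}{\sqrt{2C}}$, this leaves exactly the asserted estimate \eqref{local-2A-B}.

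The main obstacle is logical rather than computational: one must respect the order in which the two trilinear estimates are used. The sharp inequality \eqref{local-13B} only makes sense once $\|\mathcal{H}^{\frac{s}{2}}g_N\|_{\widetilde{L}^2_T\widetilde{L}^2_v(B^{1/2}_{2,1})}$ is known to be finite, and this finiteness is supplied, for each fixed $N$, only by the coarser Proposition \ref{local-regularity-g} with its $N$-dependent constant $C_N$. Thus the $N$-dependent bound serves purely as an a priori qualitative input that licenses the application of the $N$-uniform estimate, after which the smallness of $f$ and the precise constant accounting produce the desired uniform bound. One must also make sure the threshold $\varepsilon_0$ imposed here does not exceed the smallness already required in Propositions \ref{local existence-gA} and \ref{local-regularity-g}, so that every link in the chain holds simultaneously for the same $\varepsilon_0$.
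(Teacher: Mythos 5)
Your proposal is correct and takes essentially the same route as the paper's proof: the paper likewise treats the $N$-dependent bound of Proposition \ref{local-regularity-g} purely as qualitative input guaranteeing finiteness of $\|\mathcal{H}^{\frac{s}{2}}g_N\|_{\widetilde{L}^{2}_T\widetilde{L}^2_{v}(B^{1/2}_{2,1})}$ so that the $N$-uniform estimate \eqref{local-13B} applies, then performs the same dyadic energy computation (applying $2^{p}\Delta_p$, using the coercivity of $\mathcal{K}$, integrating with the weight $e^{-2t}$, taking square roots and summing over $p\geq-1$) and absorbs the nonlinear term by the smallness of $\|f\|_{\widetilde{L}^{\infty}_T\widetilde{L}^2_{v}(B^{1/2}_{2,1})}$, taking $\|f\|_{\widetilde{L}^{\infty}_T\widetilde{L}^2_{v}(B^{1/2}_{2,1})}\leq\frac{1}{4e^{2T}CC_1^2}$ exactly as your threshold requires. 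Your constant accounting $\sqrt{2/C}-\frac{1}{\sqrt{2C}}=\frac{1}{\sqrt{2C}}$ matches the paper's derivation of \eqref{local-2A-B}.
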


\begin{proof}
Applying $2^{p}\Delta_{p}(p\geq-1)$ to \eqref{local-111A} and taking the inner product with $\Delta_{p}g_N$ over $\mathbb{R}_{x}\times \mathbb{R}_{v}$ give
\begin{align*}
&\frac{1}{2}\frac{d}{dt}\left(2^{p}\|\Delta_{p}g_N\|^{2}_{L^{2}_{x,v}}\right)+\frac{1}{C}2^{p}\|\mathcal{H}^{\frac{s}{2}}\Delta_{p}g_N\|^{2}_{L^{2}_{x,v}}
\\&\qquad
\leq 2^{p}\|\Delta_{p}g_N\|^{2}_{L^{2}_{x,v}}+2^{p}\left(\Delta_{p}\Gamma(f_N,g_N),\Delta_{p}g_N\right)_{L^{2}_{x,v}}.
\end{align*}
It follows that
\begin{align*}
\frac{d}{dt}\left(e^{-2t}2^{p}\|\Delta_{p}g_N\|^{2}_{L^{2}_{x,v}}\right)+\frac{2}{C}2^{p}e^{-2t}\|\mathcal{H}^{\frac{s}{2}}\Delta_{p}g_N\|^{2}_{L^{2}_{x,v}}
\leq2e^{-2t}2^{p}|\left(\Delta_{p}\Gamma(f_N,g_N),\Delta_{p}g_N\right)_{L^{2}_{x,v}}|
\end{align*}
for all $0\leq t\leq T$.

Integrating the above inequality with respect to the time variable over $[0,t]$ with $0\leq t\leq T$ and taking the square root, we obtain
\begin{align*}
&2^{\frac{p}{2}}\|\Delta_{p}g_N\|_{L^{2}_{x,v}}+\sqrt{\frac{2}{C}}2^{\frac{p}{2}}\Big(\int_{0}^te^{2(t-\tau)}
\|\mathcal{H}^{\frac{s}{2}}\Delta_{p}g_N(\tau)\|^{2}_{L^{2}_{x,v}}d\tau\Big)^{1/2}
\\&\qquad
\leq e^{t}2^{\frac{p}{2}}\left\|\Delta_{p}g_0\right\|_{L^2_{v}L^2_{x}}
+\sqrt{2}2^{\frac{p}{2}}\Big(\int_{0}^te^{2(t-\tau)}|\left(\Delta_{p}\Gamma(f_N,g_N),\Delta_{p}g_N\right)_{L^{2}_{x,v}}|d\tau\Big)^{1/2}.
\end{align*}
Taking supremum over $0\leq t\leq T$ on the left side and summing up over $p\geq-1$, we get
\begin{align*}
&\|g_N\|_{\widetilde{L}^{\infty}_T\widetilde{L}^2_{v}(B^{1/2}_{2,1})}+\sqrt{\frac{2}{C}}\|\mathcal{H}^{\frac{s}{2}}g_N\|_{\widetilde{L}^{2}_T\widetilde{L}^2_{v}(B^{1/2}_{2,1})}
\\&
\leq e^{T}\|g_{0}\|_{\widetilde{L}^2_{v}(B^{1/2}_{2,1})}
+\sqrt{2}e^{T}\sum_{p\geq-1}2^{\frac{p}{2}}\left(\int_{0}^{T}\left|\left(\Delta_{p}\Gamma(f_N,g_N),\Delta_{p}g_N\right)_{L^{2}_{x,v}}\right|dt\right)^{1/2}
\\&
\leq e^{T}\|g_{0}\|_{\widetilde{L}^2_{v}(B^{1/2}_{2,1})}
+\sqrt{2}e^{T}C_{1}\|f_N\|^{1/2}_{\widetilde{L}^{\infty}_T\widetilde{L}^2_{v}(B^{1/2}_{2,1})}
\|\mathcal{H}^{\frac{s}{2}}g_N\|_{\widetilde{L}^{2}_T\widetilde{L}^2_{v}(B^{1/2}_{2,1})},
\end{align*}
where we used Proposition \ref{Cauchy-f} and \eqref{local-13B}. It follows from the smallness of $\|f\|_{\widetilde{L}^{\infty}_T\widetilde{L}^2_{v}(B^{1/2}_{2,1})}$ (taking $\|f\|_{\widetilde{L}^{\infty}_T\widetilde{L}^2_{v}(B^{1/2}_{2,1})}\leq\frac{1}{4e^{2T}CC_{1}^{2}}$) that
\begin{align*}
&\|g_N\|_{\widetilde{L}^{\infty}_T\widetilde{L}^2_{v}(B^{1/2}_{2,1})}+\frac{1}{\sqrt{2C}}
\|\mathcal{H}^{\frac{s}{2}}g_N\|_{\widetilde{L}^{2}_T\widetilde{L}^2_{v}(B^{1/2}_{2,1})}
\leq e^{T}\|g_{0}\|_{\widetilde{L}^2_{v}(B^{1/2}_{2,1})},
\end{align*}
which indicates the desired inequality \eqref{local-2A-B}. The proof of Proposition \ref{local-regularity-gB} is completed.
\end{proof}

In the following, we prove Theorem \ref{local-regularity} with the help of Proposition \ref{local-regularity-gB}.  \\
{\bf The proof of the Theorem \ref{local-regularity}}
It suffices to show that the sequence $\{g_N,N\in\mathbb{N}\}$ is Cauchy in the space
$$X=\{g\in \widetilde{L}^{\infty}_{T}\widetilde{L}^{2}_{v}(B^{1/2}_{2,1})|\mathcal{H}^{\frac{s}{2}}g \in \widetilde{L}^{2}_{T}\widetilde{L}^{2}_{v}(B^{1/2}_{2,1})\}.$$

Set $w_{M,M'}=g_M-g_{M'}$ for $ M,M'\in\mathbb{N}$. Then it follows that \eqref{local-111A} that
\begin{align*}
\partial_{t}w_{M,M'}+v\partial_{x}w_{M,M'}+\mathcal{K}w_{M,M'}
=\Gamma(f_{M'},w_{M,M'})+\Gamma(f_M-f_{M'},g_N).
\end{align*}
Following from the proof procedure of Proposition \ref{local-regularity-gB}, we can obtain
\begin{equation*}%\label{SSSS}
\begin{split}
&\left\|w_{M,M'}\right\|_{\widetilde{L}^{\infty}_{T}\widetilde{L}^{2}_{v}(B^{1/2}_{2,1})}+\sqrt{\frac{2}{C}}\left\|\mathcal{H}^{\frac{s}{2}}w_{M,M'}\right\|
_{\widetilde{L}^{2}_{T}\widetilde{L}^{2}_{v}(B^{1/2}_{2,1})}
\\&
\leq\sqrt{2}e^{T}\sum_{p\geq-1}2^{\frac{p}{2}}\Big(\int_{0}^{T}\Big|\left(\Delta_{p}\Gamma(f_{M'},w_{M,M'}),\Delta_{p}w_{M,M'}\right)_{L^{2}_{x,v}}\Big|dt\Big)^{1/2}
\\&
+\sqrt{2}e^{T}\sum_{p\geq-1}2^{\frac{p}{2}}\Big(\int_{0}^{T}\Big|\left(\Delta_{p}\Gamma(f_M-f_{M'},g_M),\Delta_{p}w_{M,M'}\right)_{L^{2}_{x,v}}\Big|dt\Big)^{1/2}
\\&
\leq\sqrt{2}C_{1}e^{T}\|f_{M'}\|^{1/2}_{\widetilde{L}^{\infty}_{T}\widetilde{L}^{2}_{v}(B^{1/2}_{2,1})}
\|\mathcal{H}^{\frac{s}{2}}w_{M,M'}\|_{\widetilde{L}^{2}_{T}\widetilde{L}^{2}_{v}(B^{1/2}_{2,1})}
\\&\quad
+\sqrt{2}C_{1}e^{T}\|(f_M-f_{M'})\|^{1/2}_{\widetilde{L}^{\infty}_{T}\widetilde{L}^{2}_{v}(B^{1/2}_{2,1})}
\|\mathcal{H}^{\frac{s}{2}}g_M\|^{1/2}_{\widetilde{L}^{2}_{T}\widetilde{L}^{2}_{v}(B^{1/2}_{2,1})}
\|\mathcal{H}^{\frac{s}{2}}w_{M,M'}\|^{1/2}_{\widetilde{L}^{2}_{T}\widetilde{L}^{2}_{v}(B^{1/2}_{2,1})}
\\&
\leq\sqrt{2C}C_{1}^{2}e^{2T}\|(f_M-f_{M'})\|_{\widetilde{L}^{\infty}_{T}\widetilde{L}^{2}_{v}(B^{1/2}_{2,1})}
\|\mathcal{H}^{\frac{s}{2}}g_M\|_{\widetilde{L}^{2}_{T}\widetilde{L}^{2}_{v}(B^{1/2}_{2,1})}
\\&\quad
+\frac{3}{4}\sqrt{\frac{2}{C}}\|\mathcal{H}^{\frac{s}{2}}w_{M,M'}\|
_{\widetilde{L}^{2}_{T}\widetilde{L}^{2}_{v}(B^{1/2}_{2,1})}.
\end{split}
\end{equation*}
The smallness of $\|g_{0}\|_{\widetilde{L}^2_{v}(B^{1/2}_{2,1})}$ (taking $\|g_{0}\|_{\widetilde{L}^2_{v}(B^{1/2}_{2,1})}\leq\frac{\tilde{\lambda}}{2CC_{1}^{2}e^{3T}}$) and Proposition \ref{local-regularity-gB} enables us to obtain
\begin{align*}
&\|w_{M,M'}\|_{\widetilde{L}^{\infty}_{T}\widetilde{L}^{2}_{v}(B^{1/2}_{2,1})}+\frac{1}{\sqrt{8C}}\|\mathcal{H}^{\frac{s}{2}}w_{M,M'}\|
_{\widetilde{L}^{2}_{T}\widetilde{L}^{2}_{v}(B^{1/2}_{2,1})}
\\&
\leq2CC_{1}^{2}e^{3T}\|g_{0}\|_{\widetilde{L}^2_{v}(B^{1/2}_{2,1})}\|(f_M-f_{M'})\|_{\widetilde{L}^{\infty}_{T}\widetilde{L}^{2}_{v}(B^{1/2}_{2,1})}
\\&
\leq\tilde{\lambda}\|(f_M-f_{M'})\|_{\widetilde{L}^{\infty}_{T}\widetilde{L}^{2}_{v}(B^{1/2}_{2,1})}
\end{align*}
for $0<\tilde{\lambda}<1$. It follows from  Proposition \ref{Cauchy-f} that $\{f_N\}$ is a Cauchy sequence in $\widetilde{L}^{\infty}_{T}\widetilde{L}^{2}_{v}(B^{1/2}_{2,1})$ which implies that $\{g_N\}$ is a Cauchy sequence in $X$.
Letting $g=\lim_{\substack{N\rightarrow\infty}}g_N$, we can get the desired result.

\section{Gelfand-Shilov and Gevrey regularizing effect}\label{S4}
In this section, we prove that the Cauchy problem \eqref{eq-1} enjoys the Gelfand-Shilov regularizing properties with respect to the velocity variable $v$ and Gevrey regularizing properties with respect to the position variable $x$.

\subsection{A priori estimates with exponential weights}%\label{S5-1}
Firstly, it is shown that the sequence of approximate solutions $(\tilde{g}_n)_{n\geq0}$ (which defined by \eqref{equationA}) satisfies a priori estimate with exponential weights for sufficiently small initial data.

\begin{proposition}\label{Gelfand}
Let $T>0$. There exist some positive constants $C, \varepsilon_1>0, 0<c_0\leq1$ such that for all initial data $\|g_0\|_{\widetilde{L}^2_{v}(B^{1/2}_{2,1})}\leq\varepsilon_1$, the sequence of approximate solutions $(\tilde{g}_n)_{n\geq0}$ satisfies
\begin{equation}\label{G-S1}
\begin{split}
&\|G_{\kappa}(c t)\tilde{g}_n\|_{\widetilde{L}^{\infty}_T\widetilde{L}^2_{v}(B^{1/2}_{2,1})}
+\|\mathcal{H}^{\frac{s}{2}}G_{\kappa}(c t)\tilde{g}_n\|_{\widetilde{L}^{2}_T\widetilde{L}^2_{v}(B^{1/2}_{2,1})}
\\&
+\|\langle D_x\rangle^{\frac{s}{2s+1}}G_{\kappa}(c t)\tilde{g}_n\|_{\widetilde{L}^{2}_T\widetilde{L}^2_{v}(B^{1/2}_{2,1})}
\leq Ce^{CT}\|g_{0}\|_{\widetilde{L}^2_{v}(B^{1/2}_{2,1})},
\end{split}
\end{equation}
for $0<\kappa\leq1, 0<c\leq c_0, n\geq1$, where
$$
G_{\kappa}(t)=\frac{\exp(t(\mathcal{H}^{\frac{s+1}{2}}+\langle D_x\rangle^{\frac{3s+1}{2s+1}})^{\frac{2s}{3s+1}})}
{1+\kappa\exp(t(\mathcal{H}^{\frac{s+1}{2}}+\langle D_x\rangle^{\frac{3s+1}{2s+1}})^{\frac{2s}{3s+1}})}.
$$
\end{proposition}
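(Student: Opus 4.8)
The plan is to prove \eqref{G-S1} by induction on $n$, with all constants independent of $n$ and of $\kappa\in(0,1]$. For the base case I invoke Proposition \ref{local-regularity-gB}, which is precisely \eqref{G-S1} with $G_\kappa(ct)$ replaced by the identity, so that $\|\tilde g_n\|_{\widetilde L^\infty_T\widetilde L^2_v(B^{1/2}_{2,1})}$ and $\|\mathcal H^{\frac s2}\tilde g_n\|_{\widetilde L^2_T\widetilde L^2_v(B^{1/2}_{2,1})}$ are already controlled by $\|g_0\|_{\widetilde L^2_v(B^{1/2}_{2,1})}$; the role of the cut-off $\kappa>0$ is only to render $G_\kappa(ct)$ a bounded operator on $L^2$, so that every norm below is a priori finite and the final bound is uniform in $\kappa$ (the limit $\kappa\to0_+$ being taken later). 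Throughout I write $G=G_\kappa(ct)$ and
\[
\Lambda=\Big(\mathcal H^{\frac{s+1}{2}}+\langle D_x\rangle^{\frac{3s+1}{2s+1}}\Big)^{\frac{2s}{3s+1}},\qquad G=Z(ct\Lambda),\quad Z(x)=\frac{e^x}{1+\kappa e^x},
\]
so that $G$ commutes with $\mathcal H$, $\mathcal K$, $\langle D_x\rangle$ and $\Delta_p$.

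The heart of the matter is a weighted energy identity at the level of a single Littlewood--Paley block. Applying $G\Delta_p$ to \eqref{equationA}, pairing with $G\Delta_p\tilde g_{n+1}$ in $L^2_{x,v}$, and dropping the skew-symmetric part $\mathrm{Re}(v\partial_x G\Delta_p\tilde g_{n+1},G\Delta_p\tilde g_{n+1})_{L^2_{x,v}}=0$, I obtain
\begin{align*}
\frac12\frac{d}{dt}\|G\Delta_p\tilde g_{n+1}\|_{L^2_{x,v}}^2
&+\mathrm{Re}\big(\mathcal KG\Delta_p\tilde g_{n+1},G\Delta_p\tilde g_{n+1}\big)_{L^2_{x,v}}\\
&=\mathrm{Re}\big((\partial_tG)\Delta_p\tilde g_{n+1},G\Delta_p\tilde g_{n+1}\big)_{L^2_{x,v}}
-\mathrm{Re}\big([G,v]\partial_x\Delta_p\tilde g_{n+1},G\Delta_p\tilde g_{n+1}\big)_{L^2_{x,v}}\\
&\quad+\mathrm{Re}\big(G\Delta_p\Gamma(\tilde g_n,\tilde g_{n+1}),G\Delta_p\tilde g_{n+1}\big)_{L^2_{x,v}}.
\end{align*}
The four contributions are treated as follows. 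By the coercivity Lemma \ref{JJ}, the linear term satisfies $\mathrm{Re}(\mathcal KG\Delta_p\tilde g_{n+1},G\Delta_p\tilde g_{n+1})_{L^2_{x,v}}\ge\frac1C\|\mathcal H^{\frac s2}G\Delta_p\tilde g_{n+1}\|_{L^2_{x,v}}^2-\|G\Delta_p\tilde g_{n+1}\|_{L^2_{x,v}}^2$, producing the velocity gain. For the time-derivative of the weight I use the algebraic identity $Z'=Z(1-\kappa Z)$, so that $\partial_tG=c\,\Lambda(1-\kappa G)G\ge0$ because $0\le\kappa G<1$; bounding $\Lambda\gtrsim\langle D_x\rangle^{\frac{2s}{2s+1}}$ from below then yields, on the good side, the position gain $\sim c\,\|\langle D_x\rangle^{\frac{s}{2s+1}}G\Delta_p\tilde g_{n+1}\|_{L^2_{x,v}}^2$, matching the third term on the left of \eqref{G-S1}. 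The nonlinear term is estimated, after integration in time and summation, by the weighted trilinear estimate of Lemma \ref{trilinear-G} with $\sigma=\tfrac12$ applied to $f=G\tilde g_n$ and $g=h=G\tilde g_{n+1}$, for which $\Gamma((G)^{-1}f,(G)^{-1}g)=\Gamma(\tilde g_n,\tilde g_{n+1})$.

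The \textbf{main obstacle} is the commutator $[G,v]\partial_x$, the hypoelliptic term coupling transport and diffusion. Since $G$ is a function of $\mathcal H$ through $\Lambda$ and $[\mathcal H,v]=-2\partial_v$ is of first order, a functional-calculus (or pseudodifferential) computation shows that $[G,v]$ is smoother by one power of $\mathcal H^{1/2}$ than $vG$ in the harmonic-oscillator scale; the delicate point is that this gain must be distributed between the velocity gain $\|\mathcal H^{\frac s2}G\Delta_p\tilde g_{n+1}\|$ and the position gain $\|\langle D_x\rangle^{\frac{s}{2s+1}}G\Delta_p\tilde g_{n+1}\|$, so that after a Cauchy--Schwarz step the whole commutator is absorbed into these two good terms. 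This is exactly where the specific exponents $\tfrac{s+1}{2}$, $\tfrac{3s+1}{2s+1}$ and the outer power $\tfrac{2s}{3s+1}$ in $G_\kappa$ are engineered to make the scales match; I would carry out this bound following the commutator estimate in (4.10) of \cite{LMPX2} adapted to the present weight, choosing $c_0$ small enough that the residual constant is absorbable.

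Finally I integrate in time over $[0,t]$ against the factor $e^{-2t}$ (as in Proposition \ref{local-regularity-gB}, which disposes of the zeroth-order term coming from coercivity and produces the factor $e^{CT}$), take square roots, multiply by $2^{p/2}$ and sum over $p\ge-1$ in the Chemin--Lerner fashion. The coercivity and weight gains reconstruct the three norms on the left of \eqref{G-S1}, while the trilinear term contributes, using the inductive control $\|G\tilde g_n\|_{\widetilde L^\infty_T\widetilde L^2_v(B^{1/2}_{2,1})}\le Ce^{CT}\|g_0\|_{\widetilde L^2_v(B^{1/2}_{2,1})}$, a factor proportional to $\|g_0\|^{1/2}$ times the gain norms of $\tilde g_{n+1}$. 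Imposing $\|g_0\|_{\widetilde L^2_v(B^{1/2}_{2,1})}\le\varepsilon_1$ small lets these be absorbed into the left-hand side, closing the induction with an $n$- and $\kappa$-independent constant of the form $Ce^{CT}$.
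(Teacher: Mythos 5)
Your skeleton (blockwise weighted energy estimate, Gr\"onwall in time, multiplication by $2^{p/2}$ and Chemin--Lerner summation, absorption of the trilinear term via Lemma \ref{trilinear-G} and smallness of $\|g_0\|_{\widetilde{L}^2_v(B^{1/2}_{2,1})}$, induction on $n$ uniformly in $\kappa$) coincides with the paper's. But there is a genuine gap in your mechanism for producing the position-regularity gain $\|\langle D_x\rangle^{\frac{s}{2s+1}}G_{\kappa}(ct)\tilde g_{n+1}\|_{\widetilde{L}^2_T\widetilde{L}^2_v(B^{1/2}_{2,1})}$, and it is decisive. Writing $\Lambda=(\mathcal{H}^{\frac{s+1}{2}}+\langle D_x\rangle^{\frac{3s+1}{2s+1}})^{\frac{2s}{3s+1}}$ and $Z(x)=e^x/(1+\kappa e^x)$ as you do, the weight-derivative term $\mathrm{Re}((\partial_tG)\Delta_p\tilde g_{n+1},G\Delta_p\tilde g_{n+1})_{L^2_{x,v}}$ sits on the \emph{right-hand side} of your energy identity with a plus sign, and, as you yourself note, $\partial_tG=c\,\Lambda Z'(ct\Lambda)\geq0$: an increasing weight produces a growth term that must be absorbed, never a damping term that can be harvested. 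You would need $\partial_tG\le0$ for it to land ``on the good side.'' (Even ignoring the sign, $Z'(x)=Z(x)/(1+\kappa e^{x})$ is exponentially smaller than $Z(x)$ at high frequencies for each fixed $\kappa>0$, so $c\,\mathrm{Re}(\Lambda Z'Z\,g,g)$ cannot reconstruct $c\|\Lambda^{1/2}Gg\|^2$ uniformly in $\kappa$.) The paper treats this term exactly as a bad term: it is bounded by $c\|\Lambda^{1/2}\Delta_ph_{n+1}\|_{L^2_{x,v}}\|\Lambda^{1/2}Q\Delta_ph_{n+1}\|_{L^2_{x,v}}$, split by Lemma \ref{HD} into the velocity and position gains, and absorbed by choosing $c\le c_0$ small.

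The actual source of the $x$-gain is hypoellipticity implemented through the multiplier $Q=1-\varepsilon m^w(v,D_v,D_x)$ of \eqref{Q}: the paper tests the equation against $Q\Delta_ph_{n+1}$ rather than $\Delta_ph_{n+1}$, and Lemma \ref{Linear-B} yields $\mathrm{Re}((v\partial_x+\mathcal{K})\Delta_pu,Q\Delta_pu)_{L^2}\geq c_3\|\mathcal{H}^{\frac{s}{2}}\Delta_pu\|^2_{L^2}+c_3\|\langle D_x\rangle^{\frac{s}{2s+1}}\Delta_pu\|^2_{L^2}-c_4\|\Delta_pu\|^2_{L^2}$, i.e.\ the position gain comes from the non-commutation of $v\partial_x$ with $\mathcal{K}$, not from the weight. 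Without $Q$ your scheme is circular: both of your bad terms --- the weight growth just discussed, and the conjugation error $[G_{\kappa}(ct)\,v\,(G_{\kappa}(ct))^{-1}-v]\partial_x$, which the paper controls in Lemma \ref{HD-2} by $\tilde c_1cte^{\tilde c_1ct}\|\langle D_x\rangle^{\frac{s}{2s+1}}\Delta_p\cdot\|_{L^2}$ paired against $\|\mathcal{H}^{\frac{s}{2}}\Delta_p\cdot\|_{L^2}$ --- require the position gain to be already present on the left for their absorption, yet your only proposed source of that gain is one of the bad terms itself. Your ``main obstacle'' paragraph gestures in the right direction but defers the commutator to a functional-calculus gain of one power of $\mathcal{H}^{1/2}$ and to (4.10) of \cite{LMPX2}; that estimate concerns the regularization commutator $[(1+\delta\sqrt{\mathcal{H}}+\delta\langle D_x\rangle)^{-1},v]$ used in Section \ref{S3}, not the exponential weight, and in any case neither step supplies the missing coercivity in $\langle D_x\rangle^{\frac{s}{2s+1}}$ that the third norm in \eqref{G-S1} demands.
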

To prove Proposition \ref{Gelfand}, we need some lemmas.
\begin{lemma}\label{HD}
There exists a constant $c_5>0$ such that for all $f\in\mathcal{S}(\mathbb{R}^2_{x,v})$,
\begin{align*}%\label{H1}
\|(\mathcal{H}^{\frac{s+1}{2}}+\langle D_x\rangle^{\frac{3s+1}{2s+1}})^{\frac{s}{3s+1}}f\|_{L^2_{x,v}}\leq c_5\|\mathcal{H}^{\frac{(1+s)s}{2(3s+1)}}f\|_{L^2_{x,v}}
+c_5\|\langle D_x\rangle^{\frac{s}{2s+1}}f\|_{L^2_{x,v}}.
\end{align*}
\end{lemma}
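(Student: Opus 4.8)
The plan is to reduce the claimed operator inequality to an elementary scalar subadditivity estimate via the joint spectral decomposition of the two commuting operators involved. Since $\mathcal{H}=-\Delta_v+\frac{v^2}{4}$ acts only on the velocity variable while $\langle D_x\rangle$ acts only on the position variable, the operators $\mathcal{H}^{\frac{s+1}{2}}$ and $\langle D_x\rangle^{\frac{3s+1}{2s+1}}$ commute, so their sum is a non-negative self-adjoint operator whose fractional power $(\mathcal{H}^{\frac{s+1}{2}}+\langle D_x\rangle^{\frac{3s+1}{2s+1}})^{\frac{s}{3s+1}}$ is well defined by the joint functional calculus. Concretely, for $f\in\mathcal{S}(\mathbb{R}^2_{x,v})$ I would expand $f$ in the Hermite basis $(e_n)_{n\ge0}$ in $v$ and apply the partial Fourier transform $\mathcal{F}_x$ in $x$, setting $\widehat{f}_n(\xi)=\mathcal{F}_x\big((f(\cdot,\cdot),e_n)_{L^2(\mathbb{R}_v)}\big)(\xi)$. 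In these coordinates $\mathcal{H}$ acts by multiplication by $n+\frac12$ and $\langle D_x\rangle$ by $\langle\xi\rangle$, so that all three operators in the statement become Fourier/Hermite multipliers.

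Writing $\theta=\frac{s}{3s+1}$, which lies in $(0,1)$ since $0<s<1$, the symbols of the three operators are, respectively,
\[
\Big(B_n+C(\xi)\Big)^{\theta},\qquad B_n^{\theta}=(n+\tfrac12)^{\frac{(1+s)s}{2(3s+1)}},\qquad C(\xi)^{\theta}=\langle\xi\rangle^{\frac{s}{2s+1}},
\]
where $B_n=(n+\tfrac12)^{\frac{s+1}{2}}$, $C(\xi)=\langle\xi\rangle^{\frac{3s+1}{2s+1}}$, and the exponent identities $\frac{s+1}{2}\theta=\frac{(1+s)s}{2(3s+1)}$ and $\frac{3s+1}{2s+1}\theta=\frac{s}{2s+1}$ have been used. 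The elementary inequality $(B+C)^{\theta}\le B^{\theta}+C^{\theta}$, valid for all $B,C\ge0$ and $0\le\theta\le1$ by the concavity of $t\mapsto t^{\theta}$, then yields the pointwise estimate $(B_n+C(\xi))^{\theta}\le B_n^{\theta}+C(\xi)^{\theta}$ for every $n\ge0$ and $\xi\in\mathbb{R}$.

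Finally, by Parseval's identity in the Hermite basis combined with Plancherel's theorem in $x$, each $L^2_{x,v}$ norm in the statement equals a weighted $\ell^2_n(L^2_\xi)$ norm of the sequence $(\widehat f_n)$. Multiplying the pointwise bound above by $|\widehat f_n(\xi)|$ and applying the triangle inequality in $\ell^2_n(L^2_\xi)$ gives
\[
\big\|(\mathcal{H}^{\frac{s+1}{2}}+\langle D_x\rangle^{\frac{3s+1}{2s+1}})^{\frac{s}{3s+1}}f\big\|_{L^2_{x,v}}\le\big\|\mathcal{H}^{\frac{(1+s)s}{2(3s+1)}}f\big\|_{L^2_{x,v}}+\big\|\langle D_x\rangle^{\frac{s}{2s+1}}f\big\|_{L^2_{x,v}},
\]
which is the desired inequality, in fact with $c_5=1$; a cruder application of $(a+b)^2\le2(a^2+b^2)$ would give $c_5=\sqrt2$ and would serve equally well.

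I do not expect any genuine analytic obstacle here: the commutation of the two operators makes the joint diagonalization immediate, and for Schwartz $f$ all the series in $n$ and integrals in $\xi$ converge absolutely, so the functional-calculus manipulations are fully rigorous. The only points requiring care are the verification of the exponent arithmetic recorded above and the observation that $\theta=\frac{s}{3s+1}$ indeed lies in $(0,1)$, which is precisely what licenses the concavity/subadditivity inequality.
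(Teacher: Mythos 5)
Your proof is correct and follows essentially the same route as the paper: the paper likewise diagonalizes via the Hermite expansion in $v$ together with the partial Fourier transform in $x$ and invokes the scalar subadditivity $(a+b)^{\alpha}\leq a^{\alpha}+b^{\alpha}$ for $0<\alpha<1$. The only cosmetic difference is that the paper applies this inequality with exponent $\frac{2s}{3s+1}$ directly inside the squared norm, obtaining the bound $\bigl(\|\mathcal{H}^{\frac{s(s+1)}{2(3s+1)}}f\|^{2}_{L^2_{x,v}}+\|\langle D_x\rangle^{\frac{s}{2s+1}}f\|^{2}_{L^2_{x,v}}\bigr)^{1/2}$, whereas you use exponent $\frac{s}{3s+1}$ on the unsquared symbol and then the triangle inequality in $\ell^2_n(L^2_\xi)$ --- both yield the statement with $c_5=1$.
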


\begin{proof}
We decompose $f$ into the Hermite basis in the velocity variable
\begin{align}\label{11}
f(x,v)=\sum_{n=0}^{+\infty}f_n(x)e_n(v), \ \text{with} \  f_n(x)=(f(x,\cdot),e_n)_{L^2(\mathbb{R}_v)}.
\end{align}
Since
$$
\forall0<\alpha<1, \ \ \forall a,b\geq0, \ \ \ (a+b)^{\alpha}\leq a^{\alpha}+b^{\alpha},
$$
one can verify that
\begin{align*}
\|(\mathcal{H}^{\frac{s+1}{2}}+\langle D_x\rangle^{\frac{3s+1}{2s+1}})^{\frac{s}{3s+1}}f\|_{L^2_{x,v}}&= \Big(\frac{1}{2\pi}\sum_{n=0}^{+\infty}\int_{\mathbb{R}}\Big((n+\frac{1}{2})^{\frac{s+1}{2}}+\langle\xi\rangle^{\frac{3s+1}{2s+1}}\Big)
^{\frac{2s}{3s+1}}|\widehat{f_n}(\xi)|^2d\xi
\Big)^{1/2}
\\&
\leq\Big(\frac{1}{2\pi}\sum_{n=0}^{+\infty}\int_{\mathbb{R}}\Big[(n+\frac{1}{2})^{\frac{s(1+s)}{3s+1}}+\langle\xi\rangle^{\frac{2s}{2s+1}}\Big]
|\widehat{f_n}(\xi)|^2d\xi\Big)^{1/2}
\\&
=\Big(\|\mathcal{H}^{\frac{s(s+1)}{2(3s+1)}}f\|^2_{L^2_{x,v}}+\|\langle D_x\rangle^{\frac{s}{2s+1}}f\|^2_{L^2_{x,v}}\Big)^{1/2}.
\end{align*}
\end{proof}

\begin{remark}
Since the indices
$$\frac{(1+s)s}{2(3s+1)}<\frac{s}{2},$$
we always use the following result
\begin{align*}%\label{H1}
\|(\mathcal{H}^{\frac{s+1}{2}}+\langle D_x\rangle^{\frac{3s+1}{2s+1}})^{\frac{s}{3s+1}}f\|_{L^2_{x,v}}\leq c_5\|\mathcal{H}^{\frac{s}{2}}f\|_{L^2_{x,v}}
+c_5\|\langle D_x\rangle^{\frac{s}{2s+1}}f\|_{L^2_{x,v}}.
\end{align*}
\end{remark}

\begin{lemma}\label{HD-1}
For all $0\leq \alpha\leq1$, there exists a constant $\tilde{c}_{\alpha}>0$ such that for all $f\in\mathcal{S}(\mathbb{R}^2_{x,v})$,
\begin{align*}
\|\mathcal{H}^{\alpha}Q\Delta_pf\|_{L^2_{x,v}}\leq\tilde{c}_{\alpha}\|\mathcal{H}^{\alpha}\Delta_pf\|_{L^2_{x,v}}.
\end{align*}
\end{lemma}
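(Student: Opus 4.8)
The plan is to diagonalize in the Hermite basis in the velocity variable and reduce the estimate to a uniform comparison of the weights $(n+\tfrac12)^\alpha$ over the Hermite modes that $Q$ couples. First I would expand $\Delta_p f$ as in \eqref{11},
\[
\Delta_p f(x,v)=\sum_{n\ge0}(\Delta_pf)_n(x)\,e_n(v),\qquad (\Delta_pf)_n(x)=(\Delta_pf(x,\cdot),e_n)_{L^2(\mathbb{R}_v)},
\]
and record from \eqref{non-1} that $\mathcal H$ acts diagonally, $\mathcal H^\alpha e_n=(n+\tfrac12)^\alpha e_n$, so that
\[
\|\mathcal H^\alpha \Delta_p f\|_{L^2_{x,v}}^2=\sum_{n\ge0}\Big(n+\tfrac12\Big)^{2\alpha}\|(\Delta_pf)_n\|_{L^2_x}^2 .
\]
Since $Q$ commutes with the position-frequency localization $\Delta_p$ and with $\langle D_x\rangle$, the factor $\Delta_p$ passes through and the whole matter reduces to the action of $Q$ on the velocity modes $e_n$.

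Next I would invoke the defining structure of $Q$: as an operator in the velocity variable its matrix $(Qe_n,e_m)_{L^2_v}$ in the Hermite basis is \emph{banded}, vanishing unless $|m-n|\le d$ for a fixed integer $d$, and $Q$ is bounded on $L^2$ with finite Schur bound $M=\sup_n\sum_m|(Qe_n,e_m)_{L^2_v}|$. (If $Q$ were instead a Fourier multiplier in $\mathcal H$ and $\langle D_x\rangle$ it would commute with $\mathcal H^\alpha$ outright and the estimate would be immediate with $\tilde c_\alpha=\|Q\|$; the banded case is the substantive one.) Writing the conjugated operator $\mathcal H^\alpha Q\mathcal H^{-\alpha}$ in this basis, its $(m,n)$ entry equals $\big(\tfrac{m+1/2}{n+1/2}\big)^\alpha(Qe_n,e_m)_{L^2_v}$, so everything hinges on controlling the weight ratio along the band.

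The key elementary step is then the uniform bound: for $|m-n|\le d$ and $0\le\alpha\le1$,
\[
\Big(\frac{m+\tfrac12}{n+\tfrac12}\Big)^{\alpha}\le\Big(1+\frac{d}{\,n+\tfrac12\,}\Big)^{\alpha}\le(1+2d)^{\alpha}\le 1+2d,
\]
uniformly in $n\ge0$ (the extremal case being $n$ small), and the same bound holds for the reciprocal ratio. By the Schur test this shows that $\mathcal H^\alpha Q\mathcal H^{-\alpha}$ is bounded on $L^2_{x,v}$ with operator norm at most $\tilde c_\alpha:=(1+2d)M$; applying it to $\mathcal H^\alpha\Delta_pf$ and using the displayed diagonalization yields $\|\mathcal H^\alpha Q\Delta_pf\|_{L^2_{x,v}}\le\tilde c_\alpha\|\mathcal H^\alpha\Delta_pf\|_{L^2_{x,v}}$, which is the claim. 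I expect the main obstacle to be precisely this uniformity in the Hermite index $n$ — and hence the independence of $\tilde c_\alpha$ from the dyadic block $p$ and from the truncation parameter $N$ used later — which forces one to exploit that $Q$ couples only Hermite modes at bounded distance: without such a band the ratio $\big(\tfrac{m+1/2}{n+1/2}\big)^\alpha$ is unbounded and no estimate of this form can hold.
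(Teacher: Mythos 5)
Your argument rests on the structural assumption that $Q$ is \emph{banded} in the Hermite basis, i.e.\ that $(Qe_n,e_m)_{L^2_v}=0$ whenever $|m-n|>d$ for some fixed $d$, and you correctly identify this as the load-bearing step. But this assumption is false for the $Q$ of this paper: by \eqref{Q}, $Q=1-\varepsilon m^w(v,D_v,D_x)$, where $m$ is the symbol \eqref{G}, namely $m=-\xi\eta\,\lambda^{-\frac{2s+2}{2s+1}}\psi\big((\eta^2+v^2)\lambda^{-\frac{2}{2s+1}}\big)$ with $\lambda=(1+v^2+\eta^2+\xi^2)^{1/2}$. Only operators that are polynomials of bounded degree in the creation/annihilation operators $A_{\pm}$ have banded Hermite matrices; the Weyl quantization of a symbol containing the cutoff $\psi$ and the nonpolynomial weight $\lambda$ couples \emph{all} Hermite modes, so no finite band $d$ exists and your Schur constant $(1+2d)M$ is vacuous. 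Your closing claim --- that without a band ``no estimate of this form can hold'' --- is also wrong in the other direction: rapid off-diagonal decay of $(Qe_n,e_m)_{L^2_v}$ would suffice, since $\big(\tfrac{m+1/2}{n+1/2}\big)^{\alpha}\lesssim\langle m-n\rangle^{\alpha}$ for $0\le\alpha\le1$; but establishing such decay \emph{uniformly in the parameter} $\xi$ is a nontrivial consequence of the Weyl--H\"ormander calculus that you neither state nor prove, so the proposal as written has a genuine gap.

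The paper's proof avoids the Hermite matrix of $Q$ entirely. It replaces $\mathcal{H}^{\alpha}$ by the pseudodifferential weight via the norm equivalence \eqref{PH}, $\|\mathcal{H}^{\alpha}u\|_{L^2}\approx\big\|\mathrm{Op}^w\big(\big(1+\eta^2+\tfrac{v^2}{4}\big)^{\alpha}\big)u\big\|_{L^2}$, commutes this weight past $Q$ (using $[Q,\Delta_p]=0$), and controls the two resulting terms separately: the term $Q\,\mathrm{Op}^w\big(\big(1+\eta^2+\tfrac{v^2}{4}\big)^{\alpha}\big)\Delta_pf$ by the $L^2$-boundedness of $Q$, and the commutator term by symbolic calculus, since $\big[\mathrm{Op}^w\big(\big(1+\eta^2+\tfrac{v^2}{4}\big)^{\alpha}\big),Q\big]\in\mathrm{Op}^w\big(S(\langle(v,\eta)\rangle^{2\alpha-2},\Gamma_0)\big)\subset\mathrm{Op}^w(S(1,\Gamma_0))$ uniformly in $\xi$, precisely because $0\le\alpha\le1$ makes the order $2\alpha-2\le0$. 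That gain of two orders in the commutator, not any band structure, is the actual mechanism behind the uniformity in $n$, $p$ and $\xi$; to repair your route you would have to prove a quantitative substitute for it, e.g.\ uniform-in-$\xi$ rapid off-diagonal decay of the Hermite matrix of $m^w$, which follows from --- but is no easier than --- the $S(1,\Gamma_0)$ calculus the paper invokes.
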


\begin{proof}
We can deduce from \eqref{PH} that
\begin{equation}\label{QQ}
\begin{split}
\|\mathcal{H}^{\alpha}Q\Delta_pf\|_{L^2_{x,v}}&\lesssim\Big\|\mathrm{Op}^w\Big(\Big(1+\eta^2+\frac{v^2}{4}\Big)^\alpha\Big)Q\Delta_pf\Big\|_{L^2_{x,v}}
\\&
\lesssim\Big\|Q\mathrm{Op}^w\Big(\Big(1+\eta^2+\frac{v^2}{4}\Big)^\alpha\Big)\Delta_pf\Big\|_{L^2_{x,v}}
\\&\quad
+\Big\|\Big[\mathrm{Op}^w\Big(\Big(1+\eta^2+\frac{v^2}{4}\Big)^\alpha\Big),Q\Big]\Delta_pf\Big\|_{L^2_{x,v}},
\end{split}
\end{equation}
since $[Q,\Delta_p]=0$. Due to the fact that the multiplier is a bounded operator on $L^2(\mathbb{R}^2_{x,v})$ and \eqref{PH}, we can obtain
\begin{equation}\label{QQ-1}
\begin{split}
\Big\|Q\mathrm{Op}^w\Big(\Big(1+\eta^2+\frac{v^2}{4}\Big)^\alpha\Big)\Delta_pf\Big\|_{L^2_{x,v}}
&\lesssim\Big\|\mathrm{Op}^w\Big(\Big(1+\eta^2+\frac{v^2}{4}\Big)^\alpha\Big)\Delta_pf\Big\|_{L^2_{x,v}}
\\&
\lesssim\|\mathcal{H}^{\alpha}\Delta_pf\|_{L^2_{x,v}}.
\end{split}
\end{equation}
On the other hand, we deduce from \eqref{Q} and  (3.7), (3.8), Lemma 3.2 in \cite{LMPX2} that
$$
\Big[\mathrm{Op}^w\Big(\Big(1+\eta^2+\frac{v^2}{4}\Big)^\alpha\Big),Q\Big]\in\mathrm{Op}^w\Big(S(\langle(v,\eta)\rangle^{2\alpha-2},\Gamma_0)\Big)
\subset\mathrm{Op}^w(S(1,\Gamma_0)),
$$
uniformly with respect to the parameter $\xi\in\mathbb{R}$ because $0\leq \alpha\leq1$. It implies that
\begin{equation}\label{QQ-2}
\begin{split}
\left\|\Big[\mathrm{Op}^w\Big(\Big(1+\eta^2+\frac{v^2}{4}\Big)^\alpha\Big),Q\Big]\Delta_pf\right\|_{L^2_{x,v}}\lesssim\|\Delta_pf\|_{L^2_{x,v}}.
\end{split}
\end{equation}
Combining \eqref{QQ}, \eqref{QQ-1} and \eqref{QQ-2} gives
$$
\|\mathcal{H}^{\alpha}Q\Delta_pf\|_{L^2_{x,v}}\leq\tilde{c}_{\alpha}\|\mathcal{H}^{\alpha}\Delta_pf\|_{L^2_{x,v}}.
$$
\end{proof}

\begin{lemma}\label{HD-2}
There exists a constant $\tilde{c}_{1}>0$ such that for all $f\in\mathcal{S}(\mathbb{R}^2_{x,v})$, $0\leq c\leq1,0<\kappa\leq1,t\geq0$,
\begin{align*}
\left\|\mathcal{H}^{-\frac{s}{2}}\left[G_{\kappa}(c t)v(G_{\kappa}(c t))^{-1}-v\right]\Delta_p\partial_{x}f\right\|_{L^2_{x,v}}
\leq\tilde{c}_{1}c te^{\tilde{c}_{1}c t}\|\langle D_x\rangle^{\frac{s}{2s+1}}\Delta_pf\|_{L^2_{x,v}}.
\end{align*}
\end{lemma}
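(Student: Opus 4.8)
The plan is to reduce the conjugation $G_\kappa(ct)\,v\,G_\kappa(ct)^{-1}-v$ to a single commutator and then to diagonalize everything except the multiplication by $v$. Write $\Lambda=\big(\mathcal{H}^{\frac{s+1}{2}}+\langle D_x\rangle^{\frac{3s+1}{2s+1}}\big)^{\frac{2s}{3s+1}}$ and $Z(x)=\frac{e^x}{1+\kappa e^x}$, so that $G_\kappa(t)=Z(t\Lambda)$, while $G_\kappa(t)^{-1}=e^{-t\Lambda}+\kappa$ is a \emph{bounded} operator of norm $\le 1+\kappa$ (this is exactly the isomorphism noted after \eqref{G-N}). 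Since $G_\kappa(ct)$ is a bounded isomorphism commuting with $\Delta_p$ and $\partial_x$, I first write
$$
G_\kappa(ct)\,v\,G_\kappa(ct)^{-1}-v=\big[G_\kappa(ct),v\big]G_\kappa(ct)^{-1}.
$$
Then I expand in the Hermite basis in $v$ and take the partial Fourier transform in $x$, as in \eqref{11}: each of $\mathcal{H}$, $\langle D_x\rangle$, $\Lambda$, $G_\kappa(ct)$, $G_\kappa(ct)^{-1}$, $\Delta_p$, $\partial_x$ acts as a multiplier on the mode $(n,\xi)$, with $G_\kappa(ct)$ given by $g^{ct}_n(\xi)=Z(ct\lambda_n(\xi))$ and $\lambda_n(\xi)=\big((n+\tfrac12)^{\frac{s+1}{2}}+\langle\xi\rangle^{\frac{3s+1}{2s+1}}\big)^{\frac{2s}{3s+1}}$, whereas multiplication by $v$ is tridiagonal with entries $v_{n,n\pm1}\lesssim(n+\tfrac12)^{1/2}$.

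In this representation the operator $T:=\mathcal{H}^{-\frac{s}{2}}\big[G_\kappa(ct),v\big]G_\kappa(ct)^{-1}\partial_x$ is tridiagonal, its $(n,n\pm1)$-entry being
$$
(n+\tfrac12)^{-\frac{s}{2}}\,\big(g^{ct}_n(\xi)-g^{ct}_{n\pm1}(\xi)\big)\,v_{n,n\pm1}\,\frac{i\xi}{g^{ct}_{n\pm1}(\xi)}.
$$
The heart of the matter is the scalar factor $(g^{ct}_n-g^{ct}_{n\pm1})/g^{ct}_{n\pm1}$. Applying the mean value theorem to $Z$ and using $Z'(x)=Z(x)/(1+\kappa e^x)\le Z(x)$, the monotonicity of $Z$, and the elementary bound $Z(a+b)\le e^{b}Z(a)$ for $b\ge0$ (which also underlies \eqref{Z}), I obtain
$$
\frac{|g^{ct}_n(\xi)-g^{ct}_{n\pm1}(\xi)|}{g^{ct}_{n\pm1}(\xi)}\le ct\,|\lambda_n(\xi)-\lambda_{n\pm1}(\xi)|\,e^{ct|\lambda_n(\xi)-\lambda_{n\pm1}(\xi)|}.
$$
The discrete difference is comparable to $\partial_n\lambda_n(\xi)$, which is bounded uniformly in $(n,\xi)$ by a constant $\tilde c_1$ (for $0<s<1$ one checks $\partial_n\lambda_n\to0$ as $n\to\infty$), and this is precisely where both the factor $ct$ and the exponent $e^{\tilde c_1 ct}$ of the claim come from.

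It then remains the purely algebraic estimate of the surviving multiplier. A direct computation gives $\partial_n\lambda_n(\xi)=\frac{s(s+1)}{3s+1}P^{-\frac{s+1}{3s+1}}(n+\tfrac12)^{\frac{s-1}{2}}$ with $P=(n+\tfrac12)^{\frac{s+1}{2}}+\langle\xi\rangle^{\frac{3s+1}{2s+1}}$, so that the powers $(n+\tfrac12)^{-\frac{s}{2}+\frac12}\cdot(n+\tfrac12)^{\frac{s-1}{2}}$ cancel exactly and
$$
|T_{n,n\pm1}(\xi)|\lesssim ct\,e^{\tilde c_1 ct}\,P^{-\frac{s+1}{3s+1}}\langle\xi\rangle.
$$
Since $P\ge\langle\xi\rangle^{\frac{3s+1}{2s+1}}$ one has $P^{-\frac{s+1}{3s+1}}\langle\xi\rangle\le\langle\xi\rangle^{1-\frac{s+1}{2s+1}}=\langle\xi\rangle^{\frac{s}{2s+1}}$, hence $|T_{n,n\pm1}(\xi)|\lesssim ct\,e^{\tilde c_1 ct}\langle\xi\rangle^{\frac{s}{2s+1}}$ uniformly in $n$. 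Finally a Schur/almost-orthogonality argument (each row of $T$ carries two entries, each bounded as above), combined with Plancherel in $v$ (Hermite) and in $x$ (Fourier), yields $\|T\Delta_p f\|_{L^2_{x,v}}\lesssim ct\,e^{\tilde c_1 ct}\|\langle D_x\rangle^{\frac{s}{2s+1}}\Delta_p f\|_{L^2_{x,v}}$, which is the assertion.

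The step I expect to be the main obstacle is controlling the ratio of symbol differences so as to extract \emph{exactly} the factor $ct\,e^{\tilde c_1 ct}$ (and not something worse), together with a careful treatment of the tridiagonal structure so that $L^2$ orthogonality is preserved and the uniform-in-$(n,\xi)$ bound on $\partial_n\lambda_n$ really holds at the boundary index $n=0$. I also remark that, consistently with the proof of Lemma \ref{HD-1}, one could instead run the Weyl calculus of \cite{LMPX2} (Lemma~3.2 and the classes $S(\cdot,\Gamma_0)$), treating the conjugation as a symbolic commutator; the Hermite--Fourier diagonalization sketched above is the more elementary route.
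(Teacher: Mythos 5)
Your proposal is correct and follows essentially the same route as the paper's proof: diagonalization in the Hermite basis in $v$ and Fourier in $x$, the tridiagonal action of $v$ through the creation/annihilation operators, a mean value theorem bound on the weight differences (the paper's $A^{\pm}_{n,\langle\xi\rangle}$ are exactly your ratios $(g^{ct}_n-g^{ct}_{n\pm1})/g^{ct}_{n\pm1}$), the exact cancellation of the powers of $n+\tfrac12$, and the comparison $P\geq\langle\xi\rangle^{\frac{3s+1}{2s+1}}$ producing the multiplier $\langle\xi\rangle^{\frac{s}{2s+1}}$. The only cosmetic differences are that you organize the conjugation as the commutator $[G_{\kappa}(ct),v]\,G_{\kappa}(ct)^{-1}$ and apply the mean value theorem to $Z$ itself (via $Z'\leq Z$ and $Z(x+y)\leq e^{y}Z(x)$), whereas the paper applies it to the exponent $ct\lambda_n(\xi)$ and uses $e^{x}-1\leq xe^{x}$, both yielding the same factor $ct\,e^{\tilde{c}_1 ct}$.
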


\begin{proof}
For all $f\in\mathcal{S}(\mathbb{R}^2_{x,v})$, and the decomposition \eqref{11}, by using the identities \eqref{H3}-\eqref{H4} satisfied by the creation and annihilation operators
\begin{align*}
&A_+e_n=(\frac{v}{2}-\partial_v)e_n=\sqrt{n+1}e_{n+1},
\\&
A_-e_n=(\frac{v}{2}+\partial_v)e_n=\sqrt{n}e_{n-1},
\end{align*}
we have immediately,
\begin{align*}
& ve_n=A_+e_n+A_-e_n=\sqrt{n+1}e_{n+1}+\sqrt{n}e_{n-1},
\\&
\mathcal{H}=\frac{1}{2}\left(A_+A_-+A_-A_+\right)e_n=\left(n+\frac{1}{2}\right)e_n.
\end{align*}
It follows that
\begin{equation*}
\begin{split}
&\frac{\exp(c t(\mathcal{H}^{\frac{1+s}{2}}+\langle\xi\rangle^{\frac{3s+1}{2s+1}})^{\frac{2s}{3s+1}})}{1+\kappa\exp(c t(\mathcal{H}^{\frac{1+s}{2}}+\langle\xi\rangle^{\frac{3s+1}{2s+1}})^{\frac{2s}{3s+1}})}
v\frac{1+\kappa\exp(c t(\mathcal{H}^{\frac{1+s}{2}}+\langle\xi\rangle^{\frac{3s+1}{2s+1}})^{\frac{2s}{3s+1}})}{\exp(c t(\mathcal{H}^{\frac{1+s}{2}}+\langle\xi\rangle^{\frac{3s+1}{2s+1}})^{\frac{2s}{3s+1}})}e_n
\\&
=\frac{\exp(c t((n+\frac{3}{2})^{\frac{1+s}{2}}+\langle\xi\rangle^{\frac{3s+1}{2s+1}})^{\frac{2s}{3s+1}})}{1+\kappa\exp(c t((n+\frac{3}{2})^{\frac{1+s}{2}}+\langle\xi\rangle^{\frac{3s+1}{2s+1}})^{\frac{2s}{3s+1}})}
\frac{1+\kappa\exp(c t((n+\frac{1}{2})^{\frac{1+s}{2}}+\langle\xi\rangle^{\frac{3s+1}{2s+1}})^{\frac{2s}{3s+1}})}{\exp(c t((n+\frac{1}{2})^{\frac{1+s}{2}}+\langle\xi\rangle^{\frac{3s+1}{2s+1}})^{\frac{2s}{3s+1}})}\sqrt{n+1}e_{n+1}
\\&
\quad+\frac{\exp(c t((n-\frac{1}{2})^{\frac{1+s}{2}}+\langle\xi\rangle^{\frac{3s+1}{2s+1}})^{\frac{2s}{3s+1}})}{1+\kappa\exp(c t((n-\frac{1}{2})^{\frac{1+s}{2}}+\langle\xi\rangle^{\frac{3s+1}{2s+1}})^{\frac{2s}{3s+1}})}
\frac{1+\kappa\exp(c t((n+\frac{1}{2})^{\frac{1+s}{2}}+\langle\xi\rangle^{\frac{3s+1}{2s+1}})^{\frac{2s}{3s+1}})}{\exp(c t((n+\frac{1}{2})^{\frac{1+s}{2}}+\langle\xi\rangle^{\frac{3s+1}{2s+1}})^{\frac{2s}{3s+1}})}\sqrt{n}e_{n-1}.
\end{split}
\end{equation*}
One can verify that
\begin{equation*}%\label{QQ-3}
\begin{split}
&\mathcal{F}_x\left(\mathcal{H}^{-\frac{s}{2}}\left[G_{\kappa}(c t)v(G_{\kappa}(c t))^{-1}-v\right]\Delta_p\partial_{x}f\right)
\\&
=\sum_{n=0}^{+\infty}i\xi\widehat{\Delta_pf_n}(\xi)\sqrt{n+1}(n+\frac{3}{2})^{-\frac{s}{2}}A^+_{n,\langle\xi\rangle}e_{n+1}\\
&\quad
+\sum_{n=1}^{+\infty}i\xi\widehat{\Delta_pf_n}(\xi)\sqrt{n}(n-\frac{1}{2})^{-\frac{s}{2}}A^-_{n,\langle\xi\rangle}e_{n-1},
\end{split}
\end{equation*}
where $\mathcal{F}_x$ stands for the partial Fourier transform with respect to the position variable $x$ and
\begin{equation}\label{Note-A}
\begin{split}
A^+_{n,\langle\xi\rangle}=\frac{\exp\left(c t\left((n+\frac{3}{2})^{\frac{s+1}{2}}+\langle\xi\rangle^{\frac{3s+1}{2s+1}}\right)^{\frac{2s}{3s+1}}-c t\left((n+\frac{1}{2})^{\frac{s+1}{2}}+\langle\xi\rangle^{\frac{3s+1}{2s+1}}\right)^{\frac{2s}{3s+1}}\right)-1}{\left(1+\kappa\exp\left(c t\left((n+\frac{3}{2})^{\frac{s+1}{2}}+\langle\xi\rangle^{\frac{3s+1}{2s+1}}\right)^{\frac{2s}{3s+1}}\right)\right)},\\
A^-_{n,\langle\xi\rangle}=\frac{\exp\left(c t\left((n-\frac{1}{2})^{\frac{s+1}{2}}+\langle\xi\rangle^{\frac{3s+1}{2s+1}}\right)^{\frac{2s}{3s+1}}-c t\left((n+\frac{1}{2})^{\frac{s+1}{2}}+\langle\xi\rangle^{\frac{3s+1}{2s+1}}\right)^{\frac{2s}{3s+1}}\right)-1}{\left(1+\kappa\exp\left(c t\left((n-\frac{1}{2})^{\frac{s+1}{2}}+\langle\xi\rangle^{\frac{3s+1}{2s+1}}\right)^{\frac{2s}{3s+1}}\right)\right)}.
\end{split}
\end{equation}
Then by the Plancherel theorem and Cauchy-Schwarz inequality, we have
\begin{equation}\label{Plancherel}
\begin{split}
&\sqrt{2\pi}\left\|\mathcal{H}^{-\frac{s}{2}}\left[G_{\kappa}(c t)v(G_{\kappa}(c t))^{-1}-v\right]\Delta_p\partial_{x}f\right\|_{L^2_{x,v}}\\
=&\left\|\mathcal{F}_x\left(\mathcal{H}^{-\frac{s}{2}}\left[G_{\kappa}(c t)v(G_{\kappa}(c t))^{-1}-v\right]\Delta_p\partial_{x}f\right)\right\|_{L^2_{\xi,v}}\\
\lesssim&\left(\sum_{n=0}^{+\infty}\|\xi\widehat{\Delta_pf_n}(\xi)A^+_{n,\langle\xi\rangle}\|^2_{L^2_{\xi}}\langle\,n\rangle^{1-s} \right)^{\frac{1}{2}}+\left(\sum_{n=1}^{+\infty}\|\xi\widehat{\Delta_pf_n}(\xi)A^-_{n,\langle\xi\rangle}\|^2_{L^2_{\xi}}\langle\,n\rangle^{1-s} \right)^{\frac{1}{2}}
\end{split}
\end{equation}
with $A^+_{n,\langle\xi\rangle},A^-_{n,\langle\xi\rangle}$ defined in \eqref{Note-A}.  Now we come to estimate $|A^+_{n,\langle\xi\rangle}|, |A^-_{n,\langle\xi\rangle}|$. It follows from the mean value theorem that
\begin{align*}
&\left((n+\frac{3}{2})^{\frac{s+1}{2}}+\langle\xi\rangle^{\frac{3s+1}{2s+1}}\right)^{\frac{2s}{3s+1}}-\left((n+\frac{1}{2})^{\frac{s+1}{2}}+\langle\xi\rangle^{\frac{3s+1}{2s+1}}\right)^{\frac{2s}{3s+1}}\\
&=\frac{c t s(1+s)}{3s+1}\left((n+\frac{1}{2}+\theta)^{\frac{s+1}{2}}+\langle\xi\rangle^{\frac{3s+1}{2s+1}}\right)^{-\frac{1+s}{3s+1}}(n+\frac{1}{2}+\theta)^{\frac{s-1}{2}}\quad(0<\theta<1),
\end{align*}
which leads to
\begin{align*}
0\leq A^+_{n,\langle\xi\rangle}\leq& \exp\left(\frac{2c t s(1+s)}{3s+1}\right)\frac{c t s(1+s)}{3s+1}\left((n+\frac{1}{2})^{\frac{s+1}{2}}+\langle\xi\rangle^{\frac{3s+1}{2s+1}}\right)^{-\frac{1+s}{3s+1}}(n+\frac{1}{2})^{\frac{s-1}{2}}\\
\leq& \exp\left(\frac{2c t s(1+s)}{3s+1}\right)\frac{2c t s(1+s)}{3s+1}\langle\xi\rangle^{-\frac{1+s}{2s+1}}\langle\,n\rangle^{\frac{s-1}{2}}.
\end{align*}
This shows that, for all $n\geq0$,
\begin{equation}\label{estimate1}
|A^+_{n,\langle\xi\rangle}|\leq \exp\left(\frac{2c t s(1+s)}{3s+1}\right)\frac{2c t s(1+s)}{3s+1}\langle\xi\rangle^{-\frac{1+s}{2s+1}}\langle\,n\rangle^{\frac{s-1}{2}}.
\end{equation}
On the other hand, we use the mean value theorem again,
\begin{align*}
&0\geq A^-_{n,\langle\xi\rangle}
\geq\exp\left(c t\left((n-\frac{1}{2})^{\frac{s+1}{2}}+\langle\xi\rangle^{\frac{3s+1}{2s+1}}\right)^{\frac{2s}{3s+1}}-c t\left((n+\frac{1}{2})^{\frac{s+1}{2}}+\langle\xi\rangle^{\frac{3s+1}{2s+1}}\right)^{\frac{2s}{3s+1}}\right)-1\\
&=\exp\left(-\frac{c t s(1+s)}{3s+1}\left((n-\frac{1}{2}+\theta)^{\frac{s+1}{2}}+\langle\xi\rangle^{\frac{3s+1}{2s+1}}\right)^{-\frac{1+s}{3s+1}}(n-\frac{1}{2}+\theta)^{\frac{s-1}{2}}\right)-1\quad(0<\theta<1)\\
&=\exp\left(-\frac{c \tau t s(1+s)}{3s+1}\left((n-\frac{1}{2}+\theta)^{\frac{s+1}{2}}+\langle\xi\rangle^{\frac{3s+1}{2s+1}}\right)^{-\frac{1+s}{3s+1}}(n-\frac{1}{2}+\theta)^{\frac{s-1}{2}}\right)\\
&\qquad\times\frac{c t s(1+s)}{3s+1}\left((n-\frac{1}{2}+\theta)^{\frac{s+1}{2}}+\langle\xi\rangle^{\frac{3s+1}{2s+1}}\right)^{-\frac{1+s}{3s+1}}(n-\frac{1}{2}+\theta)^{\frac{s-1}{2}}  \quad(0<\theta<1).
\end{align*}
Then for all $n\geq1$, we have
\begin{equation}\label{estimate2}
|A^-_{n,\langle\xi\rangle}|\leq \frac{2c t s(1+s)}{3s+1}\langle\xi\rangle^{-\frac{1+s}{2s+1}}\langle\,n\rangle^{\frac{s-1}{2}}.
\end{equation}
Substituting the results \eqref{estimate1} and \eqref{estimate2} into \eqref{Plancherel}, we conclude that
\begin{align*}
&\left\|\mathcal{H}^{-\frac{s}{2}}\left[G_{\kappa}(c t)v(G_{\kappa}(c t))^{-1}-v\right]\Delta_p\partial_{x}f\right\|_{L^2_{x,v}}\\
&\leq \exp\left(\frac{2c t s(1+s)}{3s+1}\right)\frac{2c t s(1+s)}{3s+1}\frac{1}{\sqrt{2\pi}}
\\&\quad
\times\left(\left(\sum_{n=0}^{+\infty}\|\langle\xi\rangle^{\frac{s}{2s+1}}\widehat{\Delta_pf_n}(\xi)\|^2_{L^2_{\xi}}\right)^{\frac{1}{2}}
+\left(\sum_{n=1}^{+\infty}\|\langle\xi\rangle^{\frac{s}{2s+1}}\widehat{\Delta_pf_n}(\xi)\|^2_{L^2_{\xi}} \right)^{\frac{1}{2}}\right)
\\&
\leq\tilde{c}_{1}c te^{\tilde{c}_{1}c t}\|\langle D_x\rangle^{\frac{s}{2s+1}}\Delta_pf\|_{L^2_{x,v}}.
\end{align*}
\end{proof}

%%%%%%%%%%%%%%%%%%%%%%%%%%%%%%%%%%%%%%%%%%%%%%%%%%%%%%%%%%%%%
\begin{proof}[The proof of Proposition \ref{Gelfand}]
Let $0\leq c\leq1$ and $0<\kappa\leq1$. Define
\begin{align}\label{H-1}
h_{n,c,\kappa}=G_{\kappa}(c t)\tilde{g}_n, \ \ \  n\geq0.
\end{align}
The function $h_{n,c,\kappa}$ depends on the parameters $0\leq c\leq1$ and $0<\kappa\leq1$. Here, we write $h_n$ for $h_{n,c,\kappa}$ for simplicity. Notice that
\begin{align*}
h_{0}(t)=(1+\kappa\exp(t(\mathcal{H}^{\frac{s+1}{2}}+\langle D_x\rangle^{\frac{3s+1}{2s+1}})^{\frac{2s}{3s+1}}))^{-1}g_0, \ \ 0\leq t\leq T,
\end{align*}
satisfies
\begin{align}\label{H-2}
\|h_{0}\|_{\widetilde{L}^\infty_{T}\widetilde{L}^2_{v}(B^{1/2}_{2,1})}\leq\|g_{0}\|_{\widetilde{L}^2_{v}(B^{1/2}_{2,1})}.
\end{align}
By using \eqref{H-1} that
\begin{align*}
\tilde{g}_n=(G_{\kappa}(c t))^{-1}h_n=(\kappa+\exp(-c t(\mathcal{H}^{\frac{s+1}{2}}+\langle D_x\rangle^{\frac{3s+1}{2s+1}})^{\frac{2s}{3s+1}}))h_n,
\end{align*}
then the equation
$$
\partial_t\tilde{g}_{n+1}+v\partial_x\tilde{g}_{n+1}+\mathcal{K}\tilde{g}_{n+1}=\Gamma(\tilde{g}_n, \tilde{g}_{n+1})
$$
can be rewritten as
\begin{align*}
&(G_{\kappa}(c t))^{-1}\partial_th_{n+1}+v\partial_{x}(G_{\kappa}(c t))^{-1}h_{n+1}+(G_{\kappa}(c t))^{-1}\mathcal{K}h_{n+1}
\\&\qquad
-c(\mathcal{H}^{\frac{s+1}{2}}+\langle D_x\rangle^{\frac{3s+1}{2s+1}})^{\frac{2s}{3s+1}}
\exp(-c t(\mathcal{H}^{\frac{s+1}{2}}+\langle D_x\rangle^{\frac{3s+1}{2s+1}})^{\frac{2s}{3s+1}})h_{n+1}
\\
=&\Gamma((G_{\kappa}(c t))^{-1}h_n,(G_{\kappa}(c t))^{-1}h_{n+1}).
\end{align*}
Due to \eqref{A-2}, the linearized Kac operator $\mathcal{K}=f(\mathcal{H})$ is a function of the harmonic oscillator acting on the velocity variable $v$, which can commute with the exponential weight $(G_{\kappa}(c t))^{-1}$.   Applying $\Delta_p(p\geq-1)$ to the resulting equality, we have
\begin{equation}\label{H-3}
\begin{split}
&\partial_t\Delta_ph_{n+1}+G_{\kappa}(c t)v(G_{\kappa}(c t))^{-1}\partial_{x}\Delta_ph_{n+1}+\mathcal{K}\Delta_ph_{n+1}
\\&\qquad
-\frac{c(\mathcal{H}^{\frac{s+1}{2}}+\langle D_x\rangle^{\frac{3s+1}{2s+1}})^{\frac{2s}{3s+1}}}{1+\kappa\exp(c t(\mathcal{H}^{\frac{s+1}{2}}+\langle D_x\rangle^{\frac{3s+1}{2s+1}})^{\frac{2s}{3s+1}})}\Delta_ph_{n+1}
\\&
=G_{\kappa}(c t)\Delta_p\Gamma((G_{\kappa}(c t))^{-1}h_n,(G_{\kappa}(c t))^{-1}h_{n+1}).
\end{split}
\end{equation}
According to Lemma \ref{Linear} and \eqref{G} in Section \ref{S5}, we choose the positive parameter $0<\varepsilon\leq\varepsilon_0$  in order to ensure that the multiplier
\begin{align}\label{Q}
Q=Q(v,D_v,D_x)=1-\varepsilon m^w(v,D_v,D_x)
\end{align}
is a positive bounded isomorphism on $L^2(\mathbb{R}^2_{x,v})$.

By integrating with respect to the $\xi-$variable and using the multiplier $Q\Delta_ph_{n+1}$ in $L^{2}(\mathbb{R}^2_{x,v})$, we deduce that from \eqref{H-3}
\begin{equation}\label{H-4}
\begin{split}
&\frac{1}{2}\frac{d}{dt}\|Q^{1/2}\Delta_ph_{n+1}\|^{2}_{L^2_{x,v}}+\mathrm{Re}(\mathcal{K}\Delta_ph_{n+1}, Q\Delta_ph_{n+1})_{L^2(\mathbb{R}^2_{x,v})}
\\&
\qquad+\mathrm{Re}(G_{\kappa}(c t)v(G_{\kappa}(c t))^{-1}\partial_{x}\Delta_ph_{n+1}, Q\Delta_ph_{n+1})_{L^2(\mathbb{R}^2_{x,v})}
\\&\qquad
-\mathrm{Re}\Big(\frac{c(\mathcal{H}^{\frac{s+1}{2}}+\langle D_x\rangle^{\frac{3s+1}{2s+1}})^{\frac{2s}{3s+1}}}{\exp(-c t(\mathcal{H}^{\frac{s+1}{2}}+\langle D_x\rangle^{\frac{3s+1}{2s+1}})^{\frac{2s}{3s+1}})}\Delta_ph_{n+1}, Q\Delta_ph_{n+1}\Big)_{L^2(\mathbb{R}^2_{x,v})}
\\&
=\mathrm{Re}(G_{\kappa}(c t)\Delta_p\Gamma((G_{\kappa}(c t))^{-1}h_n,(G_{\kappa}(c t))^{-1}h_{n+1}), Q\Delta_ph_{n+1})_{L^2(\mathbb{R}^2_{x,v})},
\end{split}
\end{equation}
which leads to
\begin{equation}\label{H-4}
\begin{split}
&\frac{1}{2}\frac{d}{dt}\|Q^{1/2}\Delta_ph_{n+1}\|^{2}_{L^2_{x,v}}+\mathrm{Re}((v\partial_x+\mathcal{K})\Delta_ph_{n+1}, Q\Delta_ph_{n+1})_{L^2(\mathbb{R}^2_{x,v})}
\\&
\quad+\mathrm{Re}([G_{\kappa}(c t)v(G_{\kappa}(c t))^{-1}-v]\partial_{x}\Delta_ph_{n+1}, Q\Delta_ph_{n+1})_{L^2(\mathbb{R}^2_{x,v})}
\\&
\leq c\|(\mathcal{H}^{\frac{s+1}{2}}+\langle D_x\rangle^{\frac{3s+1}{2s+1}})^{\frac{s}{3s+1}}\Delta_ph_{n+1}\|_{L^2_{x,v}}
\|(\mathcal{H}^{\frac{s+1}{2}}+\langle D_x\rangle^{\frac{3s+1}{2s+1}})^{\frac{s}{3s+1}}Q\Delta_ph_{n+1}\|_{L^2_{x,v}}
\\&\quad
+|(G_{\kappa}(c t)\Delta_p\Gamma((G_{\kappa}(c t))^{-1}h_n,(G_{\kappa}(c t))^{-1}h_{n+1}), Q\Delta_ph_{n+1})_{L^2(\mathbb{R}^2_{x,v})}|.
\end{split}
\end{equation}
It follows from Lemma \ref{Linear-B} that
\begin{equation}\label{H-5}
\begin{split}
&\mathrm{Re}((v\partial_x+\mathcal{K})\Delta_pu, Q\Delta_pu)_{L^2(\mathbb{R}^2_{x,v})}
\\&
\geq c_3\|\mathcal{H}^{\frac{s}{2}}\Delta_pu\|^2_{L^2(\mathbb{R}^2_{x,v})}+c_3\|\langle D_x\rangle^{\frac{s}{2s+1}}\Delta_pu\|^2_{L^2(\mathbb{R}^2_{x,v})}
-c_4\|\Delta_pu\|^2_{L^2(\mathbb{R}^2_{x,v})}
\end{split}
\end{equation}
for some constants $c_3,c_4>0$. We deduce from \eqref{H-4} and \eqref{H-5} that
\begin{equation}\label{H-6}
\begin{split}
&\frac{1}{2}\frac{d}{dt}\|Q^{1/2}\Delta_ph_{n+1}\|^{2}_{L^2_{x,v}}+c_3\|\mathcal{H}^{\frac{s}{2}}\Delta_ph_{n+1}\|^2_{L^2(\mathbb{R}^2_{x,v})}+c_3\|\langle D_x\rangle^{\frac{s}{2s+1}}\Delta_ph_{n+1}\|^2_{L^2(\mathbb{R}^2_{x,v})}
\\&
\leq c\|(\mathcal{H}^{\frac{s+1}{2}}+\langle D_x\rangle^{\frac{3s+1}{2s+1}})^{\frac{s}{3s+1}}\Delta_ph_{n+1}\|_{L^2_{x,v}}
\|(\mathcal{H}^{\frac{s+1}{2}}+\langle D_x\rangle^{\frac{3s+1}{2s+1}})^{\frac{s}{3s+1}}Q\Delta_ph_{n+1}\|_{L^2_{x,v}}
\\&
\quad+|([G_{\kappa}(c t)v(G_{\kappa}(c t))^{-1}-v]\partial_{x}\Delta_ph_{n+1}, Q\Delta_ph_{n+1})_{L^2(\mathbb{R}^2_{x,v})}|
\\&
\quad+|(G_{\kappa}(c t)\Delta_p\Gamma((G_{\kappa}(c t))^{-1}h_n,(G_{\kappa}(c t))^{-1}h_{n+1}), Q\Delta_ph_{n+1})_{L^2(\mathbb{R}^2_{x,v})}|\\
&\quad+c_4\|\Delta_ph_{n+1}\|^2_{L^2(\mathbb{R}^2_{x,v})}.
\end{split}
\end{equation}

By using Lemma \ref{HD} for $f$ being replaced by $\Delta_ph_{n+1}$ and $Q\Delta_ph_{n+1}$ and Lemma \ref{HD-1}, we have
\begin{align*}
&\Big\|\left(\mathcal{H}^{\frac{s+1}{2}}+\langle D_x\rangle^{\frac{3s+1}{2s+1}}\right)^{\frac{s}{3s+1}}\Delta_ph_{n+1}\Big\|_{L^2_{x,v}}
\Big\|\left(\mathcal{H}^{\frac{s+1}{2}}+\langle D_x\rangle^{\frac{3s+1}{2s+1}}\right)^{\frac{s}{3s+1}}Q\Delta_ph_{n+1}\Big\|_{L^2_{x,v}}
\\&
\lesssim\Big(\left\|\mathcal{H}^{\frac{s}{2}}\Delta_ph_{n+1}\right\|_{L^2_{x,v}}
+\left\|\langle D_x\rangle^{\frac{s}{2s+1}}\Delta_ph_{n+1}\right\|_{L^2_{x,v}}\Big)
\\&\qquad
\times\Big(\left\|\mathcal{H}^{\frac{s}{2}}Q\Delta_ph_{n+1}\right\|_{L^2_{x,v}}
+\left\|\langle D_x\rangle^{\frac{s}{2s+1}}Q\Delta_ph_{n+1}\right\|_{L^2_{x,v}}\Big)
\\&
\lesssim\Big\|\mathcal{H}^{\frac{s}{2}}\Delta_ph_{n+1}\Big\|^{2}_{L^2_{x,v}}
+\Big\|\langle D_x\rangle^{\frac{s}{2s+1}}\Delta_ph_{n+1}\Big\|^{2}_{L^2_{x,v}}.
\end{align*}
Based on Lemma \ref{HD-1} and Lemma \ref{HD-2}, we obtain
\begin{align*}
&\left|\left(\left[G_{\kappa}(c t)v(G_{\kappa}(c t))^{-1}-v\right]\partial_{x}\Delta_ph_{n+1}, Q\Delta_ph_{n+1}\right)_{L^2(\mathbb{R}^2_{x,v})}\right|
\\&=
\left|\left(\mathcal{H}^{-\frac{s}{2}}\left[G_{\kappa}(c t)v(G_{\kappa}(c t))^{-1}-v\right]\partial_{x}\Delta_ph_{n+1}, \mathcal{H}^{\frac{s}{2}}Q\Delta_ph_{n+1}\right)_{L^2(\mathbb{R}^2_{x,v})}\right|
\\&
\leq\left\|\mathcal{H}^{-\frac{s}{2}}\left[G_{\kappa}(c t)v(G_{\kappa}(c t))^{-1}-v\right]\partial_{x}\Delta_ph_{n+1}\right\|_{L^2_{x,v}}
\left\|\mathcal{H}^{\frac{s}{2}}Q\Delta_ph_{n+1}\right\|_{L^2_{x,v}}
\\&
\leq c_8c te^{c_8c t}\left\|\langle D_x\rangle^{\frac{s}{2s+1}}\Delta_ph_{n+1}\right\|_{L^2_{x,v}}\left\|\mathcal{H}^{\frac{s}{2}}\Delta_ph_{n+1}\right\|_{L^2_{x,v}},
\end{align*}
since $Q$ is commuting with any function of the operator $D_x$. Then, it follows from \eqref{H-6} that there exists some positive constants $0<c_0\leq1, c_9>0$ such that for $0\leq c\leq c_0, 0<\kappa\leq1,0\leq t\leq T$,
\begin{equation*}%\label{H-7}
\begin{split}
&\frac{1}{2}\frac{d}{dt}\big\|Q^{1/2}\Delta_ph_{n+1}\big\|^{2}_{L^2_{x,v}}+\left(c_9-c_8c Te^{c_8T}\right)(\left\|\mathcal{H}^{\frac{s}{2}}\Delta_ph_{n+1}\right\|^2_{L^2(\mathbb{R}^2_{x,v})}
\\&
+\left\|\langle D_x\rangle^{\frac{s}{2s+1}}\Delta_ph_{n+1}\right\|^2_{L^2(\mathbb{R}^2_{x,v})})
\leq c_4\left\|\Delta_ph_{n+1}\right\|^2_{L^2(\mathbb{R}^2_{x,v})}
\\&\quad
+\left|\left(G_{\kappa}(c t)\Delta_p\Gamma((G_{\kappa}(c t))^{-1}h_n,(G_{\kappa}(c t))^{-1}h_{n+1}), Q\Delta_ph_{n+1}\right)_{L^2(\mathbb{R}^2_{x,v})}\right|.
\end{split}
\end{equation*}
Here, the constant $0<c_0\leq1$ is chosen sufficiently small so that
$$
c_8c Te^{c_8T}\leq\frac{c_9}{2},
$$
then we obtain that for all $0\leq c\leq c_0, 0<\kappa\leq1,0\leq t\leq T$,
\begin{equation*}%\label{H-8}
\begin{split}
&\frac{d}{dt}\big\|Q^{1/2}\Delta_ph_{n+1}\big\|^{2}_{L^2_{x,v}}+c_9\left\|\mathcal{H}^{\frac{s}{2}}\Delta_ph_{n+1}\right\|^2_{L^2_{x,v}}
+c_9\left\|\langle D_x\rangle^{\frac{s}{2s+1}}\Delta_ph_{n+1}\right\|^2_{L^2_{x,v}}
\\&
\leq 2c_{10}\big\|Q^{1/2}\Delta_ph_{n+1}\big\|^2_{L^2_{x,v}}
+2\left|\left(G_{\kappa}(c t)\Delta_p\Gamma((G_{\kappa}(c t))^{-1}h_n,(G_{\kappa}(c t))^{-1}h_{n+1}), Q\Delta_ph_{n+1}\right)_{L^2_{x,v}}\right|
\end{split}
\end{equation*}
with $c_{10}=c_4\|(Q^{1/2})^{-1}\|_{\mathcal{L}(L^2)}>0$. Following from \eqref{equationA}, \eqref{H-1} and \eqref{H-2}, for all $0\leq c\leq c_0, 0<\kappa\leq1,0\leq t\leq T$ we are led to
\begin{equation*}
\begin{split}
&\left\|Q^{1/2}\Delta_ph_{n+1}\right\|^{2}_{L^2_{x,v}}
\\&
+c_9\int_0^te^{2c_{10}(t-\tau)}\left(\left\|\mathcal{H}^{\frac{s}{2}}\Delta_ph_{n+1}(\tau)\right\|^2_{L^2_{x,v}}
+\left\|\langle D_x\rangle^{\frac{s}{2s+1}}\Delta_ph_{n+1}(\tau)\right\|^2_{L^2_{x,v}}\right)d\tau
\\&
\leq e^{2c_{10}t}\left\|Q^{1/2}\Delta_pg_{0}\right\|^{2}_{L^2_{x,v}}+2\int_0^te^{2c_{10}(t-\tau)}
\\&
\times\left|\left(G_{\kappa}(c\tau)\Delta_p\Gamma((G_{\kappa}(c\tau))^{-1}h_n(\tau),(G_{\kappa}(c\tau))^{-1}h_{n+1}(\tau)), Q\Delta_ph_{n+1}(\tau)\right)_{L^2(\mathbb{R}^2_{x,v})}\right|d\tau
\\&
\leq e^{2c_{10}t}\big\|Q^{1/2}\big\|^2_{\mathcal{L}(L^2)}\left\|\Delta_pg_{0}\right\|^{2}_{L^2_{x,v}}
\\&
+2\int_0^te^{2c_{10}(t-\tau)}
\\&
\times\left|\left(G_{\kappa}(c\tau)\Delta_p\Gamma((G_{\kappa}(c\tau))^{-1}h_n(\tau),(G_{\kappa}(c\tau))^{-1}h_{n+1}(\tau)), Q\Delta_ph_{n+1}(\tau)\right)_{L^2(\mathbb{R}^2_{x,v})}\right|d\tau.
\end{split}
\end{equation*}
Taking the square root of the above inequality and taking the supremum over $0\leq t\leq T$ give
\begin{equation*}
\begin{split}
&\big\|Q^{1/2}\Delta_ph_{n+1}\big\|_{L^{\infty}_TL^2_{x,v}}+\sqrt{c_9}\left\|\mathcal{H}^{\frac{s}{2}}\Delta_ph_{n+1}\right\|_{L^{2}_TL^2_{x,v}}
+\sqrt{c_9}\left\|\langle D_x\rangle^{\frac{s}{2s+1}}\Delta_ph_{n+1}\right\|_{L^{2}_TL^2_{x,v}}
\\&
\leq e^{c_{10}T}\|Q^{1/2}\|_{\mathcal{L}(L^2)}\left\|\Delta_pg_{0}\right\|_{L^2_{x,v}}
\\&
+\sqrt{2}e^{c_{10}T}\left(\int_0^T\left|\left(G_{\kappa}(c t)\Delta_p\Gamma((G_{\kappa}(c t))^{-1}h_n,(G_{\kappa}(c t))^{-1}h_{n+1}), Q\Delta_ph_{n+1}\right)_{L^2_{x,v}}\right|dt\right)^{1/2}.
\end{split}
\end{equation*}
Multiply the above inequality by $2^{\frac{p}{2}}$ and take the summation over $p\geq-1$, we have
\begin{equation*}
\begin{split}
&\|Q^{1/2}h_{n+1}\|_{\widetilde{L}^{\infty}_T\widetilde{L}^2_{v}(B^{1/2}_{2,1})}+\sqrt{c_9}\|\mathcal{H}^{\frac{s}{2}}h_{n+1}\|
_{\widetilde{L}^{2}_T\widetilde{L}^2_{v}(B^{1/2}_{2,1})}+\sqrt{c_9}\|\langle D_x\rangle^{\frac{s}{2s+1}}h_{n+1}\|
_{\widetilde{L}^{2}_T\widetilde{L}^2_{v}(B^{1/2}_{2,1})}
\\&
\leq e^{c_{10}T}\|Q^{1/2}\|_{\mathcal{L}(L^2)}\|g_{0}\|_{\widetilde{L}^2_{v}(B^{1/2}_{2,1})}+\sqrt{2}e^{c_{10}T}\sum_{p\geq-1}2^{\frac{p}{2}}
\\&
\times\left(\int_0^T\left|\left(G_{\kappa}(c t)\Delta_p\Gamma((G_{\kappa}(c t))^{-1}h_n,(G_{\kappa}(c t))^{-1}h_{n+1}), Q\Delta_ph_{n+1})_{L^2(\mathbb{R}^2_{x,v})}\right)\right|dt\right)^{1/2}.
\end{split}
\end{equation*}
It follows from Lemma \ref{trilinear-G} that
\begin{equation}\label{h-n11}
\begin{split}
&\|h_{n+1}\|_{\widetilde{L}^{\infty}_T\widetilde{L}^2_{v}(B^{1/2}_{2,1})}+\sqrt{c_9}\|\mathcal{H}^{\frac{s}{2}}h_{n+1}\|
_{\widetilde{L}^{2}_T\widetilde{L}^2_{v}(B^{1/2}_{2,1})}+\sqrt{c_9}\|\langle D_x\rangle^{\frac{s}{2s+1}}h_{n+1}\|
_{\widetilde{L}^{2}_T\widetilde{L}^2_{v}(B^{1/2}_{2,1})}
\\&
\leq e^{c_{10}T}\|Q^{1/2}\|_{\mathcal{L}(L^2)}\|(Q^{1/2})^{-1}\|_{\mathcal{L}(L^2)}\|g_{0}\|_{\widetilde{L}^2_{v}(B^{1/2}_{2,1})}
\\&\qquad
+\sqrt{2}e^{c_{10}T}C_1\|h_{n}\|^{1/2}_{\widetilde{L}^{\infty}_T\widetilde{L}^2_{v}(B^{1/2}_{2,1})}
\|\mathcal{H}^{\frac{s}{2}}h_{n+1}\|_{\widetilde{L}^{2}_T\widetilde{L}^2_{v}(B^{1/2}_{2,1})}
\\&
\leq e^{c_{10}T}\|Q^{1/2}\|_{\mathcal{L}(L^2)}\|(Q^{1/2})^{-1}\|_{\mathcal{L}(L^2)}\|g_{0}\|_{\widetilde{L}^2_{v}(B^{1/2}_{2,1})}
\\&\qquad
+e^{c_{10}T}c_{11}\|h_{n}\|^{1/2}_{\widetilde{L}^{\infty}_T\widetilde{L}^2_{v}(B^{1/2}_{2,1})}
\|\mathcal{H}^{\frac{s}{2}}h_{n+1}\|_{\widetilde{L}^{2}_T\widetilde{L}^2_{v}(B^{1/2}_{2,1})}.
\end{split}
\end{equation}
Next, we use the mathematical induction argument to show that
\begin{align}\label{h-n}
\left\|h_{n}\right\|_{\widetilde{L}^{\infty}_T\widetilde{L}^2_{v}(B^{1/2}_{2,1})}\leq\frac{c_9}{2c_{11}^2e^{2c_{10}T}}
\end{align}
for $n\geq0$. In the case of $n=0$, owing to the assumption
\begin{align}\label{H-10}
\|g_{0}\|_{\widetilde{L}^2_{v}(B^{1/2}_{2,1})}\leq\varepsilon_1, \ \text{with} \ 0<\varepsilon_1=\inf(\tilde{\varepsilon}_0,\frac{c_9}{2c_{11}^2e^{2c_{10}T}},\frac{c_9}{2c_{11}^2e^{3c_{10}T}\|Q^{1/2}\|_{\mathcal{L}(L^2)}})
\leq\tilde{\varepsilon}_0
\end{align}
where the positive parameter $\tilde{\varepsilon}_0>0$ is defined in \eqref{g}, we deduce from \eqref{H-2} that
$$
\|h_{0}\|_{\widetilde{L}^{\infty}_T\widetilde{L}^2_{v}(B^{1/2}_{2,1})}\leq\|g_{0}\|_{\widetilde{L}^2_{v}(B^{1/2}_{2,1})}\leq\frac{c_9}{2c_{11}^2e^{2c_{10}T}}.
$$
In the case of $n\geq1$, if we assume that
$$
\left\|h_{n}\right\|_{\widetilde{L}^{\infty}_T\widetilde{L}^2_{v}(B^{1/2}_{2,1})}\leq\frac{c_9}{2c_{11}^2e^{2c_{10}T}},
$$
then it follows from \eqref{h-n11} that
\begin{equation}\label{H-9}
\begin{split}
&\|h_{n+1}\|_{\widetilde{L}^{\infty}_T\widetilde{L}^2_{v}(B^{1/2}_{2,1})}+\sqrt{\frac{c_9}{2}}\|\mathcal{H}^{\frac{s}{2}}h_{n+1}\|
_{\widetilde{L}^{2}_T\widetilde{L}^2_{v}(B^{1/2}_{2,1})}+\sqrt{c_9}\|\langle D_x\rangle^{\frac{s}{2s+1}}h_{n+1}\|
_{\widetilde{L}^{2}_T\widetilde{L}^2_{v}(B^{1/2}_{2,1})}
\\&
\leq e^{c_{10}T}\|Q^{1/2}\|_{\mathcal{L}(L^2)}\|(Q^{1/2})^{-1}\|_{\mathcal{L}(L^2)}\|g_{0}\|_{\widetilde{L}^2_{v}(B^{1/2}_{2,1})}.
\end{split}
\end{equation}
Together with \eqref{H-10} and \eqref{H-9}, we deduce that
$$
\|h_{n+1}\|_{\widetilde{L}^{\infty}_T\widetilde{L}^2_{v}(B^{1/2}_{2,1})}\leq\frac{c_9}{2c_{11}^2e^{2c_{10}T}}.
$$
Hence, it follows from \eqref{H-9} that for $0\leq c\leq c_0, 0<\kappa\leq1,n\geq1$,
\begin{equation*}
\begin{split}
&\|h_{n}\|_{\widetilde{L}^{\infty}_T\widetilde{L}^2_{v}(B^{1/2}_{2,1})}+\sqrt{\frac{c_9}{2}}\|\mathcal{H}^{\frac{s}{2}}h_{n}\|
_{\widetilde{L}^{2}_T\widetilde{L}^2_{v}(B^{1/2}_{2,1})}+\sqrt{c_9}\|\langle D_x\rangle^{\frac{s}{2s+1}}h_{n}\|
_{\widetilde{L}^{2}_T\widetilde{L}^2_{v}(B^{1/2}_{2,1})}
\\&
\leq e^{c_{10}T}\|Q^{1/2}\|_{\mathcal{L}(L^2)}\|(Q^{1/2})^{-1}\|_{\mathcal{L}(L^2)}\|g_{0}\|_{\widetilde{L}^2_{v}(B^{1/2}_{2,1})}.
\end{split}
\end{equation*}
This ends the proof of Proposition \ref{Gelfand}.
\end{proof}

Based on Proposition \ref{Gelfand}, by passing the limit when $\kappa\rightarrow0_{+}$ in the estimate \eqref{G-S1}, it follows from the monotone convergence theorem that the following lemma:

\begin{lemma}\label{Gelfand-1}
Let $T>0$. Then, there exist some constants $C, \varepsilon_1>0, 0<c_0\leq1$ such that for all initial data $\|g_0\|_{\widetilde{L}^2_{v}(B^{1/2}_{2,1})}\leq\varepsilon_1$, the sequence of approximate solutions $(\tilde{g}_n)_{n\geq0}$ satisfies
\begin{equation*}%\label{G-S2}
\begin{split}
&\|G_{0}(c t)\tilde{g}_n\|_{\widetilde{L}^{\infty}_T\widetilde{L}^2_{v}(B^{1/2}_{2,1})}
+\|\mathcal{H}^{\frac{s}{2}}G_{0}(c t)\tilde{g}_n\|_{\widetilde{L}^{2}_T\widetilde{L}^2_{v}(B^{1/2}_{2,1})}
\\&
+\|\langle D_x\rangle^{\frac{s}{2s+1}}G_{0}(c t)\tilde{g}_n\|_{\widetilde{L}^{2}_T\widetilde{L}^2_{v}(B^{1/2}_{2,1})}
\leq Ce^{CT}\|g_{0}\|_{\widetilde{L}^2_{v}(B^{1/2}_{2,1})}
\end{split}
\end{equation*}
for all $0<c\leq c_0, n\geq1$, where
$$
G_{0}(t)=\exp(t(\mathcal{H}^{\frac{s+1}{2}}+\langle D_x\rangle^{\frac{3s+1}{2s+1}})^{\frac{2s}{3s+1}}).
$$
\end{lemma}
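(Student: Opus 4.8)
The plan is to deduce Lemma \ref{Gelfand-1} from Proposition \ref{Gelfand} by letting the regularizing parameter $\kappa\to0_+$ in the estimate \eqref{G-S1} and invoking the monotone convergence theorem. The crucial point is that the bound in \eqref{G-S1} holds with a constant $C$ independent of $\kappa$, and that $G_0(ct)$ is the formal $\kappa=0$ limit of $G_\kappa(ct)$; thus it suffices to show that each of the three norms on the left of \eqref{G-S1} converges monotonically upward, as $\kappa\to0_+$, to the corresponding norm with $G_0(ct)$ in place of $G_\kappa(ct)$, so that the uniform upper bound is inherited in the limit.

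First I would record the relevant monotonicity. With $Z(x)=e^x/(1+\kappa e^x)$ as in the proof of Lemma \ref{trilinear-estimate-M-G}, differentiating in $\kappa$ gives $\partial_\kappa Z=-e^{2x}/(1+\kappa e^x)^2\le0$, so for each fixed value $x\ge0$ the symbol increases to $e^x$ as $\kappa$ decreases to $0$. Since $\mathcal{H}^{s/2}$, $\langle D_x\rangle^{s/(2s+1)}$ and $G_\kappa(ct)$ are all functions of the commuting operators $\mathcal{H}$ and $D_x$, they are simultaneously diagonalized by the Hermite basis in $v$ together with the Fourier transform in $x$. Decomposing $\tilde{g}_n$ accordingly, the operator $\mathcal{H}^{s/2}G_\kappa(ct)$ (respectively $\langle D_x\rangle^{s/(2s+1)}G_\kappa(ct)$ and $G_\kappa(ct)$) acts on the mode $(m,\xi)$ as multiplication by a nonnegative symbol which increases monotonically to its $\kappa=0$ counterpart as $\kappa\searrow0$.

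Next I would pass to the limit norm by norm. For the two $\widetilde{L}^2_T$ norms, I fix the Littlewood--Paley index $p$ and use Plancherel together with the Hermite expansion to write, for example, $\|\Delta_p\mathcal{H}^{s/2}G_\kappa(ct)\tilde{g}_n\|^2_{L^2_TL^2_{x,v}}$ as an integral over $(t,\xi)$ and a sum over $m$ of the squared symbol times $\big|\mathcal{F}_x\Delta_p(\tilde{g}_n)_m(\xi)\big|^2$, where $(\tilde{g}_n)_m=(\tilde{g}_n,e_m)_{L^2(\mathbb{R}_v)}$. By the monotonicity above this integrand increases in $\kappa\searrow0$, so the monotone convergence theorem gives convergence of each $\|\Delta_p\cdot\|$, and a second application over the outer index $p\ge-1$ gives convergence of the full Besov norm to the $G_0$ norm, bounded by $Ce^{CT}\|g_0\|_{\widetilde{L}^2_{v}(B^{1/2}_{2,1})}$. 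For the $\widetilde{L}^\infty_T$ norm the same pointwise-in-$t$ monotone increase lets me interchange the supremum over $t$ with the limit in $\kappa$: writing $M_\kappa=\sup_t\|\Delta_pG_\kappa(ct)\tilde{g}_n(t)\|_{L^2_{x,v}}$, the family $M_\kappa$ increases as $\kappa\searrow0$ and its limit equals the supremum over $t$ of the corresponding $G_0$ quantity, again bounded uniformly.

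I do not expect a genuine obstacle, since the statement is just a monotone limit of a uniform-in-$\kappa$ bound. The only point demanding care is the bookkeeping for the three successive interchanges of limit and norm---the $\xi$-integral and $m$-sum inside each dyadic block, the $t$-supremum or $t$-integral, and the outer $p$-summation---each of which is licensed by the monotone increase of the symbols established above and the nonnegativity of all the quantities involved.
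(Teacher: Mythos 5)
Your proposal is correct and follows essentially the same route as the paper: the paper deduces Lemma \ref{Gelfand-1} from Proposition \ref{Gelfand} precisely by passing to the limit $\kappa\to0_+$ in the uniform estimate \eqref{G-S1} and invoking the monotone convergence theorem. Your write-up merely makes explicit the mode-by-mode monotonicity of the symbol $e^x/(1+\kappa e^x)$ and the three interchanges of limit with the $\xi$-integral/$m$-sum, the $t$-norm, and the $p$-summation, all of which the paper leaves implicit.
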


\subsection{Gelfand-Shilov and Gevrey regularities}%\label{S5-2}

It follows from the Cauchy-Schwarz inequality  that for all $0<c\leq c_0,0<\kappa\leq1$,
\begin{align*}
\left\|G_{\kappa}(c t)\Delta_pf\right\|^{2}_{L^2_{x,v}}
\leq\left\|G_{0}(2c t)\Delta_pf\right\|_{L^2_{x,v}}\left\|\Delta_pf\right\|_{L^2_{x,v}}.
\end{align*}
By passing to the limit $\kappa\rightarrow0_{+}$ in the above inequality, it follows from the monotone convergence theorem that for all $0<c\leq c_0$,
$$
\|G_0(c t)\Delta_pf\|^{2}_{L^2_{x,v}}\leq\|G_{0}(2c t)\Delta_pf\|_{L^2_{x,v}}\|\Delta_pf\|_{L^2_{x,v}}.
$$
It implies that for all $0<c\leq c_0$,
\begin{equation}\label{M-f}
\begin{split}
\|G_{0}(c t)f\|_{\widetilde{L}^{\infty}_T\widetilde{L}^2_{v}(B^{1/2}_{2,1})}&
\leq\Big(\sum_{p\geq-1}2^{\frac{p}{2}}\|G_{0}(2c t)\Delta_pf\|_{L^\infty_TL^2_{x,v}}\Big)^{1/2}
\Big(\sum_{p\geq-1}2^{\frac{p}{2}}\|\Delta_pf\|_{L^\infty_TL^2_{x,v}}\Big)^{1/2}
\\&
\leq\|G_{0}(2c t)f\|^{1/2}_{\widetilde{L}^{\infty}_T\widetilde{L}^2_{v}(B^{1/2}_{2,1})}
\|f\|^{1/2}_{\widetilde{L}^{\infty}_T\widetilde{L}^2_{v}(B^{1/2}_{2,1})}.
\end{split}
\end{equation}
For the solutions $(\tilde{g}_n)_{n\geq0}$ defined in \eqref{equationA}, by using Lemma \ref{Gelfand-1} and \eqref{M-f}, we can obtain that for $0\leq c\leq\frac{c_0}{2}$,
\begin{align*}
&\|G_0(c t)\tilde{g}_{n+p}-G_0(c t)\tilde{g}_n\|_{\widetilde{L}^{\infty}_T\widetilde{L}^2_{v}(B^{1/2}_{2,1})}
\\&
\leq2\sqrt{C}e^{\frac{C T}{2}}\|g_{0}\|^{1/2}_{\widetilde{L}^2_{v}(B^{1/2}_{2,1})}
\|\tilde{g}_{n+p}-\tilde{g}_n\|^{1/2}_{\widetilde{L}^{\infty}_T\widetilde{L}^2_{v}(B^{1/2}_{2,1})},
\end{align*}
which implies that $(G_0(c t)\tilde{g}_n)_{n\geq1}$ is a Cauchy sequence in $\widetilde{L}^{\infty}_T\widetilde{L}^2_{v}(B^{1/2}_{2,1})$.
Let $h$ be the limit of the Cauchy sequence $(G_0(\frac{c_0}{2} t)\tilde{g}_n)_{n\geq1}$ in the space $\widetilde{L}^{\infty}_T\widetilde{L}^2_{v}(B^{1/2}_{2,1})$. Notice that
\begin{align*}
\Big\|\tilde{g}_n-\Big(G_0\Big(\frac{c_0}{2} t\Big)\Big)^{-1}h\Big\|_{\widetilde{L}^{\infty}_T\widetilde{L}^2_{v}(B^{1/2}_{2,1})}
\leq\Big\|G_0\Big(\frac{c_0}{2} t\Big)\tilde{g}_n-h\Big\|_{\widetilde{L}^{\infty}_T\widetilde{L}^2_{v}(B^{1/2}_{2,1})},
\end{align*}
then following from the convergence of the sequences $\{\tilde{g}_n\}$ in $\widetilde{L}^{\infty}_T\widetilde{L}^2_{v}(B^{1/2}_{2,1})$ and the uniqueness of the solution to the Cauchy problem \eqref{eq-1}, we have
\begin{align*}%\label{g-limit}
g=\Big(G_0\Big(\frac{c_0}{2} t\Big)\Big)^{-1}h=\exp\left(-\frac{c_0}{2}t\left(\mathcal{H}^{\frac{s+1}{2}}+\langle D_x\rangle^{\frac{3s+1}{2s+1}}\right)^{\frac{2s}{3s+1}}\right)h.
\end{align*}
On the other hand, we can deduce from Lemma \ref{Gelfand-1} that
\begin{align}\label{g-limit2}
\Big\|G_0(\frac{c_0}{2} t)\tilde{g}_n\Big\|_{\widetilde{L}^{\infty}_T\widetilde{L}^2_{v}(B^{1/2}_{2,1})}
\leq Ce^{CT}\|g_{0}\|_{\widetilde{L}^2_{v}(B^{1/2}_{2,1})}.
\end{align}
Passing to the limit in the above estimate \eqref{g-limit2} when $n\rightarrow+\infty$, we obtain
\begin{equation}\label{g-limit3}
\big\|\exp\big(\frac{c_0 t}{2}\big(\mathcal{H}^{\frac{s+1}{2}}+\langle D_x\rangle^{\frac{3s+1}{2s+1}}\big)^{\frac{2s}{3s+1}}\big)g\big\|_{\widetilde{L}^{\infty}_T\widetilde{L}^2_{v}(B^{1/2}_{2,1})}
\leq Ce^{CT}\|g_{0}\|_{\widetilde{L}^2_{v}(B^{1/2}_{2,1})}.
\end{equation}
By using the following elementary inequality,
\begin{align*}
\forall x,c>0, \ \ \ x^k\exp\left(-\frac{3s+1}{s}cx^{\frac{s}{3s+1}}\right)\leq\frac{(k!)^{\frac{3s+1}{s}}}{c^{\frac{3s+1}{s}k}},
\end{align*}
we can deduce that for $f\in\mathcal{S}(\mathbb{R}^{2}_{x,v})$, for all $k\geq0$,
\begin{equation}\label{gelfand-Shilov-1}
\begin{split}
&\left\|\left(\mathcal{H}^{\frac{s+1}{2}}+\langle D_x\rangle^{\frac{3s+1}{2s+1}}\right)^{k}\exp\left(-\frac{c_0t}{2}\left(\mathcal{H}^{\frac{s+1}{2}}+\langle D_x\rangle^{\frac{3s+1}{2s+1}}\right)^{\frac{2s}{3s+1}}\right)\Delta_pf\right\|_{L^2_{x,v}}
\\&
\leq\Big(\frac{3s+1}{sc_0}\Big)^{\frac{3s+1}{2s}k}\frac{(k!)^{\frac{3s+1}{2s}}}{t^{\frac{3s+1}{2s}k}}\|\Delta_pf\|_{L^2_{x,v}}.
\end{split}
\end{equation}
Then it follows from \eqref{g-limit3} and \eqref{gelfand-Shilov-1} that the solution to the Cauchy problem \eqref{eq-1} satisfies for all $0<t\leq T, k\geq0$,
\begin{equation}\label{gelfand-Shilov-2}
\begin{split}
&\|\left(\mathcal{H}^{\frac{s+1}{2}}+\langle D_x\rangle^{\frac{3s+1}{2s+1}}\right)^{k}g\|_{\widetilde{L}^2_{v}(B^{1/2}_{2,1})}
\\&
\leq\Big(\frac{3s+1}{sc_0}\Big)^{\frac{3s+1}{2s}k}\frac{(k!)^{\frac{3s+1}{2s}}}{t^{\frac{3s+1}{2s}k}}
\big\|\exp\big(\frac{c_0 t}{2}\big(\mathcal{H}^{\frac{s+1}{2}}+\langle D_x\rangle^{\frac{3s+1}{2s+1}}\big)^{\frac{2s}{3s+1}}\big)g\big\|_{\widetilde{L}^{\infty}_T\widetilde{L}^2_{v}(B^{1/2}_{2,1})}
\\&
\leq\Big(\frac{3s+1}{sc_0}\Big)^{\frac{3s+1}{2s}k}\frac{(k!)^{\frac{3s+1}{2s}}}{t^{\frac{3s+1}{2s}k}}Ce^{CT}\|g_0\|_{\widetilde{L}^2_{v}(B^{1/2}_{2,1})}.
\end{split}
\end{equation}
By \eqref{gelfand-Shilov-2}, we obtain that there exists a positive constant $C>1$ such that $\forall0<t\leq T, k\geq0$,
\begin{equation*}
\left\|\left(\mathcal{H}^{\frac{s+1}{2}}+\langle D_x\rangle^{\frac{3s+1}{2s+1}}\right)^{k}g(t)\right\|_{\widetilde{L}^2_{v}(B^{1/2}_{2,1})}
\leq \frac{C^{k+1}}{t^{\frac{3s+1}{2s}k}}(k!)^{\frac{3s+1}{2s}}\left\|g_0\right\|_{\widetilde{L}^2_{v}(B^{1/2}_{2,1})}.
\end{equation*}
This proves the Gelfand-Shilov property in Theorem \ref{Main}.

On the other hand, we have for $p\geq-1,q\geq0$,
\begin{align}\label{g-besov1}
&\partial_x^q\Delta_pg(x,v)=\sum_{n=0}^{+\infty}\partial_x^q\Delta_pg_n(x)e_n(x), \ \  \text{with} \ \ g_n(x)=(g(x,\cdot),e_n)_{L^2(\mathbb{R}_v)}
\end{align}
and
\begin{align}\label{g-besov2}
\partial_x^q\Delta_pg_n(x)=(\partial_x^q\Delta_pg(x,\cdot),e_n)_{L^2(\mathbb{R}_v)}.
\end{align}

We deduce from \eqref{g-besov1}-\eqref{g-besov2} and Lemma \ref{J10-5} with $r=\frac{3s+1}{2s(s+1)}$ that there exist some constants $C_1, C_2>0$ such that for for all $k,l,q\geq0, \varepsilon>0$,
\begin{equation}\label{Gevrey-1}
\begin{split}
&\|v^k\partial_v^l\partial_x^qg(t)\|_{\widetilde{L}^2_{v}(B^{1/2}_{2,1})}=\sum_{p\geq-1}2^{\frac{p}{2}}\|v^k\partial_v^l\partial_x^q\Delta_pg(t)\|_{L^2_{x,v}}
\\&
\leq\sum_{p\geq-1}\sum_{n=0}^{+\infty}2^{\frac{p}{2}}\|\partial_x^q\Delta_pg_n(t)\|_{L^2_{x}}\|v^k\partial_v^le_n\|_{L^2_{v}}
\\&
\leq C_1\Big(\frac{C_2}{\inf(\varepsilon^{\frac{3s+1}{2s(s+1)}},1)}\Big)^{k+l}(k!)^{\frac{3s+1}{2s(s+1)}}(l!)^{\frac{3s+1}{2s(s+1)}}
\sum_{n=0}^{+\infty}\sum_{p\geq-1}2^{\frac{p}{2}}\|\partial_x^q\Delta_pg_n(t)\|_{L^2_{x}}
\\&\qquad
\times\Big((1-\delta_{n,0})\exp\Big(\varepsilon\frac{3s+1}{2s(s+1)}n^{\frac{s(s+1)}{3s+1}}\Big)+\delta_{n,0}\Big),
\end{split}
\end{equation}
where $\delta_{n,0}$ stands for the Kronecker delta, i.e., $\delta_{n,0}=1$ if $n=0, \delta_{n,0}=0$ if $n\neq0$. It follows from \eqref{g-limit3} that for all $0\leq t\leq T$,
\begin{align*}
&\Big\|\exp\Big(\frac{c_0t}{2}(\mathcal{H}^{\frac{s+1}{2}}+\langle D_x\rangle^{\frac{3s+1}{2s+1}})^{\frac{2s}{3s+1}}\Big)g\Big\|_{\widetilde{L}^2_{v}(B^{1/2}_{2,1})}
\\&
=\sum_{p\geq-1}2^{\frac{p}{2}}\Big(\sum_{n=0}^{+\infty}\Big\|\exp\Big(\frac{c_0t}{2}((n+\frac{1}{2})^{\frac{s+1}{2}}+\langle D_x\rangle^{\frac{3s+1}{2s+1}})^{\frac{2s}{3s+1}}\Big)\Delta_pg_n(t)\Big\|^2_{L^2_{x}}\Big)^{1/2}
\\&
\leq Ce^{CT}\|g_{0}\|_{\widetilde{L}^2_{v}(B^{1/2}_{2,1})},
\end{align*}
which implies that for all $0\leq t\leq T$,
\begin{equation}\label{Gevrey-2}
\sum_{p\geq-1}2^{\frac{p}{2}}\sup_{n\geq0}\Big\|\exp\Big(\frac{c_0t}{2}((n+\frac{1}{2})^{\frac{s+1}{2}}+\langle D_x\rangle^{\frac{3s+1}{2s+1}})^{\frac{2s}{3s+1}}\Big)\Delta_pg_n(t)\Big\|_{L^2_{x}}
\leq Ce^{CT}\|g_{0}\|_{\widetilde{L}^2_{v}(B^{1/2}_{2,1})}.
\end{equation}
Then we obtain that for all $n,q\geq0, p\geq-1$,
\begin{equation}\label{Gevrey-3}
\begin{split}
\|\partial_x^q\Delta_pg_n(t)\|_{L^2_{x}}&
=\Big(\frac{1}{2\pi}\int_{\mathbb{R}}|\xi|^{2q}\exp\Big(-c_0t((n+\frac{1}{2})^{\frac{s+1}{2}}+\langle\xi\rangle^{\frac{3s+1}{2s+1}})^{\frac{2s}{3s+1}}\Big)
\\&\quad
\times\Big|\exp\Big(\frac{c_0t}{2}\Big((n+\frac{1}{2})^{\frac{s+1}{2}}+\langle\xi\rangle^{\frac{3s+1}{2s+1}}\Big)^{\frac{2s}{2s+1}}\Big)
\widehat{\Delta_pg_n}(\xi)\Big|^2d\xi\Big)^{1/2}
\\&
\leq\Big(\frac{2s+1}{sc_0t}\Big)^{\frac{2s+1}{2s}q}(q!)^{\frac{2s+1}{2s}}\exp\Big(-\frac{c_0t}{2}\Big(n+\frac{1}{2}\Big)^{\frac{s(s+1)}{3s+1}}\Big)
\\&\quad
\times\Big\|\exp\Big(\frac{c_0t}{2}\Big((n+\frac{1}{2})^{\frac{s+1}{2}}+\langle D_x\rangle^{\frac{3s+1}{2s+1}}\Big)^{\frac{2s}{2s+1}}\Big)\Delta_pg_n(t)\Big\|_{L^2_{x}},
\end{split}
\end{equation}
where we used the following inequality that, $\forall0<t\leq T, \ \ \forall q\geq0, \ \ \forall\xi\in\mathbb{R},$
\begin{align*}
\langle\xi\rangle^{2q}e^{-c_0t\langle\xi\rangle^{\frac{2s}{2s+1}}}\leq\Big(\frac{2s+1}{sc_0t}\Big)^{\frac{2s+1}{s}q}(q!)^{\frac{2s+1}{s}}.
\end{align*}
Then it follows from \eqref{Gevrey-1}, \eqref{Gevrey-2} and \eqref{Gevrey-3} that for all $0\leq t\leq T, k,l,q\geq0$,
\begin{equation*}%\label{Gevrey-4}
\begin{split}
&\|v^k\partial_v^l\partial_x^qg(t)\|_{\widetilde{L}^2_{v}(B^{1/2}_{2,1})}
\\&
\leq C_1\Big(\frac{C_2}{\inf(\varepsilon^{\frac{3s+1}{2s(s+1)}},1)}\Big)^{k+l}\Big(\frac{2s+1}{sc_0t}\Big)^{\frac{2s+1}{2s}q}
(k!)^{\frac{3s+1}{2s(s+1)}}(l!)^{\frac{3s+1}{2s(s+1)}}(q!)^{\frac{2s+1}{2s}}
\\&\quad
\times\sum_{n=0}^{+\infty}\sum_{p\geq-1}2^{\frac{p}{2}}\Big\|\exp\Big(\frac{c_0t}{2}\Big((n+\frac{1}{2})^{\frac{s+1}{2}}+\langle D_x\rangle^{\frac{3s+1}{2s+1}}\Big)^{\frac{2s}{2s+1}}\Big)\Delta_pg_n(t)\Big\|_{L^2_{x}}
\\&\quad
\times\Big((1-\delta_{n,0})\exp\Big(\varepsilon\frac{3s+1}{2s(s+1)}n^{\frac{s(s+1)}{3s+1}}-\frac{c_0}{4}t\Big(n+\frac{1}{2}\Big)^{\frac{s(s+1)}{3s+1}}\Big)
\\&\qquad\quad
+\delta_{n,0}\exp\Big(-\frac{c_0}{4}t\Big(n+\frac{1}{2}\Big)^{\frac{s(s+1)}{3s+1}}\Big)\Big),
\\&
\leq Ce^{CT}\|g_{0}\|_{\widetilde{L}^2_{v}(B^{1/2}_{2,1})}C_1\Big(\frac{C_2}{\inf(\varepsilon^{\frac{3s+1}{2s(s+1)}},1)}\Big)^{k+l}\Big(\frac{2s+1}{sc_0t}\Big)^{\frac{2s+1}{2s}q}
(k!)^{\frac{3s+1}{2s(s+1)}}(l!)^{\frac{3s+1}{2s(s+1)}}(q!)^{\frac{2s+1}{2s}}
\\&\quad
\times\sum^{+\infty}_{n=0}\Big((1-\delta_{n,0})\exp\Big(\varepsilon\frac{3s+1}{2s(s+1)}n^{\frac{s(s+1)}{3s+1}}-\frac{c_0}{4}t\Big(n+\frac{1}{2}\Big)^{\frac{s(s+1)}{3s+1}}\Big)
\\&\quad
\qquad\qquad+\delta_{n,0}\exp\Big(-\frac{c_0}{4}t\Big(n+\frac{1}{2}\Big)^{\frac{s(s+1)}{3s+1}}\Big)\Big).
\end{split}
\end{equation*}
If we choose
$$
\varepsilon=\frac{s(s+1)c_0t}{12s+4}>0,
$$
then there exist some constants $C_3,C_4>0$ such that for all
\begin{equation}\label{Gevrey-5}
\begin{split}
&\|v^k\partial_v^l\partial_x^qg(t)\|_{\widetilde{L}^2_{v}(B^{1/2}_{2,1})}
\\&
\leq C_3C_4^{k+l+q}\frac{F(t)}{t^{\frac{3s+1}{s(s+1)}(k+l)+\frac{2s+1}{2s}q}}
(k!)^{\frac{3s+1}{2s(s+1)}}(l!)^{\frac{3s+1}{2s(s+1)}}(q!)^{\frac{2s+1}{2s}}\|g_{0}\|_{\widetilde{L}^2_{v}(B^{1/2}_{2,1})},
\end{split}
\end{equation}
where
\begin{align*}
F(x)=\sum_{n=0}^{+\infty}\exp\Big(-\frac{c_0x}{8}\Big(n+\frac{1}{2}\Big)^{\frac{s(s+1)}{3s+1}}\Big), \ \ x>0.
\end{align*}
For any $x>0$ we obtain that
\begin{align*}
F(x)
&=x^{-(\frac{3s+1}{s(s+1)}+\alpha)}\sum_{n=0}^{+\infty}\Big(\frac{c_0x}{8}\Big(n+\frac{1}{2}\Big)^{\frac{s(s+1)}{3s+1}}\Big)^{\frac{3s+1}{s(s+1)}+\alpha}
\exp\Big(-\frac{c_0x}{8}\Big(n+\frac{1}{2}\Big)^{\frac{s(s+1)}{3s+1}}\Big)
\\&\quad
\times\frac{1}{(\frac{c_0}{8})^{\frac{3s+1}{s(s+1)}+\alpha}(n+\frac{1}{2})^{1+\frac{\alpha s(s+1)}{3s+1}}}
\\&
\lesssim x^{-(\frac{3s+1}{s(s+1)}+\alpha)}\|z^{\frac{3s+1}{s(s+1)}+\alpha}e^{-z}\|_{L^\infty([0,+\infty))}
\sum_{n=0}^{+\infty}\frac{1}{(\frac{c_0}{8})^{\frac{3s+1}{s(s+1)}+\alpha}(n+\frac{1}{2})^{1+\frac{\alpha s(s+1)}{3s+1}}}
\\&
\lesssim x^{-(\frac{3s+1}{s(s+1)}+\alpha)}.
\end{align*}
with a positive parameter $\alpha>0$. We deduce from \eqref{Gevrey-5} that for $\forall\alpha>0$, there exist some constants $C_5, C_6>0$ such that for all $0\leq t\leq T, k,l,q\geq0$,
\begin{equation*}%\label{Gevrey-6}
\begin{split}
\|v^k\partial_v^l\partial_x^qg(t)\|_{\widetilde{L}^2_{v}(B^{1/2}_{2,1})}\leq\frac{C_5C_6^{k+l+q}}{t^{\frac{3s+1}{2s(s+1)}(k+l+2)+\frac{2s+1}{2s}q+\alpha}}
(k!)^{\frac{3s+1}{2s(s+1)}}(l!)^{\frac{3s+1}{2s(s+1)}}(q!)^{\frac{2s+1}{2s}}\|g_{0}\|_{\widetilde{L}^2_{v}(B^{1/2}_{2,1})}.
\end{split}
\end{equation*}
This proves the Gevrey smoothing property in Theorem \ref{Main}.

\section{Appendix}\label{S5}

\subsection{Hermite functions}\label{S5-1}

The standard Hermite functions $(\phi_{n})_{n\in \mathbb{N}}$ are defined for $v\in\mathbb{R}$,
\begin{align*}%\label{H1}
\phi_n(v)=\frac{(-1)^n}{\sqrt{2^nn!\sqrt{\pi}}}e^{\frac{v^2}{2}}\frac{d^n}{dv^n}(e^{-\frac{v^2}{2}})
=-\frac{1}{\sqrt{2^nn!\sqrt{\pi}}}(v-\frac{d}{dv})^n(e^{-\frac{v^2}{2}})=\frac{a_+^n\phi_{0}}{\sqrt{n!}},
\end{align*}
where $a_+$ is the creation operator
$$
a_+=\frac{1}{\sqrt{2}}\Big(v-\frac{d}{dv}\Big).
$$
The family $(\phi_n)_{n\in \mathbb{N}}$ is an orthonormal basis of $L^2(\mathbb{R})$. We set for $n\in\mathbb{N},v\in\mathbb{R}$,
\begin{align*}%\label{H2}
e_n(v)=2^{-1/4}\phi_n(2^{-1/2}v), \ \ \ e_n=\frac{1}{\sqrt{n!}}(\frac{v}{2}-\frac{d}{dv})^ne_0.
\end{align*}
The family $(e_{n})_{n\in \mathbb{N}}$ is an orthonormal basis of $L^2(\mathbb{R})$ composed by the eigenfunctions of the harmonic oscillator
$$
\mathcal{H}=-\Delta_{v}+\frac{v^2}{4}=\sum_{n\geq0}(n+\frac{1}{2})\mathbb{P}_n, \ \ 1=\sum_{n\geq0}\mathbb{P}_n,
$$
where $\mathbb{P}_n$ stands for the orthogonal projection
$$
\mathbb{P}_nf=(f,e_n)_{L^2(\mathbb{R}_v)}e_n.
$$
It satisfies the identities
\begin{align}\label{H3}
A_+e_n=\sqrt{n+1}e_{n+1}, \ \ \ A_-e_n=\sqrt{n}e_{n-1},
\end{align}
where
\begin{align}\label{H4}
A_{\pm}=\frac{v}{2}\mp\frac{d}{dv}.
\end{align}

\subsection{The Kac collision operator}\label{S5-2}

For $\varphi$ a function defined on $\mathbb{R}$, we denote its even part by
$$
\breve{\varphi}(\theta)=\frac{1}{2}(\varphi(\theta)+\varphi(-\theta)).
$$
The following lemma is given by \cite{LMPX1} (Lemma A.1):
\begin{lemma}\label{Kac}
Let $\nu\in L^1_{\mathrm{loc}}(\mathbb{R}^{\ast})$ be an even function such that $\theta^2\nu(\theta)\in L^1(\mathbb{R})$. Then, the mapping
\begin{align*}
\varphi\in C^2_c(\mathbb{R})\mapsto\lim_{\varepsilon\rightarrow0_+}\int_{|\theta|\geq\varepsilon}\nu(\theta)(\varphi(\theta)-\varphi(0))d\theta
=\int_0^1\int_{\mathbb{R}}(1-t)\theta^2\nu(\theta)\varphi''(t\theta)d\theta dt,
\end{align*}
defines a distribution of order 2 denoted $\mathrm{fp}(\nu)$. The linear form $\mathrm{fp}(\nu)$ can be extended to $C^{1,1}$ functions ($C^1$ functions whose
second derivative is $L^{\infty}$). For $\varphi\in C^{1,1}$ satisfying $\varphi(0)=0$, the function $\nu\breve{\varphi}$ belongs to $L^1(\mathbb{R})$ and
$$
\langle\mathrm{fp}(\nu),\varphi\rangle=\int\nu(\theta)\breve{\varphi}(\theta)d\theta.
$$
\end{lemma}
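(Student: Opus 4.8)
The plan is to reduce the principal-value-type limit to an absolutely convergent integral by means of the second-order Taylor formula with integral remainder, and to exploit the evenness of $\nu$ at the very end. For $\varphi\in C^2_c(\mathbb{R})$ I would first write
\[
\varphi(\theta)-\varphi(0)=\varphi'(0)\theta+R(\theta),\qquad R(\theta)=\theta^2\int_0^1(1-t)\varphi''(t\theta)\,dt,
\]
so that $|R(\theta)|\le\tfrac12\theta^2\|\varphi''\|_{L^\infty}$ for every $\theta$. Since $\nu$ is even, $\theta\nu(\theta)$ is odd and lies in $L^1(\{|\theta|\ge\varepsilon\})$ (integrability away from $0$ from $\nu\in L^1_{\mathrm{loc}}(\mathbb{R}^\ast)$, at infinity from $\theta^2\nu\in L^1$), whence $\int_{|\theta|\ge\varepsilon}\nu(\theta)\varphi'(0)\theta\,d\theta=0$ for each $\varepsilon>0$. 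The $\varepsilon$-integral therefore collapses to $\int_{|\theta|\ge\varepsilon}\nu(\theta)R(\theta)\,d\theta$, and because $|\nu R|\le\tfrac12\|\varphi''\|_{L^\infty}\,\theta^2|\nu|\in L^1(\mathbb{R})$ the limit $\varepsilon\to0_+$ exists and equals $\int_{\mathbb{R}}\nu R\,d\theta$; a Tonelli/Fubini interchange, justified by the same domination, reproduces exactly the claimed double integral.

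Continuity is then immediate from the bound $|\langle\mathrm{fp}(\nu),\varphi\rangle|\le\tfrac12\|\theta^2\nu\|_{L^1}\|\varphi''\|_{L^\infty}$, which controls the functional by the derivatives of $\varphi$ up to order $2$; hence $\mathrm{fp}(\nu)$ is a distribution of order at most $2$, as in \cite{LMPX1}. To extend it to $C^{1,1}$ I would simply note that the Taylor identity persists for $\varphi\in C^{1,1}$: there $\varphi'$ is Lipschitz, hence absolutely continuous with $\varphi''\in L^\infty$ defined almost everywhere, so both the double-integral formula and the continuity estimate hold verbatim with $\|\varphi''\|_{L^\infty}$, and they define the extension.

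For the final identity I would symmetrize. Assuming $\varphi(0)=0$, the remainder is $R(\theta)=\varphi(\theta)-\varphi'(0)\theta$, and $|R(\theta)|\le\tfrac12\theta^2\|\varphi''\|_{L^\infty}$ globally, so $\nu R\in L^1(\mathbb{R})$. Moreover $\breve\varphi(0)=0$ and $\breve\varphi'(0)=0$ (the latter since $\breve\varphi'$ is odd), so Taylor yields the global bound $|\breve\varphi(\theta)|\le\tfrac12\|\varphi''\|_{L^\infty}\theta^2$, giving $\nu\breve\varphi\in L^1(\mathbb{R})$. Using the evenness of $\nu$ I replace $R$ by its even part in $\int_{\mathbb{R}}\nu R\,d\theta$ and compute $\tfrac12\big(R(\theta)+R(-\theta)\big)=\tfrac12\big(\varphi(\theta)+\varphi(-\theta)\big)=\breve\varphi(\theta)$, the odd term $\varphi'(0)\theta$ cancelling exactly; hence $\langle\mathrm{fp}(\nu),\varphi\rangle=\int_{\mathbb{R}}\nu\breve\varphi\,d\theta$.

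I expect the only genuine subtlety to be the bookkeeping around non-absolute convergence: the integrals $\int\nu\varphi$ and $\varphi'(0)\int\theta\nu$ need not converge separately near the singularity, and the entire mechanism is that subtracting the odd linear term (which integrates to zero by parity) leaves an $O(\theta^2)$ remainder that is absolutely integrable against $\theta^2\nu\in L^1$. The safest way to stay rigorous is never to split $\int\nu\varphi$ directly, but to carry $R$ throughout and symmetrize only at the last step, where the self-cancellation of $\varphi'(0)\theta$ is precisely what regenerates $\breve\varphi$.
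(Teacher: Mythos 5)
Your proof is correct, and since the paper merely quotes this as Lemma A.1 of \cite{LMPX1} without reproducing a proof, your argument is to be compared with the original one there — which it matches in essence: second-order Taylor expansion with integral remainder, cancellation of the odd linear term $\varphi'(0)\theta$ against the even $\nu$ on symmetric truncated domains, dominated convergence via $\theta^2\nu\in L^1$, and a final symmetrization producing $\breve\varphi$. All the delicate points (integrability of $\theta\nu$ on $\{|\theta|\geq\varepsilon\}$, validity of the Taylor formula for $C^{1,1}$ via absolute continuity of $\varphi'$, and the bound $|\breve\varphi(\theta)|\leq\tfrac12\|\varphi''\|_{L^\infty}\theta^2$ from $\breve\varphi(0)=\breve\varphi'(0)=0$) are handled correctly, so nothing needs to be added.
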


Let $f,g\in\mathcal{S}(\mathbb{R})$ be Schwartz functions. We define
\begin{align*}
F_{f,g}(\underbrace{v,v_{*}}_w)=f(v)g(v_{*}), \ \ \varphi_{f,g}(\theta,v)=\int_{\mathbb{R}}(F_{f,g}(R_{\theta}w)-F_{f,g}(w))dv_{*},
\end{align*}
where $R_{\theta}$ stands for the rotation of angle $\theta$ in $\mathbb{R}^2$,
\begin{equation*}
R_{\theta}=
\left(
\begin{aligned}
&\cos\theta \ \ \ -\sin\theta \\
&\sin\theta \ \ \ \cos\theta
\end{aligned} \right)=\exp(\theta J), \ \ J=R_{\frac{\pi}{2}}.
\end{equation*}
The second derivative with respect to $\theta$ of the function $\varphi_{f,g}$ is in $\mathcal{S}(\mathbb{R})$ uniformly with respect to $\theta$. We define the non-cutoff Kac operator as
$$
K(g,f)(v)=\langle\mathrm{fp}(\mathbb{I}_{(-\frac{\pi}{4},\frac{\pi}{4})}\beta),\varphi_{f,g}(\cdot,v)\rangle,
$$
when $\beta$ is a function satisfying \eqref{A-0}. Since $\Phi_{f,g}(0,v)\equiv0$, Lemma \ref{Kac} allows to replace the finite part by the absolutely converging integral
$$
K(g,f)(v)=\int_{|\theta|\leq\frac{\pi}{4}}\beta(\theta)\Big(\int_{\mathbb{R}}((\breve{g})'_\ast f'-(\breve{g})_\ast f)dv_{*}\Big)d\theta
=K(\breve{g},f)(v).
$$
It was established in \cite{LMPX1} (Lemma A.2) that $K(g,f)\in\mathcal{S}(\mathbb{R})$, when $g,f\in\mathcal{S}(\mathbb{R})$. We also recall the Bobylev formula \cite{Boby} providing an explicit formula for the Fourier transform of the Kac operator
$$
\widehat{K(g,f)}(\xi)=\int_{|\theta|\leq\frac{\pi}{4}}\beta(\theta)\Big[\widehat{\breve{g}}(\xi\sin\theta)\widehat{f}(\xi\cos\theta)
-\widehat{g}(0)\widehat{f}(\xi)\Big]d\theta,
$$
when $f,g\in\mathcal{S}(\mathbb{R})$. The proof of this formula may be found in \cite{LMPX1} (Lemma A.4).

\subsection{Linear inhomogeneous Kac operator}\label{S5-3}

We recall some spectral analysis for the linear inhomogeneous Kac operator that are given in \cite{LMPX1,LMPX2}. Consider the operator acting in the velocity variable
\begin{align}\label{P}
P=iv\xi+a_0^{w}(v,D_v),
\end{align}
with parameter $\xi\in\mathbb{R}$, where the operator $A=a_0^{w}(v,D_v)$ stands for the pseudo-differential operator
$$
a_0^{w}(v,D_v)u=\frac{1}{2\pi}\int_{\mathbb{R}^2}e^{i(v-w)\eta}a_0(\frac{v+w}{2},\eta)u(w)dwd\eta
$$
defined by the Weyl quantization of the symbol
\begin{align*}%\label{a-1}
a_0(v,\eta)=c_0(1+\eta^2+\frac{v^2}{4})^{s}
\end{align*}
with some constants $c_0>0,0<s<1$. This operator corresponds to the principle part of the linear inhomogeneous Kac operator
$$
v\partial_x+\mathcal{K}
$$
on the Fourier side in the position variable.

Let $\psi$ be a $C^{\infty}_0(\mathbb{R},[0,1])$ function satisfying
$$
\psi=1 \ \ \text{on} \ \ [-1,1], \ \ \mathrm{supp} \  \psi\subset[-2,2].
$$
We define the real-valued symbol
\begin{align}\label{G}
m=-\frac{\xi\eta}{\lambda^{\frac{2s+2}{2s+1}}}\psi(\frac{\eta^2+v^{2}}{\lambda^{\frac{2}{2s+1}}})
\end{align}
with
\begin{align*}%\label{a-3}
\lambda=(1+v^2+\eta^2+\xi^2)^{\frac{1}{2}}.
\end{align*}

It holds that the following equivalence of norms
\begin{align}\label{PH}
\forall r\in\mathbb{R}, \exists C_r>0, \frac{1}{C_r}\|\mathcal{H}^{r}u\|_{L^2}\leq\Big\|\mathrm{Op}^w\Big(\Big(1+\eta^2+\frac{v^2}{4}\Big)^r\Big)u\Big\|_{L^2}\leq C_r\|\mathcal{H}^{r}u\|_{L^2},
\end{align}
where $\mathcal{H}=-\Delta_v+\frac{v^2}{4}$ stands for the harmonic oscillator.
%
%In \cite{LMPX2}, the authors show the following symbolic estimates.
%
%\begin{lemma}\label{M}
%For $n\in\mathbb{R}$, the following symbols belongs to their respective function spaces
%\begin{align*}
%&1) \langle\xi\rangle^n\in S(\lambda^n,\Gamma_1) \quad\quad 2) \lambda^n\in S(\lambda^n,\Gamma_1)
%\quad\quad 3) \psi\Big(\frac{v^2+\eta^2}{\lambda^{\frac{2}{2s+1}}}\Big)\in S(1,\Gamma_1)
%\\&
%4)m\in S(1,\Gamma_1) \quad\quad\quad \ 5) \frac{\xi^2}{\lambda^{\frac{2+2s}{2S+1}}}\psi\Big(\frac{v^2+\eta^2}{\lambda^{\frac{2}{2s+1}}}\Big)\in S(N,\Gamma_1)
%\\&
%6) \Big(1-\psi\Big(\frac{v^2+\eta^2}{\lambda^{\frac{2}{2s+1}}}\Big)\Big)\Big(1+\eta^2+\frac{v^2}{4}\Big)^s\in S(N,\Gamma_1),
%\end{align*}
%uniformly with respect to the parameter $\xi\in\mathbb{R}$. It holds that
%\begin{align*}
%\{v\xi,m\}=\frac{\xi^2}{\lambda^{\frac{2+2s}{2S+1}}}\psi\Big(\frac{v^2+\eta^2}{\lambda^{\frac{2}{2s+1}}}\Big)+r,
%\end{align*}
%with a remainder belongs to the symbol class $S(\langle(v,\eta)\rangle^{2s},\Gamma_0)$ uniformly with respect to the parameter $\xi\in\mathbb{R}$.
%\end{lemma}

\subsection{Fundamental inequalities}\label{S5-4}
We recall some estimates for the Kac collision operator along with the Hermite basis, see \cite{LMPX2} for details.
\begin{lemma}\label{Linear}
Let $P$ be the operator defined in \eqref{P} and $M=m^{w}$ the self-adjoint operator defined by the Weyl quantization of the symbol \eqref{G}. Then, the
operator $M$ is uniformly bounded on $L^2(\mathbb{R}_v)$ with respect to the parameter $\xi\in\mathbb{R}$, and there exist some positive constants $0<\varepsilon_0\leq1,c_1,c_2>0$ such that for all $0<\varepsilon\leq\varepsilon_0,u\in\mathcal{S}(\mathbb{R}_v),\xi\in\mathbb{R}$,
\begin{align*}
\mathrm{Re}(Pu,(1-\varepsilon M)u)\geq c_1\|\mathcal{H}^{\frac{s}{2}}u\|^{2}_{L^2(\mathbb{R}_v)}+c_1\varepsilon\langle\xi\rangle^{\frac{2s}{2s+1}}
\|u\|^{2}_{L^2(\mathbb{R}_v)}-c_2\|u\|^{2}_{L^2(\mathbb{R}_v)},
\end{align*}
where $\mathcal{H}=-\Delta_{v}+\frac{v^2}{4}$ stands for the harmonic oscillator.
\end{lemma}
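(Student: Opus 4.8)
The plan is to establish the estimate by the symbolic multiplier method of H\'erau--Nier / H\"ormander type, exploiting the hypoelliptic interaction between the skew-adjoint transport term $iv\xi$ and the elliptic part $A=a_0^w(v,D_v)$. The starting point is the splitting
$$\mathrm{Re}(Pu,(1-\varepsilon M)u)=\mathrm{Re}(Pu,u)-\varepsilon\,\mathrm{Re}(Pu,Mu).$$
Since multiplication by $iv\xi$ is skew-adjoint and $A$ is self-adjoint, the first term is simply $\mathrm{Re}(Pu,u)=(Au,u)$. Because the Weyl symbol $a_0=c_0(1+\eta^2+\tfrac{v^2}{4})^{s}$ is nonnegative and, by the norm equivalence \eqref{PH}, comparable to the symbol of $\mathcal{H}^{s}$, sharp G\aa rding together with \eqref{PH} gives $(Au,u)\geq \tfrac1C\|\mathcal{H}^{s/2}u\|_{L^2}^2-C\|u\|_{L^2}^2$, which already furnishes the first term on the right-hand side. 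The whole problem then reduces to extracting the gain $\varepsilon\langle\xi\rangle^{2s/(2s+1)}\|u\|^2$ from $-\varepsilon\,\mathrm{Re}(Pu,Mu)$ while absorbing all error terms.

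Next I would record the uniform boundedness of $M$. The point is that the real-valued symbol $m$ in \eqref{G} belongs to the class $S(1,\Gamma_0)$ uniformly with respect to the parameter $\xi$. On $\mathrm{supp}\,\psi$ one has $\eta^2+v^2\lesssim\lambda^{2/(2s+1)}$ and $|\xi|\leq\lambda$, so that
$$\frac{|\xi\eta|}{\lambda^{(2s+2)/(2s+1)}}\lesssim\frac{\lambda\cdot\lambda^{1/(2s+1)}}{\lambda^{(2s+2)/(2s+1)}}=\lambda^{0}=1,$$
which gives $|m|\lesssim1$; the analogous computation for the derivatives confirms $m\in S(1,\Gamma_0)$, and the Calder\'on--Vaillancourt theorem yields the uniform $L^2$-boundedness of $M=m^w$.

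For the gain, I exploit that the symbol $v\xi$ is affine in $v$, so the Weyl commutator closes at first order and
$$\mathrm{Re}(iv\xi\,u,Mu)=-\tfrac12([iv\xi,M]u,u)=\tfrac12\big(\mathrm{Op}^w(\xi\,\partial_\eta m)u,u\big)$$
holds exactly. Differentiating \eqref{G}, the leading part is $\xi\,\partial_\eta m=-\xi^2\lambda^{-(2s+2)/(2s+1)}\psi+r$, where $r$ collects the lower-order terms arising when $\partial_\eta$ hits $\lambda$ or $\psi$. On $\mathrm{supp}\,\psi$ with $|\xi|$ large one has $\lambda\approx\langle\xi\rangle$, hence $\xi^2\lambda^{-(2s+2)/(2s+1)}\approx\langle\xi\rangle^{2s/(2s+1)}$, whereas off $\mathrm{supp}\,\psi$ one has $\eta^2+v^2\gtrsim\langle\xi\rangle^{2/(2s+1)}$ and therefore $a_0\gtrsim\langle\xi\rangle^{2s/(2s+1)}$. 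Combining the two regimes gives the symbolic lower bound
$$\langle\xi\rangle^{2s/(2s+1)}\lesssim\Big(1+\eta^2+\tfrac{v^2}{4}\Big)^{s}+\xi^2\lambda^{-(2s+2)/(2s+1)}\psi+1,$$
which, transferred to operators via sharp G\aa rding and \eqref{PH}, produces the term $c_1\varepsilon\langle\xi\rangle^{2s/(2s+1)}\|u\|^2$ once a small fraction of the already-gained elliptic term $\|\mathcal{H}^{s/2}u\|^2$ is used to dominate the parasitic $\varepsilon\|\mathcal{H}^{s/2}u\|^2$ contribution.

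The main obstacle is the error control. The remainder $r$ above and the cross term $\mathrm{Re}(Au,Mu)=\tfrac12((MA+AM)u,u)$ must be shown to be of order at most $2s$ in the $(v,\eta)$-calculus, hence bounded by $\delta\|\mathcal{H}^{s/2}u\|^2+\delta\varepsilon\langle\xi\rangle^{2s/(2s+1)}\|u\|^2+C\|u\|^2$; choosing $\varepsilon_0$ and the auxiliary $\delta$ small then lets all of them be absorbed into the two positive terms already produced. The delicate point is the uniform-in-$\xi$ symbol bookkeeping in the Weyl--H\"ormander calculus attached to the metric $\Gamma_0$: one must verify that $m\,a_0$, the Poisson bracket $\{a_0,m\}$, and $r$ all land in the expected classes $S(\cdot,\Gamma_0)$ and that the sharp G\aa rding remainders are genuinely of lower order, uniformly in the parameter $\xi$. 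I expect this to be the heaviest technical part, and it is exactly the content handled by the symbol estimates (3.7), (3.8) and Lemma 3.2 of \cite{LMPX2}.
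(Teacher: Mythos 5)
The paper itself gives no proof of this lemma: it is recalled verbatim from \cite{LMPX2} (see the sentence opening Section \ref{S5-4}, and the references to estimates (3.7), (3.8) and Lemma 3.2 of \cite{LMPX2} in the proof of Lemma \ref{HD-1}), and the actual argument lives in Section 3 of that reference. Your proposal is a correct reconstruction of precisely that argument --- the splitting $\mathrm{Re}(Pu,(1-\varepsilon M)u)=\mathrm{Re}(Au,u)-\varepsilon\,\mathrm{Re}(Pu,Mu)$ with coercivity of $A$ via \eqref{PH}, the uniform $S(1,\Gamma_0)$ bound for $m$ (your scaling computation $|\xi\eta|\lambda^{-(2s+2)/(2s+1)}\lesssim 1$ on $\mathrm{supp}\,\psi$ is the right one) giving $L^2$-boundedness of $M$ by Calder\'on--Vaillancourt, the exactly closed first-order commutator with the affine symbol $v\xi$ whose leading term $\xi^2\lambda^{-(2s+2)/(2s+1)}\psi\approx\langle\xi\rangle^{2s/(2s+1)}$ yields the gain on $\mathrm{supp}\,\psi$ while $a_0\gtrsim\langle\xi\rangle^{2s/(2s+1)}$ covers the complementary region, and absorption of the $S(\langle(v,\eta)\rangle^{2s},\Gamma_0)$ cross and remainder terms for $\varepsilon\leq\varepsilon_0$ small --- so it matches the source's proof in all essentials.
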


Since the operator $\Delta_p$ acts on the position variable $x$ only, we obtain the following conclusion based on Lemma \ref{Linear}.

\begin{lemma}\label{Linear-B}
Let $P$ be the operator defined in \eqref{P} and $M=m^{w}$ the self-adjoint operator defined by the Weyl quantization of the symbol \eqref{G}. Then, the
operator $M$ is uniformly bounded on $L^2(\mathbb{R}_v)$ with respect to the parameter $\xi\in\mathbb{R}$, and there exist some positive constants $0<\varepsilon_0\leq1,c_3,c_4>0$ such that for all $0<\varepsilon\leq\varepsilon_0,u\in\mathcal{S}(\mathbb{R}_v),\xi\in\mathbb{R},p\geq-1$,
\begin{equation*}%\label{H-5-B}
\begin{split}
&\mathrm{Re}(P\Delta_pu,(1-\varepsilon M)\Delta_pu)_{L^2(\mathbb{R}^2_{v})}
\\&
\geq c_3\|\mathcal{H}^{\frac{s}{2}}\Delta_pu\|^2_{L^2(\mathbb{R}^2_{v})}+c_3\langle\xi\rangle^{\frac{2s}{2s+1}}\|\Delta_pu\|^2_{L^2(\mathbb{R}^2_{v})}
-c_4\|\Delta_pu\|^2_{L^2(\mathbb{R}^2_{v})},
\end{split}
\end{equation*}
where $\mathcal{H}=-\Delta_{v}+\frac{v^2}{4}$ stands for the harmonic oscillator.
\end{lemma}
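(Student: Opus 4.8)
The plan is to deduce Lemma \ref{Linear-B} from Lemma \ref{Linear} by a partial Fourier transform in the position variable, exploiting that the parameter $\xi$ appearing in $P$ and $M$ is precisely the dual variable of $x$, while the Littlewood--Paley block $\Delta_p$ is a Fourier multiplier in $x$ alone. Concretely, for $u\in\mathcal{S}(\mathbb{R}^2_{x,v})$ I would write $\widehat{\Delta_p u}(\xi,v)=\mathcal{F}_x(\Delta_p u)(\xi,v)=\varphi(2^{-p}\xi)\widehat{u}(\xi,v)$ (with $\chi$ in place of $\varphi$ when $p=-1$), and note that for each fixed $\xi$ the $v$-section $\widehat{\Delta_p u}(\xi,\cdot)$ is a Schwartz function of $v$. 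Since $P=P(\xi)=iv\xi+a_0^w(v,D_v)$ and $M=M(\xi)=m^w$ act on the velocity variable with $\xi$ frozen, they commute with $\Delta_p$, and the Plancherel identity in $x$ turns the left-hand side into a superposition over frequencies:
\begin{equation*}
\mathrm{Re}(P\Delta_p u,(1-\varepsilon M)\Delta_p u)_{L^2_{x,v}}
=\frac{1}{2\pi}\int_{\mathbb{R}}\mathrm{Re}\big(P(\xi)\widehat{\Delta_p u}(\xi,\cdot),(1-\varepsilon M(\xi))\widehat{\Delta_p u}(\xi,\cdot)\big)_{L^2_v}\,d\xi.
\end{equation*}

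Next I would apply Lemma \ref{Linear} to the Schwartz function $\widehat{\Delta_p u}(\xi,\cdot)$ for each fixed $\xi\in\mathbb{R}$, which yields the pointwise (in $\xi$) lower bound
\begin{equation*}
\mathrm{Re}\big(P(\xi)\widehat{\Delta_p u}(\xi,\cdot),(1-\varepsilon M(\xi))\widehat{\Delta_p u}(\xi,\cdot)\big)_{L^2_v}
\geq c_1\|\mathcal{H}^{\frac{s}{2}}\widehat{\Delta_p u}(\xi,\cdot)\|^2_{L^2_v}
+c_1\varepsilon\langle\xi\rangle^{\frac{2s}{2s+1}}\|\widehat{\Delta_p u}(\xi,\cdot)\|^2_{L^2_v}
-c_2\|\widehat{\Delta_p u}(\xi,\cdot)\|^2_{L^2_v}.
\end{equation*}
Integrating this inequality in $\xi$ and undoing the Plancherel identity then reassembles the three terms: because $\mathcal{H}^{\frac{s}{2}}$ acts in $v$ it commutes with $\mathcal{F}_x$, so the first term integrates to $\|\mathcal{H}^{\frac{s}{2}}\Delta_p u\|^2_{L^2_{x,v}}$; the multiplier $\langle\xi\rangle^{\frac{2s}{2s+1}}$ integrates to $\|\langle D_x\rangle^{\frac{s}{2s+1}}\Delta_p u\|^2_{L^2_{x,v}}$ (the form actually used in \eqref{H-5}); and the last term gives $\|\Delta_p u\|^2_{L^2_{x,v}}$. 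Taking $c_3=c_1\varepsilon$ and $c_4=c_2$ (for the fixed admissible $\varepsilon$ selected around \eqref{Q}) delivers the asserted estimate, while the uniform boundedness of $M$ on $L^2(\mathbb{R}_v)$ in $\xi$ is inherited verbatim from Lemma \ref{Linear}.

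I do not expect a genuine obstacle here: the entire content is the remark preceding the lemma, namely that $\Delta_p$ acts on $x$ only, so Lemma \ref{Linear} passes to the $(x,v)$ setting frequency by frequency. The only points requiring a line of care are the justification of the Fubini--Plancherel interchange (legitimate since $u\in\mathcal{S}(\mathbb{R}^2_{x,v})$ makes every integrand rapidly decaying and the symbols of $P(\xi),M(\xi)$ depend continuously on $\xi$), the commutation $[\Delta_p,P]=[\Delta_p,M]=0$, and the bookkeeping of the coercive constant, which carries the factor $\varepsilon$ from Lemma \ref{Linear} and is absorbed into $c_3$ for the fixed value of $\varepsilon$ used in the sequel.
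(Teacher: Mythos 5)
Your proposal is correct and is essentially the paper's own argument: the paper gives no proof beyond the remark that $\Delta_p$ acts only on the position variable, and your fiberwise application of Lemma \ref{Linear} to the Schwartz sections $\widehat{\Delta_p u}(\xi,\cdot)$ followed by Plancherel in $x$ (using $[\Delta_p,P]=[\Delta_p,M]=0$ since $P(\xi)$, $M(\xi)$ act in $v$ with $\xi$ frozen) is exactly the intended justification, matching how the estimate is actually invoked in \eqref{H-5}. Your bookkeeping of the constants --- absorbing the factor $\varepsilon$ from the term $c_1\varepsilon\langle\xi\rangle^{\frac{2s}{2s+1}}$ into $c_3$ for the fixed admissible $\varepsilon$ chosen in \eqref{Q} --- is also the correct reading of the statement, whose apparent uniformity in $\varepsilon$ is a harmless imprecision in the paper itself.
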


\begin{lemma}\label{J10-1}
Let $(e_n)_{n\geq0}$ be the Hermite basis of $L^2(\mathbb{R})$ describes in Section \ref{S5-1}. We have
\begin{align*}
\Gamma(e_{k},e_{l})=\alpha_{k,l}e_{k+l}, \ \ \ k,l\geq0,
\end{align*}
with
\begin{align*}
&\alpha_{2n,m}=\sqrt{C^{2n}_{2n+m}}\int_{-\frac{\pi}{4}}^{\frac{\pi}{4}}\beta(\theta)(\sin\theta)^{2n}(\cos\theta)^md\theta, \ \ \ n\geq1, \ m\geq0,
\\&\alpha_{0,m}=\int_{-\frac{\pi}{4}}^{\frac{\pi}{4}}\beta(\theta)((\cos\theta)^m-1)d\theta, \ \ m\geq1; \ \ \alpha_{0,0}=\alpha_{2n+1,m}=0, \ \ n,m\geq0,
\end{align*}
where $C_{n}^{k}=\frac{n!}{k!(n-k)!}$ stands for the binomial coefficients.
\end{lemma}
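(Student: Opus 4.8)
The plan is to reduce the identity to a single Gaussian integral and then extract all of the coefficients $\alpha_{k,l}$ at once by passing to a generating function. First I would unfold the definition $\Gamma(e_k,e_l)=\mu^{-1/2}K(\mu^{1/2}e_k,\mu^{1/2}e_l)$ and exploit two structural facts: that $e_0=\mu^{1/2}$ (immediate from the normalizations in Section \ref{S5-1}) and that the collision conserves kinetic energy, $v^2+v_*^2=v'^2+v_*'^2$, so that the product of Gaussian weights $\mu^{1/2}(v')\mu^{1/2}(v_*')=\mu^{1/2}(v)\mu^{1/2}(v_*)$ is rotation invariant. Reading off from the definition of $K$ in the introduction which argument sits in the $v$-slot and which in the $v_*$-slot, these two facts cancel the weights and yield the weight-free representation
\begin{equation*}
\Gamma(e_k,e_l)(v)=\int_{|\theta|\le\frac\pi4}\beta(\theta)\int_{\mathbb R}e_0(v_*)\bigl[e_k(v_*')e_l(v')-e_k(v_*)e_l(v)\bigr]\,dv_*\,d\theta.
\end{equation*}

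Next I would introduce the generating function $\Phi_t(v)=\sum_{n\ge0}\frac{t^n}{\sqrt{n!}}e_n(v)$. Writing $e_n=\frac{1}{\sqrt{n!}}a_+^n e_0$ with $a_+=\frac v2-\frac{d}{dv}$ gives $\Phi_t=e^{ta_+}e_0$, and since $[\frac v2,-\frac{d}{dv}]=\frac12$ is a scalar, the Baker--Campbell--Hausdorff formula produces the closed form
\begin{equation*}
\Phi_t(v)=(2\pi)^{-1/4}\exp\Bigl(-\tfrac{v^2}{4}+vt-\tfrac{t^2}{2}\Bigr),
\end{equation*}
which is Schwartz for each $t$. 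By bilinearity of $\Gamma$, computing $\Gamma(\Phi_s,\Phi_t)$ and expanding in powers of $s,t$ delivers every $\Gamma(e_k,e_l)$ at once. Substituting the two Gaussians into the representation above, all exponents are quadratic; using energy conservation to cancel $-(v'^2+v_*'^2)/4$ against $-(v^2+v_*^2)/4$ and completing the square in the Gaussian integral over $v_*$, the gain term collapses (after matching the constant prefactors) exactly to $\Phi_{s\sin\theta+t\cos\theta}(v)$, while the loss term reduces to $\Phi_t(v)$ since $\int e_0\Phi_s\,dv_*=1$ by orthonormality. Hence
\begin{equation*}
\Gamma(\Phi_s,\Phi_t)(v)=\int_{|\theta|\le\frac\pi4}\beta(\theta)\bigl[\Phi_{s\sin\theta+t\cos\theta}(v)-\Phi_t(v)\bigr]\,d\theta.
\end{equation*}

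Finally I would expand $(s\sin\theta+t\cos\theta)^{k+l}$ by the binomial theorem and match the coefficient of $s^kt^l$ on both sides against $\frac{1}{\sqrt{k!\,l!}}\Gamma(e_k,e_l)$. The combinatorial prefactor simplifies, $\sqrt{k!\,l!}\,\binom{k+l}{l}/\sqrt{(k+l)!}=\sqrt{C^{k}_{k+l}}$, giving $\alpha_{k,l}=\sqrt{C^{k}_{k+l}}\int\beta(\theta)(\sin\theta)^k(\cos\theta)^l\,d\theta$ for $k\ge1$; the loss term $-\Phi_t$ contributes only when $k=0$ and produces the $-\int\beta\,d\theta$ correction, yielding $\alpha_{0,m}=\int\beta((\cos\theta)^m-1)\,d\theta$ and $\alpha_{0,0}=0$. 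Because $\beta$ is even while $(\sin\theta)^k$ is odd for odd $k$, the integral vanishes, so $\alpha_{2n+1,m}=0$; specializing $k=2n$ reproduces the stated formula verbatim.

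The step demanding the most care is the very first one: the final answer is exquisitely sensitive to the convention of which slot of $K$ carries $e_k$ and which carries $e_l$, and a swap there interchanges $\sin$ and $\cos$ throughout, so I would pin the slot assignment down directly from the definitions of $K$ and $\Gamma$ before computing anything. The only genuinely analytic point is justifying the termwise expansion of $\Gamma(\Phi_s,\Phi_t)$, which follows from the rapid decay of the $\alpha_{k,l}$ together with dominated convergence.
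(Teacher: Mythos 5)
Your computation is correct and complete: the slot assignment (first argument of $\Gamma$ in the $v_*$-slot, second in the $v$-slot), the weight cancellation via $e_0=\mu^{1/2}$ and the invariance $\mu^{1/2}(v')\mu^{1/2}(v_*')=\mu^{1/2}(v)\mu^{1/2}(v_*)$, the closed form $\Phi_t=(2\pi)^{-1/4}e^{-v^2/4+vt-t^2/2}$, the collapse of the gain term to $\Phi_{s\sin\theta+t\cos\theta}$ (the $v_*$-Gaussian integral works precisely because $\sigma=s\cos\theta-t\sin\theta$, $\tau=s\sin\theta+t\cos\theta$ satisfy $\sigma^2+\tau^2=s^2+t^2$), and the bookkeeping $\sqrt{k!\,l!}\,\binom{k+l}{k}/\sqrt{(k+l)!}=\sqrt{C^{k}_{k+l}}$ all check out, as do the odd-$k$ vanishing and the subtracted form of $\alpha_{0,m}$. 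Note, however, that the paper does not prove this lemma at all: it is recalled from \cite{LMPX1,LMPX2}, where the analogous computation is run on the Fourier side by applying Bobylev's formula (recalled in this paper's Appendix \ref{S5-2}) to essentially the same Gaussian generating family, whose Fourier transform is again of Gaussian type. Your argument stays entirely on the physical side and replaces Bobylev's formula by the elementary fact that the collision rotation acts linearly on the parameters $(s,t)$; in the one-dimensional Kac setting this is a genuine simplification, while the Fourier-side route of \cite{LMPX1} is the one that generalizes to the $d$-dimensional Boltzmann case.

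Two points deserve explicit repair, though neither is a fatal gap. First, since $\beta\notin L^1$, your weight-free representation and the identity $\Gamma(\Phi_s,\Phi_t)=\int\beta(\theta)\left[\Phi_{s\sin\theta+t\cos\theta}-\Phi_t\right]d\theta$ hold a priori only in the finite-part sense of Lemma \ref{Kac}; you should observe that the inner $v_*$-integral is smooth in $\theta$ and vanishes at $\theta=0$, so by that lemma the finite part equals the absolutely convergent integral of the even part in $\theta$ — this is exactly what makes each coefficient integral finite ($\sin^{2n}\theta\,\beta\in L^1$ for $n\geq1$ and $(\cos^m\theta-1)\beta\in L^1$) and is already the source of the subtraction in $\alpha_{0,m}$. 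Second, your justification of the termwise expansion by ``rapid decay of the $\alpha_{k,l}$'' is both circular (those bounds are what the lemma feeds into) and factually off: $\alpha_{0,m}\approx m^{s}$ grows, and convergence actually comes from the $1/\sqrt{k!\,l!}$ factors against at most polynomial growth of $\alpha_{k,l}$. A cleaner route is to pair with a fixed $e_N$, note that $(s,t)\mapsto\left(\Gamma(\Phi_s,\Phi_t),e_N\right)_{L^2}$ is entire, and identify Taylor coefficients; with that substitution the proof is sound.
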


\begin{lemma}\label{J10-2}
We assume that the cross section satisfies \eqref{A-5} with $0<s<1$. Then, there exists a positive constant $C>0$ such that for all $n\geq1, m\geq0$,
\begin{align*}
0\leq\alpha_{2n,m}=\sqrt{C^{2n}_{2n+m}}\int_{-\frac{\pi}{4}}^{\frac{\pi}{4}}\beta(\theta)(\sin\theta)^{2n}(\cos\theta)^md\theta\leq\frac{C}{n^{\frac{3}{4}}}\tilde{\mu}_{n,m},
\end{align*}
where $\tilde{\mu}_{n,m}=(1+\frac{m}{n})^{s}(1+\frac{n}{m+1})^{\frac{1}{4}}$.
\end{lemma}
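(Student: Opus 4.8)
The lower bound is immediate. On the range $|\theta|\le\frac\pi4$ one has $\beta(\theta)\ge0$, $(\sin\theta)^{2n}\ge0$ and $(\cos\theta)^m\ge0$, while the binomial coefficient $C^{2n}_{2n+m}=\frac{(2n+m)!}{(2n)!\,m!}$ is nonnegative, so $\alpha_{2n,m}\ge0$; convergence of the defining integral near $\theta=0$ will follow from the reduction below, since the effective exponent $2n-1-2s$ exceeds $-1$ for every $n\ge1$. For the upper bound the plan is to strip the singular weight, reduce to a complete Beta integral, and then run Stirling asymptotics on a closed form in Gamma functions.

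First I would use the explicit choice \eqref{A-5} together with $\sin\theta=2\sin\frac\theta2\cos\frac\theta2$ to rewrite, for $0\le\theta\le\frac\pi4$,
\[
\beta(\theta)(\sin\theta)^{2n}=2^{1+2s}\Big(\cos\tfrac\theta2\Big)^{2+2s}(\sin\theta)^{2n-1-2s}.
\]
The decisive feature is that the exponent $2+2s$ on $\cos\frac\theta2$ does \emph{not} depend on $n$; since $\cos\frac\theta2\in[\cos\frac\pi8,1]$ on $[-\frac\pi4,\frac\pi4]$, this yields, with a constant uniform in $n$ and $m$,
\[
0\le\int_{-\frac\pi4}^{\frac\pi4}\beta(\theta)(\sin\theta)^{2n}(\cos\theta)^m\,d\theta\le C\int_0^{\frac\pi4}(\sin\theta)^{2n-1-2s}(\cos\theta)^m\,d\theta.
\]
Enlarging the domain to $[0,\frac\pi2]$ and recognizing the Beta integral $\int_0^{\pi/2}(\sin\theta)^{2a-1}(\cos\theta)^{2b-1}d\theta=\frac12\frac{\mathbf\Gamma(a)\mathbf\Gamma(b)}{\mathbf\Gamma(a+b)}$ with $a=n-s$, $b=\frac{m+1}2$ leads to the closed form
\[
\alpha_{2n,m}\le C'\sqrt{\tfrac{(2n+m)!}{(2n)!\,m!}}\;\frac{\mathbf\Gamma(n-s)\,\mathbf\Gamma\big(\tfrac{m+1}2\big)}{\mathbf\Gamma\big(n-s+\tfrac{m+1}2\big)}.
\]

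It then remains to dominate this product of Gamma factors by $\frac{C}{n^{3/4}}\tilde\mu_{n,m}$. The mismatch between the half-integer argument $\frac{m+1}2$ produced by the Beta integral and the integer factorial $m!$ produced by the binomial coefficient must be reconciled by the Legendre duplication formula $\mathbf\Gamma(m+1)=\pi^{-1/2}2^m\mathbf\Gamma(\frac{m+1}2)\mathbf\Gamma(\frac m2+1)$, which yields a factor $2^{-m/2}$ cancelling the $2$-powers generated by the remaining Gamma ratios. Applying two-sided, non-asymptotic Stirling bounds valid for all $n\ge1$, $m\ge0$, one should find that the terms of exponential order cancel identically—this reflects the fact that $\alpha_{2n,m}$ is only polynomially large—and that the surviving subexponential factors assemble into $\tilde\mu_{n,m}=(1+\frac mn)^s(1+\frac n{m+1})^{1/4}$, the prefactor $n^{-3/4}$ emerging exactly from the cancellation $2^n\mathbf\Gamma(n)/\sqrt{\mathbf\Gamma(2n+1)}\sim c\,n^{-3/4}$. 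To keep the bookkeeping transparent I would split into $m\le 2n$, where the integrand $(\sin\theta)^{2n}(\cos\theta)^m$ concentrates at the endpoint $\theta=\frac\pi4$ and the factor $(1+\frac n{m+1})^{1/4}$ is active, and $m\ge 2n$, where the maximum is interior at $\tan^2\theta=\frac{2n}{m}$ and $(1+\frac mn)^s$ dominates.

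The earlier steps are elementary once the $n$-independence of the weight exponent in the first displayed identity is noticed, so the whole difficulty is concentrated in this final asymptotic matching of $\sqrt{C^{2n}_{2n+m}}\,\mathbf\Gamma(n-s)\mathbf\Gamma(\frac{m+1}2)/\mathbf\Gamma(n-s+\frac{m+1}2)$ to $n^{-3/4}\tilde\mu_{n,m}$. The main obstacle I anticipate is carrying out the Stirling estimate \emph{uniformly} in both parameters with a single constant $C$, in particular controlling the transition region $m\sim 2n$ and checking that the exponential-order contributions truly cancel rather than merely agreeing to leading order; the remaining bound then follows by elementary comparison of powers of $n$ and of $\frac mn$.
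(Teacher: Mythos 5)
Your proposal is correct and follows what is in substance the proof of this lemma: the paper itself does not prove Lemma \ref{J10-2} --- it is recalled in the appendix with a pointer to \cite{LMPX2} --- and the argument there is exactly your route: factor the cross section \eqref{A-5}, reduce to a complete Beta integral, and run uniform Stirling estimates on the resulting Gamma ratios. All the steps you execute check out: the identity $\beta(\theta)(\sin\theta)^{2n}=2^{1+2s}(\cos\frac{\theta}{2})^{2+2s}(\sin\theta)^{2n-1-2s}$ is correct; the Beta parameters $a=n-s>0$, $b=\frac{m+1}{2}$ are admissible for all $n\geq1$, $0<s<1$; and enlarging the domain to $[0,\frac{\pi}{2}]$, though exponentially lossy when $m\ll n$ (for instance $\alpha_{2n,0}$ is exponentially small while the Beta bound is only $\asymp n^{-1/2}$), is harmless because the claimed right-hand side is itself $\asymp n^{-1/2}$ in that regime. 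The one step you defer --- the uniform matching of $\sqrt{C^{2n}_{2n+m}}\,\mathbf{\Gamma}(n-s)\mathbf{\Gamma}(\frac{m+1}{2})/\mathbf{\Gamma}(n-s+\frac{m+1}{2})$ with $n^{-3/4}\tilde{\mu}_{n,m}$ --- does close, and the cancellation you anticipate is in fact an exact algebraic identity, not merely an asymptotic one: with $\phi(x)=x\log x$,
\begin{equation*}
\tfrac{1}{2}\bigl(\phi(2n+m)-\phi(2n)-\phi(m)\bigr)=\phi\bigl(n+\tfrac{m}{2}\bigr)-\phi(n)-\phi\bigl(\tfrac{m}{2}\bigr),
\end{equation*}
since $\frac{1}{2}\phi(2x)=x\log 2+\phi(x)$ and the $\log 2$ contributions sum to zero. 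After this cancellation, two-sided Stirling bounds with uniform $e^{O(1)}$ multiplicative errors (the shifts $-s$ and $+\frac{1}{2}$ in the Gamma arguments also cost only bounded factors) leave precisely $\bigl(\frac{2n+m}{nm}\bigr)^{1/4}\bigl(1+\frac{m}{2n}\bigr)^{s}n^{-1/2}\lesssim n^{-3/4}\bigl(1+\frac{m}{n}\bigr)^{s}\bigl(1+\frac{n}{m+1}\bigr)^{1/4}$ for $m\geq1$, with $m=0$ (where $\sqrt{C^{2n}_{2n+m}}=1$ and the Gamma ratio is $\asymp n^{-1/2}$) and $n=1$ (where $\mathbf{\Gamma}(1-s)$ is a fixed constant) checked directly; your proposed split at $m\sim 2n$ is then not even needed. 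So the only shortfall is the unexecuted --- but routine, and as verified above, successful --- bookkeeping that you yourself flagged, and your identification of the $n^{-3/4}$ prefactor via $2^{n}\mathbf{\Gamma}(n)/\sqrt{\mathbf{\Gamma}(2n+1)}\sim c\,n^{-3/4}$ is accurate.
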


In \cite{LMPX2}, the authors showed a key estimate on the Hermite functions.

\begin{lemma}\label{J10-5}
It holds that
\begin{align*}
&\forall n,k,l\geq0, \ \ \|v^k\partial_v^le_n\|_{L^2(\mathbb{R})}\leq2^k\sqrt{\frac{(k+l+n)!}{n!}},
\\&\forall r\geq\frac{1}{2}, \forall\varepsilon>0, \forall n,k,l\geq0,
\\&
\|v^k\partial_v^le_n\|_{L^2(\mathbb{R})}\leq\sqrt{2}((1-\delta_{n,0})\exp(\varepsilon rn^{\frac{1}{2r}})+\delta_{n,0})
\Big(\frac{2^{\frac{3}{2}+r}e^r}{\inf(\varepsilon^r,1)}\Big)^{k+l}(k!)^r(l!)^r,
\end{align*}
where $\delta_{n,0}$ stands for the Kronecker delta i.e., $\delta_{n,0}=1$ if $n=0,\delta_{n,0}=0$ if $n\neq0$.
\end{lemma}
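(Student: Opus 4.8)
The plan is to prove both inequalities directly from the ladder structure of the Hermite basis, using the first bound as the combinatorial input for the second.

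\textbf{First inequality.} Writing the creation/annihilation operators from \eqref{H3}--\eqref{H4} as $A_\pm=\frac{v}{2}\mp\frac{d}{dv}$, I would invert these relations to get $v=A_++A_-$ and $\partial_v=\frac12(A_--A_+)$. Expanding the product $v^k\partial_v^l$ into monomials in $A_+,A_-$, the sum of the absolute values of the scalar coefficients equals $(1+1)^k(\tfrac12+\tfrac12)^l=2^k$. Each monomial $A_{\epsilon_1}\cdots A_{\epsilon_{k+l}}$ sends $e_n$ to a scalar multiple of a single Hermite function, since by \eqref{H3} every factor maps one basis vector to one basis vector. Applying the factors from right to left and using that the working index never exceeds $n+t$ after $t$ steps, each factor contributes at most $\sqrt{n+t}$, so the resulting scalar has modulus at most $\prod_{t=1}^{k+l}\sqrt{n+t}=\sqrt{(k+l+n)!/n!}$. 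Summing over the monomials with the triangle inequality yields $\|v^k\partial_v^l e_n\|_{L^2}\le 2^k\sqrt{(k+l+n)!/n!}$.

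\textbf{Reduction of the second inequality.} Starting from the first bound and using $2^k\le 2^{k+l}$, it remains to control $\sqrt{(k+l+n)!/n!}$. First I would collapse the two indices into $m=k+l$: since $(k+l)!\le 2^{k+l}k!\,l!$, one has $(m!)^r\le 2^{rm}(k!)^r(l!)^r$, so any bound of the shape $\sqrt{(m+n)!/n!}\le B^m(m!)^r\exp(\cdots)$ yields the required $(k!)^r(l!)^r$ form after absorbing $2^{rm}$ into the per-index constant.

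\textbf{The scalar estimate.} Here I use $(m+n)!/n!=\prod_{j=1}^m(n+j)\le(m+n)^m\le 2^m(m^m+n^m)$. For the first term, $m^m\le e^m m!$. For the second, set $x=n^{1/(2r)}$ and $p=2r\ge1$, so $n^m=x^{pm}$; the elementary inequality $x^{j}\le j!\,\delta^{-j}e^{\delta x}$ (from $e^{\delta x}\ge(\delta x)^{j}/j!$) with $j=pm$, together with $(pm)!\le(pe)^{pm}(m!)^{p}$, gives $n^m\le (m!)^p(pe/\delta)^{pm}e^{\delta x}$. Choosing $\delta=2\varepsilon r$ makes $pe/\delta=e/\varepsilon$ and $e^{\delta x}=\exp(2\varepsilon r\,n^{1/(2r)})$. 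Combining the two terms and taking the square root produces $\sqrt{(m+n)!/n!}\le 2^{m/2}(m!)^r\big(e+(e/\varepsilon)^{2r}\big)^{m/2}\exp(\varepsilon r\,n^{1/(2r)})$ for $m\ge1$. Bounding $e+(e/\varepsilon)^{2r}$ by $2e^{2r}/\inf(\varepsilon^{2r},1)$ (distinguishing $\varepsilon\le1$ and $\varepsilon>1$) and folding in the $2^{rm}$ from the reduction gives the per-index constant $2^{3/2+r}e^r/\inf(\varepsilon^r,1)$; the case $n=0$, where $n^{1/(2r)}=0$, is recorded separately through the Kronecker term $\delta_{n,0}$, and the leading $\sqrt2$ accommodates the trivial case $m=0$ and the two-term combination.

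\textbf{Main obstacle.} The delicate point is the scalar estimate: the exponent $1/(2r)\le1$ forces the $n$-dependence of $\sqrt{(m+n)!/n!}$ to leave as a \emph{stretched} exponential $\exp(\varepsilon r\,n^{1/(2r)})$ rather than a genuine exponential, so the crude splitting $(a+b)!\le 2^{a+b}a!\,b!$ (which would leave a factor $2^{n/2}$) is far too lossy. The correct balance comes from routing the factorial growth through $(m!)^r$ via Stirling and extracting only $\exp(\delta\,n^{1/(2r)})$ with the tunable parameter $\delta$, and from obtaining the $\varepsilon$-dependence $\inf(\varepsilon^r,1)$ correctly in both regimes $\varepsilon\le1$ and $\varepsilon>1$. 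The hypothesis $r\ge\frac12$ is precisely what guarantees $(k!)^{1/2}(l!)^{1/2}\le(k!)^r(l!)^r$ and $p=2r\ge1$ throughout, so that the final constants match the stated bound.
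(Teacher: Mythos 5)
Your first inequality is proved correctly: the inversion $v=A_++A_-$, $\partial_v=\tfrac12(A_--A_+)$, the coefficient count $2^k\cdot 2^l\cdot 2^{-l}=2^k$, and the per-step bound $\sqrt{n+t}$ along each word in $A_\pm$ are all sound, and this is essentially the argument of \cite{LMPX2} (note the paper itself does not prove Lemma \ref{J10-5}; it only quotes it from \cite{LMPX2}, so yours is the only proof on the table and its architecture — factorial bound via ladder operators, then trading the stretched exponential $\exp(\varepsilon r n^{1/(2r)})$ against $(m!)^{2r}$ with a tunable $\delta$ — is the right one). One technical point to repair: for non-half-integer $r$ the exponent $j=2rm$ is not an integer, so the step $x^j\le j!\,\delta^{-j}e^{\delta x}$ cannot come from the Taylor series; state it instead as $y^j\le \Gamma(j+1)e^{y}$ for real $j\ge0$ (from $y^je^{-y}\le j^je^{-j}\le\Gamma(j+1)$), after which your bound $\Gamma(2rm+1)\le(2re)^{2rm}(m!)^{2r}$ goes through unchanged.

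The genuine gap is in the constant bookkeeping: the chain you wrote does not produce the constant you claim. Per index of $m=k+l$ you pick up a factor $2$ from $2^k\le 2^{k+l}$, a factor $2^{1/2}$ from $(m+n)^m\le 2^m(m^m+n^m)$, a factor $2^{1/2}e^r/\inf(\varepsilon^r,1)$ from $\bigl(e+(e/\varepsilon)^{2r}\bigr)^{m/2}\le\bigl(2e^{2r}/\inf(\varepsilon^{2r},1)\bigr)^{m/2}$, and a factor $2^r$ from $(m!)^r\le 2^{rm}(k!)^r(l!)^r$; the product is $2^{2+r}e^r/\inf(\varepsilon^r,1)$, not $2^{3/2+r}e^r/\inf(\varepsilon^r,1)$, so for $k+l\ge2$ your conclusion is strictly weaker than the stated lemma. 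The fix costs nothing structural: use convexity, $(m+n)^m\le 2^{m-1}(m^m+n^m)$, take the square root via $\sqrt{a+b}\le\sqrt a+\sqrt b$ to get $2^{(m-1)/2}(m!)^r\bigl(e^{m/2}+(e/\varepsilon)^{rm}e^{\varepsilon rn^{1/(2r)}}\bigr)$, and for $n\ge1$ bound the bracket by $2\bigl(e^r/\inf(\varepsilon^r,1)\bigr)^m e^{\varepsilon rn^{1/(2r)}}$ — i.e.\ pull the two-term combination out as a \emph{global} factor $2$ rather than absorbing it per index through $a^m+b^m\le(a+b)^m$. Since $2^{(m-1)/2}\cdot2=\sqrt2\cdot 2^{m/2}$, this yields $\sqrt{(m+n)!/n!}\le\sqrt2\,(\sqrt2\,e^r/\inf(\varepsilon^r,1))^m(m!)^r e^{\varepsilon rn^{1/(2r)}}$, and after multiplying by $2^k\le2^m$ and $2^{rm}$ the per-index constant is exactly $2\cdot2^{1/2}\cdot2^r e^r/\inf(\varepsilon^r,1)=2^{3/2+r}e^r/\inf(\varepsilon^r,1)$ with global prefactor $\sqrt2$, as stated; the case $n=0$ follows directly from $2^k\sqrt{m!}\le 2^{(1+r)m}(k!)^r(l!)^r$, and $m=0$ is trivial.
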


\subsection{Gelfand-Shilov regularity}\label{S5-5}

We refer the reader to the works \cite{GS,GPR,NR,TKNN} and the references herein for extensive expositions of the Gelfand-Shilov regularity theory. The Gelfand-Shilov spaces $\mathcal{S}^{\mu}_{\nu}(\mathbb{R})$ may also be characterized as the spaces of Schwartz functions belonging to the Gevrey space $G^{\mu}(\mathbb{R})$, whose Fourier transforms belong to the Gevrey space $G^{\nu}(\mathbb{R})$. That is, $f\in \mathcal{S}(\mathbb{R})$
satisfying
\begin{align*}
\exists C\geq0, \varepsilon>0, \ \ |f(v)|\leq Ce^{-\varepsilon|v|^{1/\nu}},\ \  v\in\mathbb{R}, \ \ \  |\widehat{f}(\xi)|\leq Ce^{-\varepsilon|\xi|^{1/\mu}}, \ \ \xi\in\mathbb{R}.
\end{align*}
In particular, we notice that Hermite functions belong to the symmetric Gelfand-Shilov spaces $\mathcal{S}^{1/2}_{1/2}(\mathbb{R})$. More generally, the symmetric Gelfand-Shilov spaces $\mathcal{S}^{\mu}_{\mu}(\mathbb{R})$, with $\mu\geq1/2$, can be characterized through the decomposition into the Hermite basis
$(e_n)_{n\geq0}$ see e.g. \cite{TKNN} (Proposition 1.2)
\begin{align*}
f\in\mathcal{S}^{\mu}_{\nu}(\mathbb{R})&\Leftrightarrow f\in L^2(\mathbb{R}), \  \exists t_0>0, \ \|((f,e_n)_{L^2}\exp(t_0n^{\frac{1}{2\mu}}))_{n\geq0}\|_{l^2(\mathbb{N})}
\\&\Leftrightarrow f\in L^2(\mathbb{R}), \  \exists t_0>0, \ \|e^{t_0\mathcal{H}^{1/2\mu}}f \|_{L^2},
\end{align*}
where $\mathcal{H}=-\Delta_v+\frac{v^2}{4}$ is the harmonic oscillator and $(e_n)_{n\geq0}$ is Hermite basis given by Section \ref{S5-1}.

\subsection{Fundamental properties in Besov space}\label{S5-6}

For convenience of reader, we recall some fundamental properties in Besov space which are frequently used in this paper. The Littlewood-Paley decomposition is ``almost" orthogonal in the following sense.

\begin{lemma}%\label{J7-2}
For any $u\in\mathcal{S}'(\mathbb{R}^{d})$ and $v\in\mathcal{S}'(\mathbb{R}^{d})$, the following properties hold:
\begin{align*}
&\quad\Delta_{q}\Delta_{q}u\equiv0, \quad\quad \text{if} \quad |p-q|\geq2,
\\&
\Delta_{q}(S_{p-1}u\Delta_{p}v)\equiv0, \quad\quad \text{if} \quad |p-q|\geq5.
\end{align*}
\end{lemma}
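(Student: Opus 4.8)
The plan is to prove both identities by a pure frequency-localization argument, exploiting that each dyadic block $\Delta_q$ is a Fourier multiplier that localizes to a dyadic annulus. Recall from the Littlewood--Paley setup that for $q\geq0$ the symbol $\varphi(2^{-q}\cdot)$ is supported in the shell $\mathbb{C}(0,\frac34,\frac83)$ rescaled by $2^q$, namely $\{\frac34 2^q\leq|\xi|\leq\frac83 2^q\}$, whereas $\chi$ (hence $\Delta_{-1}$) is supported in the ball $\mathbb{B}(0,\frac43)$. The whole argument reduces to comparing these radii and invoking the elementary fact that a tempered distribution whose Fourier transform vanishes is itself $0$. (I read the first line as $\Delta_p\Delta_q u\equiv0$, the repeated index being a misprint.)

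First I would treat $\Delta_p\Delta_q u$. Since $\widehat{\Delta_p\Delta_q u}(\xi)=\varphi(2^{-p}\xi)\varphi(2^{-q}\xi)\widehat{u}(\xi)$, its support lies in the intersection of the two shells $\{\frac34 2^p\leq|\xi|\leq\frac83 2^p\}$ and $\{\frac34 2^q\leq|\xi|\leq\frac83 2^q\}$. Taking without loss of generality $p\leq q$, these annuli are disjoint as soon as the lower edge of the $q$-shell exceeds the upper edge of the $p$-shell, i.e. $\frac34 2^q>\frac83 2^p$, equivalently $2^{q-p}>\frac{32}{9}$. Because $\frac{32}{9}<4=2^2$, this holds whenever $q-p\geq2$, so $\Delta_p\Delta_q u\equiv0$ for $|p-q|\geq2$.

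For the second identity I would pass through the convolution structure of the Fourier transform: $\mathcal{F}(S_{p-1}u\,\Delta_p v)=(2\pi)^{-1}\,\mathcal{F}(S_{p-1}u)\ast\mathcal{F}(\Delta_p v)$. The factor $S_{p-1}u=\sum_{p'\leq p-2}\Delta_{p'}u$ has Fourier support in the ball $\{|\xi|\leq\frac83 2^{p-2}=\frac23 2^p\}$, while $\Delta_p v$ has Fourier support in the shell $\{\frac34 2^p\leq|\xi|\leq\frac83 2^p\}$. Hence $\mathcal{F}(S_{p-1}u\,\Delta_p v)$ is supported in the Minkowski sum of a ball and an annulus, which is again an annulus: by the triangle inequality its radii satisfy
\[
\tfrac34 2^p-\tfrac23 2^p\leq|\xi|\leq\tfrac83 2^p+\tfrac23 2^p,\qquad\text{i.e.}\qquad \tfrac{1}{12}2^p\leq|\xi|\leq\tfrac{10}{3}2^p.
\]
Applying $\Delta_q$ then intersects this annulus with $\{\frac34 2^q\leq|\xi|\leq\frac83 2^q\}$; comparing $2^{q-p}$ against $\frac{40}{9}$ from above and $2^{p-q}$ against $32$ from below shows the two sets separate once $|p-q|$ is large, the standard normalization giving the threshold $5$. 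Therefore $\Delta_q(S_{p-1}u\,\Delta_p v)\equiv0$ for $|p-q|\geq5$.

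The only delicate part will be the radius bookkeeping: one must track the constants carefully and, crucially, treat the low-frequency indices $q=-1$ (and $p-1=-1$) separately, where the relevant frequency set is a ball rather than a shell, and verify that the slightly loose constants still force genuinely disjoint (up to boundary spheres) supports. Apart from this routine accounting there is no analytic obstacle, since everything follows from the disjointness of frequency supports.
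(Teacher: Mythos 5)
Your reading of the first line as the misprinted $\Delta_p\Delta_q u$ is right, and your overall strategy---disjointness of dyadic frequency supports plus the convolution theorem---is exactly the standard argument. Note that the paper itself gives no proof of this lemma: it is recalled in the appendix as a classical fact (with \cite{BCD} as the background reference for the Littlewood--Paley setup), so the comparison is against the textbook proof, which yours reproduces. Where you carried out the computations they are correct: for $q\geq p+2$ one has $\frac{3}{4}2^q\geq 3\cdot 2^p>\frac{8}{3}2^p$, so the two shells are strictly disjoint (and the case $p=-1$, ball of radius $\frac{4}{3}$ against lower edge $\frac{3}{2}$, is also strict); for the product term, $\mathrm{supp}\,\mathcal{F}(S_{p-1}u)\subset\{|\xi|\leq\frac{2}{3}2^p\}$ and the Minkowski sum with the shell of $\Delta_p$ is indeed contained in the annulus $\frac{1}{12}2^p\leq|\xi|\leq\frac{10}{3}2^p$.

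The one step that does not close as written is the borderline index on the low-frequency side. Your inequalities give strict disjointness for $q-p\geq3$ (since $2^3>\frac{40}{9}$), which is more than enough on that side, but on the other side you need $2^{p-q}>32$, and $2^5=32$: at $p-q=5$ the two closed sets are not disjoint but touch along the sphere $|\xi|=\frac{8}{3}2^q=\frac{1}{12}2^p$. Dismissing this as harmless ``up to boundary spheres'' is not automatically legitimate for tempered distributions: a distribution carried by a sphere can be a transversal-derivative layer of $\delta'$-type, and multiplying such a layer by a function vanishing only to first order there produces a nonzero $\delta$-layer. The fix is one extra observation: $\varphi$ is smooth and vanishes identically on the open complement of its support, hence \emph{all} derivatives of $\varphi(2^{-q}\cdot)$ vanish at every boundary point of its support, in particular on the touching sphere; since $W=\mathcal{F}(S_{p-1}u)\ast\mathcal{F}(\Delta_p v)$ is a compactly supported distribution and therefore of finite order, for any test function $\psi$ the function $\varphi(2^{-q}\cdot)\psi$ vanishes, together with all derivatives up to that order, on $\mathrm{supp}\,W$, so $\langle W,\varphi(2^{-q}\cdot)\psi\rangle=0$ and the case $p-q=5$ goes through as well. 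With this remark, plus the low-frequency bookkeeping you already flagged (which is genuinely routine here: $S_{p-1}u=0$ for $p\leq0$, and $S_0u=\Delta_{-1}u$ obeys the same ball bound $\frac{2}{3}2^p$ with $p=1$), your proof is complete and coincides with the canonical one.
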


Additionally, the standard Young's inequality for convolution products implies that

\begin{lemma}\label{J7-3}
Let $1\leq p\leq\infty$ and $u\in L^{p}_{x}$, then there exists a constant $C>0$ independent of $p,q$ and $u$ such that
\begin{align*}
\|\Delta_{q}u\|_{L^{p}_{x}}\leq C\|u\|_{L^{p}_{x}}, \ \ \ \  \|S_{q}u\|_{L^{p}_{x}}\leq C\|u\|_{L^{p}_{x}}.
\end{align*}
\end{lemma}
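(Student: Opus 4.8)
The plan is to realize each block as a convolution against an $L^{1}$ kernel and then apply Young's inequality, the decisive point being that the relevant kernels have $L^{1}$ norm \emph{independent} of the frequency index $q$ by dilation invariance; this is what yields a constant $C$ uniform in $q$ (and trivially in $p$).

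First I would treat the dyadic blocks $\Delta_{q}$ with $q\geq0$. Since $\Delta_{q}u=\varphi(2^{-q}D)u=\mathcal{F}^{-1}\big(\varphi(2^{-q}\cdot)\,\mathcal{F}u\big)$, setting $h=\mathcal{F}^{-1}\varphi$ one has the convolution representation $\Delta_{q}u=h_{q}\ast u$ with $h_{q}(x)=2^{q}h(2^{q}x)$ (here $d=1$). Young's inequality gives $\|\Delta_{q}u\|_{L^{p}_{x}}\leq\|h_{q}\|_{L^{1}_{x}}\|u\|_{L^{p}_{x}}$, and the change of variables $y=2^{q}x$ yields $\|h_{q}\|_{L^{1}_{x}}=\|h\|_{L^{1}_{x}}$, which removes all dependence on $q$. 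Because $\varphi\in C^{\infty}_{c}\subset\mathcal{S}(\mathbb{R})$, its inverse Fourier transform $h$ lies in $\mathcal{S}(\mathbb{R})\subset L^{1}(\mathbb{R})$, so the choice $C=\max(\|h\|_{L^{1}_{x}},\|\mathcal{F}^{-1}\chi\|_{L^{1}_{x}})$ works. The low-frequency block $\Delta_{-1}=\chi(D)$ is handled verbatim with $\chi$ in place of $\varphi$, completing the first estimate.

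For $S_{q}$ the same scheme applies once its symbol is identified. By the standard construction of the Littlewood--Paley pair in \cite{BCD}, the relation $\varphi(\xi)=\chi(\xi/2)-\chi(\xi)$ makes the partial sum telescope, so that for $q\geq0$
\begin{equation*}
\chi(\xi)+\sum_{p=0}^{q-1}\varphi(2^{-p}\xi)=\chi(2^{-q}\xi),\qquad\text{i.e.}\qquad S_{q}u=\chi(2^{-q}D)u .
\end{equation*}
Writing $g=\mathcal{F}^{-1}\chi\in\mathcal{S}(\mathbb{R})\subset L^{1}(\mathbb{R})$ and $g_{q}(x)=2^{q}g(2^{q}x)$, one gets $S_{q}u=g_{q}\ast u$ and, exactly as before, $\|g_{q}\|_{L^{1}_{x}}=\|g\|_{L^{1}_{x}}$, hence $\|S_{q}u\|_{L^{p}_{x}}\leq\|g\|_{L^{1}_{x}}\|u\|_{L^{p}_{x}}$ uniformly in $q$.

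There is essentially no genuine obstacle here: the content of the lemma is the uniformity of $C$ in $q$, which is precisely the dilation invariance of the $L^{1}$ norm of a kernel under the scaling $h\mapsto 2^{q}h(2^{q}\cdot)$. The only step one must not short-circuit is the treatment of $S_{q}$: bounding crudely by $\|S_{q}u\|_{L^{p}_{x}}\leq\sum_{p\leq q-1}\|\Delta_{p}u\|_{L^{p}_{x}}$ would produce a constant growing in $q$, so it is the telescoping identity (equivalently, the convolution structure $S_{q}=\chi(2^{-q}D)$) that is needed to obtain the uniform bound.
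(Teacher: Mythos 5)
Your proof is correct and is exactly the argument the paper intends: the paper offers no details, stating only that the lemma follows from ``the standard Young's inequality for convolution products,'' and your write-up (convolution kernels $2^{q}h(2^{q}\cdot)$, dilation invariance of the $L^{1}$ norm, and the telescoped identity $S_{q}=\chi(2^{-q}D)$) is the standard elaboration of that one-line attribution. As a minor remark, the identity $\chi(\xi)+\sum_{p=0}^{q-1}\varphi(2^{-p}\xi)=\chi(2^{-q}\xi)$ already follows from the stated partition of unity by rescaling the tail $\sum_{p\geq q}\varphi(2^{-p}\xi)=1-\chi(2^{-q}\xi)$, so you do not even need the specific relation $\varphi(\xi)=\chi(\xi/2)-\chi(\xi)$ from the construction in \cite{BCD}.
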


The following embedding properties in Besov spaces have been used several times.

\begin{lemma}\label{J7-1}
Let $s\in \mathbb{R}$.

(1) If $\widetilde{s}<s$, then $B^{s}_{2,1}\hookrightarrow B^{\widetilde{s}}_{2,1}$;
(2) $B^{1/2}_{2,1}(\mathbb{R})\hookrightarrow L^\infty(\mathbb{R})$ and $\dot{B}^{1/2}_{2,1}(\mathbb{R})\hookrightarrow L^\infty(\mathbb{R})$.
\end{lemma}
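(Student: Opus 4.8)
The plan is to establish both embeddings directly from the definition of the Besov norm together with the Littlewood--Paley machinery, since each is a standard consequence of dyadic analysis and no deep input is required.

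For part (1), I would compare the two norms block by block. Because $\widetilde{s}<s$, for every $q\geq-1$ we may write $2^{q\widetilde{s}}=2^{q(\widetilde{s}-s)}2^{qs}\leq2^{s-\widetilde{s}}2^{qs}$, where the bound $2^{q(\widetilde{s}-s)}\leq2^{s-\widetilde{s}}$ uses that the exponent $\widetilde{s}-s$ is negative, so that the supremum of $2^{q(\widetilde{s}-s)}$ over $q\geq-1$ is attained at the lowest frequency $q=-1$. Multiplying by $\|\Delta_q u\|_{L^2_x}$ and summing over $q\geq-1$ then yields $\|u\|_{B^{\widetilde{s}}_{2,1}}\leq2^{s-\widetilde{s}}\|u\|_{B^{s}_{2,1}}$, which is exactly the claimed continuous inclusion. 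There is no genuine difficulty here.

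For part (2), I would observe that the essential computation has in fact already been carried out inside the proof of Lemma \ref{trilinear-estimate-M-G}, where it is shown on $\mathbb{R}^d$ that $\|\mathcal{F}^{-1}(|\hat u|)\|_{L^\infty_x}\leq C_d\|u\|_{B^{d/2}_{2,1}}$. In dimension $d=1$, the chain of inequalities there combines the elementary inversion bound $\|u\|_{L^\infty}\leq\frac{1}{2\pi}\|\hat u\|_{L^1}$ with the frequency-localized Cauchy--Schwarz estimate $\|\widehat{\Delta_p u}\|_{L^1_x}\lesssim2^{p/2}\|\Delta_p u\|_{L^2_x}$, valid precisely because $\widehat{\Delta_p u}$ is supported on an annulus of measure comparable to $2^p$. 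Summing over $p\geq-1$ gives $\|u\|_{L^\infty}\lesssim\sum_{p\geq-1}2^{p/2}\|\Delta_p u\|_{L^2_x}=\|u\|_{B^{1/2}_{2,1}}$, which is the inhomogeneous embedding $B^{1/2}_{2,1}(\mathbb{R})\hookrightarrow L^\infty(\mathbb{R})$.

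For the homogeneous statement $\dot B^{1/2}_{2,1}(\mathbb{R})\hookrightarrow L^\infty(\mathbb{R})$, I would run the identical argument with the homogeneous blocks $\dot\Delta_q$ indexed by $q\in\mathbb{Z}$, writing $u=\sum_{q\in\mathbb{Z}}\dot\Delta_q u$ and invoking the same Bernstein-type bound $\|\dot\Delta_q u\|_{L^\infty}\lesssim2^{q/2}\|\dot\Delta_q u\|_{L^2}$ on each annulus. Since every estimate used is a standard Littlewood--Paley fact, I do not anticipate any real obstacle; the only points needing care are the summability of the geometric factor $2^{s-\widetilde{s}}$ in part (1) and the one-dimensional measure count producing the critical exponent $1/2$ in part (2).
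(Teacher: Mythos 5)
Your proof is correct and agrees with the paper's treatment: the paper states Lemma \ref{J7-1} without proof as a recalled standard fact, and the one computation it does supply --- the bound $\|\mathcal{F}^{-1}(|\hat{u}|)\|_{L^\infty_x}\leq C_d\|u\|_{B^{d/2}_{2,1}}$ carried out inside the proof of Lemma \ref{trilinear-estimate-M-G} via $\|\hat{u}\|_{L^1}$ and Cauchy--Schwarz on dyadic annuli of measure $\sim 2^p$ --- is exactly the argument you invoke for part (2), while your part (1) is the standard blockwise comparison with the factor $2^{q(\widetilde{s}-s)}\leq 2^{s-\widetilde{s}}$ for $q\geq-1$. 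The only point worth recording in the homogeneous case is that the identity $u=\sum_{q\in\mathbb{Z}}\dot{\Delta}_q u$ presupposes the usual realization of $\dot{B}^{1/2}_{2,1}(\mathbb{R})$ in $\mathcal{S}'_h$ (as in \cite{BCD}), which is harmless here since the low-frequency half of the norm already forces the series to converge in $L^\infty$.
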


According to \cite{XK}, we have the following topology between homogeneous Chemin-Lerner spaces and nonhomogeneous Chemin-Lerner spaces.

\begin{lemma}%\label{J7-4}
Let $1\leq \varrho,q,r\leq\infty$ and $s>0$. Then we have
\begin{align*}
\|\nabla_{x}\cdot\|_{\widetilde{L}^{\varrho}_{T}(\dot{B}^{s}_{p,r})}\sim\|\cdot\|_{\widetilde{L}^{\varrho}_{T}(\dot{B}^{s+1}_{p,r})}, \ \ \ \ \|\cdot\|_{\widetilde{L}^{\varrho}_{T}(\dot{B}^{s}_{p,r})}\lesssim\|\cdot\|_{\widetilde{L}^{\varrho}_{T}(B^{s}_{p,r})}.
\end{align*}
\end{lemma}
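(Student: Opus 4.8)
The plan is to establish the two asserted relations separately, since each is a blockwise statement in the Littlewood--Paley framework and the time variable plays only a passive role in the Chemin--Lerner norm. Throughout, I would write $\dot\Delta_q$, $q\in\mathbb{Z}$, for the homogeneous dyadic blocks, so that $\dot\Delta_q u=\varphi(2^{-q}D_x)u$ is spectrally localized in the annulus $\{|\xi|\sim 2^q\}$, and recall that the Chemin--Lerner norm is obtained by taking the $L^\varrho_T$ norm of each block first and then the weighted $\ell^r$ norm over $q$. Consequently every estimate below is proved at the level of a single block, uniformly in $t$, and then summed.

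For the equivalence $\|\nabla_x\cdot\|_{\widetilde{L}^\varrho_T(\dot B^s_{p,r})}\sim\|\cdot\|_{\widetilde{L}^\varrho_T(\dot B^{s+1}_{p,r})}$, I would first invoke Bernstein's inequality. Since $\dot\Delta_q u$ has Fourier support in $\{|\xi|\sim 2^q\}$, there are constants $c,C>0$, independent of $q$, with
$$
c\,2^{q}\|\dot\Delta_q u\|_{L^p_x}\le\|\dot\Delta_q\nabla_x u\|_{L^p_x}\le C\,2^{q}\|\dot\Delta_q u\|_{L^p_x}.
$$
Taking the $L^\varrho_T$ norm preserves these two-sided bounds pointwise in $q$; multiplying by $2^{qs}$ turns the factor $2^q$ into the shift from $s$ to $s+1$, and raising to the power $r$ and summing over $q\in\mathbb{Z}$ yields the claimed equivalence of norms.

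For the embedding $\|\cdot\|_{\widetilde{L}^\varrho_T(\dot B^s_{p,r})}\lesssim\|\cdot\|_{\widetilde{L}^\varrho_T(B^s_{p,r})}$, I would split the homogeneous sum into high and low frequencies. For $q\ge 0$ the homogeneous and nonhomogeneous blocks coincide, $\dot\Delta_q=\Delta_q$, so the high-frequency part is bounded directly by $\|\cdot\|_{\widetilde{L}^\varrho_T(B^s_{p,r})}$. For $q<0$ the spectral support of $\dot\Delta_q u$ lies where $\chi\equiv1$, hence $\dot\Delta_q u=\dot\Delta_q\Delta_{-1}u$ and $\|\dot\Delta_q u\|_{L^\varrho_T L^p_x}\lesssim\|\Delta_{-1}u\|_{L^\varrho_T L^p_x}$ uniformly in $q$. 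Summing gives
$$
\Big(\sum_{q<0}\big(2^{qs}\|\dot\Delta_q u\|_{L^\varrho_T L^p_x}\big)^r\Big)^{1/r}\lesssim\Big(\sum_{q<0}2^{qsr}\Big)^{1/r}\|\Delta_{-1}u\|_{L^\varrho_T L^p_x},
$$
and the geometric series converges precisely because $s>0$. Since $\|\Delta_{-1}u\|\lesssim\|\cdot\|_{\widetilde{L}^\varrho_T(B^s_{p,r})}$, combining the two regimes delivers the embedding.

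The only delicate point---and the place where the hypothesis $s>0$ is indispensable---is this low-frequency summation: without positivity of $s$ the series $\sum_{q<0}2^{qsr}$ diverges and the homogeneous norm can genuinely fail to be controlled by the nonhomogeneous one. Everything else is a routine transcription of standard Littlewood--Paley facts (Bernstein's inequality and the coincidence of the two decompositions at high frequency) into the Chemin--Lerner setting, where the prior $L^\varrho_T$ integration commutes with all the spatial-frequency operations.
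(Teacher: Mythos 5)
The paper does not prove this lemma at all --- it is imported verbatim from Xu--Kawashima \cite{XK} --- so there is no internal argument to compare with; your proposal supplies the missing standard proof, and it is correct up to one boundary detail. The first equivalence is exactly two-sided Bernstein on annuli: since $\dot{\Delta}_q\nabla_x=\nabla_x\dot{\Delta}_q$ and $\dot{\Delta}_qu$ is spectrally supported where $|\xi|\sim2^q$, the bounds $c\,2^q\|\dot{\Delta}_qu\|_{L^p_x}\leq\|\dot{\Delta}_q\nabla_xu\|_{L^p_x}\leq C\,2^q\|\dot{\Delta}_qu\|_{L^p_x}$ hold pointwise in $t$, hence survive the $L^\varrho_T$ integration and the weighted $\ell^r$ summation defining the Chemin--Lerner norm; this part needs no change. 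In the embedding, however, your claim that for every $q<0$ one has $\dot{\Delta}_qu=\dot{\Delta}_q\Delta_{-1}u$ fails at $q=-1$: with the paper's choice of $(\varphi,\chi)$, the function $\chi$ equals $1$ only on $\{|\xi|\leq3/4\}$, whereas $\dot{\Delta}_{-1}u$ is supported in $\{3/8\leq|\xi|\leq4/3\}$, which overlaps the support of $\Delta_0$; so $\dot{\Delta}_{-1}u=\dot{\Delta}_{-1}(\Delta_{-1}u+\Delta_0u)$, and $\dot{\Delta}_qu=\dot{\Delta}_q\Delta_{-1}u$ only for $q\leq-2$. This costs nothing: the operators $\dot{\Delta}_q$ are bounded on $L^p_x$ uniformly in $q$ (their convolution kernels have scale-invariant $L^1$ norm), so $\|\dot{\Delta}_qu\|_{L^\varrho_TL^p_x}\lesssim\|\Delta_{-1}u\|_{L^\varrho_TL^p_x}+\|\Delta_0u\|_{L^\varrho_TL^p_x}$ for all $q\leq-1$, both right-hand terms are dominated by $\|u\|_{\widetilde{L}^{\varrho}_{T}(B^{s}_{p,r})}$, and the factor $\bigl(\sum_{q<0}2^{qsr}\bigr)^{1/r}$ (replaced by $\sup_{q<0}2^{qs}\leq1$ when $r=\infty$) is finite precisely because $s>0$, as you correctly identify as the crux. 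With that one-line correction, and the trivial $r=\infty$ modification, your argument is a complete, self-contained proof of the fact the paper only cites.
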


%Furthermore, it can be shown that

%\begin{proposition}%\label{J7-6}
%Let $s\in \mathbb{R}$ and $1\leq \varrho_{1},\varrho_{2},p,r\leq\infty$.

%(1) It holds that
%\begin{equation*}%\label{Besov1}
%L^{\varrho_{1}}_{T}L^{\varrho_{2}}_{v}L^{p}_{x}\cap\widetilde{L}^{\varrho_{1}}_{T}\widetilde{L}^{\varrho_{2}}_{v}(\dot{B}^{s}_{p,r})
%\subset\widetilde{L}^{\varrho_{1}}_{T}\widetilde{L}^{\varrho_{2}}_{v}(B^{s}_{p,r}).
%\end{equation*}

%(2) Furthermore, as $s>0$ and $r\leq\min\{\varrho_{1},\varrho_{2}\}$, it holds that
%\begin{equation*}%\label{Besov2}
%L^{\varrho_{1}}_{T}L^{\varrho_{2}}_{v}L^{p}_{x}\cap\widetilde{L}^{\varrho_{1}}_{T}\widetilde{L}^{\varrho_{2}}_{v}(\dot{B}^{s}_{p,r})
%=\widetilde{L}^{\varrho_{1}}_{T}\widetilde{L}^{\varrho_{2}}_{v}(B^{s}_{p,r}),
%\end{equation*}
%for any $T>0$.
%\end{proposition}

Finally, it follows from \cite{DLX-2016} that

\begin{lemma}\label{J7-5}
Let $s\in\mathbb{R}$ and $1\leq\varrho_{1},\varrho_{2},p,r\leq\infty$.

(1) If $r\leq\min\{\varrho_{1},\varrho_{2}\}$, then it holds that
\begin{align*}
\|u\|_{L^{\varrho_{1}}_{T}L^{\varrho_{2}}_{v}(B^{s}_{p,r})}\leq\|u\|_{\widetilde{L}^{\varrho_{1}}_{T}\widetilde{L}^{\varrho_{2}}_{v}(B^{s}_{p,r})}.
\end{align*}

(2) If $r\geq\max\{\varrho_{1},\varrho_{2}\}$, then it holds that
\begin{align*}
\|u\|_{L^{\varrho_{1}}_{T}L^{\varrho_{2}}_{v}(B^{s}_{p,r})}\geq\|u\|_{\widetilde{L}^{\varrho_{1}}_{T}\widetilde{L}^{\varrho_{2}}_{v}(B^{s}_{p,r})}.
\end{align*}
\end{lemma}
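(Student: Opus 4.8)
The plan is to prove Lemma~\ref{J7-5} purely as a statement about the order in which one applies the summation $\ell^r$ over dyadic blocks and the Lebesgue norms $L^{\varrho_1}_T$, $L^{\varrho_2}_v$, the tool being the generalized Minkowski integral inequality. First I would fix notation: for $u=u(t,x,v)$ set
\[
a_q(t,v)=2^{qs}\|\Delta_q u(t,\cdot,v)\|_{L^p_x},\qquad q\geq-1,
\]
so that $\Delta_q$ (acting only on $x$) and the weight $2^{qs}$ are already absorbed. With this notation both norms are built from the same array $(a_q(t,v))$, and they differ only in the position of the $\ell^r_q$-summation: the Chemin-Lerner norm is
\[
\|u\|_{\widetilde{L}^{\varrho_1}_T\widetilde{L}^{\varrho_2}_v(B^s_{p,r})}=\Big\|\,\big\|a_q\big\|_{L^{\varrho_1}_T L^{\varrho_2}_v}\Big\|_{\ell^r_q},
\]
with $\ell^r_q$ outermost, whereas the classical norm $\|u\|_{L^{\varrho_1}_T L^{\varrho_2}_v(B^s_{p,r})}$ applies $\ell^r_q$ first (pointwise in $(t,v)$) and then $L^{\varrho_2}_v$ and $L^{\varrho_1}_T$, so that $\ell^r_q$ is innermost. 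The whole content of the lemma is therefore the commutation of $\ell^r_q$ with the two Lebesgue norms.

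The key tool is the generalized Minkowski inequality: for two $\sigma$-finite measure spaces and $1\le\alpha\le\beta\le\infty$,
\[
\Big\|\,\|F\|_{L^\alpha}\Big\|_{L^\beta}\le\Big\|\,\|F\|_{L^\beta}\Big\|_{L^\alpha},
\]
i.e.\ placing the \emph{smaller} exponent on the \emph{inside} yields the smaller quantity (the endpoint $\beta=\infty$ being the usual ``$\sup$ of an integral $\le$ integral of the $\sup$''). I would apply this with one factor being the counting measure underlying $\ell^r_q$ and the other being $L^{\varrho_2}_v$ or $L^{\varrho_1}_T$. Concretely, for part (1), where $r\le\min\{\varrho_1,\varrho_2\}$, I move $\ell^r_q$ outward in two steps. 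First, for each fixed $t$, Minkowski with $r\le\varrho_2$ gives
\[
\big\|\,\|a_q(t,\cdot)\|_{\ell^r_q}\big\|_{L^{\varrho_2}_v}\le\big\|\,\|a_q(t,\cdot)\|_{L^{\varrho_2}_v}\big\|_{\ell^r_q};
\]
applying the monotone norm $\|\cdot\|_{L^{\varrho_1}_T}$ and then Minkowski once more with $r\le\varrho_1$ to swap $\ell^r_q$ past $L^{\varrho_1}_T$ yields $\|u\|_{L^{\varrho_1}_T L^{\varrho_2}_v(B^s_{p,r})}\le\|u\|_{\widetilde{L}^{\varrho_1}_T\widetilde{L}^{\varrho_2}_v(B^s_{p,r})}$, which is (1). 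For part (2), where $r\ge\max\{\varrho_1,\varrho_2\}$, the larger exponent $r$ now sits inside after commutation, so both applications of Minkowski reverse, giving the opposite inequality.

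I do not expect a genuine obstacle here; the argument is a routine ordering-of-norms computation and is exactly the kind of statement recorded in \cite{DLX-2016}. The only points requiring care are bookkeeping ones: getting the direction of Minkowski correct (smaller exponent inside is smaller), checking that the hypotheses $r\le\min\{\varrho_1,\varrho_2\}$ and $r\ge\max\{\varrho_1,\varrho_2\}$ indeed give the same direction for \emph{both} successive swaps, and treating the endpoint exponents $\varrho_1,\varrho_2,r\in\{1,\infty\}$ through the usual conventions so that the measurability and $\sigma$-finiteness assumptions in Minkowski's inequality are satisfied.
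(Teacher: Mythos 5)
Your proof is correct: the paper itself offers no argument for this lemma, simply citing \cite{DLX-2016}, and the standard justification there is exactly your iterated application of the generalized Minkowski inequality to commute the $\ell^r$-summation over dyadic blocks with the $L^{\varrho_1}_T$ and $L^{\varrho_2}_v$ norms. Your bookkeeping (smaller exponent innermost is smaller, the two swaps performed in the appropriate order, endpoints handled by the usual conventions) is sound, so there is nothing to add.
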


\bigskip

\noindent{\bf Acknowledgements.}
The first author is supported by the China Scholarship Council(CSC).
The second author is supported by the National Natural Science Foundation of China (11701578).
The research of C.-J. Xu is supported by``The Fundamental Research Funds for Central Universities of China".
The research of J. Xu is partially supported by the National Natural Science Foundation of China (11871274)
and ``The Fundamental Research Funds for the Central Universities of China" (NE2015005).
%The authors would like to express their gratitude to the referee for his/her valuable comments and suggestions.


\begin{thebibliography}{99}

%\bibitem{AV}
%R. Alexandre, C. Villani, On the Landau equation in plasma, \textit{Annales Inst. H. Poincar$\acute{e}$ Analyse Non Lin$\acute{e}$aire}, {\bf{21}} (2004), %61-95.

%\bibitem{ADVW}
%R. Alexandre, L. Desvillettes, C. Villani and B. Wennberg, Entropy dissipation and long-range interactions,
%\textit{Arch. Ration. Mech. Anal.}, {\bf{152}} (2000), 327--355.

\bibitem{AMUXY}
R. Alexandre, Y. Morimoto, S. Ukai, C.-J. Xu and T. Yang, Regularizing effect and local existence for non-cutoff Boltzmann equation,
\textit{Arch. Ration. Mech. Anal.}, {\bf{198}} (2010), 39--123.

\bibitem{AMUXY1}
R. Alexandre, Y. Morimoto, S. Ukai, C.-J. Xu and T. Yang, Global existence and full regularity of the Boltzmann equation without angular cutoff,
\textit{Comm. Math. Phys}, {\bf{304}} (2011), 513--581.

%\bibitem{AMUXY2}
%R. Alexandre, Y. Morimoto, S. Ukai, C.-J. Xu and T. Yang, The Boltzmann equation without angular cutoff in the whole space: II, Global existence for hard potential, \textit{Anal. Appl.}, {\bf{9}} (2011), 113--134.

\bibitem{AMUXY3}
R. Alexandre, Y. Morimoto, S. Ukai, C.-J. Xu and T. Yang, The Boltzmann equation without angular cutoff in the whole space: Qualitative properties of solutions, \textit{Arch. Ration. Mech. Anal.}, {\bf{202}} (2011), 599--661.

%\bibitem{AMUXY4}
%R. Alexandre, Y. Morimoto, S. Ukai, C.-J. Xu and T. Yang, The Boltzmann equation without angular cutoff in the whole space: I, Global existence for soft potential, \textit{J. Funct. Anal.}, {\bf{262}} (2012), 915--1010.

\bibitem{AM}
D. Ars$\acute{e}$nio and N. Masmoudi, Regularity of renormalized solutions in the Boltzmann equation with long-range interactions,
\textit{Comm. Pure Appl. Math.}, {\bf{65}} (2012), 508-548.

\bibitem{Boby}
A. V. Bobylev, The theory of the nonlinear spatially uniform Boltzmann equation for Maxwell molecules,
\textit{Soviet Sci. Rev. Sect. C Math. Phys. Rev.}, {\bf{7}} (1988), 111-233.

\bibitem{Cerci}
C. Cercignani, \textit{The Boltzmann equation and its applications},
\textit{Applied Mathematical Sciences}, {\bf{67}}, Springer-Verlag, New York, 1988.

\bibitem{BCD}
H. Bahouri, J.-Y. Chemin and R. Danchin, \textit{Fourier Analysis and Nonlinear Partial Differential Equations},
\textit{Grundlehren der Mathematischen Wissenschaften}, {\bf{343}}, Springer, Heidelberg, 2011.

\bibitem{CL}
J.-Y. Chemin and N. Lerner, Flot de champs de vecteurs non lipschitziens et $\acute{e}$quations de Navier-Stokes,
\textit{J. Differential Equations}, {\bf{121}} (1995), 314--328.

%\bibitem{CLX}
%H. Chen, W.-X. Li and C.-J. Xu, Analytic smoothness effect of solutions for spatially homogeneous Landau equation,
%\textit{J. Differential Equations}, {\bf{248}} (2010), 77-94.
%
%\bibitem{CLX1}
%H. Chen, W.-X. Li and C.-J. Xu, Propagation of Gevrey regularity for solutions of Landau equations,
%\textit{Kinet. Relat. Models}, {\bf{1}} (2008), 355-368.
%
%\bibitem{CDH}
%Y.-M. Chen, L. Desvillettes and L.-B. He, Smoothing effects for classical solutions of the full Landau equation,
%\textit{Arch. Ration. Mech. Anal.}, {\bf{193}} (2009), 21--55.

\bibitem{D}
L. Desvillettes, About regularizing properties of the non-cut-off Kac equation,
\textit{Comm. Math. Phys.}, {\bf{168}} (1995), 417-440.

%\bibitem{D1}
%L. Desvillettes, On asymptotics of the Boltzmann equation when the collisions become grazing,
%\textit{Transport Theory Statist. Phys.}, {\bf{21}} (1992), 259-276.

\bibitem{DFT}
L. Desvillettes, G. Furioli and E. Terraneo, Propagation of Gevrey regularity for solutions of the Boltzmann equation for Maxwellian molecules,
\textit{Trans. Amer. Math. Soc.}, {\bf{361}} (2009), 1731-1747.

\bibitem{DS}
R. Duan, R.-M. Strain, On the full dissipative property of the Vlasov-Poisson-Boltzmann system,
Hyperbolic Problems Theory, Numerics and Applications, {\bf{2}}, 398-405, Ser. Contemp. Appl. Math. CAM, {\bf{18}}, World Sci. Publishing, Singapore, 2012.

%\bibitem{DV}
%L. Desvillettes and C.Villani, On the spatially homogeneous Landau equation for hard potentials $I$. Existence, uniqueness and smoothness,
%\textit{Comm. Partial Differential Equations}, {\bf{25}} (2000), 179-259.
%
%\bibitem{DV1}
%L. Desvillettes and C.Villani, On the spatially homogeneous Landau equation for hard potentials $II$. H-theorem theorem and applications,
%\textit{Comm. Partial Differential Equations}, {\bf{25}} (2000), 261-298.
%
%\bibitem{D2}
%R.-J. Duan, On the Cauchy problem for the Boltzmann equation in the whole space: Global existence and uniform stability in
%$L^2_{\xi}(H^N_x)$, \textit{J. Differential Equations}, {\bf{244}} (2008), 3204-3234.

%\bibitem{D3}
%R.-J. Duan, Hypocoercivity of Linear Degenerately Dissipative Kinetic Equations,
%Nonlinearity, {\bf{24}} (2011), 2165-2189.

%\bibitem{DS}
%R.-J. Duan and R. M. Strain, Optimal Time Decay of the Vlasov-Poisson-Boltzmann System in ${\mathbb R^3}$,
%\textit{Arch. Ration. Mech. Anal.}, {\bf{199}} (2011), 291-328.

\bibitem{DLX-2016}
R.-J. Duan, S.-Q. Liu and J. Xu, Global well-posedness in spatially critical Besov space for the Boltzmann equation,
\textit{Arch. Ration. Mech. Anal.}, {\bf{220}} (2016), 711-745.

%\bibitem{G}
%Y. Guo, The Landau Equation in a Periodic Box, \textit{Commun. Math. Phys.}, {\bf{231}} (2002), 391-434.

%\bibitem{G1}
%Y. Guo, The Vlasov-Poisson-Boltzmann system near Maxwellians, \textit{Comm. Pure Appl. Math.}, Vol. {\bf{LV}} (2002), 1104-1135.

\bibitem{E}
L. C. Evans, \textit{Partial Differential Equations},
\textit{Graduate Studies in Mathematics}, {\bf{19}}, American Mathematical Society, Providence, RI, 1998.

\bibitem{GS}
I. M. Gelfand and G. E. Shilov, Generalized Functions, {\bf{2}}, Academic Press, New York-London, 1968.

%\bibitem{GLX}
%L. Glangetas, H.-G. Li and C.-J. Xu, Sharp regularity properties for the non-cutoff spatially homogeneous Boltzmann equation,
%\textit{Kinet. Relat. Models}, {\bf{9}} (2016), 299-371.

\bibitem{GPR}
T. Gramchev, S. Pilipovi$\acute{c}$ and L. Rodino, Classes of degenerate elliptic operators in Gelfand-Shilov spaces. New Developments in Pseudo-Differential Operators, 15-31, \textit{Oper. Theory Adv. Appl.}, {\bf{189}}, Birkh$\ddot{a}$user Basel, 2009.

%\bibitem{H}
%L. H$\ddot{o}$rmander, The Analysis of Linear Partial Differential Operators II, Springer-Verlag, 1983.
%
%\bibitem{HN}
%N. Hideko, Convergence of approximate solutions for Kac's model of the Boltzmann equation,
%\textit{Hiroshima Math. J.}, {\bf{15}} (1985), 1--25.
%
%\bibitem{HY}
%L. Hsiao and H.-J. Yu, On the Cauchy problem of the Boltzmann and Landau equations with soft potentials,
%\textit{Quart. Appl. Math.}, {\bf{65}} (2007), 281-315.
%
%\bibitem{J}
%M. N.\,Jones, Spherical harmonics and tensors for classical field theory. UK: Research Studies Press, 1985.

\bibitem{LX}
N. Lekrine and C.-J. Xu, Gevrey regularizing effect of the Cauchy problem for non-cutoff homogeneous Kac equation,
\textit{Kinet. Relat. Models}, {\bf{2}} (2009), 647-666.

%\bibitem{K}
%S. Kawashima, The Boltzmann equation and thirteen moments, \textit{Japan J. Appl. Math.}, {\bf{7}} (1990), 301--320.

%\bibitem{L}
%L. D. Landau, Die kinetische Gleichung f$\ddot{u}$r den Fall Coulombscher Wechselwirkung, \textit{Phys. Z. Sowjet.}, {\bf{10}} (1936), 154.
%Translation: The transport equation in the case of Coulomb interations, \textit{D. ter Haar (Ed.)}, Collected papers of L. D. Landau, Pergamon Press, Oxford (1981), 163-170.
%
%\bibitem{L1}
%L. P. Lions, On Boltzmann and Landau equations,
%\textit{Philos. Trans. Roy. Soc. London Ser. A}, {\bf{346}} (1994), 191--204.

\bibitem{LMPX1}
N. Lerner, Y. Morimoto, K. Pravda-Starov and C.-J. Xu, Spectral and phase space analysis of the linearized non-cutoff Kac collision operator,
\textit{J. Math. Pures Appl.}, {\bf{100}} (2013), 832-867.

\bibitem{LMPX2}
N. Lerner, Y. Morimoto, K. Pravda-Starov and C.-J. Xu, Gelfand-Shilov and Gevrey smoothing effect for the spatially inhomogeneous non-cutoff Kac equation,
\textit{J. Funct. Anal.}, {\bf{269}} (2015), 459--535.

\bibitem{LMPX3}
N. Lerner, Y. Morimoto, K. Pravda-Starov and C.-J. Xu, Phase space analysis and functional calculus for the linearized Landau and Boltzmann operators, \textit{Kinet. Relat. Models}, {\bf{6}} (2013), 625-648.

\bibitem{LMPX4}
N. Lerner, Y. Morimoto, K. Pravda-Starov and C.-J. Xu, Gelfand-Shilov smoothing properties of the radially symmetric spatially homogeneous Boltzmann equation without angular cutoff, \textit{J. Differential Equations}, {\bf{256}} (2014), 797-831.

%\bibitem{LX1}
%H.-G. Li and C.-J. Xu, The Cauchy problem for the radially symmetric homogeneous Boltzmann equation with Shubin class initial datum and Gelfand-Shilov smoothing effect, \textit{J. Differential Equations}, {\bf{263}} (2017), 5120-5150.
%
%\bibitem{LX2}
%H.-G. Li and C.-J. Xu, Cauchy problem for the spatially homogeneous landau equation with shubin class initial datum and Gelfand-shilov smoothing effect, arXiv:1708.06573.

%\bibitem{ML}
%X. Ma and S.-Q. Liu, $H^{s}_{x,v}$ stability of the Boltzmann equation and Landau equation with soft potentials,
%\textit{Appl. Math. Comp.}, {\bf{215}} (2010), 3721-3730.

%\bibitem{MPX}
%Y. Morimoto, K. Pravda-Starov and C.-J. Xu, A remark on the ultra-analytic smoothing properties of the spatially homogeneous Landau equation,
%\textit{Kinet. Relat. Models}, {\bf{6}} (2013), 715-727.

\bibitem{MX}
Y. Morimoto and C.-J. Xu, Ultra-analytic effect of Cauchy problem for a class of kinetic equations,
\textit{J. Differential Equations}, {\bf{247}} (2009), 596-617.

\bibitem{MS}
Y. Morimoto and S. Sakamoto, Global solutions in the critical Besov space for the non-cutoff Boltzmann equation,
\textit{J. Differential Equations}, {\bf{261}} (2016), 4073--4134.

%\bibitem{MY}
%Y. Morimoto and T. Yang, Smoothing effect of the homogeneous Boltzmann equation with measure initial datum,
%\textit{Ann. Inst. H. Poincar\'e Anal. Non Lin\'eaire}, {\bf{32}} (2015), 429-442.

\bibitem{NR}
F. Nicola and L. Rodino, \textit{Global Pseudo-Differential Calculus on Euclidean Spaces},
\textit{Pseudo Differential Operators. Theory and Applications}, {\bf{4}}, Birkh$\ddot{a}$user Verlag, Basel, 2010.

%\bibitem{M}
%C. M\"uller, Analysis of spherical symmetries in Euclidean spaces, Springer, 1998.

%\bibitem{S}
%G. Sansone, Orthogonal Functions, \textit{Pure and Applied Mathematics}, Vol. IX, 1959.

%\bibitem{S1}
%M. Shubin, Pseudodifferential Operators and Spectral theory. Spri%\bibitem{M}
%C. M\"uller, Analysis of spherical symmetries in Euclidean spaces, Springer, 1998.
%
%\bibitem{S}
%G. Sansone, Orthogonal Functions, \textit{Pure and Applied Mathematics}, Vol. IX, 1959.
%
%\bibitem{S1}
%M. Shubin, Pseudodifferential Operators and Spectral theory. Springer Series in Soviet Mathematics, Springer-Verlag, Berlin, 1987.
%
%\bibitem{S2}
%J. C. Slater, Quantum theory of atomic structure. Vol. 1. New York: McGraw-Hill, 1960.
%
%\bibitem{T}
%H. Tanaka, Probabilistic treatment of the Boltzmann equation of Maxwellian molecules,
%Wahrsch. Verw. Geb., 46 (1978/79), 67-105.
%
%\bibitem{TV}
%G. Toscani and C. Villani, Probability metrics and uniqueness of the solution to the Boltzmann equations for Maxwell gas,
%J. Statist. Phys., 94 (1999), 619-637.

\bibitem{TKNN}
J. Toft, A. Khrennikov, B. Nilsson and S. Nordebo, Decompositions of Gelfand-Shilov kernels into kernels of similar class,
\textit{J. Math. Anal. Appl.}, {\bf{396}} (2012), 315-322.

%\bibitem{UY}
%S. Ukai and T. Yang, The Boltzmann equation in the space $L^{2}\cap L^{\infty}_{\beta}$: global and time-periodic solutions,
%\textit{Anal. Appl. (Singap.)}, {\bf{4}} (2006), 263--310.

%\bibitem{V1}
%C. Villani, On the spatially homogeneous Landau equation for Maxwellian molecules,
%\textit{Math. Models Methods Appl. Sci.}, {\bf{8}} (1998), 957-983.
%
%\bibitem{V2}
%C. Villani, On a new class of weak solutions to the spatially homogeneous Boltzmann and Landau equations,
%\textit{Arch. Rational Mech. Anal.}, {\bf{143}} (1998), 273-307.
%
%\bibitem{V3}
%C. Villani, On the Cauchy problem for Landau equation: sequential stability, global existence,
%\textit{Adv. Differential Equations}, {\bf{1}} (1996), 793-816.

\bibitem{V4}
C. Villani, A review of mathematical topics in collisional kinetic theory,
\textit{Handbook of mathematical fluid dynamics}, {\bf{1}}, North-Holland, Amsterdam, 2002, 71-305.

%\bibitem{WW}
%E. T. Whittaker, G. N. Watson, A course of modern analysis, Cambridge university press, 1927.

\bibitem{XK}
J. Xu and S. Kawashima, Global classical solutions for partially dissipative hyperbolic system of balance laws,
\textit{Arch. Ration. Mech. Anal.}, {\bf{211}} (2014), 513-553.

%\bibitem{Y}
%H.-J. Yu, Convergence rate for the Boltzmann and Landau equations with soft potentials,
%\textit{Proceedings of the Royal Society of Edinburgh}, {\bf{139}}A (2009), 339-416.

%\bibitem{YY}
%T. Yang and H.-J. Yu, Optimal convergence rates of Landau equation with external forcing in the whole space,
%\textit{Acta Math. Sci.}, {\bf{4}} (2009), 1035-1062.
\end{thebibliography}
\end{document}